\newtheorem{theorem}{Theorem}[section]
\newtheorem{lemma}[theorem]{Lemma}
\newtheorem{corollary}[theorem]{Corollary}
\newtheorem{conjecture}[theorem]{Conjecture}
\newtheorem{definition}[theorem]{Definition}
\newtheorem{remark}[theorem]{Remark}
\newtheorem{question}[theorem]{Question}
\def\Mod{\mathrm{Mod}}
\def\Modpm{\mathrm{Mod}^{\pm}}
\def\Modpmfin{\mathrm{Mod}^{\pm}_{\mathrm{Fin}}}
\def\Aut{\mathrm{Aut}}
\def\B{\mathrm{B}}
\title{Spaces of Pants Decompositions for Surfaces of Infinite Type}
\author{Ben Branman}
\begin{document}
\maketitle
\begin{abstract}
We study the pants complex of surfaces of infinite type.  When $S$ is a surface of infinite type, the usual definition of the pants graph $\mathcal{P}(S)$ yields a graph with infinitely many connected-components.  In the first part of our paper, we study this disconnected graph.  In particular, we show that the extended mapping class group $\mathrm{Mod}(S)$ is isomorphic to a proper subgroup of $\mathrm{Aut}(\mathcal{P})$, in contrast to the finite-type case where $\mathrm{Mod}(S)\cong \mathrm{Aut}(\mathcal{P}(S))$.  
\par 
In the second part of the paper, motivated by the Metaconjecture of Ivanov \cite{IvanovMeta}, we seek to endow $\mathcal{P}(S)$ with additional structure. To this end, we define a coarser topology on $\mathcal{P}(S)$ than the topology inherited from the graph structure.  We show that our new space is path-connected, and that its automorphism group is isomorphic to $\mathrm{Mod}(S)$.
\end{abstract}
\section{Introduction}
Let $S$ be an orientable surface of infinite type.  The \emph{graph of pants decompositions} $\mathcal{P}(S)$ contains one vertex for each isotopy class of decompositions of $S$ into pairs of pants.  Vertices of $\mathcal{P}(S)$ are connected by an edge if they differ by an ``elementary move'' (see section 2.)
\par 
Unlike in the finite-type case, the pants graph of an infinite-type surface is always disconnected.  To see why, consider a pants decomposition $X$ of $S$ containing only nonseperating curves, and a pants decomposition $Y$ of $S$ containing infinitely many seperating curves.  An elementary move replaces only one curve, hence no finite sequence of elementary moves can take $X$ to $Y$.  
\par 
The (extended) mapping class group $\Mod(S)$ acts on $\mathcal{P}(S)$ by graph isomorphisms.  This action is well understood for finite-type surfaces.  In particular, this action is faithful and $\Aut(\mathcal{P}(S))\cong\Mod(S)$ for all but finitely many surfaces\cite{margalit2002}.  
\par 
The author is aware of only two prior papers which consider pants graphs for infinite-type surfaces.  Fossas and Parlier \cite{FossasParlier} add additional edges to the pants graph to make it connected.  They are concerned primarily with the coarse geometry of the graphs, and do not consider the automorphism group.  Later, Aroca showed \cite{aroca2018remarks} that some of the connected pants graphs of Fossas and Parlier have automorphism group $\Modpm(S)$.

\par 
The outline of the rest of this paper is as follows.  In section 2, we recall some background information.  In section 3, we study the automorphism group of the pants graph of infinite-type surfaces. In particular we show that it is not isomorphic to the mapping class group.  In section 4, we define a family of topologies on the set of isotopy classes of pants decompositions of a surface (that is, the 0-skeleton of the pants graph).  This topology is analogous to the Gromov-Hausdorff topology on a set of metric spaces.  In section 5, we explicitly construct a metric on the vertex space, prove it is complete, and use it to prove that the vertex space is homeomorphic to the Baire space $\mathbb{N}^{\mathbb{N}}$.  
\par 
In section 6, we extend the topology on the vertices to the entire pants graph, creating the \emph{pants space} $\mathcal{PS}(S)$.  In section 7, we prove that the automorphism group of the pants space is the extended mapping class group, generalizing Margalit's result for finite-type surfaces.  In section 8, we prove that the natural action of the mapping class group on $\mathcal{PS}(S)$ is continuous.  
\par 
In section 9, we briefly discuss some open questions about the pants space.
\section{Preliminaries}

In this section we recall the terminology and basic results that we will use in this paper.  
\par 
If $S$ is an orientable surface, a \emph{pair of pants decomposition} $X$ for $S$ is a set of disjoint, simple-closed curves in $S$ such that $S\backslash X$ is homeomorphic to a disjoint union of thrice-punctured spheres.  By abuse of notation, we will sometimes refer to an isotopy class of pants decompositions as a single pants decomposition.  
\par 
The \emph{complexity} $\kappa(S)$ of a surface $S$ is the number of curves in a pants decomposition for $S$.  If $S=S^b_{g,n}$ has finite type, then $\kappa(S)=3g-3+b+n$, otherwise $\kappa(S)=\infty$.  
\par 
We recall the \emph{Alexander Method} for finite-type surfaces \cite[Prop~2.8]{primer}.
\begin{theorem}
Let $S$ have finite-type and let $\kappa(S)\geq 6$, and let $f:S\to S$ be a homeomorphism.  Then $f$ fixes each simple closed curve in $S$ up to isotopy if and only if $f$ is isotopic to the identity on $S$.
\end{theorem}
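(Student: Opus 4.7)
The forward implication is immediate, since any isotopy carries each simple closed curve to an isotopic curve. The plan for the substantive direction is to replace the hypothesis about \emph{all} simple closed curves by its effect on a carefully chosen finite collection, and then reduce to the classical Alexander lemma on complementary disks.

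The first step is to exhibit a finite collection $\mathcal{C}=\{\alpha_1,\dots,\alpha_k\}$ of essential simple closed curves that \emph{fills} $S$, meaning that every component of $S\setminus\bigcup_i\alpha_i$ is either an open disk or an open disk with one puncture, and that is moreover in minimal position (pairwise transverse, intersections minimized, no triple points). The complexity hypothesis $\kappa(S)\ge 6$ is exactly what is needed to assemble such a configuration, for instance from a pants decomposition together with a dual family of curves crossing each pants curve.

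The second step, which I expect to be the main technical obstacle, is to promote the per-curve isotopies supplied by the hypothesis to a simultaneous setwise fixing. Concretely, I would isotope $f$ one curve at a time, at each stage using the bigon criterion to remove bigons between $f(\alpha_i)$ and $\alpha_i$ but localizing each isotopy to a neighborhood of the bigons so as not to disturb the straightening already achieved for $\alpha_1,\dots,\alpha_{i-1}$. The delicacy here is verifying that these localized isotopies can be chosen disjoint from the previously straightened curves; a change-of-coordinates argument together with the fact that $\mathcal{C}$ is in minimal position makes this possible. After the induction, $f$ preserves each $\alpha_i$ setwise.

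In the final step, $f$ now permutes the finitely many intersection points of $\mathcal{C}$ and the closures of the complementary pieces. To pin down this permutation I would feed further test curves into the hypothesis: for any two adjacent complementary regions, an essential curve that crosses exactly those two regions must be preserved by $f$, which forces $f$ to fix every region setwise; analogous test curves fix each subarc of each $\alpha_i$ and each puncture. A further isotopy then makes $f$ the identity on $\mathcal{C}$, after which $f$ restricts on each complementary piece to a self-homeomorphism of a disk or once-punctured disk fixing the boundary pointwise. The Alexander trick shows each such restriction is isotopic to the identity rel boundary, and assembling these local isotopies produces an ambient isotopy from $f$ to $\mathrm{id}_S$.
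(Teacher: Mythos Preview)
The paper does not supply its own proof of this statement; it is simply recalled as the Alexander Method and attributed to \cite[Prop~2.8]{primer}. Your sketch is essentially the standard argument found there, so in that sense you are aligned with what the paper invokes.

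One point deserves correction, however. You write that $\kappa(S)\ge 6$ is ``exactly what is needed to assemble'' a filling configuration in minimal position; this is not so---such configurations exist on any surface of positive complexity. The complexity hypothesis is doing its real work in your \emph{third} step, not your first. On the closed genus-two surface ($\kappa=3$) the hyperelliptic involution fixes every isotopy class of simple closed curves yet is not isotopic to the identity, so the theorem is genuinely false there and your ``feed further test curves'' manoeuvre must break down: every test curve you introduce is itself fixed by the involution, and hence cannot distinguish the pair of complementary regions it swaps. What the bound $\kappa(S)\ge 6$ buys you is precisely that no such nontrivial symmetry of the cell structure determined by $\mathcal{C}$ can be realised by a homeomorphism fixing all curve classes---equivalently, that the kernel of the action of $\Modpm(S)$ on isotopy classes of simple closed curves is trivial in this range. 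Your outline is sound once this is understood, but you should relocate the hypothesis to the step where it is actually consumed and say explicitly why the test-curve argument succeeds there.
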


The main result of \cite{HMValexander} extends the Alexander Method to infinite-type surfaces.
\begin{theorem}[\cite{HMValexander}]
Let $S$ be a surface of infinite type, and let $f:S\to S$ be a homeomorphism.  Then $f$ fixes each simple closed curve in $S$ up to isotopy if and only if $f$ is isotopic to the identity on $S$.
\end{theorem}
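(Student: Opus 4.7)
The forward implication is immediate from the definition, so the content is the converse: a homeomorphism fixing every isotopy class of simple closed curve is isotopic to the identity. The plan is to exhaust $S$ by finite-type subsurfaces, straighten $f$ on each by the finite-type Alexander method (Theorem~2.1), and concatenate the resulting local isotopies into a single global one. I would fix once and for all an exhaustion $\Sigma_{1}\subset\Sigma_{2}\subset\cdots$ of $S$ by essential finite-type subsurfaces with $\Sigma_{i}\subset\mathrm{int}(\Sigma_{i+1})$ and $\bigcup_{i}\Sigma_{i}=S$, chosen so that $\kappa(\Sigma_{i})\geq 6$ and each component of $\partial\Sigma_{i}$ is a simple closed curve of $S$ whose isotopy class is distinct from the other boundary components.

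The core step is an inductive construction of homeomorphisms $f_{i}\colon S\to S$, each isotopic to $f$, with $f_{i}|_{\Sigma_{i}}=\mathrm{id}_{\Sigma_{i}}$ and with the isotopy from $f_{i-1}$ to $f_{i}$ supported in $S\setminus\Sigma_{i-1}$. To pass from $f_{i-1}$ to $f_{i}$, I would use that $f_{i-1}$ still fixes every isotopy class of simple closed curve, in particular the curves in $\partial\Sigma_{i}$. The isotopy extension theorem, applied in the region $\Sigma_{i+1}\setminus\Sigma_{i-1}$, lets me modify $f_{i-1}$ by an isotopy supported there to a homeomorphism which preserves $\Sigma_{i}$ setwise and fixes each of its boundary components. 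The restriction to $\Sigma_{i}$ then fixes every isotopy class of simple closed curve in $\Sigma_{i}$ (since any such curve is also a simple closed curve of $S$), so by Theorem~2.1 it is isotopic to $\mathrm{id}_{\Sigma_{i}}$; extending this isotopy by the identity outside $\Sigma_{i}$ produces $f_{i}$.

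To finish, I would concatenate the isotopies from $f_{i-1}$ to $f_{i}$ on the intervals $[1-2^{-(i-1)},1-2^{-i}]$ to obtain an isotopy $F\colon S\times[0,1]\to S$ from $f$ to a limit map at time $1$. Because the isotopy between $f_{j}$ and $f_{j+1}$ is supported in $S\setminus\Sigma_{j}$, every point of $S$, being eventually contained in some $\Sigma_{i}$, is moved only finitely often; hence $F$ is continuous at $t=1$ and $F(\cdot,1)=\mathrm{id}_{S}$.

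The main obstacle is the inductive step: arranging that the adjustment carrying $f_{i-1}(\partial\Sigma_{i})$ back to $\partial\Sigma_{i}$ can be performed by an isotopy supported outside $\Sigma_{i-1}$. This requires that the exhaustion leaves enough collar room between consecutive subsurfaces, and a careful application of isotopy extension in the noncompact setting---one must verify both that the ambient isotopy between $f_{i-1}(\partial\Sigma_{i})$ and $\partial\Sigma_{i}$ can itself be taken to avoid $\Sigma_{i-1}$ (using that their isotopy classes lie in the complement) and that its extension to $S$ inherits this support. If that technical point is handled, the rest of the argument is a straightforward reduction to Theorem~2.1.
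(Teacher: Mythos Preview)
The paper does not prove this theorem; it is quoted without proof as the main result of \cite{HMValexander} (Hern\'andez--Morales--Valdez). So there is no ``paper's own proof'' to compare against.

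That said, your outline is the correct overall strategy and is essentially the one carried out in \cite{HMValexander}: exhaust by finite-type subsurfaces, use the finite-type Alexander method on each level, and splice the local isotopies with shrinking supports into a global isotopy. A few points deserve more care than you have given them. First, Theorem~2.1 as stated here concerns finite-type surfaces (without boundary) and tells you the map is isotopic to the identity; on $\Sigma_i$, which has boundary, you need the version that produces an isotopy \emph{rel} $\partial\Sigma_i$, and for that you must rule out boundary Dehn twists. Fixing only the isotopy classes of curves \emph{inside} $\Sigma_i$ does not do this; you need to use that $f$ also fixes the classes of curves in $S$ which cross $\partial\Sigma_i$ (or, equivalently, work with arcs-and-curves on $\Sigma_i$). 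Second, your claim that ``any such curve is also a simple closed curve of $S$'' is not quite right: a curve essential in $\Sigma_i$ can be inessential in $S$ (parallel to a puncture, say), and two curves isotopic in $S$ need not a priori be isotopic in $\Sigma_i$; both issues are handled by taking $\partial\Sigma_i$ essential and $\Sigma_i\hookrightarrow S$ $\pi_1$-injective, but you should say so. Finally, the support control you flag as the ``main obstacle'' is exactly the delicate step in \cite{HMValexander}; resolving it cleanly requires the collar/principal-exhaustion hypotheses you have built in, together with a careful argument that the ambient isotopy realizing $f_{i-1}(\partial\Sigma_i)\simeq\partial\Sigma_i$ can be taken disjoint from $\Sigma_{i-1}$.
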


Let $S$ be an infinite-type surface and let $S_0\subset S_1\cdots$ be an exhaustion of $S$ by finite-type surfaces.  For notational convenience, we let $S_{-1}=\emptyset$.  Following the convention of \cite{HMValexander}, we say that the $S_j$ form a \emph{principal exhaustion} for $S$ if
\begin{enumerate}
\item
For all $n\geq 0$, $\overline{S_n}\subseteq S_{n+1}^{\mathrm{o}}$.  (This condition ensures that any compact set in $S$ is contained in $S_n$ for some $n$.)
\item
For all $n\geq 0$, $\partial S_n\backslash\partial S$ is a union of finitely many pairwise-disjoint essential simple closed curves.
\item
For all $n\geq 0$, each connected-component of $S_n\backslash S_{n-1}$ has complexity at least 6.
\end{enumerate}

\par

The \emph{curve complex} $C(S)$ of a surface whose vertex set is the set of isotopy classes of simple closed curves on $S$, and where $k$ vertices form a $k-1$-simplex if and only if the corresponding curves have mutually disjoint representatives.  
\par 
The mapping class group acts on $C(S)$ in the obvious way.  The Alexander Method implies that this action is faithful. 1997, Ivanov proved the following:
\begin{theorem}[\cite{Ivanov1997}]
Let $S$ be a finite-type surface of genus at least 2.  Then $\Aut(C(S))\cong \Modpm(S)$.
\end{theorem}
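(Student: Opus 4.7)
The natural map $\Modpm(S)\to\Aut(C(S))$ is injective by the Alexander Method (Theorem 2.1): a homeomorphism fixing every isotopy class of simple closed curves is isotopic to the identity, so distinct mapping classes induce distinct automorphisms of $C(S)$. The substance of the theorem is therefore surjectivity, namely that every combinatorial automorphism of $C(S)$ is induced by a self-homeomorphism of $S$.

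The plan is to fix an automorphism $\phi$ of $C(S)$ and reconstruct a homeomorphism of $S$ that realizes it. The first step is to characterize topological invariants of curves combinatorially, so that $\phi$ is forced to respect them. Top-dimensional simplices of $C(S)$ correspond to pants decompositions, so $\phi$ carries pants decompositions to pants decompositions. More subtly, one distinguishes separating from non-separating curves by examining the link of a vertex: cutting along a non-separating curve yields a connected surface of genus $g-1$, while cutting along a separating curve yields two pieces, and this structural difference is visible in the links of the corresponding vertices. By refining this analysis, one recovers the full topological type of any curve (the genera and puncture counts of its complementary components) from intrinsic data of $C(S)$. The hypothesis $g\geq 2$ is what prevents exceptional isomorphisms and makes these characterizations actually determine the curve.

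Once $\phi$ is known to preserve topological types, I would choose a pants decomposition $P$ together with a collection $Q$ of transverse ``dual'' curves such that $P\cup Q$ fills $S$ and such that any homeomorphism of $S$ is determined up to isotopy by its action on $P\cup Q$. By the change-of-coordinates principle, the image $\phi(P\cup Q)$ is a filling system of the same combinatorial and topological type, so there exists a homeomorphism $f\colon S\to S$, unique up to isotopy, sending each curve of $P\cup Q$ to its $\phi$-image.

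The final step is to show that the mapping class $[f]$ induces $\phi$ on all of $C(S)$, not merely on $P\cup Q$. One considers the automorphism $f_\ast^{-1}\circ \phi$, which fixes every vertex of $P\cup Q$, and then uses the abundance of such filling configurations (together with Alexander-Method-style rigidity) to deduce that it fixes every vertex of $C(S)$ and therefore is the identity. The central obstacle is the second step: producing purely combinatorial characterizations of topological features of curves. Every later step takes it as a black box that $\phi$ already recognizes which vertex is which kind of curve, and this recognition step is where essentially all of the genuine work in Ivanov's original argument is concentrated.
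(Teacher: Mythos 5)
The paper does not prove this theorem: it is a background result quoted with a citation to Ivanov's 1997 paper, so there is no in-paper proof for your attempt to be compared against. Taken on its own, your outline correctly reproduces the shape of Ivanov's argument: injectivity of $\Modpm(S)\to\Aut(C(S))$ from the Alexander Method, and surjectivity by (i) showing that an automorphism $\phi$ of $C(S)$ must preserve the topological type of each vertex, (ii) transporting a suitable filling configuration of curves to obtain a candidate homeomorphism via change of coordinates, and (iii) a rigidity step showing the induced automorphism agrees with $\phi$ everywhere. You are also right that the load-bearing step is (i).

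What the sketch elides, and where any honest write-up would have to spend most of its effort, is the cascade of combinatorial lemmas Ivanov uses to make step (i) precise: one must first show that maximal simplices go to maximal simplices (you note this), then that $\phi$ sends codimension-one faces of a pants decomposition to faces in a way that preserves whether the missing curve lies in a one-holed torus or a four-holed sphere (i.e., it preserves the two "Farey-graph" types of elementary moves), and from this extract that $\phi$ preserves geometric intersection number one and the separating/nonseparating distinction. Only after that machinery is in place does the dual-curve/filling-system argument in your second and third paragraphs go through. So the proposal is a correct high-level roadmap, but the passage "By refining this analysis, one recovers the full topological type of any curve from intrinsic data of $C(S)$" is precisely where the theorem is, and declaring it a black box (as you candidly do) means the sketch does not yet constitute a proof.
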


In \cite{Luo1999}, F Luo extended the result to finite-type surfaces of complexity at least 2.  Two decades later, Bavard, Dowdell, and Rafi \cite{BDR} extended the result to infinite-type surfaces.
\par 
There are many related results which state that some simplicial complex associated with a surface has automorphism group $\Modpm(S)$ (see the references in \cite{BM2017} for examples).  In 2006, Ivanov \cite{IvanovMeta} proposed the \emph{Metaconjecture}
\begin{conjecture}[\cite{IvanovMeta}]
Every object naturally associated to a surface $S$ and having sufficiently rich structure has $\Modpm(S)$ as its automorphism group.  Moreover, this can be proven using a reduction to the theorem about the automorphisms of $C(S)$.
\end{conjecture}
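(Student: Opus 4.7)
The final ``statement'' is the Metaconjecture, which is not a formal theorem but a programmatic principle: the phrase ``sufficiently rich structure'' has no precise definition, so the statement cannot be proved in the usual sense. What one can do, and what the rest of the paper in fact does for the pants space $\mathcal{PS}(S)$, is to verify concrete instances of the Metaconjecture by the reduction strategy Ivanov himself suggests. My proposal is therefore to describe the template that such verifications follow, rather than to ``prove'' the Metaconjecture as stated.

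The template proceeds in four steps. First, given a complex (or space) $\mathcal{X}(S)$ whose vertices are naturally indexed by some class of isotopy classes of simple closed curves or multicurves, one shows that every automorphism $\varphi\in\Aut(\mathcal{X}(S))$ induces a well-defined bijection on isotopy classes of simple closed curves in $S$. In practice, this is the step that most uses the ``rich structure'' hypothesis: one identifies individual curves with combinatorial invariants visible inside $\mathcal{X}(S)$ (for instance, curves as intersections of pants decompositions that differ by elementary moves, or as limits of sequences of vertices). Second, one verifies that this induced bijection preserves disjointness, so that it extends to a simplicial automorphism $\Phi\in\Aut(C(S))$.

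Third, one invokes the appropriate version of the Ivanov--Luo--Bavard--Dowdall--Rafi theorem (\cite{Ivanov1997,Luo1999,BDR}) to conclude that $\Phi$ is induced by a unique element $f\in\Modpm(S)$. Fourth, one shows that the action of $f$ on $\mathcal{X}(S)$ coincides with the original automorphism $\varphi$; combined with the Alexander Method (Theorem 2.2) to rule out non-trivial kernel, this produces the desired isomorphism $\Aut(\mathcal{X}(S))\cong\Modpm(S)$. The natural map $\Modpm(S)\to\Aut(\mathcal{X}(S))$ being injective is usually a straightforward consequence of the Alexander Method applied to the action of $\Modpm(S)$ on curves.

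The main obstacle, and the one that genuinely varies between instances, is always the first step: producing a canonical curve-theoretic interpretation of the combinatorial/topological data of $\mathcal{X}(S)$. For the naive pants graph $\mathcal{P}(S)$ of an infinite-type surface, the introduction already tells us that this step fails: $\Aut(\mathcal{P}(S))$ is strictly larger than $\Modpm(S)$ because elementary moves do not see the ``global'' structure of pants decompositions at infinity. The substantive content of the paper is to enrich $\mathcal{P}(S)$ with a topology --- converting it into the pants space $\mathcal{PS}(S)$ --- that restores enough rigidity for Step 1 to go through, thereby exhibiting the Metaconjecture as a heuristic that succeeds precisely when one has chosen the correct category in which to view the complex.
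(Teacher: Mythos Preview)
Your assessment is correct: the statement is Ivanov's Metaconjecture, and the paper does not attempt to prove it. Immediately after stating it, the paper notes that the phrases ``naturally associated'' and ``sufficiently rich structure'' are not given rigorous definitions, and then moves on. There is no proof in the paper to compare against.

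Your additional discussion of the four-step reduction template is accurate as a description of how instances of the Metaconjecture are typically verified, and it correctly anticipates the strategy the paper later uses for $\mathcal{PS}_i(S)$ in Section~7 (via Theorem~3.11 and Lemma~7.20). However, this template is your own elaboration; the paper itself offers no such commentary at the point where the conjecture is stated. So while nothing you wrote is wrong, be aware that you have supplied substantially more than the paper does here.
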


The phrases "nautrally associated" and "sufficiently rich structure" are not given rigorous definitions.

\par 
One case of the metaconjecture which was shown by Margalit in \cite{margalit2002} was that of the \emph{graph of pants decompositions}.  Two pants decompositions $X$ and $Y$ are said to \emph{differ by an elementary move} if 
\begin{enumerate}
\item
$X\backslash Y$ is a single curve $\alpha$.
\item
$Y\backslash X$ is a single curve $\alpha'$
\item
The intersection number $\mathrm{i}(\alpha,\alpha')$ is positive minimal for the complexity 1 subsurface containing $\alpha$ and $\alpha'$.
\end{enumerate}

The curves $\alpha, \alpha'$ must lie in a copy of either $S_{1,2}$ or $S_{0,4}$.  Any two nonisotopic curves in $S_{1,2}$ intersect at least once, so for an elementary move we require that they intersect exactly once.  Similarly, any two nonisotopic curves in $S_{0,4}$ intersect at least twice, so for an elementary move we require that they intersect exactly twice.
\par 
The pants graph $\mathcal{P}(S)$ is the graph whose vertices are isotopy classes of pants decompositions of $S$, and where two pants decompositions are joined by an edge if they differ by an elementary move.  There is also a related notion of the \emph{pants complex}, which is a 2-complex whose 1-skeleton is $\mathcal{P}(S)$, but we will not use the pants complex in this paper.

Margalit proved the following theorem.

\begin{theorem}[\cite{margalit2002}]
Let $S$ be a finite-type surface with $\kappa(S)\geq 2$.  Then $\Aut(\mathcal{P}(S))\cong \Modpm(S)$.
\end{theorem}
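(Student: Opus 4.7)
The plan is to reduce to Ivanov and Luo's theorem that $\Aut(C(S))\cong\Modpm(S)$. First note that the natural map $\Modpm(S)\to\Aut(\mathcal{P}(S))$ is injective: if $f\in\Modpm(S)$ acts trivially on $\mathcal{P}(S)$ then $f$ fixes every curve that appears in some pants decomposition, which is every isotopy class of essential simple closed curves on $S$, and the Alexander Method forces $f$ to be isotopic to the identity. It therefore suffices to show that every $\phi\in\Aut(\mathcal{P}(S))$ is induced by some element of $\Modpm(S)$.

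For each isotopy class of essential simple closed curves $\alpha$, let $V_\alpha\subseteq\mathcal{P}(S)$ be the set of pants decompositions containing $\alpha$. The central technical step is to give a purely combinatorial characterization of the subsets $V_\alpha$ inside $\mathcal{P}(S)$. The strategy is to first distinguish the two types of elementary move ($S_{1,2}$-moves from $S_{0,4}$-moves) via a local invariant of each edge, for instance by enumerating the short cycles (squares, pentagons, hexagons) in which the edge participates; the two move types sit in non-isomorphic local neighborhoods. Using this, at each vertex $X$ I partition the edges at $X$ according to which curve of $X$ they swap: two edges at $X$ represent swaps of the same curve of $X$ precisely when they fail to extend to a commuting pair. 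Then $V_\alpha$ is recognized as the connected component of $X$ in the subgraph obtained from $\mathcal{P}(S)$ by deleting, at every vertex, the edges that swap the curve playing the role of $\alpha$.

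Once the subsets $V_\alpha$ are intrinsic, any $\phi\in\Aut(\mathcal{P}(S))$ permutes them, inducing a map $\phi_*$ on isotopy classes of curves. Two curves $\alpha,\beta$ are disjoint if and only if they sit in a common pants decomposition, if and only if $V_\alpha\cap V_\beta\neq\emptyset$; so $\phi_*$ preserves disjointness and extends to a simplicial automorphism of $C(S)$. By Ivanov and Luo there is $f\in\Modpm(S)$ with $f_*=\phi_*$. The composite $f^{-1}\phi$ fixes every $V_\alpha$, so for each vertex $X$ with curves $\alpha_1,\dots,\alpha_{\kappa(S)}$ the image $(f^{-1}\phi)(X)$ lies in every $V_{\alpha_i}$ and hence contains each $\alpha_i$; since $(f^{-1}\phi)(X)$ and $X$ are both pants decompositions with $\kappa(S)$ curves, they are equal. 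Thus $f^{-1}\phi$ fixes every vertex and, being a graph automorphism, equals the identity.

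The main obstacle is the intrinsic characterization of $V_\alpha$, and in particular distinguishing $S_{1,2}$-moves from $S_{0,4}$-moves from purely local graph data. This is delicate because a square in $\mathcal{P}(S)$ can arise either from two commuting elementary moves supported on disjoint subsurfaces or from a coincidence inside a single complexity-$2$ subsurface, so the local cycle enumeration has to be carried out carefully. The low-complexity cases $\kappa(S)=2$ (notably $S_{0,5}$ and $S_{1,2}$) will likely need a separate, hands-on argument, since there the ambient surface itself has complexity too small for the generic cycle-counting to separate the two move types.
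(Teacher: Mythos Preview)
This theorem is not proved in the present paper; it is stated as background and attributed to \cite{margalit2002}. There is therefore no proof here to compare your proposal against.

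For what it is worth, your outline is essentially Margalit's own strategy: show that an automorphism of $\mathcal{P}(S)$ permutes the ``Farey subgraphs'' (your $V_\alpha$), deduce an induced simplicial automorphism of $C(S)$, and invoke Ivanov--Luo. The obstacle you single out---an intrinsic combinatorial description of $V_\alpha$---is exactly where the work lies in \cite{margalit2002}. One caution about your proposed criterion that two edges at $X$ swap the same curve precisely when they ``fail to extend to a commuting pair'': the Farey graph itself contains $4$-cycles (e.g.\ $0,1,\infty,-1$), so two edges swapping the same curve can sit in a square; what rescues the idea is that Farey $4$-cycles always carry a chord while genuine commuting squares are induced, and it is this kind of refinement (together with the analysis of triangles and pentagons) that Margalit actually carries out. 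The low-complexity cases do, as you anticipate, require separate treatment there as well.
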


If $S$ is an infinite-type surface, then $\mathcal{P}(S)$ has uncountably many connected components.  The reason is that an elementary move can only replace one curve at a time.  In an infinite-type surface, each pants decomposition contains infinitely many curves.  For two pair of pants decompositions $X$ and $Y$ such that $X\backslash Y$ is infinite, no finite sequence of elementary moves can turn $X$ into $Y$.  Figure \ref{twoladder} shows an example of two pants decompositions of the ladder surface in different connected-components.

\begin{figure}
\caption{Two pants decompositions of the ladder surface in different components of the pants graph.  Any pants decomposition in the component of (a) must have infinitely many seperating curves.\label{twoladder}}
\def\svgwidth{\columnwidth}

\begin{subfigure}[b]{0.5\textwidth}
\subcaptionbox{\label{laddera}}
{\includegraphics[scale=0.33]{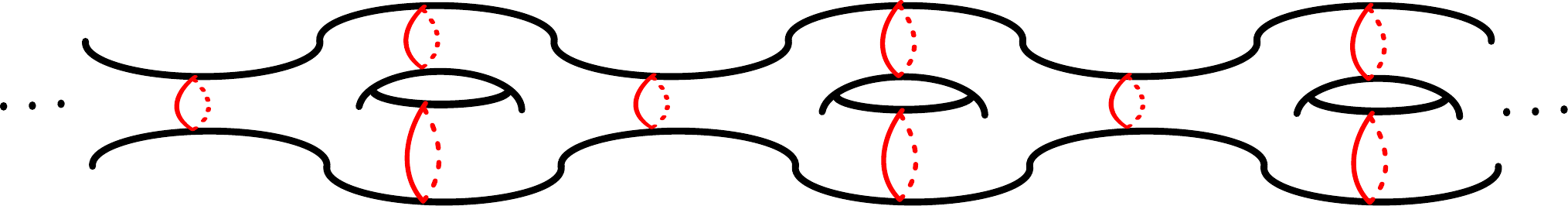}}
\end{subfigure}

\begin{subfigure}[b]{0.5\textwidth}
\subcaptionbox{\label{ladderb}}
{\includegraphics[scale=0.33]{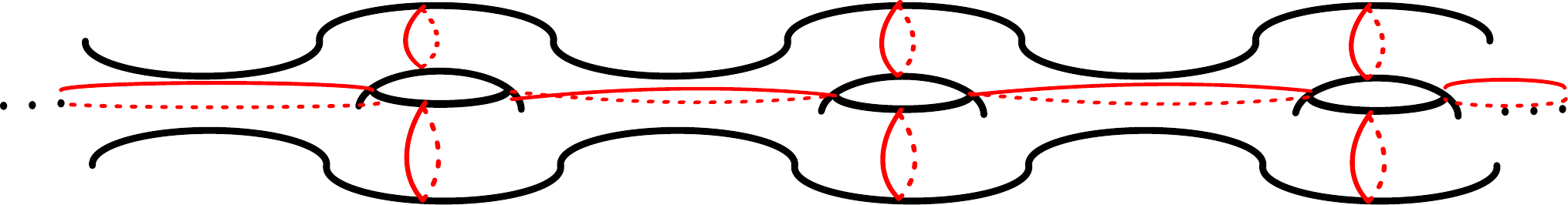}}
\end{subfigure}
\end{figure}

\par 
For any surface $S$, $\Mod(S)$ can be given the structure of a topological group.  We endow $\mathrm{Homeo}(S)$ with the compact-open topology, and quotient by isotopy to obtain a topology on $\Mod(S)$.  When $S$ has finite type, the resulting topology is always discrete \cite[Section 2.1]{primer}.  By contrast, when $S$ has infinite type, $\Mod(S)$ is always homeomorphic to the Baire space $\mathbb{N}^{\mathbb{N}}$ \cite[Corollary 9]{vlamisnotes}.  
 
\section{Automorphisms of $\mathcal{P}$}
The goal of this section is to understand the automorphism group of $\mathcal{P}(S)$.

\subsection{}
The main goal of this subsection is to establish the following result.

\begin{theorem}
Let $S$ be an infinite-type surface.  Then the extended mapping class group of $S$ is isomorphic to a proper subgroup of $\Aut(\mathcal{P}(S))$.
\end{theorem}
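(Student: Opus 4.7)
The statement splits into two assertions: the map $\Mod^{\pm}(S)\to\Aut(\mathcal{P}(S))$ is injective, and it is not surjective.

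For injectivity, the plan is to reduce to the infinite-type Alexander method cited above. Suppose $\phi\in\Mod^{\pm}(S)$ fixes every vertex of $\mathcal{P}(S)$. Since $\phi(X)=X$ as an unordered set of curves whenever $\alpha\in X$, the curve $\phi(\alpha)$ lies in $\bigcap_{X\ni\alpha}X$, so it suffices to prove this intersection is $\{\alpha\}$. Given $\beta\neq\alpha$, the case $i(\beta,\alpha)>0$ is automatic; in the disjoint case, pick any $X_0\ni\alpha,\beta$ and perform the elementary move on $\beta$ inside its complexity-$1$ subsurface $\Sigma_\beta$. Since $\alpha$ lies either on $\partial\Sigma_\beta$ or outside $\Sigma_\beta$ it is untouched, producing a pants decomposition containing $\alpha$ but not $\beta$. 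Alexander then gives $\phi=\mathrm{id}$.

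For non-surjectivity, the key observation is that edges of $\mathcal{P}(S)$ never connect vertices in distinct components, so there is complete freedom to prescribe automorphisms component by component. I would first produce a pair $(\phi,C)$ with $\phi\in\Mod^{\pm}(S)$ and $C':=\phi(C)\neq C$. Such pairs are easy to manufacture on an infinite-type surface: for example, if $S$ admits a non-separating curve $\alpha$, take $\phi=T_\alpha$ and a pants decomposition $X$ with infinitely many curves meeting $\alpha$, so that $|X\triangle\phi(X)|=\infty$ and the two lie in distinct components; analogous constructions using a homeomorphism that swaps interchangeable ends handle the remaining cases. Given such $(\phi,C)$, define
\[
\Psi(Z)=\begin{cases}\phi(Z),&Z\in C,\\ \phi^{-1}(Z),&Z\in C',\\ Z,&\text{otherwise,}\end{cases}
\]
which is a well-defined graph automorphism of $\mathcal{P}(S)$.

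To see $\Psi\notin\Mod^{\pm}(S)$, suppose $\Psi=\phi_0$; then $\phi_0$ fixes every vertex outside $C\cup C'$. For an arbitrary curve $\alpha$, I would argue that the family of pants decompositions $X\ni\alpha$ with $X\notin C\cup C'$ still has intersection $\{\alpha\}$: since each component of $\mathcal{P}(S)$ is countable while there are uncountably many pants decompositions containing $\alpha$ (they correspond to pants decompositions of the infinite-type surface $S\setminus\alpha$), uncountably many components of $\mathcal{P}(S)$ contain such a pants decomposition, so the local elementary-move construction from the injectivity step can always be carried out inside a component distinct from $C$ and $C'$. This forces $\phi_0=\mathrm{id}$ by Alexander, contradicting the nontriviality of $\Psi$ on $C$. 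The principal technical input is this spreading statement: verifying that the local flexibility of elementary moves can always be executed outside any prescribed finite union of components of $\mathcal{P}(S)$.
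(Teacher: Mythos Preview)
Your injectivity argument is essentially the paper's (Lemma~3.4 plus the Alexander method), and is fine.

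The non-surjectivity argument has a genuine gap in its key construction. You propose $\phi=T_\alpha$ for a non-separating curve $\alpha$, together with a pants decomposition $X$ having infinitely many curves meeting $\alpha$. But no such $X$ exists: a pants decomposition is locally finite (the complement is a disjoint union of pairs of pants), and $\alpha$ is a compact curve, so $\alpha$ meets only finitely many curves of any pants decomposition. Consequently $T_\alpha$ changes only finitely many curves of $X$, and $T_\alpha(X)$ lies in the \emph{same} component as $X$. In fact this is exactly the content of Lemma~3.3: any finite-type-supported mapping class preserves every component of $\mathcal{P}(S)$, and a single Dehn twist is as finite-type-supported as one can get. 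Your fallback (``swap interchangeable ends'') is not available on every infinite-type surface, and in general producing a mapping class that genuinely moves a component requires real work---this is Theorem~3.17, proved later in the paper using products of pseudo-Anosovs on infinitely many disjoint complexity-$1$ subsurfaces together with a totally-geodesic embedding result.

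The paper sidesteps this difficulty entirely by exploiting Lemma~3.3 rather than fighting it. Take any nontrivial $f\in\Modpmfin(S)$; since $f$ preserves \emph{every} component, one may define $\overline{f}$ to act as $f$ on a single chosen component $\Gamma$ and as the identity on all others. This is automatically a graph automorphism (no component-matching needed), and it cannot come from a mapping class because it is trivial on every component other than $\Gamma$, forcing it to be the identity by your own injectivity argument. This is both shorter and avoids the existence problem your approach runs into; if you want to keep your strategy, you must first supply an honest construction of a component-moving mapping class on an arbitrary infinite-type surface.
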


This theorem at first glance seems to contradict the Ivanov Metaconjecture.  However, $\mathcal{P}(S)$ is a disconnected graph with uncountably many connected components.  Thus, it can be said to lack the "sufficiently rich structure" required by the Metaconjecture.
\par 
We will need several lemmas to prove Theorem 3.1.

\begin{lemma}
Let $S$ be an infinite-type surface, and let $X$ and $Y$ be pants decompositions of $S$.  Then $X$ and $Y$ lie in the same connected-component of $\mathcal{P}(S)$ if and only if $|X\backslash Y|$ is finite.
\end{lemma}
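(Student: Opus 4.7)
The plan is to prove the two directions separately. For the forward direction, if $X$ and $Y$ are connected by a path $X=Z_0,Z_1,\ldots,Z_n=Y$ of elementary moves in $\mathcal{P}(S)$, then since $X\triangle Y\subseteq\bigcup_i(Z_i\triangle Z_{i+1})$ and each $|Z_i\triangle Z_{i+1}|=2$, we conclude $|X\backslash Y|\leq n<\infty$. So connectivity forces $|X\backslash Y|$ to be finite.

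For the converse, suppose $|X\backslash Y|<\infty$. The idea is to localize the discrepancy between $X$ and $Y$ to a finite-type subsurface, then invoke Hatcher--Thurston. Let $Z=X\cap Y$, cut $S$ along $Z$, and let $R$ denote the union of those components of $S\backslash Z$ which contain a curve of $X\backslash Y$ in their interior. I would first verify that a component of $S\backslash Z$ meets $X\backslash Y$ if and only if it meets $Y\backslash X$: if not, such a component would be a single pair of pants with respect to one of the decompositions but a non-trivial union of pants with respect to the other, a contradiction. So $R$ is an equally valid local model for both $X$ and $Y$. Next, I would check that $R$ is genuinely finite type: each component of $R$ is built by gluing pairs of pants along curves of $X\backslash Y$, and a connected dual graph with $k$ edges has at most $k+1$ vertices, so each component involves only finitely many pants; similarly there are at most $|X\backslash Y|$ components of $R$, giving finite total complexity, even though the boundary of $R$ in $S$ may contain infinitely many curves from $Z$.

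With $R$ finite type, the restrictions $X\cap R$ and $Y\cap R$ are two pants decompositions of the same finite-type surface, so by the classical Hatcher--Thurston theorem they are joined by a finite sequence of elementary moves inside $R$. Each such move occurs in the complexity-one subsurface formed by the two pants adjacent to the curve being replaced, and this subsurface lies entirely inside $R\subset S$, so the move lifts verbatim to an elementary move of $\mathcal{P}(S)$ that fixes $Z$ and the rest of $X$. Concatenation yields a finite path from $X$ to $Y$ in $\mathcal{P}(S)$. The main obstacle is the bookkeeping showing $R$ is finite type and that the two definitions of ``non-trivial component'' agree; once that is in place, the conclusion follows directly from the finite-type case.
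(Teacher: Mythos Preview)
Your proof is correct and follows essentially the same strategy as the paper: localize the symmetric difference to a finite-type subsurface and invoke connectivity of the finite-type pants graph (Hatcher--Thurston). The paper's version is much terser---it simply asserts that $|Y\setminus X|=|X\setminus Y|$ is finite, that the curves in $(X\setminus Y)\cup(Y\setminus X)$ lie in some finite-type subsurface of $S$, and that finite-type pants graphs are connected---whereas you explicitly construct this subsurface as $R$ and verify its properties.

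One minor remark: your worry that ``the boundary of $R$ in $S$ may contain infinitely many curves from $Z$'' is unfounded. Since each component of $R$ is assembled from finitely many pairs of pants (by your own dual-graph count) and each pair of pants has exactly three boundary circles, $\partial R$ is automatically a finite collection of curves. This overcaution does not create a gap; it just means your proof is slightly longer than it needs to be.
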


\begin{proof}
The "only if" direction follows from the fact that an elementary move can only replace one curve at a time.  Conversely, if $|X\backslash Y|$ is finite, then $|Y\backslash X|=|X\backslash Y|$ is also finite.  Hence the union of all curves in $(X\backslash Y)\cup (Y\backslash X)$ is contained in some finite-type subsurface of $S$.  The "if" direction now follows from the fact that finite-type pants graphs are connected.
\end{proof}

\begin{lemma}
Let $S$ be a surface of infinite type and let the support of $f\in \Modpm(S)$ have finite type.  Then $f$ induces automorphisms on every connected component of $\mathcal{P}(S)$. 
\end{lemma}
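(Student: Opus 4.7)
The plan is to reduce the claim to Lemma 3.2. Since $\Modpm(S)$ acts on $\mathcal{P}(S)$ by graph automorphisms, it will suffice to check that $f$ preserves each connected component of $\mathcal{P}(S)$ as a set; the restriction of $f$ to any component is then automatically a graph automorphism of that component. By Lemma 3.2, this in turn reduces to showing that for every pants decomposition $X$, the sets $X$ and $f(X)$ differ in only finitely many curves.

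To this end, I would first pick a homeomorphism $\phi$ representing $f$ together with a finite-type subsurface $\Sigma \subset S$, with compact closure, such that $\phi$ is the identity on $S \setminus \Sigma$; the existence of such a $\Sigma$ is exactly what ``$f$ has support of finite type'' means in this context. The key observation to exploit is that any pants decomposition $X$ is locally finite: its complement is a disjoint union of (topological) pairs of pants, any compact subset of $S$ meets only finitely many of these pants, and so any compact set meets only finitely many curves of $X$. Applying this to $\overline{\Sigma}$, only finitely many curves of $X$ intersect $\Sigma$, while every other curve of $X$ is disjoint from $\Sigma$ and hence fixed by $\phi$, in particular fixed up to isotopy by $f$.

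Combining these observations, $|X \setminus f(X)|$ is finite, and Lemma 3.2 then yields that $f(X)$ lies in the same connected component as $X$, which completes the argument. I do not expect a serious obstacle. The only mildly delicate point is the justification that the finite-type support can be realized by a subsurface with compact closure (as opposed to, say, a finite-type subsurface whose ends accumulate in $S$), but this is a standard consequence of the usual working definition of support for mapping classes of infinite-type surfaces, and can if necessary be made rigorous by choosing $\Sigma$ inside some term $S_n$ of a principal exhaustion containing the support of $\phi$.
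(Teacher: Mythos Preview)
Your proposal is correct and follows essentially the same approach as the paper: both argue that a finite-type-supported mapping class fixes all but finitely many curves of any pants decomposition, and then invoke Lemma~3.2 to conclude that $X$ and $f(X)$ lie in the same component. Your version is simply more explicit, spelling out the local-finiteness reason why only finitely many curves of $X$ can meet the support, whereas the paper states this conclusion in a single sentence.
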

\begin{proof}
If $f$ has finite type support and $X$ is a pants decomposition of $S$, then $f$ fixes all but finitely-many curves of $X$ up to isotopy.  Hence, $f(X)$ and $X$ are in the same connected-component of $\mathcal{P}(S)$.
\end{proof}

\begin{lemma}
Let $\Gamma$ be a connected-component of $\mathcal{P}(S)$ and let $f\in \Modpm(S)$ be a mapping class such that the restriction of $f$ to $\Gamma$ is the identity.  Then $f$ is isotopic to the identity.
\end{lemma}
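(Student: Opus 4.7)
The plan is to reduce to the finite-type case via Margalit's theorem and then invoke the infinite-type Alexander Method (Theorem~2.2), which asserts that $f$ is isotopic to the identity as soon as it fixes every isotopy class of simple closed curves on $S$.  Throughout, fix a pants decomposition $X\in\Gamma$; by hypothesis, $f(X)=X$ as a set of isotopy classes.

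The first substantial step is to show that $f$ fixes each curve $\alpha\in X$ individually, not merely that $f$ permutes $X$.  Pick any elementary move replacing $\alpha$ by some $\alpha'$; the resulting pants decomposition $Y=(X\setminus\{\alpha\})\cup\{\alpha'\}$ lies in $\Gamma$ by Lemma~3.2, so $f$ fixes both $X$ and $Y$ as sets.  A short bijection argument applied to $f$ and $f^{-1}$ shows $f(X\cap Y)=X\cap Y$, hence $f$ fixes the singleton $\{\alpha\}=X\setminus(X\cap Y)$ setwise, giving $f(\alpha)=\alpha$.

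Now let $\gamma$ be an arbitrary simple closed curve on $S$.  Since $\gamma$ is compact and the curves of $X$ are disjoint, $\gamma$ meets only finitely many curves of $X$, so it lies in some finite union of pairs of pants of $S\setminus X$.  Enlarging this union if necessary, I would pick a connected finite-type subsurface $\Sigma\subset S$ of complexity at least $2$ containing $\gamma$ with $\partial\Sigma\subset X$.  Because $f$ fixes each curve of $X$, it preserves every pair of pants of $S\setminus X$ up to isotopy, and hence preserves $\Sigma$ up to isotopy, restricting to an element $f|_\Sigma\in\Modpm(\Sigma)$.  The subgraph $\Gamma_\Sigma\subset\Gamma$ consisting of pants decompositions of $S$ that agree with $X$ outside $\Sigma$ is canonically graph-isomorphic to $\mathcal{P}(\Sigma)$ via $Y\mapsto Y\cap\Sigma$; the induced action of $f|_\Sigma$ on $\mathcal{P}(\Sigma)$ is precisely the restriction of $f$ to $\Gamma_\Sigma$, which is trivial by hypothesis.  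Margalit's theorem (Theorem~2.5) then forces $f|_\Sigma=\mathrm{id}$ in $\Modpm(\Sigma)$, so in particular $f(\gamma)=\gamma$.

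The main obstacle is the first step: verifying that $f$ fixes each curve of $X$ individually and not merely setwise.  Without this one cannot coherently restrict $f$ to $\Sigma$ and match its action with that of an element of $\Modpm(\Sigma)$ on $\mathcal{P}(\Sigma)$.  Fortunately the elementary-move trick above dispatches the obstacle cheaply, and the remaining reduction to Margalit plus the Alexander Method is then essentially bookkeeping.
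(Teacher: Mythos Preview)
Your proof is correct, but it takes a considerably longer route than the paper's. The paper observes directly that for \emph{any} simple closed curve $a$ and any $b\neq a$, there is a pants decomposition $X\in\Gamma$ with $a\in X$ and $b\notin X$ (obtained from a fixed element of $\Gamma$ by finitely many changes); since $f(X)=X$ setwise, $f(a)\in X$, so $f(a)\neq b$, and as $b$ was arbitrary this gives $f(a)=a$. One then invokes the Alexander Method. No appeal to Margalit's theorem is made.

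Your argument reaches the same conclusion via a detour through Margalit. In fact, your own Step~2 already contains everything needed: the elementary-move trick you use to show $f(\alpha)=\alpha$ for $\alpha\in X$ works verbatim for any curve $\gamma$, once you note that $\gamma$ lies in \emph{some} pants decomposition $X'\in\Gamma$ (a finite modification of $X$). So Step~3 is unnecessary. The Margalit route does work, but you should be aware that the claim ``$f$ preserves every pair of pants of $S\setminus X$'' (and hence $\Sigma$) is not automatic from $f$ fixing the curves of $X$; what actually saves you is that if $f(\Sigma)\neq\Sigma$ then $f(\Sigma)$ would be another component of $S\setminus\partial\Sigma$ with exactly the same boundary curves, forcing $S=\Sigma\cup f(\Sigma)$ to be of finite type. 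This deserves a sentence rather than being folded into ``hence preserves $\Sigma$.''
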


\begin{proof}
Let $a$ and $b$ be distict curves in $S$.  Then there is some pants decomposition $X\in \Gamma$ such that $a\in X$ but $b\notin X$.  By construction, $f(X)=X$, hence $f(a)\neq b$.
Since the only assumption we needed to make about $b$ was that $b\neq a$, we can conclude that $f(a)=a$.  Hence, by the Alexander method, $f\cong \text{Id}_S$
\end{proof}

\begin{lemma}
Let $\Gamma$ be a connected-component of $\mathcal{P}(S)$ and let $f,g\in \Modpm(S)$ be mapping classes such that $f_{|_{\Gamma}}=g_{|_{\Gamma}}$.  Then $f$ and $g$ are isotopic.
\end{lemma}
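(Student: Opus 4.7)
The plan is to reduce this statement to the previous lemma (Lemma 3.4) by considering the composition $h = f^{-1} g$. The idea is that if two mapping classes agree on a component, then their ``difference'' acts as the identity on that component, and then the previous lemma applies.

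More concretely, I would first verify that $g(\Gamma)$ and $f(\Gamma)$ are the same connected component of $\mathcal{P}(S)$. This is immediate: since $f_{|\Gamma} = g_{|\Gamma}$, for every vertex $X \in \Gamma$ we have $f(X) = g(X)$, so $f(\Gamma) = g(\Gamma)$ as sets of vertices (and as connected components, since $f$ and $g$ act by graph isomorphisms). Hence the composition $h = f^{-1} g$ satisfies $h(\Gamma) = f^{-1}(g(\Gamma)) = f^{-1}(f(\Gamma)) = \Gamma$, and moreover for every $X \in \Gamma$,
\[
h(X) = f^{-1}(g(X)) = f^{-1}(f(X)) = X.
\]
Thus $h$ restricts to the identity on $\Gamma$.

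Now I apply Lemma 3.4 to $h$: since $h \in \Modpm(S)$ fixes every vertex of the connected component $\Gamma$, $h$ is isotopic to the identity on $S$. Therefore $f^{-1} g \simeq \mathrm{Id}_S$, which gives $f \simeq g$ as required.

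There is no real obstacle here, since the main work has already been absorbed into Lemma 3.4 (which in turn rested on the Alexander Method for infinite-type surfaces). The only subtlety to be careful about is that $f$ and $g$ might a priori send $\Gamma$ to different components, but the hypothesis that they agree pointwise on $\Gamma$ immediately rules this out, so the composition $f^{-1}g$ is legitimately a self-map of $\Gamma$ fixing every vertex.
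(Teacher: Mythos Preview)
Your proof is correct and follows exactly the same approach as the paper: reduce to the previous lemma by considering $h=f^{-1}\circ g$, observe that $h$ restricts to the identity on $\Gamma$, and conclude via Lemma~3.4 that $h$ is isotopic to the identity. The paper's version is slightly terser (it does not spell out the verification that $f(\Gamma)=g(\Gamma)$), but the argument is identical.
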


\begin{proof}
Since the restrictions of $f$ and $g$ to $\Gamma$ are equal, the restriction of $f^{-1}\circ g$ to $\Gamma$ is the identity.  Hence $f^{-1}\circ g$ is isotopic to the identity by the previous lemma.
\end{proof}
Let $\Modpmfin(S)$ denote the group of mapping classes which contain a homeomorphism whose support is a finite-type subsurface.  
\begin{lemma}
The automorphism group $\Aut(\mathcal{P})$ contains a subgroup isomorphic to the direct product
\[
\prod_{\Gamma\in \Pi_0(\mathcal{P}(S))} \Modpmfin(S)
\]
\end{lemma}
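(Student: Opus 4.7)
The strategy is to define an explicit homomorphism
\[
\Phi : \prod_{\Gamma \in \Pi_0(\mathcal{P}(S))} \Modpmfin(S) \longrightarrow \Aut(\mathcal{P}(S))
\]
componentwise, and then verify it is well-defined and injective using the three preceding lemmas. Concretely, given a tuple $(f_\Gamma)_\Gamma$, define $\Phi((f_\Gamma))$ to be the map on vertices of $\mathcal{P}(S)$ whose restriction to each component $\Gamma$ equals the restriction of $f_\Gamma$ to $\Gamma$. Because $\mathcal{P}(S)$ is the disjoint union of its components as a graph, there is no compatibility to check across different $\Gamma$'s: any family of graph automorphisms of the individual components assembles into a graph automorphism of the whole graph.

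First I would verify that each coordinate action makes sense. By Lemma 3.3, any $f_\Gamma \in \Modpmfin(S)$ preserves every connected component of $\mathcal{P}(S)$, so in particular it sends vertices of $\Gamma$ to vertices of $\Gamma$, and since $f_\Gamma$ acts on $\mathcal{P}(S)$ by graph automorphisms, its restriction to $\Gamma$ is a graph automorphism of $\Gamma$. Patching these restrictions together for varying $\Gamma$ yields a vertex bijection of $\mathcal{P}(S)$ that preserves edges (which all lie inside single components), hence an element of $\Aut(\mathcal{P}(S))$. The homomorphism property is immediate: both $\Phi((f_\Gamma)(g_\Gamma)) = \Phi((f_\Gamma g_\Gamma))$ and $\Phi((f_\Gamma)) \circ \Phi((g_\Gamma))$ agree on each $\Gamma$ by construction, so they agree everywhere.

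For injectivity, suppose $\Phi((f_\Gamma)) = \mathrm{Id}$. Then for every component $\Gamma$, the restriction $f_\Gamma|_\Gamma$ is the identity, so by Lemma 3.4 each $f_\Gamma$ is isotopic to the identity on $S$, i.e.\ represents the trivial element in $\Modpmfin(S)$. Hence $(f_\Gamma) = e$ in the product, proving $\Phi$ is injective. Its image is the desired subgroup.

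The only potentially delicate point is ensuring that a family of ``local'' automorphisms genuinely extends to a single automorphism of $\mathcal{P}(S)$, but this is automatic from the observation that $\Pi_0(\mathcal{P}(S))$ partitions the vertices and edges of $\mathcal{P}(S)$. So beyond invoking the lemmas established above, the proof is essentially a bookkeeping exercise; I do not anticipate a genuine obstacle.
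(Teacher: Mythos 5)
Your proof is correct and follows essentially the same route as the paper's: use Lemma 3.3 to see that each coordinate of the tuple preserves every connected component (so the componentwise assembly is a graph automorphism of $\mathcal{P}(S)$), and use Lemma 3.4 for injectivity. The paper phrases this more compactly by first noting $\prod_{\Gamma}\Aut(\Gamma)\leq\Aut(\mathcal{P}(S))$ and then embedding $\Modpmfin(S)$ into each $\Aut(\Gamma)$, but the content is the same.
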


\begin{proof}
$\mathcal{P}$ is a disconnected graph, and hence its automorphism group contains a copy of
\[
\prod_{\Gamma\in \Pi_0(\mathcal{P}(S))} \Aut(\Gamma)
\]
The finite-type-supported mapping class group acts faithfully on each connected component of $S$, and hence $\Aut(\Gamma)$ contains a subgroup isomorphic to $\Modpmfin(S)$.  The result follows.
\end{proof}

We are now ready to prove Theorem 3.1.

\begin{proof}
Choose some component $\Gamma\in \Pi_0(\mathcal{P}(S))$ and some nontrivial mapping class $f\in \Modpmfin(S)$.  Then $f$ induces a nontrivial automorphism on $\Gamma$.  Let $\overline{f}$ be the automorphism of $\mathcal{P}(S)$ given by
\[
\overline{f}(X)=
\begin{cases}
f(X), & X\in \Gamma\\
X, & X\notin \Gamma
\end{cases}
\]
Then $\overline{f}$ gives an automorphism on each component of $\mathcal{P}(S)$ and hence is an automorphism of $\mathcal{P}(S)$.  On the other hand, $\overline{f}$ acts trivially on all but one component, and hence by Lemma 3.4 cannot be induced by a nontrivial mapping class.
\end{proof}

\subsection{}

The main goal of this section is to prove a converse to Lemma 3.3.  Namely, we will show

\begin{theorem}
Let $f\in \Modpm(S)$.  Then $f$ induces automorphisms on every connected component of $\mathcal{P}(S)$ if and only if $f$ is finite-type-supported.
\end{theorem}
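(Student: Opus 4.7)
The only nontrivial implication is that $f$ preserving every component of $\mathcal{P}(S)$ forces $f \in \Modpmfin(S)$, and I will prove its contrapositive. Assuming $f \in \Modpm(S) \setminus \Modpmfin(S)$, the plan is to construct a pants decomposition $X$ of $S$ for which $|f(X) \setminus X|$ is infinite; by Lemma 3.2 applied to the pair $(f(X), X)$, this forces $X$ and $f(X)$ into different connected components of $\mathcal{P}(S)$, contradicting the hypothesis.

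Fix a principal exhaustion $\{S_n\}$ of $S$. First I strengthen the non-triviality hypothesis: for every $n$, there are infinitely many isotopy classes of simple closed curves in $S \setminus S_n$ moved by $f$. Indeed, if only finitely many were moved, absorbing them into a larger finite-type subsurface $T$ would give a subsurface whose complementary curves are all fixed by $f$. Then the Alexander Method (Theorems 2.1 and 2.2), applied separately to each component of $\overline{S \setminus T}$ — each of which has either infinite type or, by condition~(3) of the principal exhaustion, complexity at least $6$ — would make $f|_{\overline{S \setminus T}}$ isotopic to the identity and hence $f \in \Modpmfin(S)$.

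Using this, I recursively build pairwise disjoint curves $\gamma_i$ with $\gamma_i \subset S \setminus S_{n_{i-1}}$ for a strictly increasing sequence $n_i$ with $\gamma_i \subset S_{n_i}$, such that $f(\gamma_i) \neq \gamma_i$ and such that the isotopy classes $\gamma_i$ and $f(\gamma_i)$ are distinct from all of $\gamma_j$ and $f(\gamma_j)$ for $j < i$. At each stage only finitely many isotopy classes need be avoided, while infinitely many moved curves in $S \setminus S_{n_{i-1}}$ remain available, so the recursion succeeds. Setting $C = \{\gamma_i\}$ and $D = \{f(\gamma_i)\}$, these multicurves are disjoint as sets of isotopy classes.

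Finally, extend $C$ to a pants decomposition $X$ of $S$ with $X \cap D = \emptyset$. If $f(\gamma_i)$ has nontrivial geometric intersection with some $\gamma_j \in C$, then $f(\gamma_i)$ cannot appear in any pants decomposition containing $C$ and no further work is needed. Otherwise, $f(\gamma_i)$ lies in the interior of some connected component $R$ of $S \setminus C$, and I choose a pants decomposition of $R$ avoiding all the (at most countably many) such forbidden curves in $R$, built inductively along a finite-type exhaustion of $R$ using the flexibility of elementary moves to dodge the finitely many forbidden curves encountered at each finite stage. The resulting $X$ then contains every $\gamma_i$ but none of the $f(\gamma_i)$, giving $|f(X) \setminus X| = \infty$ and completing the contradiction. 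The main technical obstacle is this last step — arranging the pants decomposition inside the potentially infinite-type components of $S \setminus C$ to avoid a countable set of forbidden curves — but since only finitely many forbidden curves appear at any finite stage of the exhaustion of such a component, the problem reduces to the familiar flexibility of pants decompositions of a finite-type surface.
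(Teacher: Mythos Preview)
Your argument is correct, but it follows a genuinely different path from the paper's. The paper first establishes a general lemma (Lemma~3.10): if a pants decomposition $X$ has infinitely many curves moved by $f$, then after replacing curves of $X$ on a carefully chosen infinite family of pairwise disjoint complexity-one subsurfaces (Lemmas~3.8 and~3.9) one obtains a pants decomposition $Y$ with $f(Y)\setminus Y$ infinite. It then finishes by choosing finitely many pants decompositions $P_1,\ldots,P_n$ that fill $S$: since $f$ must fix cofinitely many curves in each $P_j$, a short Dehn-twist argument on the pairs of pants of $P_1$ forces $f$ to be finite-type-supported.

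Your route bypasses both the complexity-one modification trick and the filling-set argument. Instead you invoke the Alexander Method directly to show that a non-finite-type-supported $f$ moves infinitely many curves outside every $S_n$, then build a single locally finite multicurve $C$ disjoint (as a set of isotopy classes) from $f(C)$, and finally extend $C$ to a pants decomposition avoiding $f(C)$. The trade-off is that your last step---producing a pants decomposition of each component of $S\setminus C$ that avoids a prescribed locally finite family of forbidden curves---is where the real work hides; your sketch (exhaust by finite-type pieces whose boundaries dodge the finitely many forbidden curves visible at that stage, then choose pants curves in each new piece avoiding the same finite list) is correct but would benefit from one or two more sentences, particularly the observation that local finiteness of $\{f(\gamma_i)\}$ follows from compactness of $f^{-1}(S_k)$. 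The paper pays the analogous price up front in Lemma~3.9, where the replacement curves $\beta_i$ are built one at a time with explicit constraints; because it works inside a fixed pants decomposition rather than building one from scratch, that bookkeeping is somewhat cleaner and yields a reusable intermediate statement. Your Alexander-Method step also needs a little more care (choosing $T=S_m$ with $\partial S_m$ disjoint from the finitely many moved curves, and checking that $f$ cannot permute the complementary components since it fixes curves in each), but the idea is sound.
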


We will need several lemmas to prove this theorem.

\begin{lemma}
Let $S$ be an infinite-type surface, $X$ be a pants decomposition of $S$, and $C$ be an infinite subset of the curves of $X$.  Then there exists an infinite subset $A=\{\alpha_1,\ldots,\}\subseteq C$ such that all curves in $A$ lie on pairwise-disjoint complexity 1 subsurfaces.
\end{lemma}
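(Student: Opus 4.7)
The plan is to produce $A$ greedily, after attaching to each curve of $X$ a canonical complexity-$1$ neighborhood and controlling how many other curves of $X$ can interfere with it.

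First, I would associate to each $\alpha \in X$ a subsurface $N(\alpha) \subseteq S$ as follows. Let $\mathcal{P}_\alpha$ be the set of pairs of pants in the closure of $S \setminus X$ that have $\alpha$ on their boundary. There are two cases: either $\alpha$ has its two sides in the same pair of pants $P \in \mathcal{P}_\alpha$, in which case $N(\alpha) := P$ is a copy of $S_{1,1}$; or the two sides of $\alpha$ lie in distinct pants $P_1, P_2 \in \mathcal{P}_\alpha$, in which case $N(\alpha) := P_1 \cup P_2$ is a copy of $S_{0,4}$ (this can be seen by an Euler characteristic count, since $\chi(P_1 \cup P_2) = -2$ and the resulting surface has four boundary components). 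In either case $N(\alpha)$ is a complexity-$1$ subsurface of $S$ containing $\alpha$ in its interior.

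Second, I would build an auxiliary graph $G$ with vertex set $X$, placing an edge $\alpha\beta$ whenever $N(\alpha) \cap N(\beta) \neq \emptyset$. Since pants in $S\setminus X$ have pairwise-disjoint interiors, $N(\alpha) \cap N(\beta) \neq \emptyset$ if and only if some pair of pants in $S\setminus X$ has both $\alpha$ and $\beta$ on its boundary. Because $\alpha$ lies on the boundary of at most two distinct pants and each such pair of pants has three boundary circles, the degree of each vertex in $G$ is bounded above by $4$.

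Third, I would extract $A$ by a straightforward greedy construction. Set $C_0 := C$, pick $\alpha_1 \in C_0$ arbitrarily, and put $C_1 := C_0 \setminus \bigl( \{\alpha_1\} \cup N_G(\alpha_1) \bigr)$. Inductively pick $\alpha_{n+1} \in C_n$ and let $C_{n+1} := C_n \setminus \bigl( \{\alpha_{n+1}\} \cup N_G(\alpha_{n+1}) \bigr)$. At each step at most $5$ elements are removed from an infinite set, so $C_n$ remains infinite and the recursion runs forever. The resulting sequence $A := \{\alpha_1, \alpha_2, \ldots\} \subseteq C$ is, by construction, an independent set in $G$, and hence the subsurfaces $N(\alpha_n)$ are pairwise disjoint complexity-$1$ subsurfaces as required.

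The argument is essentially a bounded-degree independent-set extraction, so I do not anticipate any genuine obstacle. The only step requiring modest care is verifying that $N(\alpha)$ is in every case a complexity-$1$ subsurface (including the degenerate cases of a pair of pants self-glued along a curve or of pants with a boundary component on $\partial S$) and that the degree bound on $G$ is uniform.
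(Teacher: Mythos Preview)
Your proposal is correct and follows essentially the same approach as the paper: both arguments observe that each curve of $X$ shares a pair of pants with at most four other curves of $X$, and then extract $A$ from $C$ by a greedy inductive construction. The only difference is cosmetic---you package the bounded-degree observation as an auxiliary graph $G$ and phrase the greedy step as independent-set extraction, while the paper does the induction directly---but the content is identical.
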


\begin{proof}
We will construct $A$ inductively.  We can choose any curve in $C$ to be $\alpha_1$.  Now suppose we have already chosen $\alpha_1,\ldots, \alpha_{n-1}\in C$ which all lie on disjoint complexity 1 subsurfaces.  Each $\alpha_i$ shares a pair of pants with at most four curves in $C\backslash \{\alpha_1,\ldots, \alpha_{n-1}\}$.  Hence, all but finitely many curves in $C$ lie on disjoint complexity 1 subsurfaces from each $\alpha_i$ for $1\leq i\leq n-1$, and hence there are infintely many choices in $C$ for $\alpha_n$.
\end{proof}

\begin{lemma}
Let $X$ be a pants decomposition of $S$ and let $A=\{\alpha_1,\ldots,\}\subseteq X$ be an infinite family of curves in $X$ which all lie on mutually disjoint complexity 1 subsurfaces.  Let $f\in \Modpm(S)$ be a mapping class such that $f(\alpha_i)\neq \alpha_i$ for all $i\in \mathbb{N}$.  Then there exists a family of curves $B=\{\beta_1,\ldots,\ldots\}$ in $S$ such that
\begin{enumerate}
\item
The curve $\beta_i$ is disjoint from $X\backslash\{\alpha_i\}$.
\item
The curves $\alpha_i$ and $\beta_i$ are distict.
\item
For all $i\in \mathbb{N}$, $f(\beta_i)\notin B$

\item
For all $i\in \mathbb{N}$, $f^{-1}(\beta_i)\notin B$.
\end{enumerate}
\end{lemma}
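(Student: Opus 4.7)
I would construct $B$ inductively, at each step choosing $\beta_n$ from the complexity-one subsurface $\Sigma_n\subset S$ that contains $\alpha_n$ and is bounded by curves of $X\setminus\{\alpha_n\}$. Since the $\Sigma_i$ are pairwise disjoint by hypothesis, placing $\beta_i\in\Sigma_i$ immediately secures condition~(1), and taking $\beta_i\neq\alpha_i$ handles condition~(2). For conditions~(3) and~(4), note that they collapse to the single constraint $f(B)\cap B=\emptyset$: if $f(\beta_i)\in B$, say $f(\beta_i)=\beta_j$, then equivalently $f^{-1}(\beta_j)=\beta_i\in B$, so~(3) and~(4) are the same requirement.

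Suppose $\beta_1,\ldots,\beta_{n-1}$ have been chosen with $f(\beta_j)\neq\beta_k$ for all $j,k\leq n-1$. To extend, I would pick $\beta_n$ from $\Sigma_n\setminus F_n$, where
\[
F_n=\{\alpha_n\}\cup\{f(\beta_j),\,f^{-1}(\beta_j):j<n\}\cup\{\gamma\subset\Sigma_n:f(\gamma)=\gamma\}.
\]
Avoiding the first block gives~(2); avoiding the second rules out any new conflict with previously chosen $\beta_j$; avoiding the third guarantees $f(\beta_n)\neq\beta_n$, which is the ``$i=j$'' instance of the single constraint. The first two pieces contain at most $2n-1$ curves. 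Granting that the third piece is also finite, and recalling that $\Sigma_n$, being homeomorphic to $S_{1,1}$ or $S_{0,4}$, contains infinitely many isotopy classes of essential simple closed curves, $\beta_n$ can always be chosen and the induction proceeds.

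The main obstacle is bounding the number of isotopy classes in $\Sigma_n$ fixed by $f$. When $f(\Sigma_n)=\Sigma_n$ as an isotopy class of subsurface, $f$ induces a mapping class of $\Sigma_n$ which is nontrivial (otherwise $f(\alpha_n)=\alpha_n$, contradicting the hypothesis); a nontrivial mapping class of $S_{1,1}$ or $S_{0,4}$ fixes at most two vertices of the Farey graph of essential simple closed curves. When $f(\Sigma_n)\neq\Sigma_n$ the previous argument breaks, and a cleaner workaround is to restrict the search for $\beta_n$ to the infinite family $\{T_{\alpha_n}^k(\gamma):k\in\mathbb{Z}\}$ for a fixed base curve $\gamma\subset\Sigma_n$ with $i(\gamma,\alpha_n)>0$: all of these lie in $\Sigma_n$ and are disjoint from $X\setminus\{\alpha_n\}$, they are pairwise non-isotopic, and the equation $f(T_{\alpha_n}^k\gamma)=T_{\alpha_n}^k\gamma$ forces $T_{\alpha_n}^{-k}fT_{\alpha_n}^k$ to fix $\gamma$, which holds for only finitely many $k$ because $fT_{\alpha_n}f^{-1}=T_{f(\alpha_n)}\neq T_{\alpha_n}$. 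Either way, enough candidates remain in $\Sigma_n$ to select $\beta_n$ avoiding $F_n$, and the inductive construction of $B$ goes through.
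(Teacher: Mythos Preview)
Your inductive construction is the same as the paper's: pick $\beta_n$ inside the complexity-one subsurface $\Sigma_n$, avoiding the finitely many forbidden curves, including those fixed by $f$. The paper dispatches the finiteness of $\{\gamma\subset\Sigma_n : f(\gamma)=\gamma\}$ in a single line (``$f$ is not isotopic to the identity on $\Sigma_n$, hence infinitely many curves there are moved''), whereas you argue it more carefully via a case split on whether $f(\Sigma_n)=\Sigma_n$.

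One small point: in your case $f(\Sigma_n)\neq\Sigma_n$, the clause ``because $fT_{\alpha_n}f^{-1}\neq T_{\alpha_n}$'' is not by itself a proof that $f(T_{\alpha_n}^k\gamma)=T_{\alpha_n}^k\gamma$ fails for all but finitely many $k$. The direct argument is simpler and makes the twist family unnecessary: any two distinct essential simple closed curves in a complexity-one surface fill it, so if $f$ fixed two curves in $\Sigma_n$ it would preserve $\Sigma_n$ up to isotopy, contrary to the case hypothesis. Hence in this case $f$ fixes at most one curve of $\Sigma_n$, and $F_n$ is finite.
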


\begin{proof}
We can construct $B$ inductively.  As a base case, $B_0=\emptyset$ vacuously satisfies conditions (1)-(4).  As our induction hypothesis, suppose that we have chosen a sequence of curves $B_{n-1}=\{\beta_1,\ldots, \beta_{n-1}\}$ such that for each $1\leq i\leq n-1$, $\beta_i$ satisfies the conditions
\begin{enumerate}
\item
The curve $\beta_i$ is disjoint from $X\backslash\{\alpha_i\}$.
\item
The curves $\alpha_i$ and $\beta_i$ are distict.
\item
For all $1\leq i\leq n-1$, $f(\beta_i)\notin B_{n-1}$

\item
For all $1\leq i\leq n-1$, $f^{-1}(\beta_i)\notin B_{n-1}$.
\end{enumerate}

We want to choose a curve $\beta_n$ such that
\begin{enumerate}[label=(\roman*)]
\item
The curve $\beta_n$ is disjoint from $X\backslash\{\alpha_n\}$.
\item
The curves $\alpha_n$ and $\beta_n$ are distinct

\item
$f(\beta_n)\notin B_{n-1}$

\item
$f^{-1}(\beta_n)\notin B_{n-1}$

\item
$f(\beta_n)\neq \beta_n$
\end{enumerate}
Conditions (i)-(iv) can be easily satisfied: there are infinitely many isotopy classes of curves in the complexity 1 subsurface containing $\alpha_n$, and conditions (ii)-(iv) only require us to avoid a finite number of curves.  Now recall that $f(\alpha_n)\neq \alpha_n$.  Hence, $f$ is not isotopic to the identity on the complexity 1 subsurface containing $\alpha_n$, and hence there are infinitely many curves on this subsurface which are not fixed by $f$.  Thus, we can find a $\beta_n$ satisfying conditions (i)-(v).  
\par
Conditions (iii) and (v) ensure that $f^{-1}(\beta_i)\neq \beta_n$ for $1\leq i\leq n$. Conditions (iv) and (v) ensure that that $f(\beta_i)\neq \beta_n$ for $1\leq i\leq n$.  We have thus shown that $B_n:=B_{n-1}\cup \{\beta_n\}$ satisfies conditions (1)-(4) of the induction hypothesis.  Hence, the desired set of curves $B$ exists by induction.
\end{proof}

\begin{lemma}
Let $f\in \Modpm(S)$ and let $X$ be a pants decomposition of $S$.  Suppose that there are infinitely many curves $c\in X$ such that $f(c)\nsimeq c$  Then there exists a pants decomposition $Y$ such that $Y$ and $f(Y)$ are in different components of $\mathcal{P}(S)$.
\end{lemma}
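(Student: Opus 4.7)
The plan is to construct $Y$ by replacing infinitely many curves of $X$ on pairwise disjoint complexity-1 patches, choosing the replacements so that $f$ moves them off the modified decomposition. Since by hypothesis infinitely many curves of $X$ are not fixed by $f$, I first apply Lemma 3.8 to extract an infinite subfamily $A = \{\alpha_i\}_{i \in \mathbb{N}} \subseteq X$ with $f(\alpha_i) \nsimeq \alpha_i$, lying on pairwise disjoint complexity-1 subsurfaces $N_i$. I then carry out the inductive construction of Lemma 3.9 to produce $B = \{\beta_i\}_{i \in \mathbb{N}}$ with each $\beta_i$ essential in $N_i$, $\beta_i \neq \alpha_i$, $\beta_i$ disjoint from $X \setminus \{\alpha_i\}$, and $f(B) \cap B = \emptyset$, but I impose at each step the additional constraint $\beta_i \notin f(X)$.

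Set $Y := (X \setminus A) \cup B$. Since the $N_i$ are pairwise disjoint and each $\beta_i$ is an essential curve of $N_i$ disjoint from $X \setminus \{\alpha_i\}$, replacing each $\alpha_i$ by $\beta_i$ locally produces a valid pants decomposition of $S$. By Lemma 3.2 it will suffice to show $|Y \setminus f(Y)| = \infty$, and I will do this by proving $B \subseteq Y \setminus f(Y)$. Each $\beta_i$ is in $Y$ by construction. If $\beta_i \in f(Y) = f(X \setminus A) \cup f(B)$, then $\beta_i \in f(B)$ is ruled out by $f(B) \cap B = \emptyset$ (condition (3) of Lemma 3.9), while $\beta_i \in f(X \setminus A) \subseteq f(X)$ is ruled out by the added constraint $\beta_i \notin f(X)$. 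Hence $B \subseteq Y \setminus f(Y)$, which is infinite, and $Y$, $f(Y)$ lie in distinct components by Lemma 3.2.

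The main obstacle is justifying that the extra requirement $\beta_i \notin f(X)$ can be folded into the induction of Lemma 3.9 without breaking it. The key observation is that $f(X) \cap N_i$ is finite: $N_i$ is a complexity-1 finite-type subsurface, so any collection of pairwise disjoint essential simple closed curves in $N_i$ is finite, and in particular only finitely many isotopy classes in $N_i$ belong to the pants decomposition $f(X)$. Since the inductive choice of $\beta_n$ in Lemma 3.9 already proceeds by excluding a finite set of bad isotopy classes from the infinite supply of essential curves in $N_n$, augmenting it with this additional finite restriction preserves the inductive step.
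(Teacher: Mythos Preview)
Your argument is correct, and it is in fact a bit more streamlined than the paper's. The paper first disposes of the case where $X$ and $f(X)$ already lie in different components, and in the remaining case exploits the extra information $f(A)\subseteq X$ (after passing to a suitable subfamily). It then applies Lemma~3.9 \emph{verbatim} and finishes by a second case split, showing $f(\beta_i)\notin Y$ according to whether $f(\alpha_i)\in A$ or $f(\alpha_i)\in X\setminus A$.

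You avoid both case splits by strengthening the inductive construction of Lemma~3.9 with the single extra clause $\beta_n\notin f(X)$, and then verifying $\beta_i\notin f(Y)$ directly. Your justification that this extra clause only excludes finitely many candidates in $N_n$ is sound: any two non-isotopic essential non-peripheral curves in a complexity-1 subsurface intersect, so the pants decomposition $f(X)$ can contain at most one curve class essential in $N_n$. The trade-off is that the paper can invoke Lemma~3.9 as a black box, whereas you must reopen its proof; conversely, your endgame is cleaner and does not require first arranging $f(A)\subseteq X$. Note also that your conclusion $B\subseteq Y\setminus f(Y)$ and the paper's conclusion $f(B)\subseteq f(Y)\setminus Y$ are symmetric ways of invoking Lemma~3.2, and that for $f(B)\cap B=\emptyset$ condition~(3) of Lemma~3.9 suffices, so condition~(4) is not actually needed in your route.
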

\begin{proof}
If $X$ and $f(X)$ are in different components, then we are done.
\par 
Now suppose that $X$ and $f(X)$ are in the same connected component.  Our goal is to modify $X$ in such a way that we will obtain the desired pants decomposition $Y$.
\par 
Because $X$ and $f(X)$ are in the same component, the set of curves $X\backslash f^{-1}(X)$ is finite.  Hence, there are infinitely many curves $c\in X$ such that $f(c)\neq c$ but $f(c)\in X$.  Thus, we can find an infinite subset $A=\{\alpha_1,\ldots\}\subseteq X$ such that $f(A)\subseteq X$, $f(\alpha_i)\neq (\alpha_i)$ for all $i\geq 1$, and all the curves in $A$ lie on mutually disjoint subsurfaces.  
\par 

Invoking the previous lemma, we obtain a family of curves $B=\{\beta_1,\ldots\}$.  We obtain $Y$ from $X$ by replacing $\alpha_i$ with $\beta_i$ for all $i\geq 1$.  This process still yields a pants decomposition because the $\alpha_i$ all lie on disjoint subsurfaces.    
\par
Now, we want to show that $f(\beta_i)\notin Y$ for all $i\geq 1$.  First, we consider the case where $f(\alpha_i)\notin A$.  Then $f(\alpha_i)\in Y$.  Also, $\alpha_i$ and $\beta_i$ are distinct and intersect, so $f(\alpha_i)$ and $f(\beta_i)$ are also distinct and intersect.  Thus, $f(\beta_i)$ cannot be contained in a pants decomposition with $f(\alpha_i)$, so $f(\beta_i)\notin Y$.

\par 
On the other hand, suppose $f(\alpha_i)\in A$.  Then $f(\alpha_i)=\alpha_j$ for some $j\in \mathbb{N}$, and $\beta_j\in Y$.  Since $\alpha_i$ and $\beta_i$ lie on the same complexity 1 subsurface, so do $\alpha_j$, $\beta_j$, and $f(\beta_i)$.  Recall that $\beta_j$ was chosen so that $f(\beta_i)\neq \beta_j$, hence $f(\beta_i)\notin Y$.  
\par
Thus, we have infinitely many curves that are in $f(Y)$ but not $Y$, so $f(Y)$ and $Y$ are in different components.

\end{proof}

We are now ready to prove Theorem 3.7
\begin{proof}
We have already proven the ``if" direction in Lemma 3.3.  Now, suppose that $f$ induces automorphisms on every connected component of $\mathcal{P}(S)$.  Choose a finite set of pants decompositions $P_1, \ldots, P_n$ which fill $S$.  By the previous lemma, $f$ fixes all but finitely many curves in each pants decomposition.  The mapping class group of a pair of pants is generated by Dehn twists around the boundary components.  Hence, outside of a finite type subsurface, $f$ is isotopic to a product of Dehn twists along curves in $P_1$.  But the $P_i$ fill $S$, hence each curve in $P_1$ intersects with at least one curve in $P_j$ for some $2\leq j\leq n$.  Since $f$ also fixes all but finitely many curves in $P_j$ for all $2\leq j\leq n$, it follows that $f$ is isotopic to the identity on all but finitely many pairs of pants in $P_1$, and hence $f$ is finite-type-supported.
\end{proof}

\subsection{}
The goal of this subsection is to prove the following two theorems.  Let $\phi:S\to S'$ be a $\pi_1$-injective simplicial embedding between surfaces.  Let $\Gamma$ and $\Gamma'$ respectively be connected components of $S$ and $S'$, and let $f:\Gamma\to \Gamma'$ be a simplicial injective embedding.  We say that $\phi$ \emph{induces} $f$ if there exists a multicurve $Q$ of $S'$ such that $f(X)=\phi(X)\cup Q$ for all vertices $X$ of $\Gamma$.
\begin{theorem}
Let $\Gamma$ and $\Gamma'$ be connected components of $\mathcal{P}(S)$ and $\mathcal{P}(S')$, respectively.  let $f: \Gamma\to \Gamma'$ be an isomorphism.  Then there is a homeomorphism $\phi: S\to S'$, unique up to isotopy, which induces $f$.
\end{theorem}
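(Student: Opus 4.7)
The plan is to reduce to Margalit's finite-type theorem (Theorem 2.5) via a principal exhaustion, then glue the resulting finite-type homeomorphisms into a global homeomorphism $\phi: S \to S'$. Fix a vertex $X_0 \in \Gamma$ and set $Y_0 := f(X_0) \in \Gamma'$. Choose a principal exhaustion $\{S_n\}$ of $S$ whose boundary curves all lie in $X_0$ (possible since $X_0$ is a pants decomposition), and define the subgraph
$$\Gamma_n := \{X \in \Gamma : X \setminus S_n = X_0 \setminus S_n\}.$$
By Lemma 3.2, and the observation that every elementary move between elements of $\Gamma_n$ is confined to $S_n$, the graph $\Gamma_n$ is naturally isomorphic to the finite-type pants graph $\mathcal{P}(S_n)$; moreover $\Gamma_n \subseteq \Gamma_{n+1}$ and $\Gamma = \bigcup_n \Gamma_n$.

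The technical heart of the argument is to show that $f(\Gamma_n)$ has a parallel description in $\Gamma'$: there is a finite-type subsurface $T_n' \subset S'$ with $\partial T_n' \subset Y_0$ such that $f(\Gamma_n) = \{Y \in \Gamma' : Y \setminus T_n' = Y_0 \setminus T_n'\}$. I would approach this by giving an intrinsic combinatorial characterization of $\Gamma_n$ inside $\Gamma$ that $f$ must preserve; for instance, $\Gamma_n$ can be described as the maximal connected subgraph of $\Gamma$ containing $X_0$ whose edges are elementary moves supported in a specified finite family of complexity-1 subsurfaces determined combinatorially by the finite set $X_0 \cap S_n$, in analogy with how Margalit detects finite curve data from the pants graph in the finite-type case. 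Once the $T_n'$ are constructed, passing to a subsequence arranges $\{T_n'\}$ into a principal exhaustion of $S'$. Then $f$ restricts to an isomorphism $\mathcal{P}(S_n) \to \mathcal{P}(T_n')$, and for $n$ large enough that $\kappa(S_n) \geq 2$, Theorem 2.5 produces a homeomorphism $\phi_n: S_n \to T_n'$, unique up to isotopy, inducing $f|_{\Gamma_n}$. Since the inclusion $\Gamma_n \subseteq \Gamma_{n+1}$ corresponds under $f$ to $\Gamma_{T_n'}' \subseteq \Gamma_{T_{n+1}'}'$, uniqueness in Margalit's theorem forces $\phi_{n+1}|_{S_n}$ to be isotopic to $\phi_n$, and a standard exhaustion argument (compatibly adjusting isotopy representatives) combines the $\phi_n$ into a global homeomorphism $\phi: S \to S'$ inducing $f$.

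For the uniqueness of $\phi$ up to isotopy, suppose $\phi_1, \phi_2: S \to S'$ both induce $f$. Then $\phi_2^{-1} \circ \phi_1$ preserves every pants decomposition of $\Gamma$ as a multicurve, and using that different pants decompositions in $\Gamma$ share curves in enough ways to pin down individual isotopy classes, it fixes every simple closed curve appearing in some $X \in \Gamma$. As this family is rich enough to distinguish isotopy classes of simple closed curves in $S$, the Alexander method (Theorem 2.2) gives $\phi_2^{-1} \circ \phi_1 \simeq \mathrm{id}_S$. The main obstacle throughout is the step locating $f(\Gamma_n)$ in a finite-type subsurface of $S'$: the infinite diameter of $\Gamma$ at every vertex and the absence of any distinguished ``outside'' make the intrinsic characterization of $\Gamma_n$ delicate, and this is where substantial work beyond Margalit's original argument is required.
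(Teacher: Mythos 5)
Your overall strategy — exhaust $S$ by finite-type subsurfaces, reduce to finite-type results, and glue — is the same as the paper's, but the step you correctly identify as ``the technical heart'' is a genuine gap, and the way you propose to fill it would not work as stated. Concretely: you need to know that $f(\Gamma_n)$ is supported on a finite-type subsurface $T_n'\subset S'$, i.e.\ that $f(\Gamma_n)=\mathcal{P}(S')_{Q_n}$ for some multicurve $Q_n$. Your suggested route is to give an ``intrinsic combinatorial characterization of $\Gamma_n$ inside $\Gamma$ that $f$ must preserve,'' but the description you sketch — the maximal connected subgraph through $X_0$ whose elementary moves are supported in a specified family of complexity-1 subsurfaces — is not an invariant of the abstract graph $\Gamma$: the complexity-1 subsurfaces and the set $X_0\cap S_n$ are data about $S$, not data $f$ is given. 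An abstract graph isomorphism $f$ does not a priori carry the information of which edges are ``supported in $S_n$.'' So the characterization is not obviously preserved by $f$, and without that, you cannot locate $T_n'$.

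The paper fills this hole differently. It does not try to characterize $\Gamma_n$ intrinsically; instead it treats $f\circ\iota_n:\mathcal{P}(S_n)\to\Gamma'$ simply as an injective simplicial map from a finite-type pants graph into a (component of a) pants graph, and then invokes the Pants Graph Embedding Theorem (Theorem~3.12/Lemma~3.13, extending Aramayona's Theorem~A to infinite-type targets) together with Lemma~3.15 (the analogue of Aramayona's Theorem~C) to conclude that the image is exactly $\mathcal{P}(S')_{Q_n}$ for a multicurve $Q_n$ of deficiency $\kappa(S_n)$, and that the map is induced by a $\pi_1$-injective embedding $\phi_n:S_n\to S'$. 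This is a statement about \emph{arbitrary} injective simplicial maps out of finite-type pants graphs, so no intrinsic identification of $\Gamma_n$ is needed. The compatibility $Q_{n+1}\subset Q_n$ and the colimit step then proceed essentially as you describe. Your reduction to Margalit's Theorem~2.5 is also slightly off target: Margalit's theorem applies to isomorphisms between full finite-type pants graphs, and you would still owe an argument that $f(\Gamma_n)$ is such a pants graph (which is again Aramayona's Theorem~C, not Margalit). Your uniqueness argument via the Alexander method is fine and matches the paper's Lemma~3.4. In short: correct outline and correct endgame, but the key lemma you defer to ``substantial work'' is precisely where you need Aramayona's embedding theorem (extended to infinite-type targets), and the intrinsic-characterization route you propose instead does not obviously go through.
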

This result will be important in section seven when we determine the automorphism group of the pants space.   

We will need the following theorem, which is an extension of the main result of \cite{Aram2010}.

\begin{theorem}[Pants Graph Embedding Theorem]
Let $S$ and $S'$ be (possibly finite-type) surfaces such that each connected-component has complexity at least 2.  Let $\Gamma$ and $\Gamma'$ respectively be connected components of $\mathcal{P}(S)$ and $\mathcal{P}(S')$.  Let $f: \Gamma\to \Gamma'$ be an injective simplicial map.  Then there exists a $\pi_1$-injective embedding $\phi: S\to S'$ which induces $f$.
\end{theorem}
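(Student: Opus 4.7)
The plan is to reduce to the finite-type Pants Graph Embedding Theorem of Aramayona \cite{Aram2010} via an exhaustion-and-direct-limit argument. Fix a pants decomposition $X \in \Gamma$, set $Y := f(X) \in \Gamma'$, and choose principal exhaustions $S_0 \subset S_1 \subset \cdots$ of $S$ and $S'_0 \subset S'_1 \subset \cdots$ of $S'$ with the compatibility properties $\partial S_n \setminus \partial S \subseteq X$ and $\partial S'_m \setminus \partial S' \subseteq Y$. Such exhaustions can be constructed by taking neighborhoods in the dual adjacency graphs of the pants decompositions $X$ and $Y$, coarsened if necessary to enforce the complexity-$6$ condition. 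For each $n$, let $\Gamma_n \subseteq \Gamma$ be the subgraph of pants decompositions that agree with $X$ outside $S_n$; then $\Gamma_n$ is canonically isomorphic to $\mathcal{P}(S_n)$, and $\Gamma = \bigcup_n \Gamma_n$ by Lemma 3.2. Define $\Gamma'_m \subseteq \Gamma'$ analogously using $Y$ and $S'_m$.

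The heart of the argument, and its main obstacle, is a \emph{localization lemma}: for every $n$ there exists $m = m(n)$ such that $f(\Gamma_n) \subseteq \Gamma'_{m(n)}$. To prove this, I would first establish that $f$ induces a well-defined injective curve map $f_\ast : X \cap S_n^\circ \to Y$. The key observation is that three pairwise adjacent vertices of any pants graph must form a Farey triangle in a single complexity-$1$ subsurface, which follows from a direct case analysis on elementary moves. Applying this to triangles $\{X, Z_1, Z_2\}$ in $\Gamma_n$ where $Z_1, Z_2$ are obtained from $X$ by elementary moves on a common curve $c$, one deduces that $f(Z_1)$ and $f(Z_2)$ both modify a common curve $c' \in Y$ inside a single complexity-$1$ subsurface $P'_{c'} \subseteq S'$; by connectedness of the Farey graph $\mathcal{P}(P_c)$, this curve $c'$ depends only on $c$, giving $f_\ast(c)$. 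A parallel analysis of $4$-cycles in $\Gamma_n$ arising from commuting elementary moves on disjoint curves shows that $f_\ast$ preserves disjointness, so the subsurfaces $P'_{f_\ast(c)}$ for $c \in X \cap S_n^\circ$ are pairwise disjoint and their union lies in some finite-type subsurface $T_n \subseteq S'$. A straightforward induction on graph distance from $X$ in $\Gamma_n$ then shows that $f(Z)$ differs from $Y$ only inside $T_n$ for every $Z \in \Gamma_n$; choosing $m(n)$ with $T_n \subseteq S'_{m(n)}$ completes the localization.

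Given the localization lemma, $f|_{\Gamma_n}$ is an injective simplicial map $\mathcal{P}(S_n) \to \mathcal{P}(S'_{m(n)})$ between pants graphs of finite-type bordered surfaces of complexity at least $2$. Aramayona's theorem \cite{Aram2010} then produces a $\pi_1$-injective embedding $\phi_n : S_n \to S'_{m(n)} \subseteq S'$ inducing $f|_{\Gamma_n}$, unique up to isotopy. Applying this uniqueness clause to $\phi_{n+1}|_{S_n}$ shows the $\phi_n$ are coherent up to isotopy, so after inductively adjusting isotopy representatives we may arrange $\phi_{n+1}|_{S_n} = \phi_n$ on the nose. The direct limit of the $\phi_n$ is then the desired embedding $\phi : S \to S'$; both $\pi_1$-injectivity and the property that $\phi$ induces $f$ are witnessed at each finite stage and persist in the limit.
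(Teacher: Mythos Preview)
Your overall architecture---exhaust $S$ by finite-type pieces compatible with a fixed $X\in\Gamma$, apply Aramayona at each stage, and pass to a direct limit---is the same as the paper's. The substantive difference is in how you handle the infinite-type target. You prove a \emph{localization lemma} (via Farey-triangle and $4$-cycle combinatorics) to trap $f(\Gamma_n)$ inside some $\Gamma'_{m(n)}\cong\mathcal{P}(S'_{m(n)})$, thereby reducing to Aramayona's original finite-to-finite theorem as a black box. The paper instead observes that Aramayona's proof goes through verbatim when the target $S'$ is infinite-type (this is Lemma~3.13 and Lemma~3.15), so no exhaustion of $S'$ is needed: one applies the finite-to-infinite version directly to each $f\circ\iota_n:\mathcal{P}(S_n)\to\Gamma'$, obtaining a multicurve $Q_n\subset S'$ with $f(\mathcal{P}(S_n))=\mathcal{P}(S')_{Q_n}$, and the nesting $Q_{n+1}\subset Q_n$ gives the coherence of the $\phi_n$. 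Your localization lemma is essentially a reproof of the combinatorial core of Aramayona's argument (the curve-map and disjointness analysis), so your route is more self-contained but duplicates work; the paper's route is shorter but relies on the reader accepting that nothing in \cite{Aram2010} uses finiteness of the target. Both are valid; your coherence step via the uniqueness clause in Aramayona is also fine, and matches in spirit the paper's use of the nested $Q_n$.
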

The case where both $S$ and $S'$ are of finite-type is Theorem A in \cite{Aram2010}.  
\par

We start by proving the case of the Pants Graph Embedding Theorem where $S$ has finite-type and $S'$ has infinite type.  

\begin{lemma}
Let $S$ be a compact orientable surface of negative Euler characteristic and such that each connected-component has complexity at least 2.  Let $S'$ be an infinite-type orientable surface, and let $f: \mathcal{P}(S)\to \mathcal{P}(S')$ be an injective simplicial map.  Then there exists a $\pi_1$-injective embedding $\phi: S\to S'$ which induces $f$.
\end{lemma}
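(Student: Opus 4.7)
The plan is to reduce this statement to Aramayona's finite-to-finite theorem (Theorem A of \cite{Aram2010}) by locating the entire image of $f$ inside pants decompositions of a finite-type subsurface of $S'$, modulo a fixed multicurve.

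Fix a base pants decomposition $X_0 \in \mathcal{P}(S)$ and set $Y_0 := f(X_0)$. Since $S$ is finite type, $\mathcal{P}(S)$ is connected, and the image of $f$ lies in a single connected component $\Gamma' \subset \mathcal{P}(S')$. By Lemma 3.2, every $f(X)$ differs from $Y_0$ in only finitely many curves. Following the link analysis from Aramayona's proof, I would extract an injection $\psi_0: X_0 \hookrightarrow Y_0$ as follows: for each $\alpha \in X_0$, the elementary moves on $\alpha$ parameterize a Farey-type subcomplex of $\mathcal{P}(S)$ whose vertices share the curves $X_0 \setminus \{\alpha\}$. The injectivity and simplicial structure of $f$ should force a unique curve $\psi_0(\alpha) \in Y_0$ to be replaced by every such elementary move, and a short combinatorial argument yields that $\psi_0$ is well-defined and injective.

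Set $Q := Y_0 \setminus \psi_0(X_0)$, a multicurve of $S'$. Each non-pants component of $S' \setminus Q$ must contain at least one curve of $\psi_0(X_0)$ in its interior---otherwise it would already be a component of $S' \setminus Y_0$, and hence a pair of pants---so there are at most $\kappa(S)$ such components, each of finite type, and their union $S''$ is a finite-type subsurface of $S'$. I would then show by induction on the graph distance from $X_0$ in $\mathcal{P}(S)$ that $Q \subset f(X)$ and that $f(X) \setminus Q$ is a pants decomposition of $S''$ for every $X$, producing a well-defined injective simplicial map $\psi: \mathcal{P}(S) \to \mathcal{P}(S'')$ satisfying $f(X) = \psi(X) \cup Q$. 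Applying Aramayona's theorem to $\psi$ yields a $\pi_1$-injective embedding $S \hookrightarrow S''$; composing with the inclusion $S'' \hookrightarrow S'$ produces the desired embedding $\phi: S \hookrightarrow S'$ inducing $f$.

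The main obstacle is the inductive propagation step: extending $\psi_0$ consistently across every edge of $\mathcal{P}(S)$. I would repeat the Farey-link analysis at an arbitrary vertex $X$, using the induction hypothesis that $|f(X) \setminus Q| = \kappa(S)$ to force the local injection $\psi_X: X \hookrightarrow f(X) \setminus Q$ to be a bijection. The bookkeeping needed to ensure that $\psi_X$ agrees with $\psi_0$ on shared curves and that the variable curve of any elementary move lands inside $S''$ is the main technical content beyond what appears in the finite-to-finite case.
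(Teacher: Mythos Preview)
Your approach is sound but genuinely different from the paper's. The paper gives no detailed argument at all: it simply observes that once ``deficiency'' is redefined so as to make sense for infinite-type $S'$, Aramayona's finite-to-finite proof goes through verbatim, with $S'$ playing the role of the target surface throughout. In other words, the paper re-runs Aramayona's link analysis directly in the infinite-type target.

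You instead try to cite Aramayona's Theorem~A as a black box by first locating a finite-type subsurface $S'' \subset S'$ containing all the action. This is a reasonable packaging, and your argument that $S''$ is finite type is correct: the non-pants components of $S' \setminus Q$ are cut into pairs of pants by the finitely many curves of $\psi_0(X_0)$, hence each has finite complexity. The trade-off is that your ``inductive propagation step''---showing that $Q$ persists in every $f(X)$ and that the replaced curve always lies in $\psi_0(X_0)$---is exactly the Farey-link analysis that constitutes the heart of Aramayona's proof. So you are not really avoiding the work; you are doing the hard part of Aramayona's argument in order to set up the reduction, and then citing the easier remainder. One small point to watch: to invoke Theorem~A for $\psi:\mathcal{P}(S)\to\mathcal{P}(S'')$ you should verify (or at least remark on) the complexity-$\geq 2$ hypothesis on the components of $S''$; this either follows from the link analysis you have already done or can be bypassed since Aramayona's argument does not genuinely need it on the target side.
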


\begin{definition}\rm
Let $Q$ be a multicurve of $S$.  The \emph{deficiency} of $Q$ is the cardinality of $P\backslash Q$, where $P$ is a pants decomposition containing $Q$.  Note that if $S$ is of finite type, then the deficiency of $Q$ is $\kappa(S)-|Q|$.
\end{definition}
We note that in the case where $S$ has finite type, our definition is equivalent to the one given in \cite{Aram2010}.
\par 
With our altered definition of deficiency, the proof Lemma 3.13 is exactly the same as the case where both surfaces are of finite-type.  Writing out the full proof would simply involve copying Aramayona's paper word for word, except that our surface $S'$ has infinite type.   

\par 
In particular, Aramayona's proof gives us a lemma similar to Theorem C in \cite{Aram2010}.  Let $Q\subset S$ be a multicurve.  We denote by $\mathcal{P}(S)_Q$ the full subgraph of $\mathcal{P}(S)$ spanned by vertices which contain $Q$.  Note that if $Q$ has finite deficiency, then $\mathcal{P}(S)_Q$ is connected by Lemma 3.2.  A connected-component of $S\backslash Q$ is said to be \emph{nontrivial} if it has nonzero complexity.    

\begin{lemma}
Let $S$ be a finite-type surface such that each connected-component has complexity at least 2, and let $S'$ be an infinite-type surface.  Let $\Gamma'$ be a connected component of $\mathcal{P}(S')$.  Let $f: \mathcal{P}(S)\to \Gamma'$ be an injective simplicial map.   Then
\begin{itemize}
\item
There exists an infinite multicurve $Q\subset S'$, of deficiency $\kappa(S)$, such that $f(\mathcal{P}(S))=\mathcal{P}(S')_Q$.  In particular, $\mathcal{P}(S)\cong \mathcal{P}(S')_Q$.

\item 
$S$ and $S'\backslash Q$ have the same number of nontrivial components.  Moreover, if $S_1,\ldots, S_r$ and $S'_1,\ldots, S'_r$ are respectively  nontrivial components of $S$ and $S'\backslash Q$, then, up to reordering of the indices, $f$ induces isomorphisms $f_i:\mathcal{P}(S_i)\to \mathcal{P}(S'_i)$.  In particular $\kappa(S_i)=\kappa(S'_i)$.  
\end{itemize}
\end{lemma}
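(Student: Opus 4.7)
The plan is to mirror Aramayona's proof of Theorem~C in \cite{Aram2010}, with the only substantive change being the use of the modified notion of deficiency from Definition~3.12 to cope with $\kappa(S')=\infty$. The candidate multicurve is
\[
Q=\bigcap_{X\in \mathcal{P}(S)} f(X),
\]
the set of curves of $S'$ that appear in every pants decomposition in the image of $f$. By construction $f(\mathcal{P}(S))\subseteq \mathcal{P}(S')_Q$, so the first bullet breaks into two claims: (a) $Q$ has deficiency exactly $\kappa(S)$, and (b) every pants decomposition of $S'$ containing $Q$ is in the image of $f$.

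For (a), since $f$ is injective and simplicial, an elementary move $X\leftrightarrow X'$ in $\mathcal{P}(S)$ is carried to an elementary move in $\Gamma'$ which changes exactly one curve; in particular, that moved curve of $f(X)$ is not in $Q$. Each of the $\kappa(S)$ curves of a fixed vertex $X$ can be changed by some elementary move supported in its complexity-one neighborhood, and distinct curves of $X$ produce distinct moved curves of $f(X)$ by injectivity of $f$, yielding $|f(X)\backslash Q|\geq \kappa(S)$. The reverse inequality, namely that any curve of $f(X)$ not in $Q$ arises this way, is where Aramayona's matching argument is invoked; it uses only the complexity bound on the components of $S$ and goes through verbatim. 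For (b), once the deficiency of $Q$ equals $\kappa(S)$, the subgraph $\mathcal{P}(S')_Q$ is connected by Lemma~3.2. The image $f(\mathcal{P}(S))$ is then a connected subgraph of $\mathcal{P}(S')_Q$ which is closed under elementary moves (this is where one applies simpliciality to $f^{-1}$ on the image), so the two coincide.

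For the second bullet, I would track where elementary moves are supported. An elementary move in $\mathcal{P}(S)$ supported in a nontrivial component $S_i$ of $S$ is sent by $f$ to an elementary move in $\mathcal{P}(S')_Q$ supported in a single nontrivial component of $S'\backslash Q$. Injectivity together with the fact that $f$ is an isomorphism onto $\mathcal{P}(S')_Q$ forces this assignment to be a bijection between the nontrivial components of $S$ and those of $S'\backslash Q$, giving the pairing $S_i\leftrightarrow S'_i$ and the induced isomorphisms $f_i:\mathcal{P}(S_i)\to \mathcal{P}(S'_i)$; the complexity identity $\kappa(S_i)=\kappa(S'_i)$ then follows from the matching of deficiencies on each factor.

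The main obstacle is essentially notational. Aramayona's argument uses the global equality $|Q|=\kappa(S')-\kappa(S)$ in several places, which is meaningless here because $|Q|$ and $\kappa(S')$ are both infinite. The remedy, anticipated by Definition~3.12 and by the connectivity statement in Lemma~3.2, is to reinterpret every such count as a statement about the finite deficiency of $Q$ and the local (rather than global) structure of $\mathcal{P}(S')_Q$ around a vertex. Once this translation is done, Aramayona's induction on deficiency proceeds unchanged, because it stays entirely within the finite-deficiency range.
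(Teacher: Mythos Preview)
Your approach is essentially the same as the paper's: the paper does not give an independent proof of this lemma but simply observes that, once deficiency is redefined as in Definition~3.14, Aramayona's proof of Theorem~C in \cite{Aram2010} goes through verbatim with $S'$ of infinite type. Your sketch fleshes out that deferral in a reasonable way, correctly identifying the one genuine obstacle (that the equation $|Q|=\kappa(S')-\kappa(S)$ is meaningless) and its remedy via the finite-deficiency reformulation; the numbering slip (Definition~3.14, not~3.12) aside, this matches the paper.
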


We now present a proof of Theorems 3.12.  This argument is similar to the one used in \cite{BDR} to prove Theorem 1.3.

\begin{proof}[Proof of Theorem 3.12]
Choose a pants decomposition $X\in \Gamma$.  Choose an exhaustion $S_0\subset S_1\subset \cdots \subset S$ of $S$ by finite-type surfaces such that for all $n\geq 0$, $\partial S_n$ is a disjoint union of essential curves in $X$.  This exhaustion induces natural embeddings $\iota_n: \mathcal{P}(S_n)\to \Gamma$.  Composing with $f$ gives embeddings $f\circ \iota_n: \mathcal{P}(S_n)\to \Gamma'$.  
\par 
By Lemma 3.15, each $f\circ \iota_n$ is an isomorphism between $\mathcal{P}(S_n)$ and $\mathcal{P}(S')_{Q_n}$ for some deficiency $\kappa(S_n)$ multicurve $Q_n$.  Since $f\circ \iota_n$ is a restriction of $f\circ \iota_{n+1}$, $Q_{n+1}\subset Q_n$ for all natural numbers $n$.   
\par 
By Lemma 3.13, each $f\circ \iota_n$ is induced by a $\pi_1$-injective embedding $\phi_n:S_n\to S'$.  Moreover, by the second part of Lemma 3.15, the image of $\phi_n$ is the union of the nontrivial components of $S'\backslash Q_n$.  Since $Q_{n+1}\subset Q_n$, the image of $\phi_n$ is a subsurface of the image of $\phi_{n+1}$, and moreover $\phi_n$ is isotopic to a restriction of $\phi_{n+1}$.  Hence we can take the colimits of the $\phi_n$ to get a $\pi_1$-injective embedding $\phi:S\to S'$.
\end{proof}

Now we are ready to prove Theorem 3.11.

\begin{proof}[Proof of Theorem 3.11]
Our argument here is almost the same as the proof of Theorem 3.12.  The only difference is that we need to show the embedding $\phi:S\to S'$ is actually a homeomorphism.  Suppose $S\backslash S'$ is nonempty.  Since $\phi$ is $\pi_1$-injective, there is an essential curve $\alpha\subset (S\backslash S')$.  Since $\phi$ induces $f$, $\alpha$ is either a curve in every curve of $f(\Gamma)$ or none of the curves of $f(\Gamma)$.  But this is impossible as $f$ is surjective.  
\end{proof}

\subsection{}
The main goal of this section is to prove the following theorem.
\begin{theorem}
Let $\Gamma\in \Pi_0(\mathcal{P}(S))$ be a connected-component let $X\in \Gamma$ be a pants decomposition.  Then there is an infinite-support mapping class $f\in \overline{\Modpmfin(S)}\backslash \Modpmfin(S)$ such that $f(X)=X$.  
\end{theorem}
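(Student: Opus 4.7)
The plan is to construct $f$ as an infinite product of Dehn twists along the curves of $X$. Enumerate the curves of $X$ as $\{c_i\}_{i\in\mathbb{N}}$; since $X$ is a pants decomposition of an infinite-type surface, this family is countable, pairwise disjoint, and locally finite, so I can pick a locally finite collection of pairwise disjoint annular neighborhoods $A_i \supset c_i$. Let $T_i$ be a Dehn twist about $c_i$ supported in $A_i$, and set
\[
f = \prod_{i=1}^\infty T_i.
\]
The key observation is that local finiteness forces every compact set to meet only finitely many $A_i$, so the infinite product converges to a well-defined self-homeomorphism of $S$; the disjoint supports make the factors commute, so the ordering is immaterial.

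Having constructed $f$, I must verify three things. That $f(X) = X$ is immediate: each $T_i$ fixes $c_i$ setwise and every other $c_j$ pointwise. To see $f \in \overline{\Modpmfin(S)}$, consider the partial products $f_n = T_1 \cdots T_n$. Each is supported in the finite-type subsurface $A_1 \cup \cdots \cup A_n$, so $[f_n] \in \Modpmfin(S)$, and on any compact set $K \subset S$ the sequence $f_n$ stabilises to $f$ once $n$ is large enough that $A_{n+1}, A_{n+2}, \ldots$ all avoid $K$. This yields convergence in the compact-open topology on $\mathrm{Homeo}(S)$ and hence in $\Mod(S)$.

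The step I expect to demand the most care is verifying $f \notin \Modpmfin(S)$, i.e.\ that $f$ has genuinely infinite-type support. The strategy is to exhibit infinitely many isotopy classes of curves moved by $f$ and then invoke the Alexander method. For each $i$, let $\Sigma_i$ be the complexity-one subsurface containing $c_i$ in $X$. I would pick a curve $\alpha_i$ lying in the interior of $\Sigma_i$, crossing $c_i$ essentially, and disjoint from every $A_j$ with $j \ne i$; such a curve exists because $\Sigma_i$ is either an $S_{0,4}$ or an $S_{1,1}$, where essential curves transverse to $c_i$ are abundant. Since only $T_i$ acts nontrivially near $\alpha_i$, we have $f(\alpha_i) \simeq T_i(\alpha_i) \nsimeq \alpha_i$. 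If some representative of $f$ were supported on a finite-type subsurface $R$, then for any $i$ with $\Sigma_i \cap R = \emptyset$ — and local finiteness guarantees infinitely many such $i$ — we would need $f(\alpha_i) \simeq \alpha_i$, contradicting the above.
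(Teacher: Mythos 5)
Your construction is the same as the paper's: take the infinite multi-twist along the curves of $X$, observe it fixes $X$, and exhibit it as a limit of the finite partial products $T_1\cdots T_n$ to place it in $\overline{\Modpmfin(S)}$. The one place you go further than the paper is in explicitly verifying $f\notin\Modpmfin(S)$ via the transversal curves $\alpha_i$ and the Alexander method; the paper asserts $f$ has infinite support without proof, so your extra paragraph is a welcome addition rather than a divergence.
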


\begin{proof}
Choose an orientation for each curve $X$.  Let $f$ be the mapping class of the homeomorphism obtained by performing a Dehn twist simultaneously along each curve in $X$.  Then $f$ fixes $X$ and hence induces an automorphism on $\Gamma$. If we let $\alpha_1,\alpha_2,\ldots$ denote the curves of $X$, then $f$ is a limit of the sequence $T_{\alpha_1},T_{\alpha_1}\cdot T_{\alpha_2}, T_{\alpha_1}\cdot T_{\alpha_2}\cdot T_{\alpha_3},\ldots$ of compactly supported mapping classes. Hence $f$ lies in the closure of $\Modpmfin(S)$.
\end{proof}

\subsection{}
The main goal of this section is to prove the following theorem.
\begin{theorem}
Let $\Gamma\in \Pi_0(\mathcal{P}(S))$ be a connected-component.  Then there is a mapping class $f\in \overline{\Modpmfin(S)}\subset \Modpm(S)$ such that $f(\Gamma)\neq \Gamma$. 
\end{theorem}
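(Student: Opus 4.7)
The plan is to construct $f$ as an infinite product of mapping classes supported on pairwise disjoint complexity-$1$ subsurfaces, arranged so that $f$ moves infinitely many curves of some pants decomposition $X \in \Gamma$; Lemma~3.2 will then force $f(X)$ to lie in a different component, so $f(\Gamma)\neq\Gamma$. Fix any $X\in\Gamma$ and apply Lemma~3.8 to extract an infinite subfamily $\{\alpha_i\}_{i\geq 1}\subseteq X$ of curves lying on pairwise disjoint complexity-$1$ subsurfaces $T_i\subseteq S$. Each $T_i$ is a union of the one or two pants of the decomposition adjacent to $\alpha_i$. Because a pants decomposition is locally finite, any compact $K\subset S$ meets only finitely many pants, and hence only finitely many of the $T_i$; so $\{T_i\}$ is a locally finite family in $S$.

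For each $i$, I would pick a mapping class $g_i\in\Modpm(S)$ supported on $T_i$ with $g_i(\alpha_i)\neq \alpha_i$ --- for instance, a Dehn twist about any curve in $T_i$ that intersects $\alpha_i$ essentially. Since the $T_i$ are pairwise disjoint and locally finite, the infinite composition $f:=\prod_{i\geq 1} g_i$ is a well-defined homeomorphism of $S$: on any compact $K$ only finitely many $g_i$ act nontrivially, and they commute as they are supported on disjoint subsurfaces. Equivalently, the partial products $h_n:=g_1g_2\cdots g_n\in\Modpmfin(S)$ agree with $f$ on any fixed compact set once $n$ is sufficiently large, so $h_n\to f$ in the compact-open topology on $\Mod(S)$. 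This places $f\in \overline{\Modpmfin(S)}$.

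Finally, $f(\alpha_i)=g_i(\alpha_i)\neq \alpha_i$ for every $i\geq 1$, so $X\setminus f(X)$ contains every $\alpha_i$ and is therefore infinite. By Lemma~3.2, $f(X)$ lies in a component of $\mathcal{P}(S)$ distinct from $\Gamma$, and hence $f(\Gamma)\neq\Gamma$. The only point that requires any genuine care is the local finiteness of $\{T_i\}$ --- needed both to make the infinite product well-defined and to give convergence in the topology on $\Mod(S)$ --- but as noted above this is automatic from the local finiteness of the pants decomposition $X$ itself.
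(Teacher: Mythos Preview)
Your argument is correct, and in fact it is more direct than the paper's. Both proofs build $f$ as an infinite commuting product of mapping classes supported on pairwise disjoint complexity-$1$ subsurfaces, and both verify $f\in\overline{\Modpmfin(S)}$ via convergence of the finite partial products. The difference lies in how each argument certifies $f(X)\notin\Gamma$. The paper chooses pseudo-Anosov maps $\phi_j$ on the pieces with $d_{\mathcal{P}(S_j)}(X_j,\phi_j(X_j))>j$, invokes the Taylor--Zupan theorem (packaged as Lemma~3.20) to promote these to distance lower bounds in $\Gamma$, and concludes that $f(X)$ lies at infinite distance from $X$. You instead observe that each $g_i$ moves $\alpha_i$ off itself, so infinitely many curves of $X$ fail to lie in $f(X)$, and Lemma~3.2 finishes immediately. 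Your route bypasses Lemmas~3.19 and~3.20 and the appeal to \cite{taylor2014products} entirely; the paper's route, while more elaborate for this particular theorem, develops the totally-geodesic machinery that is reused for the geometric-rank statement in Corollary~3.21.

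One small point worth tightening: the implication ``$f(\alpha_i)\neq\alpha_i$, so $\alpha_i\in X\setminus f(X)$'' deserves a word of justification, since a~priori some other curve of $X$ could map to $\alpha_i$. This is easily handled: the only curve of $X$ interior to $T_i$ is $\alpha_i$, every other curve of $X$ is either fixed by $f$ or sent into some $T_j$ with $j\neq i$, and $g_i(\alpha_i)$ is essential in $T_i$ hence not boundary-parallel. So indeed no curve of $X$ is sent to $\alpha_i$. With that sentence added, the proof is complete.
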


First, we recall a theorem of Taylor and Zupan\cite{taylor2014products}.

\begin{theorem}
Let $S$ be a finite surface, let $G$ be a Cartesian product of Farey graphs, and let $i:G\to \mathcal{P}(S)$ be a simplicial embedding.  Then $i(G)$ is totally geodesic in $\mathcal{P}(S)$.
\end{theorem}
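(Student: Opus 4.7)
The plan is to first identify the image $i(G)$ with a canonical full subgraph of $\mathcal{P}(S)$, then rule out geodesic excursions outside this subgraph by combining a combinatorial surgery argument with a subsurface projection argument.

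For the identification, I would use that each Farey graph is the pants graph of a complexity-one subsurface. An injective simplicial map $i:F_1\times\cdots\times F_k\to\mathcal{P}(S)$ must therefore send the elementary moves in the $j$th factor to elementary moves of $\mathcal{P}(S)$ all supported in a single complexity-one subsurface $\Sigma_j\subset S$, and the fact that moves in distinct factors of $G$ commute forces $\Sigma_1,\ldots,\Sigma_k$ to be pairwise disjoint. Setting $Q$ equal to $\partial\Sigma_1\cup\cdots\cup\partial\Sigma_k$ together with any fixed pants decomposition of $S\setminus\bigsqcup_j\Sigma_j$ that $i$ selects, a variant of the reasoning behind Lemma 3.15 identifies $i(G)$ with the full subgraph $\mathcal{P}(S)_Q$ spanned by pants decompositions containing $Q$. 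The theorem thus reduces to showing that $\mathcal{P}(S)_Q$ is totally geodesic in $\mathcal{P}(S)$ whenever the nontrivial components of $S\setminus Q$ are all of complexity one.

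For the geodesicity step, I fix $u,v\in\mathcal{P}(S)_Q$ and a pants-graph geodesic $\sigma=(P_0,\ldots,P_n)$ from $u$ to $v$, and I assume for contradiction that some $\gamma\in Q$ is absent from an intermediate $P_j$. Letting $j_1\leq j_2$ be the first and last indices with $\gamma\notin P_{j_i}$, the move at step $j_1$ removes $\gamma$ and the move at step $j_2+1$ reintroduces it; both take place on the same complexity-one piece $R\subset S$ containing $\gamma$. I would construct a strictly shorter path from $u$ to $v$ by splitting the moves in the subsequence $(P_{j_1-1},\ldots,P_{j_2+1})$ into those supported in $R$ and those supported off $R$, and using disjoint-support commutativity to reorder the moves on $R$ first, so that they form a closed loop in the Farey graph of $R$ based at $\gamma$. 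Excising that loop and retaining the remaining moves gives a shorter path, contradicting geodesicity.

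The main obstacle is in making the reordering and projection precise: intermediate vertices $P_j$ may fail to contain any curve of $Q$ that bounds $R$, so the partition into moves "inside $R$" versus "outside $R$" requires careful bookkeeping of which pair of pants each elementary move modifies, and one must rule out pairs of pants straddling $R$ in pathological ways. The cleanest way to complete the argument is to replace the hand-surgery with subsurface projection input: each complexity-one piece $R$ carries a coarsely well-defined projection $\pi_R:\mathcal{P}(S)\to F(R)$, and since $\pi_R(u)=\pi_R(v)$ by hypothesis, the Bounded Geodesic Image / Behrstock inequalities force the $F(R)$-projection of the geodesic $\sigma$ to have uniformly bounded diameter. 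Combined with minimality of $\sigma$, this rules out any excursion on $\gamma$. Iterating over every $\gamma\in Q$ yields $\sigma\subset\mathcal{P}(S)_Q=i(G)$, completing the proof.
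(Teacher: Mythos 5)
First, a point of comparison: the paper does not prove this statement at all --- it is quoted as an external input from Taylor--Zupan \cite{taylor2014products} (and the totally-geodesic property of $\mathcal{P}(S)_Q$ ultimately rests on Aramayona--Parlier--Shackleton's explicit retraction onto $\mathcal{P}(S)_Q$). So there is no in-paper argument to match; your proposal has to stand on its own, and as written it has a real gap in its second half. The first half --- identifying $i(G)$ with $\mathcal{P}(S)_Q$ for a multicurve $Q$ whose nontrivial complementary pieces have complexity one --- is correct in outline, but the assertion that all edges coming from the $j$th Farey factor are supported in one fixed complexity-one subsurface $\Sigma_j$ is not automatic; it is essentially the content of Aramayona's embedding theorem (Theorem A of \cite{Aram2010}, quoted in Section 3 of this paper), and should be invoked rather than asserted.

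The gap is in the geodesicity step. If a geodesic $\sigma$ from $u$ to $v$ drops a curve $\gamma\in Q$, the elementary move doing so takes place in a complexity-one subsurface $W$ containing $\gamma$ as an \emph{interior} (non-peripheral) curve, so $W$ essentially crosses $Q$; it is not one of the complexity-one pieces $R$ of $S\setminus Q$, and it changes from step to step along $\sigma$. Consequently the projection you propose to use is aimed at the wrong subsurfaces; moreover the claimed hypothesis $\pi_R(u)=\pi_R(v)$ is false in general, since $u$ and $v$ restricted to a Farey piece are arbitrary, possibly distant, vertices of that Farey graph (what $u$ and $v$ share is $Q$ itself, not their restrictions to $S\setminus Q$). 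Finally, even where applicable, the Bounded Geodesic Image and Behrstock inequalities only give coarse bounds with additive constants, which can at best yield quasi-convexity; they cannot establish the exact statement that \emph{every} geodesic lies inside $i(G)$. The standard way to close this is the Aramayona--Parlier--Shackleton argument: one builds an explicit surgery retraction $\rho:\mathcal{P}(S)\to\mathcal{P}(S)_Q$ that fixes $\mathcal{P}(S)_Q$ pointwise, sends adjacent vertices to equal or adjacent vertices, and strictly collapses any edge whose endpoints do not both contain $Q$; applying $\rho$ to a geodesic that leaves $\mathcal{P}(S)_Q$ then produces a strictly shorter path, the contradiction you were aiming for with the hand surgery. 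Your combinatorial reordering sketch is in the right spirit but, as you note, founders on exactly the bookkeeping that the retraction is designed to handle.
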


To prove Theorem 3.17, we will need and infinite-type version of Taylor and Zupan's theorem.  We will need the following lemma.

\begin{lemma}
Let $S$ have infinite type, let $\Gamma\in \Pi_0(\mathcal{P}(S))$ be a connected-component, let $G$ be a finite graph, and let $\iota:G\to \Gamma$ be a simplicial embedding.  Then there is a finite-type subsurface $S'\subset S$ such that $\iota(G)\subset i(\mathcal{S'})$, where $i:\mathcal{P}(S')\to \mathcal{P}(S)$ is the natural inclusion.
\end{lemma}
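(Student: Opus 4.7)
The plan is to use the finiteness of $G$ together with Lemma 3.2 to isolate a large common multicurve $Q$ shared by every vertex of $\iota(G)$, and then take $S'$ to be the union of the non-pants components of $S \setminus Q$.

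Enumerate the vertices of $\iota(G)$ as $X_1, \ldots, X_N$ and set $Q = \bigcap_i X_i$. Since $G$ is finite and all $X_i$ lie in the single component $\Gamma$, Lemma 3.2 gives $|X_i \triangle X_j| < \infty$ for every pair, and hence $X_i \setminus Q = \bigcup_j (X_i \setminus X_j)$ is finite for every $i$. Because $Q \subset X_1$ and $X_1$ is a pants decomposition of $S$, the intersection $X_1 \cap \overline{P}$ is a pants decomposition of every component $P$ of $S \setminus Q$, relative to its boundary (which lies in $Q$). Such a $P$ is itself a pair of pants exactly when $X_1 \cap \mathrm{int}(P) = \emptyset$; since $X_1 \setminus Q$ is finite, only finitely many components $P_1, \ldots, P_m$ of $S \setminus Q$ fail to be pairs of pants, and each $P_j$ is itself finite-type (its pants decomposition $X_1 \cap \overline{P_j}$ is finite).

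Set $S' = \overline{P_1 \cup \cdots \cup P_m}$. Then $S'$ is a finite-type subsurface with $\partial S' \subset Q$, and $S \setminus \mathrm{int}(S')$ is a disjoint union of pairs of pants whose boundary $Q_{\mathrm{out}} := Q \setminus \mathrm{int}(S')$ is a pants decomposition of $S \setminus S'$. Let $i \colon \mathcal{P}(S') \to \mathcal{P}(S)$ be the natural inclusion $Y \mapsto Y \cup Q_{\mathrm{out}}$. To check $\iota(G) \subset i(\mathcal{P}(S'))$ on vertices: every curve of $X_k \setminus Q$ lies in the interior of some non-pants piece, hence in $\mathrm{int}(S')$; conversely, any curve of $X_k$ lying outside $\mathrm{int}(S')$ is an essential simple closed curve disjoint from $Q$ contained in a pair-of-pants component of $S \setminus Q$, forcing it to be boundary-parallel and so to coincide (as an isotopy class) with an element of $Q$. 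Therefore $X_k \setminus \mathrm{int}(S') = Q_{\mathrm{out}}$ for every $k$, and $X_k \cap S'$ is a pants decomposition of $S'$, so $X_k \in i(\mathcal{P}(S'))$.

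For edges: each elementary move between some $X_i$ and $X_j$ is realized in a complexity-one subsurface $\Sigma$ equal to the union of the one or two pants of $X_i$ adjacent to $\alpha \in X_i \setminus X_j$. Because $Q \subset X_i$, each pants in the complement of $X_i$ sits inside a single component of $S \setminus Q$; the pants adjacent to $\alpha$ therefore all lie in the unique non-pants component $P_k$ containing $\alpha$, so $\Sigma \subset P_k \subset S'$ and the move is realized inside $\mathcal{P}(S')$. The one genuinely delicate point is this last step — making sure the ambient complexity-one surface for each move is not dragged outside of $S'$ — and it is handled by the observation that the pants of any $X_i$ respect the coarser multicurve $Q$.
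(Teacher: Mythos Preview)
Your proof is correct and follows essentially the same idea as the paper's: use Lemma~3.2 to show that each $X_i \setminus Q$ (with $Q=\bigcap_k X_k$) is finite, and then take $S'$ to contain the finitely many curves in $\bigcup_i (X_i\setminus Q)$. The paper stops there, simply declaring ``let $S'$ be a finite-type surface containing every curve in $A$,'' whereas you go further and explicitly build $S'$ as the union of the non-pants components of $S\setminus Q$ and verify both vertex and edge containment; this extra care is not a different approach, just a more complete execution of the same one.
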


\begin{proof}
Let $X_1,\ldots, X_m$ be the vertices of $\iota(G)$.  Since $\Gamma$ is connected, Lemma 3.2 implies that 
\[X_j\backslash (\bigcap_{k=1}^m X_k)\]
is finite for each $1\leq j\leq m$.  Hence, 
\[A=\bigcup_{j=1}^m (X_j\backslash (\bigcap_{k=1}^m X_k))\]
is also finite.  Let $S'$ be a finite-time surface containing every curve in $A$.  Then $S'$ satisfies the conclusion of the lemma.
\end{proof}

\begin{lemma}
Let $S$ have infinite type, let $\Gamma\in \Pi_0(\mathcal{P}(S))$ be a connected-component, let $G$ be a Cartesian product of finitely many Farey graphs, and let $i:G\to \Gamma$ be a simplicial embedding.  Then $i(G)$ is totally geodesic in $\Gamma$.
\end{lemma}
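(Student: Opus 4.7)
The plan is to reduce the infinite-type statement to the finite-type Taylor--Zupan theorem (Theorem 3.18) via Lemma 3.19. Fix vertices $x, y \in V(G)$ and a geodesic $\gamma$ in $\Gamma$ from $i(x)$ to $i(y)$; the goal is to show $\gamma \subset i(G)$. Choose a geodesic $\gamma_G$ in $G$ from $x$ to $y$: since $G = F_1 \times \cdots \times F_k$ is a product of Farey graphs, $\gamma_G$ is any interleaving of finite Farey geodesics and has length $\sum_{j} d_{F_j}(x_j, y_j)$. I would apply Lemma 3.19 to the finite subgraph $i(\gamma_G) \cup \gamma$ of $\Gamma$ to obtain a finite-type subsurface $S' \subset S$ together with a fixed pants decomposition $Q$ of $S \setminus S'$ such that every vertex of $i(\gamma_G) \cup \gamma$ has the form $X \cup Q$ for some pants decomposition $X$ of $S'$; this identifies $i(\gamma_G) \cup \gamma$ with a subgraph of $\mathcal{P}(S')$ under the natural inclusion $\mathcal{P}(S') \hookrightarrow \Gamma$ sending $X \mapsto X \cup Q$.

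Next I would establish a product structure for $i$. Using that edges of $G$ in distinct factors commute (they span squares in $G$) and that an elementary move is supported on a single complexity-1 subsurface, I aim to identify $k$ pairwise disjoint complexity-1 subsurfaces $W_1, \ldots, W_k \subset S$ such that varying the $j$-th coordinate of $v \in V(G)$ only changes the curve of $i(v)$ inside $W_j$. After enlarging $S'$ to contain $W_1 \cup \cdots \cup W_k$ (and adjusting $Q$ accordingly), the embedding $i$ restricts to a simplicial embedding $G \to \mathcal{P}(S')$ of a product of Farey graphs into a finite-type pants graph, and Theorem 3.18 then gives that the image is totally geodesic in $\mathcal{P}(S')$.

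To conclude, I would argue that the inclusion $\mathcal{P}(S') \hookrightarrow \Gamma$ is an isometric embedding on its image: any geodesic in $\Gamma$ between two vertices that agree with $Q$ outside $S'$ must stay within the image, because any step changing a curve of $Q$ would have to be undone by a later move, producing a detour of at least two extra moves and contradicting minimality. Hence $\gamma$ is itself a geodesic in $\mathcal{P}(S')$ from $i(x)$ to $i(y)$, and Theorem 3.18 yields $\gamma \subset i(G)$. The main obstacle will be the product-structure step: isolating the subsurfaces $W_j$ globally, and proving they are independent of the other coordinates of $v$, is a genuine lift of Taylor--Zupan's structure result to infinite type, and I expect it will require a careful iterative application of Lemma 3.19 together with the commutativity of product-edge squares in $G$.
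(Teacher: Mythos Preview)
Your argument is correct in outline and is actually more careful than the paper's. The paper's proof is much shorter: given a geodesic $P$ in $\Gamma$ between $i(X)$ and $i(Y)$, it applies Lemma~3.19 directly to $P\cup i(G)$ to produce a finite-type $S'$ with $P\cup i(G)\subset \mathcal{P}(S')$, and then invokes Taylor--Zupan in $\mathcal{P}(S')$. The paper asserts that $P\cup i(G)$ is a finite graph, which is literally false since a Farey graph is infinite; your product-structure step (locating disjoint complexity-$1$ subsurfaces $W_1,\ldots,W_k$ carrying the factors) is precisely what is needed to justify the conclusion $i(G)\subset\mathcal{P}(S')$ rigorously. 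So the two proofs follow the same reduction-to-finite-type strategy, but you are supplying an argument the paper elides.

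Two remarks on your write-up. First, your isometric-embedding step is unnecessary: once both $\gamma$ and $i(G)$ lie in $\mathcal{P}(S')$, the path $\gamma$ is automatically a geodesic in $\mathcal{P}(S')$, because the inclusion $\mathcal{P}(S')\hookrightarrow\Gamma$ is $1$-Lipschitz, so $|\gamma|=d_{\Gamma}(i(x),i(y))\le d_{\mathcal{P}(S')}(i(x),i(y))\le |\gamma|$. (Your ``detour of two extra moves'' heuristic is not a proof as stated, so it is just as well you do not need it.) Second, the product-structure step is less delicate than you fear: pick a base vertex $v_0\in G$ and, for each factor $F_j$, one edge at $v_0$ in that direction; these $k$ edges determine $k$ elementary moves supported on complexity-$1$ subsurfaces $W_1,\ldots,W_k$, and the square relations in $G$ force these to be pairwise disjoint and force every edge in factor $j$ (at any vertex of $G$) to be supported in $W_j$. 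One pass, not an iteration, suffices.
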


\begin{proof}
Let $X$ and $Y$ be vertices of $G$.  Let $P$ be a geodesic  in $\Gamma$ from $i(X)$ to $i(Y)$.  Then $P\cup i(G)$ is a finite graph, and hence by Lemma 3.19 there is a finite-type subsurface $S'\subset S$ such that $(P\cup i(G))\subset \mathcal{P}(S')$.  By Taylor and Zupan's theorem for finite-type surfaces, $i(G)$ is totally geodesic in $\mathcal{P}(S')$, and hence $P\subset i(G)$, as desired.
\end{proof}

Before proving Theorem 3.17, we note that Lemma 3.20 provides us with an interesting corollary about the pants graph.  Recall that the \emph{geometric rank} of a metric space $G$ is the greatest $n$ such that there is a quasi-isometric embedding of $\mathbb{Z}^n$ in $G$.  

\begin{corollary}
Let $S$ have infinite type, and let $\Gamma\in \Pi_0(\mathcal{P}(S))$ be a connected-component.  Then $\Gamma$ has infinite geometric rank.
\end{corollary}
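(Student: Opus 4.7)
The plan is to show, for every positive integer $n$, that there is a quasi-isometric embedding $\mathbb{Z}^n \hookrightarrow \Gamma$; since $n$ is arbitrary, this gives infinite geometric rank.

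First, I would fix any pants decomposition $X \in \Gamma$ and, imitating the construction in Lemma 3.8, pick $n$ curves $c_1, \ldots, c_n \in X$ whose complexity-1 neighborhoods $\Sigma_1, \ldots, \Sigma_n$ (each the union of the two pairs of pants of $S \setminus X$ meeting $c_i$) are pairwise disjoint finite-type subsurfaces of $S$; this is possible because $X$ contains infinitely many curves, so the greedy argument of Lemma 3.8 gives arbitrarily large such families. Let $F_i$ denote the Farey graph of $\Sigma_i$. I would then define
\[
\iota : F_1 \times \cdots \times F_n \longrightarrow \Gamma, \qquad (d_1,\ldots,d_n) \longmapsto (X \setminus \{c_1,\ldots,c_n\}) \cup \{d_1,\ldots,d_n\}.
\]
Because the $\Sigma_i$ are disjoint, the resulting multicurve is always a pants decomposition differing from $X$ only inside a finite-type subsurface, so it lies in $\Gamma$ by Lemma 3.2. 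Edge-moves in distinct factors commute and each corresponds to an elementary move in $\mathcal{P}(S)$, so $\iota$ is a simplicial embedding.

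Next, by Lemma 3.20, $\iota(F_1\times\cdots\times F_n)$ is totally geodesic in $\Gamma$, so $\iota$ is an isometric embedding when the product carries its standard Cartesian graph-product metric (the $L^1$ sum of the factor metrics). Each Farey graph has infinite diameter and contains a quasi-isometric copy of $\mathbb{Z}$ (any bi-infinite geodesic works), so the product of $n$ such embeddings gives a quasi-isometric embedding $\mathbb{Z}^n \hookrightarrow F_1 \times \cdots \times F_n$, since both $\mathbb{Z}^n$ and the Cartesian product carry $L^1$-type metrics. Composing with $\iota$ yields the required quasi-isometric embedding $\mathbb{Z}^n \hookrightarrow \Gamma$.

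I do not anticipate a serious obstacle: the substantive geometric input, that product subgraphs embed isometrically rather than just quasi-isometrically, is already packaged in Lemma 3.20, and the remaining content is the routine observation that an $L^1$ sum of infinite quasi-geodesics produces a quasi-flat in the product. The only point requiring a little care is verifying that the graph metric on $F_1\times\cdots\times F_n$ really is the sum of the factor metrics and that this is what $\iota$ preserves, but this is immediate from the definitions of simplicial embedding and total geodesicity.
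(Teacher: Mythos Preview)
Your proposal is correct and follows essentially the same approach as the paper: find $n$ disjoint complexity-1 subsurfaces, embed the product of their Farey graphs into $\Gamma$, and invoke Lemma~3.20 to get a totally geodesic (hence isometric) copy of the product. You supply more detail than the paper---in particular you are careful to anchor the embedding at a vertex $X\in\Gamma$ so that it lands in the correct component, and you spell out why a product of $n$ Farey graphs contains a quasi-isometric $\mathbb{Z}^n$---but the strategy is the same.
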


\begin{proof}
For any $n\in \mathbb{N}$, $S$ contains a subsurface $S'$ consisting of $n$ disjoint complexity 1 subsurfaces.  The pants graph of $S'$ is the product of $n$ Farey graphs.  By Lemma 3.20, this product embeds totally geodesicly in $\Gamma$, hence the geometric rank of $\Gamma$ is at least $n$.  
\end{proof}

Now we are ready to prove Theorem 3.17.

\begin{proof}
Pick a pants decomposition $X\in \Gamma$.  Pick a multicurve $Y\subset X$ such that $S\backslash Y$ is a disjoint union of complexity 1 surfaces.  Call these complexity 1 surfaces $S_1,\ldots$.  Let $X_j=X\cap S_j$ for $j\in \mathbb{N}$.  For each $j\in \mathbb{N}$, choose a pseudo-Anosov map $\phi_j$ on $S_j$ such that $d_{\mathcal{P}(S_j)}(X_j,\phi_j(X_j))>j$.  Let $\widetilde{\phi_j}\in \mathrm{Homeo}(S)$ be an extension of $\phi_j$ to $S$ which is the idenity outside of $\overline{S_j}$. By Lemma 3.20, the image of the natural inclusion of $\mathcal{P}(S_j)$ is totally geodesic in $\Gamma$, hence $d_{\Gamma}(X,\widetilde{\phi_j}(X))>j$.  Consider the homeomorphism $\widetilde{\phi}$ obtained as the limit of the sequence $\widetilde{\phi_1},\widetilde{\phi_2}\circ \widetilde{\phi_1}, \widetilde{\phi_3}\circ \widetilde{\phi_2}\circ \widetilde{\phi_1},\ldots$.  Then $\widetilde{\phi}(X)$ cannot be any finite distance from $X$, and hence $\widetilde{\phi}(X)\notin \Gamma$.  Thus the mapping class of $\widetilde{\phi}$ satisfies the conclusion of Theorem 3.17.
\end{proof}

We now state a corollary which ties together several results from this section.

\begin{corollary}
Let $S$ have infinite type, and let $\Gamma\in \Pi_0(\mathcal{P}(S))$ be a connected-component.  There are natural monomorphisms $\Modpmfin(S) \hookrightarrow \Aut(\Gamma)\hookrightarrow \Modpm(S)$.  Neither map is a surjection.  Moreover, the complement of the image of both maps contain elements of $\overline{\Modpmfin(S)}$.  
\end{corollary}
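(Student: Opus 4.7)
The plan is to construct the two natural homomorphisms explicitly, verify injectivity from Lemma 3.4, and then use Theorems 3.16 and 3.17 to exhibit mapping classes in $\overline{\Modpmfin(S)}$ that witness non-surjectivity of each map.

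First I would define the map $\Modpmfin(S) \to \Aut(\Gamma)$: any $f \in \Modpmfin(S)$ acts as a simplicial automorphism of $\mathcal{P}(S)$, and by Lemma 3.3 it preserves every connected component, so restriction gives a well-defined homomorphism to $\Aut(\Gamma)$. For the second map $\Aut(\Gamma) \to \Modpm(S)$, Theorem 3.11 applied with $S' = S$ and $\Gamma' = \Gamma$ says that each $F \in \Aut(\Gamma)$ is induced by a homeomorphism of $S$ which is unique up to isotopy, so sending $F$ to this mapping class is a well-defined homomorphism. By construction the composition $\Modpmfin(S) \to \Aut(\Gamma) \to \Modpm(S)$ is the natural inclusion. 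Injectivity of both maps is then immediate from Lemma 3.4: a mapping class restricting to the identity on $\Gamma$ is isotopic to the identity, so each kernel is trivial.

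For non-surjectivity of $\Modpmfin(S) \hookrightarrow \Aut(\Gamma)$, I would invoke Theorem 3.16 to obtain a mapping class $f \in \overline{\Modpmfin(S)} \backslash \Modpmfin(S)$ fixing some $X \in \Gamma$. Because $f$ fixes a vertex of $\Gamma$, it preserves the connected component $\Gamma$ setwise, so it determines an element of $\Aut(\Gamma)$. This element cannot lie in the image of $\Modpmfin(S)$, since by injectivity of $\Aut(\Gamma) \hookrightarrow \Modpm(S)$ this would force $f \in \Modpmfin(S)$, contradicting the choice of $f$. For non-surjectivity of $\Aut(\Gamma) \hookrightarrow \Modpm(S)$, Theorem 3.17 produces $f \in \overline{\Modpmfin(S)}$ with $f(\Gamma) \neq \Gamma$; such an $f$ cannot lie in the image of $\Aut(\Gamma)$, since every element of that image preserves $\Gamma$ setwise by construction. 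In both cases the witness lies in $\overline{\Modpmfin(S)}$, as required.

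Essentially no new obstacle arises in this corollary; the substantive content has already been carried out in Theorems 3.11, 3.16, and 3.17, and the corollary is a packaging statement that collects them. The only care needed is keeping the codomains straight, so that a witness constructed as a mapping class is correctly located in the appropriate complement (inside $\Aut(\Gamma)$ for the first map and inside $\Modpm(S)$ for the second).
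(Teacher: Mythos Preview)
Your proposal is correct and follows essentially the same approach as the paper's proof: the first embedding via Lemma 3.3, the second via Theorem 3.11, and non-surjectivity witnessed by Theorems 3.16 and 3.17 respectively. You supply more detail than the paper does (explicitly invoking Lemma 3.4 for injectivity and noting that the composition is the natural inclusion), and you are careful about the point the paper leaves implicit---that the witness from Theorem 3.16 lands in $\Aut(\Gamma)$ because it fixes a vertex of $\Gamma$---but the overall structure is identical.
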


\begin{proof}
The first embedding comes from Lemma 3.3.  The fact that the first embedding is not surjective, and that the complement of the image contains an element of $\overline{\Modpmfin(S)}$, comes from Theorem 3.16.  
\par 
The second embedding comes from Theorem 3.11.  The fact that the second embedding is not surjective, and that the complement of the image contains an element of $\overline{\Modpmfin(S)}$, comes from Theorem 3.17.
\end{proof}

\section{The Vertex Spaces $V_i(S)$}

In this section, we endow the vertex set of $\mathcal{P}(S)$ with a family of topologies.  We will prove that the resulting vertex spaces are metrizable, second-countable, and totally disconnected.  We will use the vertex spaces again in the next section.
\par

We fix a principal exhaustion $S_0\subseteq S_1\subseteq \cdots$ of $S$ by finite-type subsurfaces.  Also fix a hypberbolic metric on $S$.  For pants decompositions $X$ and $Y$, we wish to define some sense in which $X$ and $Y$ "agree" on a compact subsurface $Z\subset S$.  We wish for this definition to be preserved by isotopy.  We will use this notion of agreement to define the limit of a sequence of pants decompositions, and use this definition of a limit to get a topological structure on the set of pants decompositions.
\par 
For the chosen hyberbolic metric on $S$, each pants decomposition has a unique geodesic representative.  Thus, we can define agreement for the geodesic represenatives, and consider two pants decompositions to agree if their geodesic representatives agree.

\par 
\begin{definition}\rm
Two pants decompositions $X$ and $Y$ \emph{0-agree} if they are isotopic.
\end{definition}

\begin{definition}\rm
Let $X$ and $Y$ be two geodesic pants decompositions, viewed as sets of (geodesic) curves, and let $S'\subset S$ be a compact subsurface.  $X$ and $Y$ are said to \emph{1-agree on $S'$} if 
\[\{x\in X: x\cap S'\neq \emptyset\}=\{y\in Y: y\cap S'\neq \emptyset\}\]
\end{definition}

There is another way we could have defined 1-agreement.

\begin{lemma}
Let $X$ and $Y$ be two geodesic pants decompositions, viewed as subsets of $S$.  Then $X$ and $Y$ 1-agree on $S'$ if and only if
\[X\cap S'=Y\cap S'\]
\end{lemma}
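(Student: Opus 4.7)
The plan is in two parts. The forward implication is essentially tautological: if
$\{x\in X: x\cap S'\neq\emptyset\}=\{y\in Y: y\cap S'\neq\emptyset\}$, then both $X\cap S'$ and $Y\cap S'$ equal $\bigcup(c\cap S')$ with the union running over this common family of curves, so they coincide as subsets of $S$.

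For the backward direction, the strategy is to take a curve $x\in X$ meeting $S'$ and show that $x$ is also a curve of $Y$; by symmetry this suffices. The two ingredients I plan to use are \emph{local finiteness} of a geodesic pants decomposition (since curves are pairwise disjoint, each point of $S$ has a neighborhood meeting at most one of them, so any compact subset of $S$ meets only finitely many curves of the decomposition) and \emph{geodesic rigidity} (two simple closed geodesics on $S$ that share a nondegenerate arc must coincide, since a geodesic is determined by a point and a tangent direction).

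The argument then proceeds as follows. Given $x\in X$ with $x\cap S'\neq\emptyset$, I first locate a nondegenerate subarc $\alpha\subseteq x\cap S'$. Generically $x\cap\mathrm{int}(S')$ is a nonempty open subset of the circle $x$ and hence contains such an arc; in the degenerate case $x\subseteq\partial S'$ I take $\alpha=x$. Now $\alpha\subseteq X\cap S'=Y\cap S'$, so each point of $\alpha$ lies on some curve of $Y$. By local finiteness only finitely many curves of $Y$ meet the compact arc $\alpha$, and these curves are pairwise disjoint closed subsets of $S$, so $\alpha$ partitions into finitely many pairwise disjoint nonempty closed fibers of the map ``point of $\alpha$ $\mapsto$ curve of $Y$ through it''. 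Since $\alpha$ is connected there must be only one fiber, meaning $\alpha$ lies on a single $y\in Y$; geodesic rigidity then forces $x=y\in Y$.

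The step I expect to require the most care is the generic-position claim used to produce $\alpha$: if $x$ meets $\partial S'$ only tangentially at isolated points without entering $\mathrm{int}(S')$, then $x\cap S'$ could fail to contain a nondegenerate arc. This is handled by a small preliminary isotopy of $\partial S'$ making it transverse to the (finitely many) curves of $X\cup Y$ that meet it, which preserves both the hypothesis $X\cap S'=Y\cap S'$ and the conclusion; alternatively, one can restrict attention to compact subsurfaces already transverse to the geodesic representatives, which is the setting in which the definition is subsequently applied.
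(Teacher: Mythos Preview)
Your proof is correct and follows essentially the same route as the paper: both argue that a curve of $X$ meeting $S'$ must share a geodesic arc with some curve of $Y$, and then invoke the uniqueness of geodesics (two geodesics tangent at a point coincide) to conclude the curves are equal. You are more careful than the paper about justifying why a \emph{single} curve of $Y$ contains the chosen arc (via local finiteness and connectedness) and about the transversality caveat; the paper's proof simply asserts the existence of such a $\gamma'$ and appeals directly to ``any two tangent geodesics are equal.''
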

\begin{proof}
The "only if" direction is trivial.  Suppose that $X\cap S'=Y\cap S'$ and that $\gamma$ is a curve in $X$ which intersects $S'$.  Then $Y$ must contain a curve $\gamma'$ such that $\gamma\cap S'=\gamma'\cap S'$.  Any two tangent geodesics are equal, so $\gamma'=\gamma$.  Thus $X$ and $Y$ 1-agree on $S'$.
\end{proof}

\begin{definition}\rm
Let $X$ and $Y$ be two geodesic pants decompositions, and let $S'\subset S$ be a compact subsurface.  View $X$ and $Y$ as subsets of $S$.  $X$ and $Y$ are said to \emph{2-agree on $Z$} if the subsets $X\cap S'$ and $Y\cap S'$ are properly isotopic, counting multiplicities of proper arcs. 
\end{definition}

Figure \ref{2not1} shows a pair of pants decompositions which 2-agree but do not 1-agree.  

\begin{figure}
\def\svgwidth{\columnwidth}
\caption{Two pants decompositions of $S$.  The black circle is $\partial S'$, the red dots are punctures, and the blue curves are pants curves.  The two decompositions 2-agree on $S'$ but do not 1-agree on $S'$.\label{2not1}}

\subcaptionbox{\label{2not1a}}
{\includegraphics[scale=0.25]{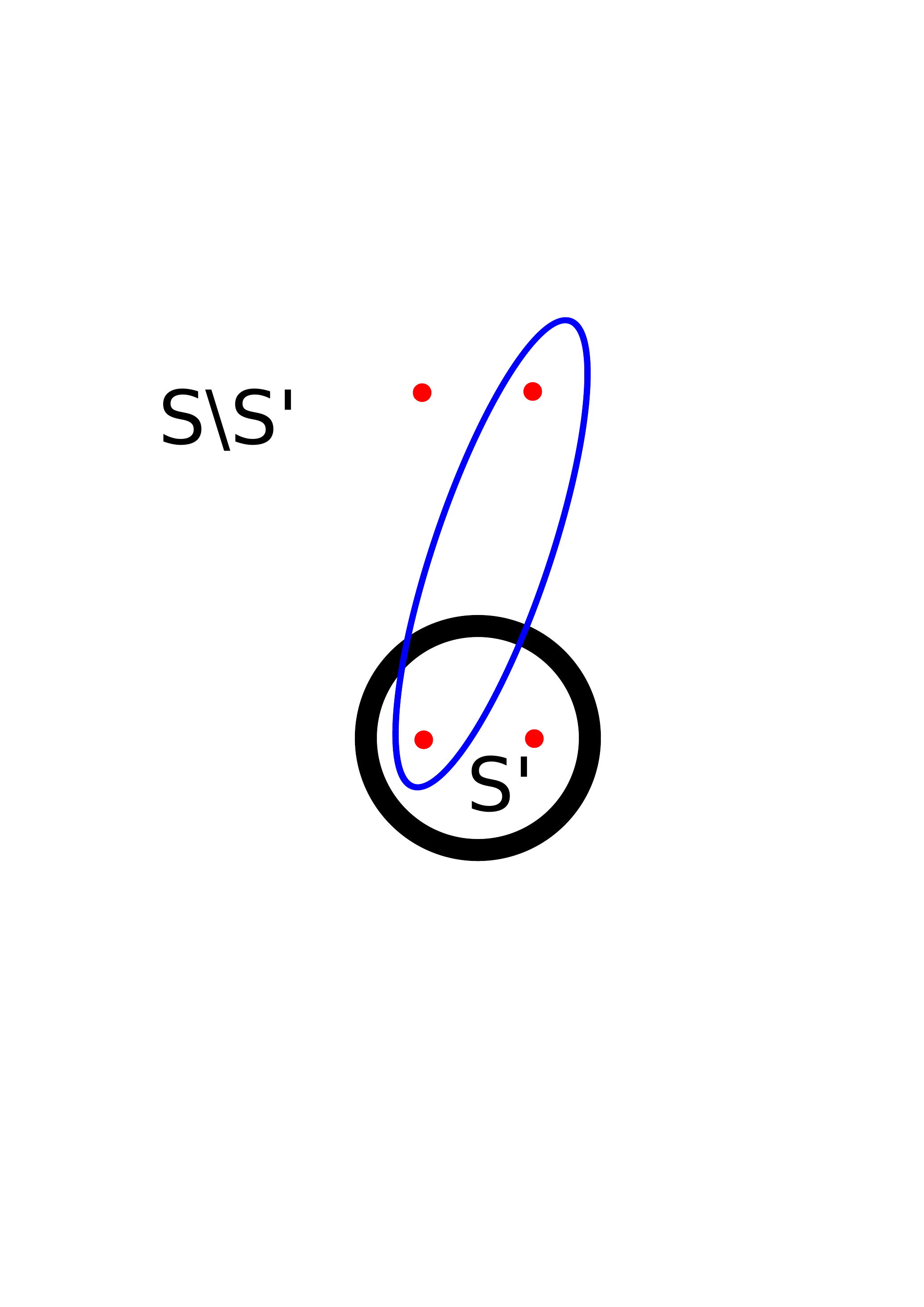}}
\subcaptionbox{\label{2not1a}}
{\includegraphics[scale=0.25]{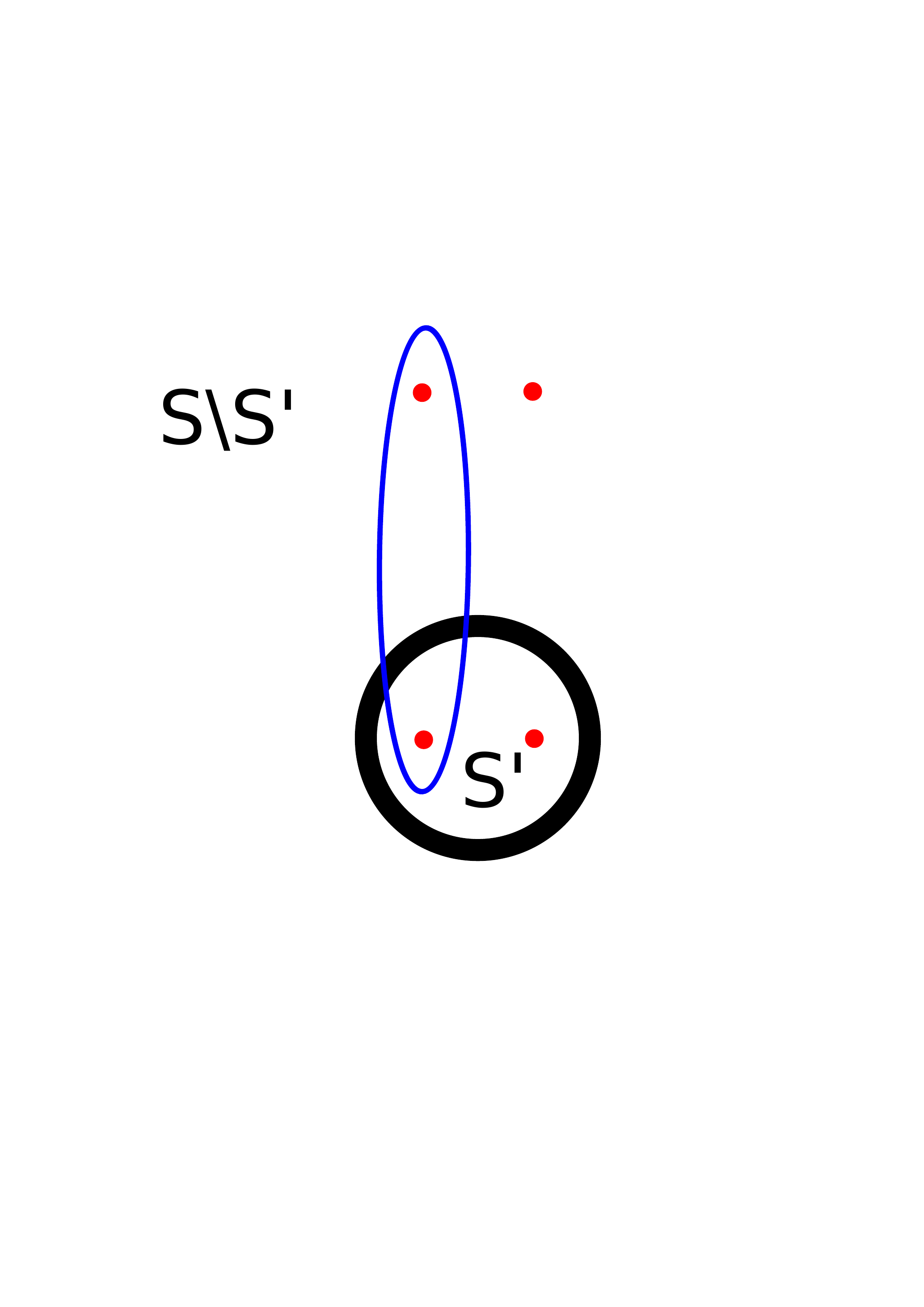}}
\end{figure}

If we ignore the multiplicities of arcs in $X\cap S'$ and $X\cap Y'$, we get a weaker notion of agreement.

\begin{definition}\rm
Let $X$ and $Y$ be two geodesic pants decompositions, and let $S'\subset S$ be a compact subsurface.  $X$ and $Y$ are said to \emph{3-agree} on $S'$ if every curve and proper arc in $X\cap S'$ is isotopic to a curve or proper arc in $Y\cap S'$, and vice versa.  In other words, $X$ and $Y$ 3-agree on $S'$ if the multicurves $X\cap S'$ and $Y\cap S'$ correspond to the same simplex in the arc-and-curve complex of $S'$.
\end{definition}

Figure \ref{3not2} shows a pair of pants decompositions which 3-agree but do not 2-agree.  

\begin{figure}
\def\svgwidth{\columnwidth}
\caption{Two pants decompositions of $S$.  The black circle is $\partial S'$, the red dots are punctures, and the blue curves are pants curves.  The two decompositions 3-agree on $S'$ but do not 2-agree on $S'$.\label{3not2}}

\subcaptionbox{\label{3not2a}}
{\includegraphics[scale=0.25]{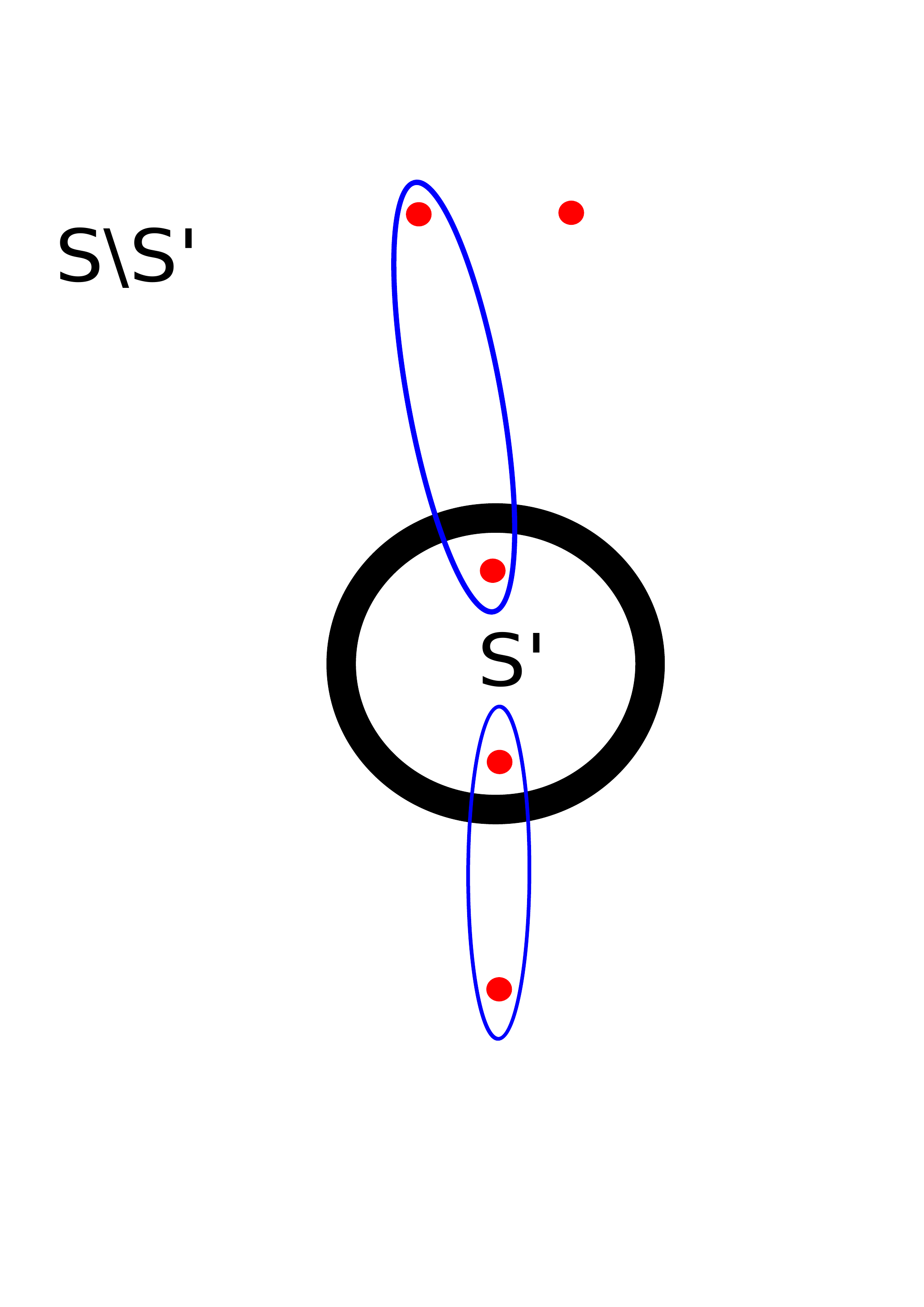}}
\subcaptionbox{\label{3not2a}}
{\includegraphics[scale=0.25]{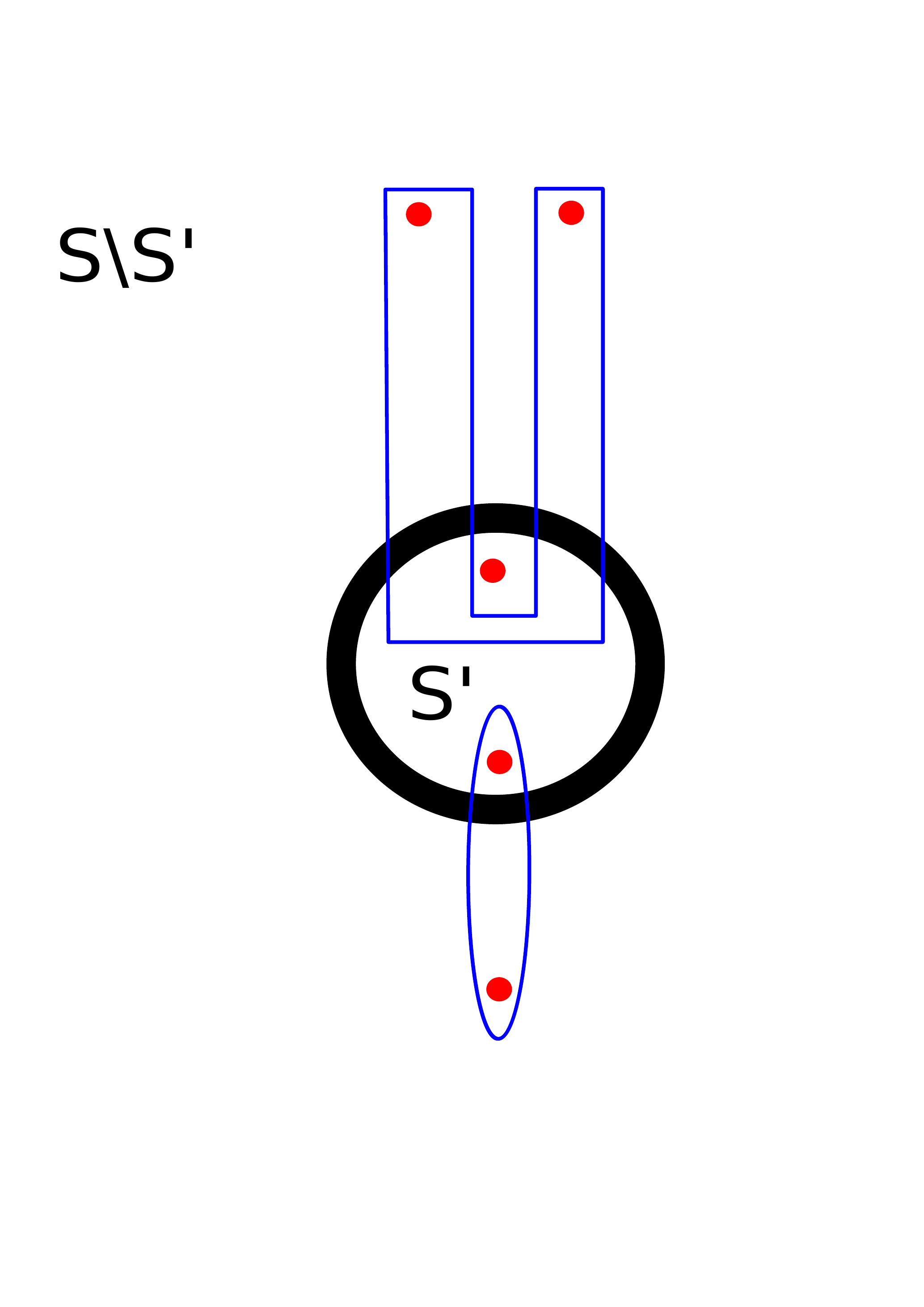}}
\end{figure}

\begin{definition}\rm
Let $X$ and $Y$ be two geodesic pants decompositions, viewed as sets of curves, and let $S'\subset S$ be a compact subsurface.  $X$ and $Y$ are said to \emph{4-agree on $Z$} if
\[\{x\in X: x\subseteq S'\}=\{y\in Y: y\subseteq S'\}\]
\end{definition}

Figure \ref{4not3} shows a pair of pants decompositions which 4-agree but do not 3-agree.  

\begin{figure}
\def\svgwidth{\columnwidth}
\caption{Two pants decompositions of $S$.  The black circle is $\partial S'$, the red dots are punctures, and the blue curves are pants curves.  The two decompositions 4-agree on $S'$ but do not 3-agree on $S'$.\label{4not3}}

\subcaptionbox{\label{4not3a}}
{\includegraphics[scale=0.25]{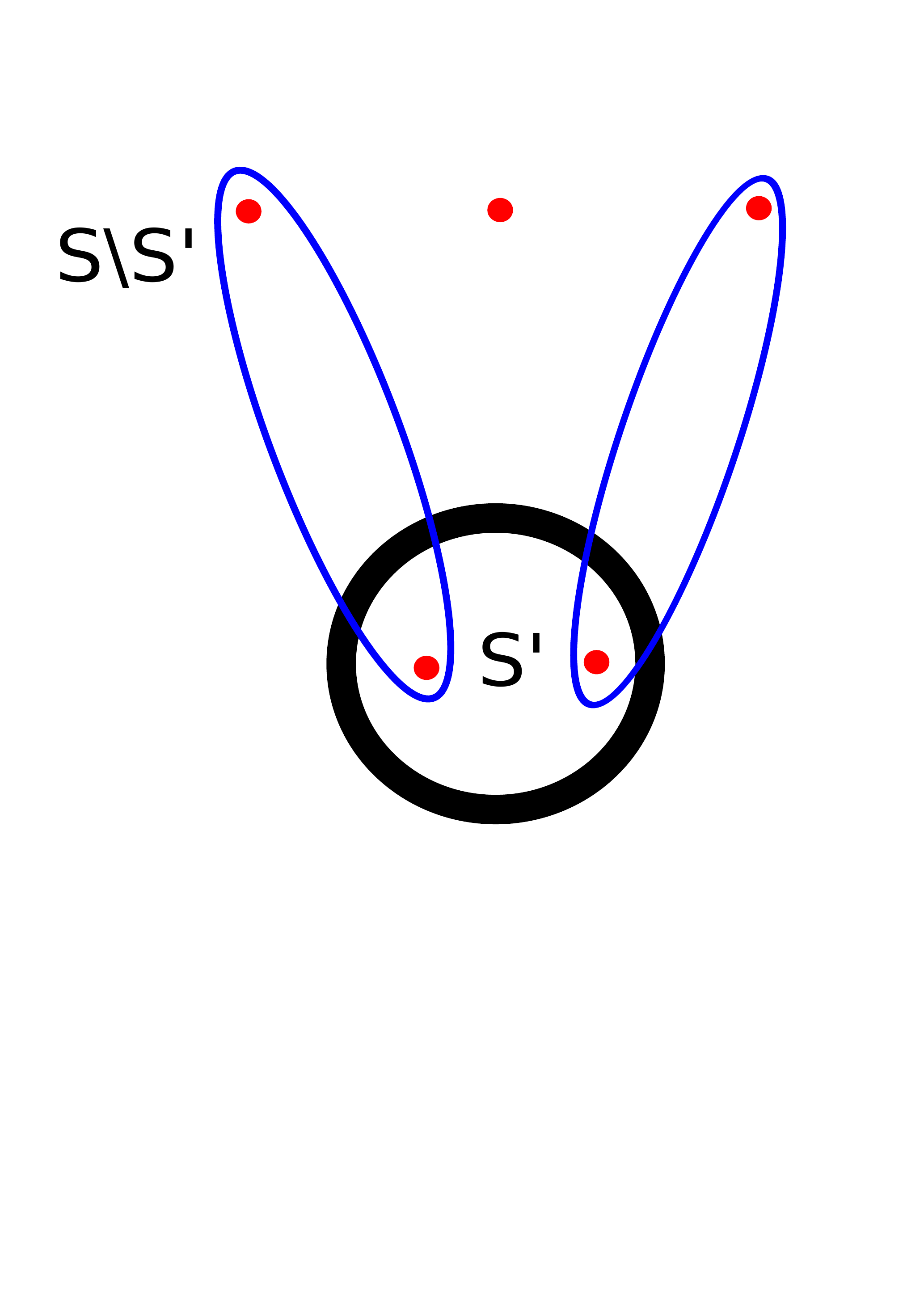}}
\subcaptionbox{\label{4not3a}}
{\includegraphics[scale=0.25]{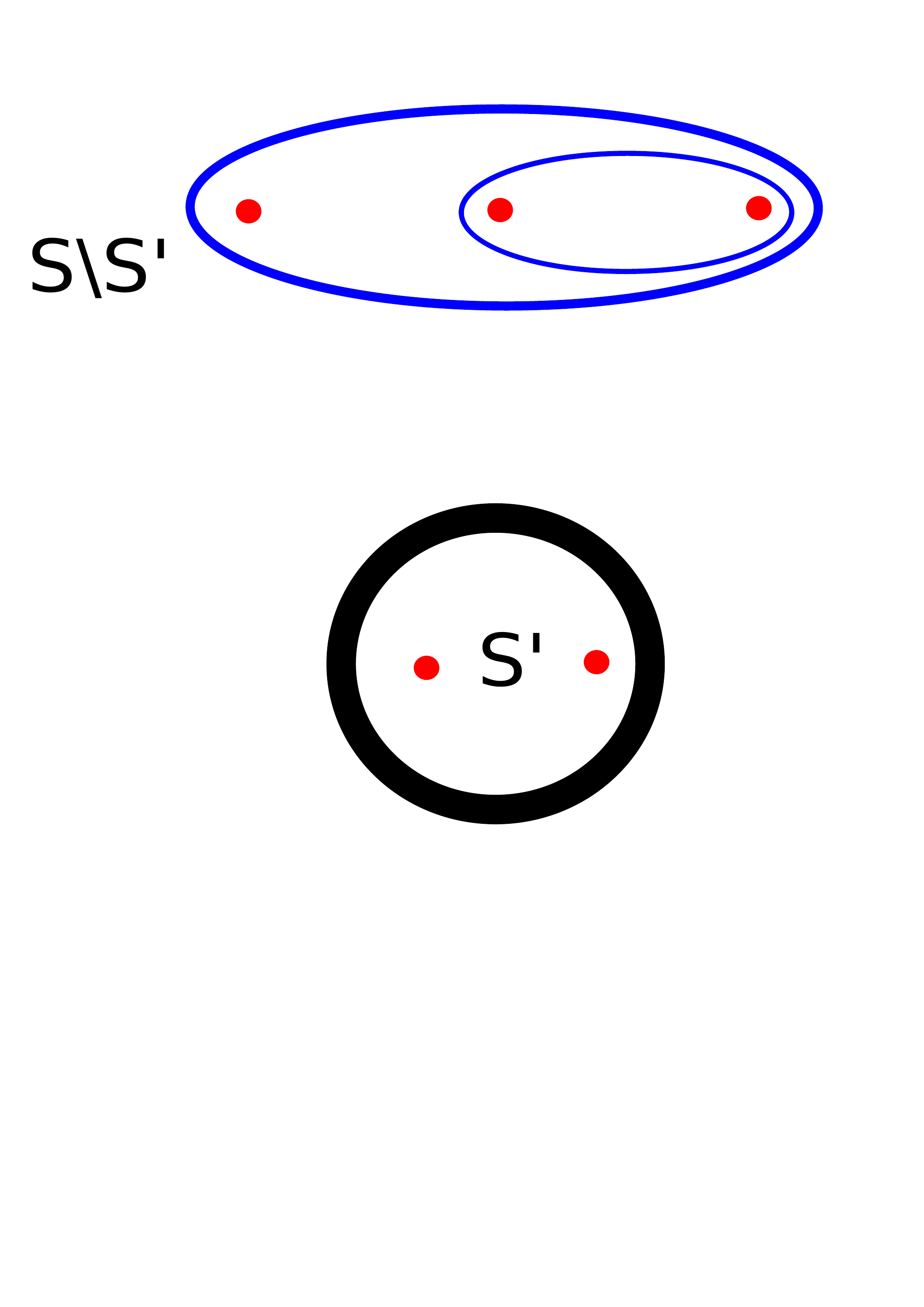}}
\end{figure}

\begin{lemma}
Let $X$ and $Y$ be two pants decompositions of $S$, and let $S'$ be a compact subsurface, and let $0\leq i\leq j\leq 4$.  If $X$ and $Y$ $i$-agree on $S'$, then they also $j$-agree on $S'$.
\end{lemma}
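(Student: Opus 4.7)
The plan is to verify each of the four implications $i\Rightarrow i+1$ separately for $0\leq i\leq 3$. The unifying tool is uniqueness of the geodesic representative of an isotopy class of simple closed curve, together with Lemma 4.3.

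First I would handle $0\Rightarrow 1$: isotopic pants decompositions have identical geodesic representatives, so the subsets of curves meeting $S'$ are literally equal, giving 1-agreement. Next, for $1\Rightarrow 2$, I would invoke Lemma 4.3 to rewrite 1-agreement as the set equality $X\cap S'=Y\cap S'$ viewed as subsets of $S$. Equal subsets are properly isotopic via the identity isotopy, and multiplicities of proper arcs match on the nose, so 2-agreement follows trivially.

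For $2\Rightarrow 3$, I would observe that a proper isotopy between $X\cap S'$ and $Y\cap S'$ that respects multiplicities in particular induces a bijection between connected components that preserves proper isotopy class; forgetting the multiplicities yields exactly 3-agreement. Finally, for $3\Rightarrow 4$, I would take any $c\in X$ with $c\subseteq S'$, note that $c$ is a closed-curve component of $X\cap S'$, and use 3-agreement to produce a component of $Y\cap S'$ isotopic to $c$. This component cannot be a proper arc, so it is a closed curve $c'\in Y$ with $c'\subseteq S'$, and uniqueness of geodesic representatives forces $c=c'$. The reverse inclusion follows by symmetry.

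The only implication requiring genuine care is $3\Rightarrow 4$: the key topological observation is that a closed geodesic cannot be isotopic to a proper arc, which rules out a full curve of $X$ lying inside $S'$ being matched to a proper arc of $Y\cap S'$. All other implications reduce to unwinding definitions once uniqueness of geodesic representatives and Lemma 4.3 are in hand, so I do not anticipate any serious obstacle.
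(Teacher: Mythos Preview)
Your proposal is correct and proceeds exactly as one would expect from unwinding the definitions; the paper's own proof consists of the single sentence ``This result follows immediately from the definitions,'' so you have simply spelled out in detail what the author leaves implicit. Your careful treatment of $3\Rightarrow 4$ (noting that a closed geodesic cannot be properly isotopic to an arc, and then invoking uniqueness of geodesic representatives) is the only step with any content, and your argument there is sound.
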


\begin{proof}
This result follows immediately from the definitions.
\end{proof}

\begin{lemma}
For $0\leq i\leq 4$, and for a compact subsurface $S'\subset S$, $i$-agreement on $S'$ is an equivalence relation.
\end{lemma}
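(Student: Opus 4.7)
The plan is to handle each value of $i \in \{0,1,2,3,4\}$ separately and observe that in every case, $i$-agreement on $S'$ reduces either to equality of certain sets, or to a well-known equivalence relation (isotopy or proper isotopy) applied to a canonical object extracted from the pants decomposition.

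For $i=0$, the relation is literally isotopy of pants decompositions on $S$, which is the standard equivalence relation on isotopy classes. For $i=1$, by definition $X$ and $Y$ 1-agree on $S'$ iff the sets $\{x\in X: x\cap S'\neq\emptyset\}$ and $\{y\in Y: y\cap S'\neq\emptyset\}$ are equal; since equality of sets is an equivalence relation, so is 1-agreement. (Lemma 4.3 also lets us phrase this as equality of the subsets $X\cap S'=Y\cap S'$ of $S$, which is again clearly an equivalence relation.) For $i=4$, the argument is identical: 4-agreement is the equality $\{x\in X: x\subseteq S'\}=\{y\in Y: y\subseteq S'\}$, hence an equivalence relation.

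For $i=2$ and $i=3$, the argument is the same in spirit but replaces set equality by an equivalence on the ``trace'' $X\cap S'$ inside $S'$. In the case $i=2$, 2-agreement says that $X\cap S'$ and $Y\cap S'$, viewed as multisets of disjoint arcs and curves in the compact surface $S'$, are properly isotopic rel $\partial S'$. Proper isotopy of disjoint systems of arcs and curves in a compact surface is a standard equivalence relation (reflexivity from the constant isotopy, symmetry from reversing time, transitivity from concatenation), so 2-agreement inherits this structure. For $i=3$, by definition 3-agreement on $S'$ asserts equality of two simplices in the arc-and-curve complex of $S'$ (the simplices determined by $X\cap S'$ and $Y\cap S'$ with multiplicities forgotten), and equality of simplices is trivially an equivalence relation.

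There is essentially no obstacle: every case collapses to set equality or to an already-known equivalence. The only point that requires a moment's care is to note that the relations are well-defined on isotopy classes of pants decompositions. Since all five definitions are phrased in terms of the geodesic representative of each pants decomposition (which is canonical once the hyperbolic metric is fixed), this is automatic, and so reflexivity follows from the trivial observation that the geodesic representative of $X$ equals itself, while symmetry and transitivity reduce immediately to the corresponding properties of the underlying equality or isotopy relation.
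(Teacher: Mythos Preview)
Your proof is correct and takes essentially the same approach as the paper, which dispatches the lemma in one line (``The result follows from the uniqueness of geodesics''). Both arguments rest on the observation that each definition is phrased in terms of the canonical geodesic representative, so that $i$-agreement reduces to either set equality or proper isotopy, each of which is manifestly an equivalence relation; you simply spell out the five cases explicitly where the paper leaves them implicit.
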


\begin{proof}The result follows from the uniqueness of geodesics.\end{proof}

\begin{definition}\rm
Let $0\leq i\leq 4$ and let $X_1,X_2,\ldots$ be a sequence of pants decompositions for $S$.  The sequence is said to \emph{i-converge} to a pants decomposition $Y$ if, for all $n\in \mathbb{N}$, there exists a $m\in \mathbb{N}$ such that for all $k\geq m$, $X_k$ and $Y$ i-agree on $S_n$.
\end{definition}

Note that a sequence of pants decompositions 0-converges if and only if it is eventually constant.

\begin{definition}\rm
Let $0\leq i\leq 4$.  Define $V_i(S)$ to be the set of pants decompositions of $S$, endowed with the finest topology such that, for all sequences $\{X_j\}_{j\geq 1}$ in $V_i(S)$, $\{X_j\}_{j\geq 1}$ converges to $Y$ in $V_i(S)$ if and only if $\{X_j\}_{j\geq 1}$ $i$-converges to $Y$.
\end{definition}

The space $V_0(S)$ is just the discrete topology on the set of isotopy classes of pants decompositions of $S$.  Our goal for this section is to understand the structure of $V_i(S)$ for $1\leq i\leq 4$.

\begin{lemma}
For $1\leq i\leq 4$, the topology on $V_i(S)$ does not depend on the choice of exhaustion for $S$.  

\end{lemma}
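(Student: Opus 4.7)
The plan is to exploit the fact that any two principal exhaustions of $S$ are mutually cofinal. If $\{S_n\}$ and $\{S'_n\}$ are two such exhaustions, then the first condition in the definition of a principal exhaustion guarantees that any compact set is contained in $S_n$ for some $n$ (and likewise in some $S'_n$). In particular, for each $n$ there exists $m(n)$ with $S_n \subseteq S'_{m(n)}$, and symmetrically in the other direction. Since the topology on $V_i(S)$ is defined as the finest topology with prescribed convergent sequences, it suffices to show that the two exhaustions yield the same notion of $i$-convergence.

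The main technical step I would isolate as a lemma is monotonicity of agreement: if $S' \subseteq S''$ are compact subsurfaces and $X, Y$ $i$-agree on $S''$, then $X, Y$ also $i$-agree on $S'$. For $i=1$, this is immediate from Lemma 4.3: $X \cap S' = (X \cap S'') \cap S' = (Y \cap S'') \cap S' = Y \cap S'$. For $i=4$, any pants curve of $X$ contained in $S'$ is a fortiori contained in $S''$, hence lies in $Y$ by 4-agreement on $S''$, and remains in $S'$. For $i=2,3$ the argument is more delicate since proper isotopies do not restrict naively; the trick is to invoke uniqueness of geodesic representatives, so that $X \cap S'$ and $Y \cap S'$ are specific geodesic multi-arc-and-curves in $S'$, and then to track how a proper isotopy on $S''$ induces a matching of the components entering and exiting $\partial S'$ while preserving isotopy classes rel boundary.

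Given this monotonicity, the theorem follows cleanly. Suppose $\{X_j\}$ $i$-converges to $Y$ with respect to $\{S_n\}$. For any $n$, pick $m$ with $S'_n \subseteq S_m$; by hypothesis there exists $K$ such that $X_k$ and $Y$ $i$-agree on $S_m$ for all $k \geq K$, and monotonicity then forces $i$-agreement on $S'_n$ for all $k \geq K$. Hence $\{X_j\}$ also $i$-converges to $Y$ with respect to $\{S'_n\}$, and the converse is symmetric. Since the two exhaustions define the same convergent sequences, they induce the same finest topology on the set of pants decompositions, so $V_i(S)$ is independent of the exhaustion.

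The main obstacle I anticipate is verifying monotonicity in the cases $i = 2, 3$, where proper isotopy classes do not automatically restrict to a smaller compact subsurface; this requires a careful argument about how geodesic arcs of $X$ and $Y$ cross $\partial S'$ versus $\partial S''$, which the cases $i = 1$ and $i = 4$ avoid by virtue of their set-theoretic definitions.
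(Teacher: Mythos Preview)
Your approach is essentially identical to the paper's: both arguments use mutual cofinality of the two exhaustions together with monotonicity of $i$-agreement (agreement on a larger compact subsurface implies agreement on a smaller one) to conclude that the two exhaustions determine the same convergent sequences, and hence the same finest topology. The paper's proof is terser than yours---it simply asserts ``$X_k$ and $Y$ $i$-agree on $S_n$, and hence $X_k$ and $Y$ $i$-agree on $S_{n'}'$'' without isolating monotonicity as a separate lemma or commenting on the cases $i=2,3$---whereas you explicitly flag the restriction of proper isotopy classes as the one step requiring care.
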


\begin{proof}Let $S_0'\subset \cdots $ be another exhaustion for $S$.  Let $V_i'(S)$ be the topological space obtained by replacing $S_i$ with $S_i'$ in the above definition.  Suppose $X_1, X_2, \ldots$ is a sequence of pants decompositions which converge to $Y$ in $V_i(S)$.  We want to show that it also converges to $Y$ in $V_i'(S)$.  Choose some $n'\in \mathbb{N}$.  Then there is some $n\in \mathbb{N}$ such that $S_{n'}'\subseteq S_n$.  By the definition of $V_i(S)$, there exists a $m\in \mathbb{N}$ such that for all $k\geq m$, $X_k$ and $Y$ $i$-agree on $S_n$, and hence $X_k$ and $Y$ $i$-agree on $S_{n'}'$.  Hence the sequence $X_1,\ldots$ converges to $Y$ in $V_i'(S)$, so $V_i(S)$ and $V_i'(S)$ have the same topology. 
\end{proof}

\begin{definition}\rm
Suppose $X\in V_i(S)$ for some $1\leq i\leq 4$.  Let

\[
\B^i_n(X)=\{Y\in V_i(X): X\text{ and }Y\text{ i-agree on } Z_n\}
\]

\end{definition}

\begin{lemma}
For $1\leq i\leq 4$, 
\[\{\B_n^i(X): X\in V_i(S), n\in \mathbb{N}\}\]
is a basis for $V_i(S)$
\end{lemma}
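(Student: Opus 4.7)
The plan is to verify the two basis axioms for the collection $\{\B^i_n(X)\}$ and then show that the topology it generates coincides with $V_i(S)$. A key preliminary is a monotonicity property: if $X$ and $Y$ $i$-agree on $S_{n+1}$, then they $i$-agree on $S_n$, so $\B^i_{n+1}(X)\subseteq \B^i_n(X)$. For $i=1,4$ this is immediate from the definitions, since the sets of curves meeting (respectively contained in) $S_n$ are subsets of those meeting (contained in) $S_{n+1}$. For $i=2,3$ it requires a careful geometric argument using that the geodesic representatives of pants curves are in minimal position, so that the relevant isotopy on $S_{n+1}$ restricts to one on $S_n$.

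For the basis axioms, reflexivity of $i$-agreement (Lemma 4.8) gives $X\in \B^i_n(X)$ for every $n$. If $Y\in \B^i_n(X)\cap \B^i_m(X')$, set $k=\max(n,m)$ and let $Z\in \B^i_k(Y)$; then $Z$ $i$-agrees with $Y$ on $S_k$, hence by monotonicity on both $S_n$ and $S_m$, and transitivity (Lemma 4.8) combined with $Y$ $i$-agreeing with $X$ on $S_n$ and with $X'$ on $S_m$ gives $Z\in \B^i_n(X)\cap \B^i_m(X')$. Thus $\B^i_k(Y)\subseteq \B^i_n(X)\cap \B^i_m(X')$.

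To identify the generated topology with $V_i(S)$, recall that the characterization of $V_i(S)$ as the finest topology whose convergent sequences are the $i$-convergent ones is the sequential topology: $U$ is open in $V_i(S)$ iff every $i$-convergent sequence into $U$ is eventually in $U$. First, each $\B^i_n(X)$ is sequentially open: if $\{Y_j\}$ $i$-converges to $Y\in \B^i_n(X)$, then eventually $Y_j$ $i$-agrees with $Y$ on $S_n$, whence with $X$ on $S_n$ by transitivity, so $Y_j\in \B^i_n(X)$ for large $j$. Conversely, for any $V_i(S)$-open $U$ and any $Y\in U$, I claim some $\B^i_n(Y)$ lies in $U$; otherwise one picks $Y_n\in \B^i_n(Y)\setminus U$ for each $n$, and monotonicity forces $\{Y_n\}$ to $i$-converge to $Y$, contradicting the sequential openness of $U$. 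Hence every $V_i(S)$-open set is a union of basic sets.

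The main obstacle is the monotonicity of $i$-agreement for $i=2,3$, which is the only step with real geometric content; once this technical point is established, the remainder of the argument is a formal manipulation of the equivalence relations from Lemma 4.8 together with the characterization of $V_i(S)$ as a sequential topology.
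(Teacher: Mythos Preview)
Your proof is correct and follows essentially the same approach as the paper: show each $\B^i_n(X)$ is sequentially open (via transitivity of $i$-agreement), then show every open neighborhood of a point $Y$ contains some $\B^i_n(Y)$ by the same contradiction argument of choosing $Y_n\in\B^i_n(Y)\setminus U$. You are more careful than the paper in isolating the monotonicity $\B^i_{n+1}(X)\subseteq\B^i_n(X)$ as a preliminary step---the paper uses this implicitly when asserting that such a sequence $i$-converges to $Y$---and your separate verification of the abstract basis-intersection axiom is harmless but redundant once openness and the local-basis property are established.
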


\begin{proof}
First, we must show that each $B_n^i(X)$ is open in $V_i(S)$.  Suppose $Y\in B_n^i(X)$ and $X_1,\ldots $ converge to $Y$ in $V_i(S)$.  Then $\{X_j\}_{j\geq 1}$ i-converges to $Y$, hence all but finitely many of the $X_j$ are in $B_n^i(X)$.  The openness of $B_n^i(X)$ thus follows from the fact that $V_i(S)$ carries the finest possible topology such that all i-convergent sequences converge in $V_i(S)$.  
\par
Now suppose $X\in V_i(S)$ and that $U\subseteq V_i(S)$ is a neighborhood of $X$.  We want to show that there exists some $n\in \mathbb{N}$ such that $B_n^i(X)\subseteq U$.  Suppose not.  Then for all $n\in \mathbb{N}$, there exists a pants decomposition $X_n\in B_n^i(X)\backslash U$.  But this means the sequence $\{X_n\}_{n\geq 1}$ converges to $X$, but none of the $X_n$ are in $U$, a contradiction because $U$ is open.  Hence there is some $B_n^i(X)\subseteq U$.
\end{proof}

\begin{lemma}[Disjointness Lemma]
Let $X,Y\in V_i(S)$.  Then $B_n^i(X)$ are $B_n^i(Y)$ are either disjoint or equal.  
\end{lemma}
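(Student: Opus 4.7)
The plan is to recognize $B_n^i(X)$ as the equivalence class of $X$ under the relation of $i$-agreement on $S_n$. By Definition 4.10, $B_n^i(X) = \{Y \in V_i(S) : X \text{ and } Y \text{ } i\text{-agree on } S_n\}$, which is precisely this equivalence class. Lemma 4.8 already established that $i$-agreement on $S_n$ is an equivalence relation, so the classical fact that the equivalence classes of any equivalence relation partition the underlying set yields the conclusion directly.

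To spell it out: I would suppose $B_n^i(X) \cap B_n^i(Y) \neq \emptyset$ and pick some $Z$ in the intersection. Writing $\sim$ for $i$-agreement on $S_n$, this means $Z \sim X$ and $Z \sim Y$. By symmetry and transitivity (Lemma 4.8) we get $X \sim Y$. Then for any $W \in B_n^i(X)$ we have $W \sim X \sim Y$, so $W \in B_n^i(Y)$, giving $B_n^i(X) \subseteq B_n^i(Y)$. The reverse inclusion follows from the same argument with the roles of $X$ and $Y$ interchanged, and hence $B_n^i(X) = B_n^i(Y)$. Contrapositively, if the two balls are not equal, they must be disjoint.

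There is essentially no obstacle here; the entire content of the lemma sits inside Lemma 4.8, which in turn rested on the uniqueness of geodesic representatives in the fixed hyperbolic metric. The only thing worth checking carefully is that the definition of $B_n^i(X)$ really is symmetric in $X$ and its elements, i.e.\ that ``$X$ and $Y$ $i$-agree on $S_n$'' is genuinely a statement about an unordered pair, which is immediate from each of Definitions 4.1, 4.2, 4.4, 4.6, and 4.9. Once this bookkeeping is noted, the proof reduces to the one-line observation above.
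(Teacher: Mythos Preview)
Your proof is correct and follows exactly the same approach as the paper: both recognize $B_n^i(X)$ as the equivalence class of $X$ under $i$-agreement on $S_n$ (an equivalence relation by Lemma~4.8) and invoke the standard fact that equivalence classes are either disjoint or equal. Your version simply spells out the symmetry--transitivity argument in more detail, whereas the paper states it in one sentence; note that a couple of your definition numbers are slightly off (e.g.\ $B_n^i(X)$ is Definition~4.12, and the $3$-agreement definition is~4.5), but this does not affect the argument.
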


\begin{proof}
$B_n^i(X)$ and $B_n^i(Y)$ are equivalence classes of vertices with $i$-agreement on $S_n$ as the equivalence relation.  Any two equivalence classes are either disjoint or equal.
\end{proof}

\begin{definition}\rm
For $X\in V_i(S)$, let
\[V_i(S,X)=\{Y\in \mathcal{P}(S): X\text{ and }Y\text{ are in the same connected-component of } \mathcal{P}\}\]

\end{definition}

\begin{lemma}
For $1\leq i\leq 4$, $V_i(S)$ is second-countable.  In particular, for any $X\in V_i(S)$, the space has a countable basis given by
\[\{B_n^i(Y): Y\in V_i(S,X), n\in \mathbb{N}\}\]
\end{lemma}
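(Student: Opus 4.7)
The plan is to exhibit $\mathcal{B} := \{B_n^i(Z) : Z \in V_i(S, X), n \in \mathbb{N}\}$ as a countable basis for $V_i(S)$, which will simultaneously establish second-countability and the stated explicit basis.

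For countability, note that $V_i(S, X)$ is (identified with) a connected component of the pants graph $\mathcal{P}(S)$. Every vertex of $\mathcal{P}(S)$ has countably many neighbors, since a pants decomposition contains countably many curves and each curve admits countably many elementary moves (the vertices of the relevant Farey graph). Hence the ball of radius $k$ about $X$ in $\mathcal{P}(S)$ is countable for every $k$, and $V_i(S, X)$, being the union of these balls, is countable.

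For the basis property, Lemma 4.12 already supplies the basis $\{B_n^i(Y) : Y \in V_i(S), n \in \mathbb{N}\}$ of $V_i(S)$. By the Disjointness Lemma, $B_n^i(Y) = B_n^i(Z)$ whenever $Y$ and $Z$ $i$-agree on $S_n$, so it suffices to establish: for every $Y \in V_i(S)$ and every $n \in \mathbb{N}$, there is some $Z \in V_i(S, X)$ that $i$-agrees with $Y$ on $S_n$. To produce such a $Z$, I would exploit the local finiteness of geodesic pants decompositions: only finitely many curves of $Y$ meet $\overline{S_n}$, and likewise for $X$. Choose a finite-type subsurface $S_k$ containing all of these relevant curves; then modify $X$ inside $S_k$ by finitely many elementary moves to produce a pants decomposition $Z$ whose curves meeting $S_n$ coincide with those of $Y$. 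Connectedness of the pants graph of the compact surface $S_k$, known from the finite-type theory, is what permits this modification. Since only finitely many curves are changed, Lemma 3.2 places $Z$ in $V_i(S, X)$, and by construction $Z$ $i$-agrees with $Y$ on $S_n$.

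The main obstacle is making the modification of $X$ inside $S_k$ extend to a globally valid pants decomposition of $S$: naively replacing $X \cap S_k$ by a pants decomposition of $S_k$ may fail because $X$ can have curves that cross $\partial S_k$. The plan to overcome this is, for $k$ sufficiently large, to first perform a preliminary finite sequence of elementary moves on $X$ so that the resulting pants decomposition contains $\partial S_l$ as a submulticurve for some $l \geq k$; this uses that $\partial S_l$ consists of finitely many essential simple closed curves by the definition of a principal exhaustion, together with local finiteness to bound how many curves of $X$ need to be touched. After this preparatory step, the desired modification is carried out purely inside the finite-type surface $S_l$, where one invokes connectedness of the finite-type pants graph to route from (the prepared) $X$ to a pants decomposition containing the required curves of $Y$ near $S_n$, completing the construction of $Z$.
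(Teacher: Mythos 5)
Your proof is correct and takes essentially the same route as the paper: restrict the basis of Lemma 4.13 to $Y$ ranging over the component of $X$, using the Disjointness Lemma to justify the restriction (one small correction: the basis lemma is 4.13, not 4.12, which is the definition of $B^i_n$). You supply two supporting observations that the paper leaves implicit --- countability of a connected component of $\mathcal{P}(S)$, and the fact that $B^i_n(Y)\cap V_i(S,X)\neq\emptyset$ for every $Y$ and $n$ --- and both arguments are sound.
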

\begin{proof}
Let $Y\in V_i(S)$ and $n\in \mathbb{N}$.  Then $B_n^i(Y)$ intersects nontrivially with $V_i(S,X)$, hence there is some $\hat{Y}\in (B_n^i(Y)\cap V_i(S,X))$.  By the Disjointness Lemma, $B_n^i(Y)=B_n^i(\hat{Y})$.  Thus,

\[\{\B_n^i(Y): Y\in V_i(S), n\in \mathbb{N}\}=\{B_n^i(Y): Y\in V_i(S,X), n\in \mathbb{N}\}\]
is a basis for $V_i(S)$.
\end{proof}

\begin{lemma}
The space $V_i(S)$ is zero-dimensional: i.e., it has a basis of clopen sets.
\end{lemma}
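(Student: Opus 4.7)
The plan is to show that each basic open set $B_n^i(X)$ is in fact clopen, which, combined with Lemma 4.11 (which says these sets form a basis for $V_i(S)$), gives the result immediately.

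Openness of each $B_n^i(X)$ is already established in Lemma 4.11. For closedness, the key observation is that the Disjointness Lemma tells us that $i$-agreement on $S_n$ is an equivalence relation whose equivalence classes are exactly the sets $\{B_n^i(Y) : Y \in V_i(S)\}$. In particular, the collection $\{B_n^i(Y) : Y \in V_i(S)\}$ partitions $V_i(S)$ for each fixed $n$. Thus I would write
\[
V_i(S) \setminus B_n^i(X) = \bigcup_{[Y] \neq [X]} B_n^i(Y),
\]
where the union runs over equivalence classes of $i$-agreement on $S_n$ distinct from that of $X$. Each $B_n^i(Y)$ in this union is open by Lemma 4.11, so their union is open, and therefore $B_n^i(X)$ is closed.

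Having shown that each $B_n^i(X)$ is clopen, the subcollection
\[
\{B_n^i(X) : X \in V_i(S),\, n \in \mathbb{N}\}
\]
is a basis for $V_i(S)$ consisting entirely of clopen sets, which is precisely the definition of zero-dimensionality. I do not anticipate any genuine obstacle here; the only subtlety is making sure the partition described above really does cover the complement, but this is immediate from the fact that every pants decomposition lies in some equivalence class of $i$-agreement on $S_n$ (namely its own). So the proof is essentially a one-line deduction from the Disjointness Lemma and Lemma 4.11.
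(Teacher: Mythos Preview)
Your proof is correct and follows essentially the same approach as the paper: both argue that for each fixed $n$ the sets $\{B_n^i(Y)\}$ partition $V_i(S)$ into open sets (via the Disjointness Lemma), so each is clopen. You have simply written out in more detail the step that the complement is a union of open sets.
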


\begin{proof}
For each $n\geq 0$, the set 
\[\{B_n^i(Y): Y\in V_i(S)\}\]
gives a partition of $V_i(S)$ into disjoint open sets, hence each set in the basis from the previous lemma is clopen.
\end{proof}

\begin{lemma}
For $1\leq i\leq 4$, $V_i(S)$ is Hausdorff.
\end{lemma}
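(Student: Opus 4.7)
The plan is to combine the basis from Lemma 4.12 with the Disjointness Lemma 4.13: to separate distinct points $X \neq Y$ of $V_i(S)$, it suffices to exhibit some $n \in \mathbb{N}$ for which $B_n^i(X) \neq B_n^i(Y)$, since the Disjointness Lemma then upgrades ``not equal'' to ``disjoint,'' yielding the required separating open neighborhoods.

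To find such an $n$, I would argue as follows. Since $X$ and $Y$ are distinct isotopy classes of pants decompositions, their unique geodesic representatives are distinct as subsets of $S$, so there exists a curve $\alpha$ in the symmetric difference; without loss of generality $\alpha \in X \setminus Y$. Because $\alpha$ is a compact subset of $S$ and the $S_n$ form an exhaustion with $\overline{S_n} \subseteq S_{n+1}^{\mathrm{o}}$, there is some $n$ with $\alpha \subset S_n$.

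Now I would verify that for this $n$ and for every $1 \leq i \leq 4$, the decompositions $X$ and $Y$ fail to $i$-agree on $S_n$. By Lemma 4.27, it is enough to rule out the strongest notion, $1$-agreement, for $i=1$, and the weakest relevant one for $i = 2,3,4$. In each case the curve $\alpha$ lies in the ``agreement data'' of $X$ with respect to $S_n$: for $i=1$ it is a curve of $X$ that meets $S_n$; for $i=2,3$ it contributes the closed component $\alpha$ to $X \cap S_n$; for $i=4$ it is a curve of $X$ contained in $S_n$. If $Y$ were to $i$-agree with $X$ on $S_n$, then $Y$ would have to contain a curve whose geodesic representative is isotopic to $\alpha$ (for $i=2,3$ this uses that a proper arc is never isotopic to a simple closed curve; for $i=1$ and $i=4$ it is immediate). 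By uniqueness of the geodesic representative in each isotopy class, this forces $\alpha \in Y$, contradicting $\alpha \in X \setminus Y$.

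Hence $X$ and $Y$ do not $i$-agree on $S_n$, so $B_n^i(X) \neq B_n^i(Y)$, and by the Disjointness Lemma these basic open sets are disjoint neighborhoods of $X$ and $Y$ respectively. The only point requiring care is the isotopy argument for $i = 2, 3$, where one must use properness of the isotopy together with the fact that $\alpha$ is entirely contained in $S_n$ to conclude that the isotopic curve in $Y \cap S_n$ is in fact a closed curve equal to $\alpha$; beyond this, the proof is a direct application of the basis and the Disjointness Lemma.
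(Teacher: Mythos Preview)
Your proof is correct and follows the same approach as the paper: find an $n$ for which $X$ and $Y$ fail to $i$-agree on $S_n$, then invoke the Disjointness Lemma to obtain disjoint basic open neighborhoods. The paper's proof is in fact much more terse---it simply asserts ``choose an $n$ such that $X$ and $Y$ do not $i$-agree on $S_n$'' without justification---so your version supplies the missing detail.

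Two minor remarks. First, your lemma numbering is off by one relative to the paper (the basis is Lemma~4.13, the Disjointness Lemma is 4.14, and your ``Lemma~4.27'' is presumably Lemma~4.7). Second, the case analysis for $i=1,2,3,4$ can be collapsed: since $\alpha \subset S_n$ and $\alpha \in X \setminus Y$, the decompositions $X$ and $Y$ fail to $4$-agree on $S_n$ (the weakest notion), and the contrapositive of Lemma~4.7 then gives failure of $i$-agreement for every $i \le 4$. This avoids the delicate discussion of proper arcs versus closed curves for $i=2,3$.
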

\begin{proof}
Let $X,Y\in V_i(S)$ with $X\neq Y$.  Choose an $n\in \mathbb{N}$ such that $X$ and $Y$ are do not $i$-agree on $S_n$.  Then by the Disjointness Lemma, $B_n^i(X)$ and $B_n^i(Y)$ are disjoint open sets that respectively contain $X$ and $Y$.
\end{proof}
\begin{lemma}
For $1\leq i\leq 4$, $V_i(S)$ is totally disconnected.
\end{lemma}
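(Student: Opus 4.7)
The plan is to deduce total disconnectedness immediately from the zero-dimensionality already established, together with the Hausdorff property. Recall that a topological space is totally disconnected if its only connected subsets are singletons (or the empty set). So the goal reduces to showing that any subset $C\subseteq V_i(S)$ containing two distinct points $X,Y$ is disconnected.

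First I would fix distinct $X,Y\in V_i(S)$. Since $X$ and $Y$ represent different isotopy classes of pants decompositions, there must be some $n\in\mathbb{N}$ large enough that $X$ and $Y$ fail to $i$-agree on $S_n$; indeed, this is essentially what was used to prove Hausdorffness, and the existence of such an $n$ follows because any two non-isotopic geodesic pants decompositions differ on some compact subsurface, which is eventually contained in some $S_n$ of the principal exhaustion. By the Disjointness Lemma, $B_n^i(X)$ and $B_n^i(Y)$ are then disjoint, and $B_n^i(X)$ is clopen by the previous lemma on zero-dimensionality.

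Next I would use this to split any set $C$ containing both $X$ and $Y$. Write
\[
C = \bigl(C\cap B_n^i(X)\bigr)\sqcup\bigl(C\setminus B_n^i(X)\bigr).
\]
Both pieces are open in the subspace topology on $C$ (since $B_n^i(X)$ is clopen in $V_i(S)$), and both are nonempty (the first contains $X$, the second contains $Y$). Hence $C$ is disconnected, and total disconnectedness follows.

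The only subtle step is verifying that distinct points genuinely fail $i$-agreement on some $S_n$; but if $X$ and $Y$ are non-isotopic geodesic pants decompositions, then for $i\in\{1,2,3,4\}$ there is some curve witnessing the difference, and by the exhaustion property this witness lies in $S_n$ for all sufficiently large $n$. This is essentially a repetition of the Hausdorff argument, so no new work is required, and I would simply cite the previous lemma.
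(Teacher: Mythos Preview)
Your proposal is correct and follows essentially the same approach as the paper: pick $n$ so that $X$ and $Y$ fail to $i$-agree on $S_n$, and use that $B^i_n(X)$ is a clopen set containing $X$ but not $Y$ to separate them. The paper's version is slightly terser (it just says $B^i_n(X)$ and $B^i_n(Y)$ are disjoint clopen sets, so $X$ and $Y$ lie in different components), but your explicit splitting of an arbitrary $C$ is the same argument unpacked.
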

\begin{proof}
Let $X,Y\in V_i(S)$ with $X\neq Y$.  Choose an $n\in \mathbb{N}$ such that $X$ and $Y$ are do not $i$-agree on $S_n$.  Then by Lemma 4.17, $B^i_n(X)$ and $B^i_n(Y)$ are disjoint clopen sets, so $X$ and $Y$ are not in the same connected-component.
\end{proof}
Recall that a space $M$ is said to be $T_3$ if for any nonempty closed set $C\subseteq M$ and any point $m\in M\backslash C$, there exist open sets $U,V\subset M$ such that $C\subseteq U$ and $m\in V$.
\begin{lemma}
For $1\leq i\leq 4$, $V_i(S)$ is $T_3$.
\end{lemma}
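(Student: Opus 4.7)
The plan is to leverage the zero-dimensionality established in Lemma 4.17: in any $T_1$ space with a basis of clopen sets, regularity is immediate, so the work has essentially been done already.

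In detail, I would start with an arbitrary nonempty closed set $C \subseteq V_i(S)$ and a point $X \in V_i(S) \setminus C$. Since $C$ is closed, its complement is an open neighborhood of $X$. By Lemma 4.14 (the basis lemma), there exists some $n \in \mathbb{N}$ such that $\B_n^i(X) \subseteq V_i(S) \setminus C$. By Lemma 4.17, $\B_n^i(X)$ is clopen, so its complement $V_i(S) \setminus \B_n^i(X)$ is also open.

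I would then take $V = \B_n^i(X)$ and $U = V_i(S) \setminus \B_n^i(X)$. By construction, $V$ is open and contains $X$, while $U$ is open and contains $C$ (since $\B_n^i(X) \cap C = \emptyset$). The two sets are disjoint as complements, establishing the $T_3$ property.

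There is essentially no obstacle here; the entire argument is a direct consequence of the previously established clopen basis. The only thing to be careful about is formally invoking the right lemmas in the right order (Lemma 4.14 to produce a basic neighborhood of $X$ inside the open set $V_i(S) \setminus C$, then Lemma 4.17 to promote this basic set to a clopen one).
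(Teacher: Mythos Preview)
Your argument is correct and is essentially the same strategy as the paper's proof, with one small streamlining. Both proofs first produce an $n$ with $B_n^i(X)\cap C=\emptyset$; the paper does this by a direct sequence-contradiction argument, while you invoke the basis lemma (note: that is Lemma~4.13, not Lemma~4.14, which is the Disjointness Lemma). For the separating open set containing $C$, the paper takes $U=\bigcup_{Y\in C} B_n^i(Y)$ and appeals to the Disjointness Lemma to see $U\cap B_n^i(X)=\emptyset$, whereas you simply take $U=V_i(S)\setminus B_n^i(X)$ using that $B_n^i(X)$ is clopen (Lemma~4.17). Since Lemma~4.17 was itself proved from the Disjointness Lemma, the two constructions are equivalent; your version just packages the ``clopen basis $\Rightarrow$ $T_3$'' implication more directly.
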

\begin{proof}
Let $X\in V_i(S)$ be a pants decomposition and $C\subset V_i(S)$ be a closed set with $X\notin C$.  Note that $V_i(S)\backslash C$ is an open neighborhood of $X$.  We claim that there is an $n\in \mathbb{N}$ such that $B_n^i(X)\cap C=\emptyset$.  Suppose not.  Then for all $m\in \mathbb{N}$, there is some $Y_m\in C$ such that $Y\in B_m^i(X)$.  But this implies that $Y_1,Y_2,\ldots$ is a sequence of pants decompositions which converge to $X$, and hence this sequence must eventually enter $V_i(S)\backslash C$, which is a contradiction.  Thus, we have some $n$ such that $B_n^i(X)\cap C=\emptyset$.
\par 
Now let 
\[U=\bigcup_{Y\in C} B_n^i(Y)\]
Then $U$ is an open set containing $C$, and by the Disjointness Lemma, $B_n^i(X)\cap U=\emptyset$.
\end{proof}
\begin{corollary}
For $1\leq i\leq 4$, $V_i(S)$ is metrizable.
\end{corollary}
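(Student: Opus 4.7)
The plan is to apply the Urysohn metrization theorem, which states that every second-countable, Hausdorff, regular topological space is metrizable. Each of the three hypotheses has already been verified in the preceding lemmas of this section: second-countability comes from Lemma 4.15, the Hausdorff property comes from Lemma 4.17, and the $T_3$ property comes from Lemma 4.19. Thus $V_i(S)$ satisfies all the hypotheses of Urysohn's theorem and hence is metrizable.

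The only subtlety worth flagging is a convention issue: some references define ``regular'' to mean only the separation axiom $T_3$ (a point and a disjoint closed set can be separated by disjoint open sets), while others bundle $T_1$ (or Hausdorff) into the definition. Either way the proof is valid here, since both $T_3$ and Hausdorff are established separately in Lemmas 4.19 and 4.17.

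There is essentially no obstacle to the proof: the work has already been done, and the corollary is simply the formal invocation of Urysohn. The statement is phrased as a corollary precisely because it follows mechanically from the preceding three lemmas. I would note, however, that this result is purely existential and produces no explicit metric; the construction of an explicit (and moreover complete) metric inducing the same topology is deferred to Section 5, where it is used to identify $V_i(S)$ with the Baire space $\mathbb{N}^{\mathbb{N}}$.
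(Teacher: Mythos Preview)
Your proposal is correct and matches the paper's own proof essentially verbatim: both simply invoke the Urysohn Metrization Theorem after assembling Hausdorff, $T_3$, and second-countability from the preceding lemmas. The only slip is in your lemma numbering---in the paper, second-countability is Lemma~4.16, Hausdorff is Lemma~4.18, and $T_3$ is Lemma~4.20 (your Lemma~4.19 is actually the total-disconnectedness statement).
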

\begin{proof}
Combining the previous lemmas, $V_i(S)$ is a Hausdorff, $T_3$, second-countable topological space.  Hence, by the Urysohn Metrization Theorem\cite{willard2004general}, $V_i(S)$ is metrizable.
\end{proof}

\begin{lemma}
For all $X\in V_i(S)$ and all $n\geq 1$, $B^i_n(X)$ is not compact.
\end{lemma}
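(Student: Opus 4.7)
The approach will be to exhibit an infinite sequence inside $B^i_n(X)$ that has no convergent subsequence; since $V_i(S)$ is metrizable by Corollary 4.22, sequential compactness and compactness coincide, so this suffices.

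To build the sequence, the plan is to alter $X$ by modifications carried out entirely inside $S \setminus S_n$. First I would find a pants curve $\alpha \in X$ such that both pairs of pants of $X$ adjacent to $\alpha$ lie inside $S \setminus S_n$; write $R$ for their union, which is a complexity 1 subsurface disjoint from $S_n$. Such an $\alpha$ exists because $S \setminus S_n$ has infinite type while only finitely many pants pieces of $X$ can meet the finite curve system $\partial S_n$, so infinitely many pants pieces of $X$ are disjoint from $S_n$ and some neighboring pair shares a curve. Next, I would choose a sequence $\alpha_1, \alpha_2, \ldots$ of pairwise non-isotopic essential simple closed curves in $R$, each of which can serve as a pants curve in place of $\alpha$; this is possible since the Farey graph of a complexity 1 surface is infinite. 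Setting $X_k = (X \setminus \{\alpha\}) \cup \{\alpha_k\}$ then produces a sequence of pants decompositions of $S$ differing from $X$ only inside $R$. In particular the set of curves meeting $S_n$ is unchanged, so $X_k \in \B^1_n(X) \subseteq \B^i_n(X)$ for every $i \in \{1,2,3,4\}$.

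The final step is to verify that $\{X_k\}$ has no convergent subsequence in $V_i(S)$. Suppose for contradiction that $X_{k_j} \to Y$; pick $m$ with $R \subseteq S_m$, so that eventually $X_{k_j}$ and $Y$ must $i$-agree on $S_m$. Because $\alpha_{k_j}$ is a simple closed geodesic fully contained in $R \subseteq S_m$, each of the four notions of $i$-agreement forces $\alpha_{k_j}$ to coincide with a curve of $Y$ (using that tangent simple geodesics are equal, as in Lemma 4.3). But the $\alpha_{k_j}$ pairwise intersect inside $R$, whereas $Y$ is a pants decomposition consisting of disjoint curves, so $Y$ can contain at most one of them, a contradiction. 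The main technical obstacle is the first step: verifying that an $\alpha$ exists whose \emph{two adjacent pants} lie in $S \setminus S_n$, rather than merely $\alpha$ itself being disjoint from $S_n$; once this finiteness argument is pinned down, the rest of the proof is uniform across the four notions of agreement.
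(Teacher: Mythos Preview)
Your argument is correct, but it is a genuinely different approach from the paper's.  The paper does not use sequential compactness at all: it simply observes that $B^i_n(X)$ is partitioned by the clopen sets $B^i_{n+1}(X')$ (Disjointness Lemma), and that by the principal-exhaustion hypothesis $\kappa(S_{n+1}\setminus S_n)\geq 6$ this partition has infinitely many pieces, so there is no finite subcover.  That proof is two lines and uses nothing about the geometry of curves.  Your route instead builds an explicit sequence with no accumulation point: you locate a complexity~1 subsurface $R$ entirely outside $S_n$, replace the single pants curve $\alpha\subset R$ by infinitely many mutually intersecting candidates $\alpha_k$, and then argue that any limit $Y$ would have to contain infinitely many of the pairwise-intersecting $\alpha_{k_j}$, contradicting disjointness of a pants decomposition.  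This works and is uniform in $i$ as you say; the only comments are that the metrizability result you invoke is Corollary~4.21 (not 4.22, which is the lemma you are proving), and the existence of $R$ can be justified exactly as you sketch---only finitely many pants of $X$ meet the compact $S_n$, and each such pants has at most three boundary curves, so all but finitely many pants curves of $X$ have both adjacent pants disjoint from $S_n$.  The trade-off is that the paper's proof exploits the built-in combinatorics of the principal exhaustion and avoids any geometric reasoning, while yours is more explicit about what ``non-compactness'' looks like at the level of pants decompositions.
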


\begin{proof}
The set $B^i_n(X)$ has an open cover by a disjoint union of clopen sets:
\[B^i_n(X)=\bigcup_{X'\in B^i_n(X)} B^i_{n+1}(X')\]
By the assumption that $\kappa(S_{n+1}\backslash S_n)\geq 6$, this cover is infinite.  Since the sets in the cover are disjoint, they clearly cannot have a finite subcover.
\end{proof}

The following Lemma will be useful in the next section.  We state and prove it here because it is a statement about the topology of $V_i(S)$.

\begin{lemma}
Every compact subset of $V_i(S)$ has empty interior.
\end{lemma}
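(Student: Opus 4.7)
The plan is to argue by contradiction using the previous lemma together with the fact that the basic sets $B^i_n(X)$ are clopen.

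Suppose $K \subseteq V_i(S)$ is a compact subset with nonempty interior. Since the collection $\{B^i_n(Y) : Y \in V_i(S), n \in \mathbb{N}\}$ is a basis for $V_i(S)$ by Lemma 4.13, there exist some $X \in V_i(S)$ and some $n \geq 1$ such that $B^i_n(X)$ is contained in the interior of $K$, and in particular $B^i_n(X) \subseteq K$.

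Next I would invoke Lemma 4.17, which tells us that each $B^i_n(X)$ is clopen in $V_i(S)$. Consequently $B^i_n(X)$ is a closed subset of the compact set $K$, and closed subsets of compact Hausdorff spaces are themselves compact. This forces $B^i_n(X)$ to be compact, which directly contradicts Lemma 4.21. Hence no compact subset of $V_i(S)$ can have nonempty interior.

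The argument is entirely formal once one has Lemma 4.21 in hand, so there is no real obstacle; the only subtlety is making sure one selects $n \geq 1$ (rather than $n = 0$) when extracting a basic open set from the interior of $K$, which is legitimate since the collection $\{B^i_n(Y) : n \geq 1\}$ already forms a basis at each point (the sets $B^i_n(X)$ shrink as $n$ grows, so any basic neighborhood at level $0$ contains one at level $\geq 1$).
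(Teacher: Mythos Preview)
Your proof is correct and follows essentially the same approach as the paper's own argument: both suppose a compact set with nonempty interior, extract a basic set $B^i_n(X)$ inside it, note that this set is closed and hence compact, and derive a contradiction with the previous lemma. The only minor remark is that closedness of a subset in a compact space already suffices for compactness (Hausdorffness is not needed), but this does not affect the validity of your argument.
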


\begin{proof}
Suppose $C\subseteq V_i(S)$ has nonempty interior.  Then there is a point $X\in C$ and an $n\geq 1$ such that $B^i_n(X)\subset C$.  But since $B^i_n(X)$ is closed, and closed subsets of compact sets are compact, then $B^i_n(X)$ is also compact, which is a contradiction.
\end{proof}

The following Lemma will be useful in the section six.  We state and prove it here because it is a statement about the topology of $V_i(S)$.
\begin{lemma}
Let $X,Y\in V_i(S)$.  Then there exists a sequence of pants decompositions $X=X_1,X_2,\ldots$ such that $X_j$ and $X_{j+1}$ differ by an elementary move and such that the $X_j$ converge to $Y$.
\end{lemma}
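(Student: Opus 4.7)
The plan is to construct the sequence inductively in stages: after stage $n$, the current pants decomposition $Z^{(n)}$ will $i$-agree with $Y$ on $S_n$, and every later stage will be arranged to \emph{preserve} that agreement, so that $Z^{(n)}$ and all later terms lie in $\B^i_n(Y)$, which is exactly $i$-convergence. Before starting, I would record that any pants decomposition is a locally finite family of curves in $S$: its complement is a disjoint union of open pants, so its union is closed. Consequently, only finitely many curves of $Y$ (or of the current $Z^{(n-1)}$) meet any given compact $S_n$, which makes the finite-type constructions below feasible.

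For stage $n$, let $U_n$ be the union of the finitely many pants of $Y$ meeting $S_n$, and let $C_n$ be the multicurve consisting of $\partial U_n$ together with the curves of $Y$ in the interior of $U_n$; equivalently $C_n$ is the pants decomposition of $U_n$ induced by $Y$, and $C_{n-1}\subseteq C_n$ since $U_{n-1}\subseteq U_n$. Choose a finite-type subsurface $T^{(n)}\supseteq T^{(n-1)}\cup U_n$ whose boundary is a union of curves of $Z^{(n-1)}$, by taking $T^{(n)}$ to be a union of pants of $Z^{(n-1)}$; this is finite by local finiteness. Define the target pants decomposition $W^{(n)}$ of $T^{(n)}$ by gluing $C_n$ with any pants decomposition of the finite-type surface $T^{(n)}\setminus U_n$ (with boundary $\partial U_n\cup\partial T^{(n)}$). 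Then $W^{(n)}\supseteq\partial T^{(n)}\cup C_n$, and the only curves of $W^{(n)}$ that can meet $S_n$ lie in $C_n$ and are precisely the curves of $Y$ meeting $S_n$; thus $W^{(n)}$ $1$-agrees with $Y$ on $S_n$.

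To produce the elementary moves, observe that $Z^{(n-1)}\cap T^{(n)}$ and $W^{(n)}$ are pants decompositions of the finite-type surface $T^{(n)}$ with common boundary $\partial T^{(n)}$, and both contain $C_{n-1}$ (by the inductive hypothesis for $Z^{(n-1)}$ and by construction for $W^{(n)}$). Working in the relative pants graph of $T^{(n)}$ fixing $C_{n-1}$, equivalently in the pants graph of the finite-type (possibly disconnected) surface obtained by cutting $T^{(n)}$ along $C_{n-1}$, we get a finite sequence of elementary moves between them; this relative pants graph is connected because it is a box product of connected finite-type pants graphs. Each such move lifts to an elementary move of $S$, so performing them turns $Z^{(n-1)}$ into $Z^{(n)} := W^{(n)}\cup(Z^{(n-1)}\setminus T^{(n)})$. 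Because every intermediate decomposition contains $C_{n-1}$, its curves meeting $S_{n-1}$ are exactly those of $Y$ meeting $S_{n-1}$ (additional curves can appear only outside $U_{n-1}$, hence outside $S_{n-1}$), so $1$-agreement with $Y$ on $S_{n-1}$ is preserved throughout stage $n$. Inductively, the $i$-agreement on each $S_m$ achieved at stage $m$ persists for all subsequent terms, giving $i$-convergence.

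The main obstacle is precisely this relative-connectedness step: the sequence must \emph{stabilize} in $\B^i_n(Y)$ rather than only visit it at the end of each stage, which forces working in the pants graph relative to the growing multicurve $C_n$. Reducing this to the classical connectedness of the finite-type pants graph, applied to each component of $T^{(n)}$ cut along $C_n$, is what makes the inductive construction go through.
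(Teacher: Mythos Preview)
Your argument is correct in substance, though you should state the base case explicitly: with the convention $Z^{(-1)}=X$, $T^{(-1)}=\emptyset$, $C_{-1}=\emptyset$, stage $0$ runs as written with no relative constraint, producing $Z^{(0)}\supseteq C_0$. Without that convention the inductive hypothesis ``$Z^{(n-1)}\supseteq C_{n-1}$'' has no anchor, since $X$ need not contain $C_0$. You should also remark that the argument establishes $1$-agreement (the strongest notion for $1\le i\le 4$), hence $i$-convergence for each such $i$; this is implicit but worth saying.

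Your route differs from the paper's. The paper first invokes Lemma~4.11 (independence of the topology from the choice of principal exhaustion) to replace the given exhaustion by one satisfying $(\partial S_j\setminus\partial S)\subset Y$ for every $j$. After that normalisation the auxiliary subsurfaces collapse: one simply works inside $S_{k+1}$, matching $Y$ on $S_{k+1}$ while keeping $\partial S_k$ fixed, and the required relative connectedness is just connectedness of the pants graph of each component of $S_{k+1}\setminus S_k$. Your version keeps the exhaustion fixed and instead manufactures the subsurfaces $U_n$ (pants of $Y$ meeting $S_n$) and $T^{(n)}$ (pants of $Z^{(n-1)}$ covering $T^{(n-1)}\cup U_n$) at each stage. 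The paper's approach is shorter and avoids tracking two nested families of subsurfaces, at the cost of using Lemma~4.11; your approach is self-contained and makes the ``preserve agreement by fixing $C_{n-1}$'' mechanism more visible, which is arguably the conceptual heart of the proof.
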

\begin{proof}
Recall the assumption that the exhaustion $S_0\subset\cdots$ is \emph{principal}.  Also recall that the topology of $V_i(S)$ does not depend on the choice of principal exhaustion.  Thus, we may assume that $(\partial S_j\backslash \partial S) \subset Y$ for all $j\geq 0$.  
\par 
Since the exhaustion is principal, $S_0$ has connected pants graph.  Thus, there is a path $X_1,\ldots, X_{m_0}$ in $\mathcal{P}(S)$ such that $X_{m_0}$ agrees with $Y$ on $S_0$ and such that $(\partial S_0\backslash \partial S)\subset X_{m_0}$.  
\par 
We will inductively build a path satisfying the conclusion of this lemma.  Suppose by way of induction that there exist integers $1\leq m_0<m_1<\cdots <m_k$ and a path $X=X_1,X_2,\ldots ,X_{m_k}$ in $\mathcal{P}(S)$ such that
\begin{itemize}
\item
For all $m_j\leq \ell \leq m_k$, the pants decompositions $X_{\ell}$ and $Y$ agree on $S_j$.
\item
For all $m_j\leq \ell\leq m_k$, the pants decomposition $X_{\ell}$ contains $\partial S_j\backslash \partial S$.
\end{itemize}

Starting from $X_{m_k}$, we make additional elementary moves to extend the sequence.   The exhaustion is principal, and hence each component of $S_{k+1}\backslash S_k$ has a connected pants graph.  Thus, there is a pants decomposition $X_{m_{k+1}}$ in the same connected-component as $X_{m_k}$ such that
\begin{itemize}
\item
$(\partial S_{k+1}\backslash \partial S)\subset X_{m_{k+1}}$.
\item
The pants decompositions $X_{m_{k+1}}$ and $Y$ agree on $S_{k+1}$.
\end{itemize}

Moreover, by the induction hypothesis, $X_{m_k}$ contains $\partial S_k\backslash \partial S$, and so we can find a path from $X_{m_k}$ to $X_{m_{k+1}}$ such that all added and removed curves are entirely contained in $S\backslash S_k$.  The result follows.
\end{proof}

\section{$V_i(S)$ as a Metric Space}
In this section, we construct a metric on the set of pants decompositions of $S$, which induces the topology of $V_i(S)$ from section 4.  We will show that this metric is complete, and we will use it to prove that $V_i(S)$, as a topological space, is homeomorphic to the Baire space $\mathbb{Z}^{\mathbb{Z}}$.

\begin{definition}\rm
For $1\leq i\leq 4$, denote by $\hat{d}_i$ the \emph{partial function} on $V_i(S)\times V_i(S)$ with the following values.

\[
\hat{d}_i(X,Y)=
\begin{cases}
0 & X=Y \\
\frac{1}{n} & Y\in B^i_{n-1}(X)\backslash B^i_n(X) \\
1 & Y\notin B^i_1(X)\mbox{ and } d_0(X,Y)=1
\end{cases}
\]

\end{definition}

We will show that $\hat{d}_i$ can be extended to a distance function $d_i$ on $V_i(S)$.
\begin{lemma}
For $X,Y\in V_i(S)$, $\hat{d}_i(X,Y)=0$ if and only if $X=Y$.
\end{lemma}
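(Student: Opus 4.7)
The plan is to verify this by direct inspection of the three cases defining $\hat{d}_i$, together with one small consistency check to ensure the partial function is well-defined.

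The \emph{if} direction is immediate: the first case of Definition 5.1 explicitly stipulates $\hat{d}_i(X,X)=0$.

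For the \emph{only if} direction, I would argue as follows. Suppose $\hat{d}_i(X,Y)=0$; then this value is assigned by one of the three cases. In the second case, the value is $\tfrac{1}{n}$ for some positive integer $n$ (since $B^i_{-1}(X)$ does not appear and the case is triggered only when $Y\in B^i_{n-1}(X)\setminus B^i_n(X)$), which is strictly positive. In the third case, the value is $1$, also strictly positive. Therefore the only case producing the value $0$ is the first, whose hypothesis is exactly $X=Y$.

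Before concluding, I would check that $\hat{d}_i$ is unambiguous, i.e.\ that no pair $(X,Y)$ falls into two of the three defining clauses at once. If $X=Y$, then by the definition of $i$-agreement $Y\in B^i_n(X)$ for every $n\geq 0$, so neither the second nor the third clause applies. If $X\neq Y$, then by the Hausdorff property of $V_i(S)$ (Lemma 4.17) there exists some least $n\geq 0$ with $Y\notin B^i_n(X)$; if $n\geq 1$ then $Y\in B^i_{n-1}(X)\setminus B^i_n(X)$ and only clause two fires, while if $n=0$ we fall outside $B^i_1(X)$ and only clause three (when applicable, i.e.\ when $d_0(X,Y)=1$) can fire. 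In no scenario do clauses collide with clause one.

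The main obstacle is essentially bookkeeping rather than mathematics: one must carefully read Definition 5.1 and observe that the value $0$ appears in exactly one clause, and that this clause is incompatible with the others. Once the partial function is seen to be consistently defined, the lemma reduces to a one-line inspection.
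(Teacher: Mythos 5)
Your proof is correct and takes essentially the same approach as the paper, which simply observes that the lemma follows immediately from the definition; your case-by-case inspection spells out that observation. One small bookkeeping note on your consistency check: when the least $n$ with $Y\notin B^i_n(X)$ is $n=1$ and additionally $d_0(X,Y)=1$, clauses two and three \emph{both} apply (clause two with $n=1$), but they agree in assigning the value $1$, so the partial function remains well-defined and your conclusion is unaffected.
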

\begin{proof}
This lemma follows immediately from the definition.
\end{proof}

\begin{lemma}
Suppose $\hat{d}_i(X,Y)$ is defined.  Then $\hat{d}_i(X,Y)=\hat{d}_i(Y,X)$.
\end{lemma}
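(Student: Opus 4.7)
The plan is to check symmetry case by case, corresponding to the three-case definition of $\hat{d}_i$, and in each case show both that $\hat{d}_i(Y,X)$ is defined via the same case as $\hat{d}_i(X,Y)$ and that the resulting value agrees. The key tools will be (i) symmetry of equality, (ii) the fact that $i$-agreement on $S_n$ is an equivalence relation (Lemma 4.9), in particular symmetric, and (iii) symmetry of the elementary-move relation, which gives $d_0(X,Y)=d_0(Y,X)$.

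First I would dispose of the trivial case $X=Y$, where both $\hat{d}_i(X,Y)$ and $\hat{d}_i(Y,X)$ equal $0$ by definition. Next, in the main case, suppose $\hat{d}_i(X,Y)=1/n$, so that $Y\in B_{n-1}^i(X)\setminus B_n^i(X)$. By definition of the balls $B^i_m$, this means that $X$ and $Y$ $i$-agree on $S_{n-1}$ but do not $i$-agree on $S_n$. Since $i$-agreement on any compact subsurface is symmetric (Lemma 4.9), it follows that $Y$ and $X$ $i$-agree on $S_{n-1}$ and do not $i$-agree on $S_n$, i.e., $X\in B^i_{n-1}(Y)\setminus B^i_n(Y)$. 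Hence $\hat{d}_i(Y,X)$ is defined by the same middle case and equals $1/n$ as well.

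Finally, in the third case, assume $\hat{d}_i(X,Y)=1$ because $Y\notin B^i_1(X)$ and $d_0(X,Y)=1$. Symmetry of the equivalence relation "$i$-agreement on $S_1$" gives $X\notin B^i_1(Y)$, and the elementary-move relation is manifestly symmetric (swapping the roles of the replaced curves $\alpha,\alpha'$), so $d_0(Y,X)=1$. Thus $\hat{d}_i(Y,X)$ is again defined by the third clause and equals $1$.

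I do not anticipate any genuine obstacle: the argument is a direct unpacking of the definition, with each case relying on a symmetry property that has already been established. The only mild subtlety is verifying that the correct clause of the piecewise definition applies to $(Y,X)$, which is handled by observing that the conditions defining each clause are themselves symmetric in $X$ and $Y$.
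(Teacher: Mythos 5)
Your proof is correct and follows essentially the same route as the paper. The only cosmetic difference is that you invoke symmetry of the $i$-agreement equivalence relation (Lemma 4.9) directly, whereas the paper cites the Disjointness Lemma (Lemma 4.14) to pass from $Y\in B^i_n(X)$ to $X\in B^i_n(Y)$; since the Disjointness Lemma is itself an immediate consequence of $i$-agreement being an equivalence relation, the two arguments are the same in substance.
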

\begin{proof}
If $Y\in B^i_n(X)$ for some $n\geq 0$, then the Disjointness Lemma implies that $X\in B^i_n(Y)$, hence $\hat{d}_i(X,Y)=\hat{d}_i(Y,X)$.  If $Y\notin B^i_0(X)$ but $d_0(X,Y)=1$, then $d_0(Y,X)=1$ because $d_0$ is symmetric, and hence $\hat{d}_i(X,Y)=\hat{d}_i(Y,X)=1$.
\end{proof}
\begin{lemma}
Suppose $X$,$Y$, and $Z$ are distinct vertices such that $\hat{d}_i(X,Y)$, $\hat{d}_i(X,Z)$, and $\hat{d}_i(Y,Z)$ are all defined.  Then $\hat{d}_i(X,Z) < \max(\{\hat{d}_i(X,Y),\hat{d}_i(Y,Z)\})$
\end{lemma}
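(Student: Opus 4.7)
The plan is to proceed by case analysis on which of the three defining clauses of $\hat{d}_i$ applies to each of the pairs $(X,Y)$, $(Y,Z)$, and $(X,Z)$. The central tools will be the Disjointness Lemma and the nested chain $B^i_0(W)\supseteq B^i_1(W)\supseteq\cdots$ associated with each vertex $W$, together with the transitivity of $i$-agreement on each $S_n$ (as a genuine equivalence relation).

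I would first dispatch the principal case in which all three distances come from clause (b) of the definition. Write $\hat{d}_i(X,Y)=1/a$, $\hat{d}_i(Y,Z)=1/b$, $\hat{d}_i(X,Z)=1/c$, so that $Y\in B^i_{a-1}(X)\setminus B^i_a(X)$ and analogously for the other two pairs. The Disjointness Lemma converts membership into equality of balls: $Y\in B^i_{a-1}(X)$ gives $B^i_k(X)=B^i_k(Y)$ for every $k\leq a-1$, and similarly $B^i_k(Y)=B^i_k(Z)$ for every $k\leq b-1$. Setting $k=\min(a-1,b-1)$ and chaining the equalities yields $Z\in B^i_{\min(a,b)-1}(X)$, hence $c\geq\min(a,b)$; this is the non-strict ultrametric bound $\hat{d}_i(X,Z)\leq \max(\hat{d}_i(X,Y),\hat{d}_i(Y,Z))$.

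The main obstacle will be upgrading this bound to the stated strict inequality. The natural attempt runs as follows: if $a<b$, then $B^i_a(Y)=B^i_a(Z)$ while $B^i_a(X)\cap B^i_a(Y)=\emptyset$ (from $Y\notin B^i_a(X)$), and the Disjointness Lemma forces $Z\notin B^i_a(X)$; this pins down $c=a$, so $\hat{d}_i(X,Z)=1/a=\max(\hat{d}_i(X,Y),\hat{d}_i(Y,Z))$, which is equality rather than strict inequality. An analogous equality configuration arises when $a=b$ (for instance if $X,Y,Z$ pairwise differ by elementary moves supported in a single complexity-$1$ subsurface of $S_a\setminus S_{a-1}$). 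Thus strict inequality cannot come from the nested ball structure and Disjointness Lemma alone, and the proof will require an additional ingredient — most likely a refined use of the principal exhaustion (each component of $S_n\setminus S_{n-1}$ having complexity at least $6$) or of the distinctness of $X,Y,Z$ to rule out the equality configurations at the critical scale. The remaining cases, in which at least one pair has distance $1$ via clause (c) of the definition (an elementary move outside the agreement region), would be dispatched by parallel case analysis: one observes that an elementary move affecting a curve in $S_n$ breaks $i$-agreement on $S_n$, localizes the support of each pair relative to the exhaustion, and then reapplies the Disjointness Lemma to bound $\hat{d}_i(X,Z)$ against the maximum of the other two distances.
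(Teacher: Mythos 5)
Your main case is exactly the paper's argument: when both $\hat{d}_i(X,Y)$ and $\hat{d}_i(Y,Z)$ come from the middle clause, the paper picks $m,n\geq 1$ with $B^i_m(X)=B^i_m(Y)$ and $B^i_n(Y)=B^i_n(Z)$, assumes $m\leq n$, and chains $Z\in B^i_n(Y)\subseteq B^i_m(Y)=B^i_m(X)$ to get $\hat{d}_i(X,Z)\leq \hat{d}_i(X,Y)$ --- the same nested-ball-plus-Disjointness-Lemma computation you carry out with $k=\min(a-1,b-1)$.

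The ``main obstacle'' you identify is real, but the resolution is not a missing ingredient: the strict inequality in the statement is an error (presumably a typo for $\leq$). The paper's own proof concludes every case with ``$\hat{d}_i(X,Z)\leq \max(\ldots)$'' and never attempts to upgrade to $<$, and the only downstream uses of this lemma (the triangle inequality in Theorem 5.5 and the ultrametric claim in Corollary 5.13) require only the non-strict bound. Your equality configuration is correct and genuinely refutes the strict version: if $\hat{d}_i(X,Y)=1/a$ and $\hat{d}_i(Y,Z)=1/b$ with $a<b$, then $Z\in B^i_{a-1}(X)\setminus B^i_a(X)$ is forced, so $\hat{d}_i(X,Z)=1/a=\max$. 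So rather than searching for a refinement via the principal exhaustion, you should simply note that the correct statement is $\hat{d}_i(X,Z)\leq\max(\{\hat{d}_i(X,Y),\hat{d}_i(Y,Z)\})$ and that your argument already proves it. One simplification for the remaining cases: if either $\hat{d}_i(X,Y)=1$ or $\hat{d}_i(Y,Z)=1$ (whether via the second clause with $n=1$ or via the third clause), the inequality is immediate because $\hat{d}_i$ never exceeds $1$; no localization of supports or further use of the Disjointness Lemma is needed there, and this is how the paper dispatches those cases.
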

\begin{proof}
First, suppose $\hat{d}_i(X,Y)=1$.  Since $\hat{d}_i$ never takes values greater than 1, we have 
\[\hat{d}_i(X,Z)\leq 1=\hat{d}_i(X,Y)=\max(\{\hat{d}_i(X,Y),\hat{d}_i(Y,Z)\})\]
By a similar argument, if $\hat{d}_i(Y,Z)=1$, then 
\[\hat{d}_i(X,Z)\leq 1=\hat{d}_i(Y,Z)=\max(\{\hat{d}_i(X,Y),\hat{d}_i(Y,Z)\})\]
\par 
Now suppose that $\hat{d}_i(X,Y)$ and $\hat{d}_i(Y,Z)$ are both less than 1.  Then there exist integers $m,n\geq 1$ such that $B^i_m(X)=B^i_m(Y)$ and $B^i_n(Y)=B^i_n(Z)$.  Without loss of generality, we will assume $m\leq n$.  Then $Z\in B^i_n(Y)\subseteq B^i_m(Y)=B^i_m(X)$, hence $\hat{d}_i(X,Z)\leq \hat{d}_i(X,Y)\leq \max(\{\hat{d}_i(X,Y),\hat{d}_i(Y,Z)\})$.  
\end{proof}

\begin{theorem}
There is a distance function $d_i$ on the set of pants decompositions of $S$ such that 
\begin{enumerate}
\item
Whenever $\hat{d}_i(X,Y)$ is defined, $\hat{d}_i(X,Y)=d_i(X,Y)$.
\item 
If $\hat{d}_i(X,Y)$ is undefined, then $d_i(X,Y)\geq 1$.  

\item
The topology induced by $d_i$ is the same as the topology on $V_i(S)$ defined in section 4.
\end{enumerate}
\end{theorem}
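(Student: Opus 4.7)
The plan is to write $d_i$ down directly in closed form rather than build it as a path metric from $\hat{d}_i$. For $X \neq Y$ in $V_i(S)$, let $N_i(X,Y)$ denote the largest integer $n \geq 0$ such that $X$ and $Y$ $i$-agree on $S_n$, with the convention $N_i(X,Y) = -1$ if they fail to $i$-agree even on $S_0$. This is well-defined: since $S_m \subseteq S_n$ for $m \leq n$, $i$-agreement on $S_n$ implies $i$-agreement on $S_m$, and since $V_i(S)$ is Hausdorff (by the Hausdorff lemma of section 4), some $S_n$ must detect the difference between distinct $X, Y$. Define
\[
d_i(X,Y) := \begin{cases} 0 & X = Y, \\ \dfrac{1}{1+\max(N_i(X,Y), 0)} & X \neq Y. \end{cases}
\]

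First I would check that $d_i$ extends $\hat d_i$, examining the three clauses of Definition 5.1 in turn. In the second clause, $Y \in B_{n-1}^i(X) \setminus B_n^i(X)$ gives $N_i(X,Y) = n - 1$ and hence $d_i(X,Y) = 1/n$, matching $\hat d_i$. In the third clause, where $Y \notin B_1^i(X)$ and $d_0(X,Y) = 1$, we have $N_i(X,Y) \in \{-1, 0\}$ and so $d_i(X,Y) = 1$, again matching $\hat d_i$. Where $\hat d_i(X,Y)$ is undefined --- which by inspecting the three clauses happens precisely when $X \neq Y$, $Y \notin B_0^i(X)$, and $d_0(X,Y) \neq 1$ --- we have $N_i(X,Y) = -1$, so $d_i(X,Y) = 1 \geq 1$, establishing conclusion (2).

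Next I would verify the metric axioms. Symmetry and positive-definiteness are immediate. For the triangle inequality I would prove the stronger ultrametric inequality $d_i(X,Z) \leq \max(d_i(X,Y), d_i(Y,Z))$, using that $i$-agreement on a fixed $S_n$ is an equivalence relation (stated earlier as a lemma of section 4): if both $X, Y$ and $Y, Z$ $i$-agree on $S_n$, then so do $X, Z$, so $N_i(X,Z) \geq \min(N_i(X,Y), N_i(Y,Z))$, and inverting gives the desired inequality. For conclusion (3), the open $d_i$-ball of radius $1/n$ around $X$ is precisely $\{Y : N_i(X,Y) \geq n\} = B_n^i(X)$; since these sets form a basis for $V_i(S)$ (by the basis lemma of section 4), the $d_i$-topology coincides with the topology of $V_i(S)$.

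The main obstacle I anticipate is the monotonicity claim that $i$-agreement on $S_n$ implies $i$-agreement on $S_m$ for $m \leq n$. For $i = 1$ and $i = 4$ this is immediate from the set-theoretic definitions, but for $i = 2, 3$ it requires a short argument: a proper isotopy of $X \cap S_n$ to $Y \cap S_n$ in $S_n$ restricts to a proper isotopy of the respective intersections with $S_m$ after combining with the uniqueness of geodesic representatives outside $S_m$. Once this preliminary is in place, all three conclusions fall out formally.
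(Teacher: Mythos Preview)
Your proof is correct and takes a genuinely different route from the paper's.  The paper defines $d_i$ as a \emph{path metric}: for $X,Y$ it takes the infimum, over all finite chains $X=X_0,X_1,\ldots,X_\ell=Y$ with each $\hat d_i(X_{j-1},X_j)$ defined, of the sum $\sum_j \hat d_i(X_{j-1},X_j)$.  The paper then checks well-definedness, symmetry, the triangle inequality, that $d_i=\hat d_i$ on the domain of $\hat d_i$ (using Lemma~5.4), that $d_i\geq 1$ off that domain, and that small $d_i$-balls are $B^i_n(X)$.  You instead write down an explicit ultrametric $d_i(X,Y)=1/(1+\max(N_i(X,Y),0))$ and verify the three conclusions directly.

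The two constructions produce \emph{different} metrics.  Yours is an ultrametric bounded by $1$, with $d_i(X,Y)=1$ exactly when $\hat d_i(X,Y)$ is undefined.  The paper's metric is \emph{not} an ultrametric (Corollary~5.14) and takes values strictly greater than $1$ in the undefined case (Lemma~5.12).  Your approach is shorter and more transparent for the theorem as stated; what it gives up is the large-scale structure that the paper's path metric carries---the results of Lemmas~5.11--5.12, Corollaries~5.13--5.14, and the coarse-geometry questions of Section~9.1 are all about the path metric and become vacuous for your bounded ultrametric.  So the paper's extra work buys a metric worth studying in its own right, while yours is the minimal witness to the existence statement.

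One remark on your flagged ``main obstacle'': the monotonicity claim (that $i$-agreement on $S_n$ implies $i$-agreement on $S_m$ for $m\leq n$) is indeed needed, but note that the paper's own proof relies on it just as heavily---it is used implicitly in Lemma~5.4 (the containment $B^i_n(Y)\subseteq B^i_m(Y)$) and in identifying the $\epsilon$-ball with $B^i_{\lfloor 1/\epsilon\rfloor}(X)$ at the end of the proof of Theorem~5.5.  So this is a shared lacuna rather than a defect of your approach.
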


\begin{proof}
To construct $d_i$, consider the set $\mathbb{S}_i(X,Y)$ of all finite sequences of vertices $X=X_0,X_1,\ldots , X_{\ell}=Y$ from $X$ to $Y$ such that $\hat{d}_i(X_j,X_{j+1})$ is defined for all $0\leq j<\ell$.  Let 
\[d_i(X,Y)=\inf_{\mathbb{S}_i(X,Y)}(\hat{d}_i(X_0,X_1)+\hat{d}_i(X_1,X_2)+\cdots \hat{d}_i(X_{\ell-1},X_{\ell})\]
To show that $d_i(X,Y)$ is well-defined, we must show that $\mathbb{S}_i(X,Y)$ is nonempty for all $X,Y\in V_i(S)$.  For $n\geq 0$, $B^i_n(X)$ has nonempty intersection with every connected-component of $\mathcal{P}(S)$.  Thus, there is some $X_1\in B^i_0(X)$ such that $X_1$ and $Y$ are in the same connected component.  Hence, there exists a sequence $(X=X_0,X_1,\ldots, X_{\ell}=Y)$ such that $X_j$ is adjacent to $X_{j+1}$ for $1\leq j<\ell$, and hence $\hat{d}_i(X_j,X_{j+1})$ is defined.  Thus $\mathbb{S}_i(X,Y)$ is nonempty, so we can take the infimum of a nonnegative function over it.  

\par 
The fact that $d_i$ is symmetric follows from the symmetry of $\hat{d}_i$ and the construction of $d_i$.  Similarly, $d_i$ satisfies the triangle inequality because $\hat{d}_i$ does when it is defined. 
\par 
If $\hat{d}_i(X,Y)$ is defined, then $\mathbb{S}_i(X,Y)$ contains the two vertex sequence $(X,Y)$.  No other sequence can have a shorter length by Lemma 5.4.  Thus $\hat{d}_i(X,Y)=d_i(X,Y)$.
\par 
Next suppose $\hat{d}_i(X,Y)$ is undefined, and consider a sequence $(X,X_1,\ldots,X_{\ell}=Y)\in \mathbb{S}_i(X,Y)$.  Since $\hat{d}_i(X,Y)$ is undefined, we must have $\ell\geq 2$.  We claim that there exists a $0\leq j<\ell$ such that $\hat{d}_i(X_j,X_{j+1})=1$, and hence $\sum_{j=0}^{\ell-1} \hat{d}_i(X_j,X_{j+1})>1$.  Suppose not.  Then $X_{j+1}\in B^i_1(X_j)$ for all $j$, so by the Disjointness Lemma $Y\in B^i_1(X)$, so $\hat{d}_i(X,Y)$ is defined, which is a contradiction.  Thus, $d_i(X,Y)\geq 1$ whenever $\hat{d}_i(X,Y)$ is undefined.  In particular, $d_i(X,Y)=0$ if and only if $X=Y$, so $d_i$ is actually a distance function.
\par 
Finally, we must show that the metric given by $d_i$ is compatible with the topology on $V_i(S)$ defined in section 4.  Note that for $0<\epsilon<1$, the open $\epsilon$-ball around a point $X$ is $B^i_{\lfloor \frac{1}{\epsilon}\rfloor}(X)$.  Hence, the topology induced by $d_i$ has a basis
\[\{B^i_n(X):X\in V_i(S),n\geq 1\}\]
which is also a basis for the previously defined topology on $V_i(S)$.  
\end{proof}

We also have the following.
\begin{lemma}
$V_i(S)$ is a complete metric space.
\end{lemma}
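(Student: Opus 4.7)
The plan is, given a Cauchy sequence $\{X_j\}$ in $V_i(S)$, to distill a candidate limit $Y$ from the stabilizing local data of the sequence and then verify that $X_j \to Y$. By Theorem 5.5, the open ball of radius $1/k$ about $X$ is exactly $\B^i_k(X)$, so the Cauchy condition yields a strictly increasing sequence of indices $N_1 < N_2 < \cdots$ with the property that $X_m \in \B^i_k(X_{N_k})$ for every $m \geq N_k$. Two applications of the Disjointness Lemma then give $\B^i_{k+1}(X_{N_{k+1}}) \subseteq \B^i_k(X_{N_k})$, so the $i$-agreement classes restricted to $S_k$ form a nested tower as $k$ grows.

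To construct $Y$, I would work with geodesic representatives. For $i=1$, I would define $Y$ so that its geodesic trace on each $S_k$ equals $X_{N_{k'}} \cap S_k$ for any $k' \geq k$; the nesting above makes this assignment consistent as $k$ varies. For $i=2$ one additionally tracks arc multiplicities, and for $i=3,4$ the agreement loses some arc-level data but the same idea applies after passing deeper into the exhaustion, where the curves and arcs in question eventually sit strictly inside some $S_k$. Since $\bigcup_k S_k = S$, the result is a well-defined collection $Y$ of pairwise-disjoint essential simple closed geodesics in $S$.

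The main obstacle will be verifying that $Y$ is genuinely a pants decomposition, i.e.\ that $S \setminus Y$ is a disjoint union of thrice-punctured spheres. Local finiteness of $Y$ is straightforward because any compact subset lies in some $S_k$ and $X_{N_k}$ contributes only finitely many curves there; disjointness is inherited from the $X_{N_k}$ since any putative intersection would be witnessed inside some $S_k$, where $Y$ restricts to a subset of the pants decomposition $X_{N_k}$. For the pants condition I would argue that each pair of pants of $X_{N_k}$ lying strictly inside $S_k$ is realized as a component of $S \setminus Y$, and that every point of $S$ lies in such a piece for some $k$. The axioms of a principal exhaustion—in particular that every component of $S_n \setminus S_{n-1}$ has complexity at least six—ensure that this matching is coherent across levels and that no ``stray'' pants pieces can appear in the limit.

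Once $Y$ is confirmed to be a pants decomposition, convergence is immediate: given $\epsilon > 0$, choose $k$ with $1/k < \epsilon$; then $Y \in \B^i_k(X_{N_k})$ by construction, and $X_j \in \B^i_k(X_{N_k})$ for $j \geq N_k$, so the Disjointness Lemma gives $X_j \in \B^i_k(Y)$, whence $d_i(X_j, Y) \leq 1/(k+1) < \epsilon$. Hence $\{X_j\}$ converges to $Y$ in $V_i(S)$, which is the desired completeness.
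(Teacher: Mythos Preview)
Your approach is correct and follows the same strategy as the paper: use the Cauchy condition to obtain eventual $i$-agreement on each $S_n$, then extract a limit pants decomposition from the stabilized local data. The paper's proof is in fact much terser than yours---it simply notes that a Cauchy sequence eventually stabilizes on every finite-type subsurface and asserts convergence without explicitly constructing the limiting pants decomposition or checking that the limit is one; your outline fills in precisely the steps the paper elides.
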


\begin{proof}
Let $X_1,\ldots$ be a Cauchy sequence in $V_i(S)$.  Let $n\geq 2$.  Then by the definition of a Cauchy sequence, there exists an $N\in \mathbb{N}$ such that $d_i(X_j,X_k)<\frac{1}{n}$ whenever $j,k\geq N$.  But by construction, this means that $X_j$ and $X_k$ agree on $S_n$.  Hence, all but a finite number of terms in the sequence agree on any finite-type subsurface of $S$, and hence the sequence converges.
\end{proof}

The preceeding lemma implies that, as a topological space, $V_i(S)$ is completely metrizable.  We now recall the \emph{Baire Space}, which is the countably infinite Cartesian product of a countable infinite discrete space.  The Baire set is also homeomorphic to $\mathbb{R}\backslash \mathbb{Q}$ with the usual topology.  The Alexandrov-Urysohn Theorem characterizes when a space is homeomorphic to the Baire Space.

\begin{theorem}[\cite{Kechris_book}, Theorem 7.7]
A space is homeomorphic to the Baire space if and only if it is non-empty, zero-dimensional, seperable, completely metrizable, and if every compact subset has empty interior.
\end{theorem}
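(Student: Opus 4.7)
The plan is to prove the theorem via the standard Lusin-scheme construction from descriptive set theory. The forward direction is routine verification: $\mathbb{N}^{\mathbb{N}}$ has a clopen basis of cylinders (giving zero-dimensionality), contains a countable dense set of eventually-zero sequences, admits the complete metric $d(\alpha,\beta)=2^{-\min\{n:\alpha_n\neq\beta_n\}}$, and every compact subset is contained in a product $\prod_n K_n$ of finite sets, hence contains no cylinder and so has empty interior.

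For the converse, given $X$ satisfying the five hypotheses, I would construct a \emph{Lusin scheme}: a family $\{U_s\}_{s\in\mathbb{N}^{<\mathbb{N}}}$ of nonempty clopen subsets of $X$ indexed by finite sequences of natural numbers, with $U_\emptyset=X$, with $\{U_{s\frown n}\}_{n\in\mathbb{N}}$ a clopen partition of $U_s$ into countably infinitely many nonempty pieces for every $s$, and with $\mathrm{diam}(U_s)\to 0$ uniformly as $|s|\to\infty$. The scheme then yields a map $\phi:\mathbb{N}^{\mathbb{N}}\to X$ sending $\alpha$ to the unique point of $\bigcap_n U_{\alpha|n}$; this is well-defined by completeness together with the diameter condition, and it is straightforward to check that $\phi$ is a continuous open bijection, since basic cylinders $\{\alpha:\alpha|n=s\}$ correspond exactly to the clopen sets $U_s$. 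Thus $\phi$ is the desired homeomorphism.

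The scheme is built by induction on $|s|$, and the main obstacle is the following claim: every nonempty clopen subset $U\subseteq X$ admits, for each prescribed $\delta>0$, a clopen partition into countably \emph{infinitely} many nonempty pieces of diameter less than $\delta$. Here hypothesis (5) is essential. Since $U$ is clopen it equals its own interior, so by hypothesis (5) $U$ is not compact; hence there is an open cover of $U$ with no finite subcover, and refining this via a countable basis of clopen sets (produced from zero-dimensionality plus second-countability, which follows from separability and metrizability) gives a clopen cover $\mathcal{D}$ of $U$ with no finite subcover. Intersecting $\mathcal{D}$ with a clopen cover of $U$ by sets of diameter less than $\delta$ yields another clopen cover, still lacking a finite subcover since any finite subcover would project to one of $\mathcal{D}$. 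Extract a countable subcover using second-countability, and disjointify it (which preserves clopenness in a zero-dimensional space) to obtain a countable clopen partition of $U$ into small-diameter pieces; this partition must be infinite, because a finite such partition would, by the disjointification construction, yield a finite subcover of the countable cover just above. This is the crux of the argument, and the remainder of the construction is bookkeeping.
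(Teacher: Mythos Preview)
The paper does not prove this statement at all: it is quoted as Theorem~7.7 from Kechris' textbook and invoked as a black box to deduce Corollary~5.8. So there is no proof in the paper to compare against.

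That said, your proposal is correct and is essentially the standard argument one finds in Kechris. The Lusin-scheme construction is the canonical approach, and your handling of the key claim---that every nonempty clopen set splits into \emph{infinitely} many nonempty small clopen pieces---is sound: the non-compactness (forced by hypothesis~(5)) gives a clopen cover with no finite subcover, intersecting with a small-diameter cover preserves this, and disjointifying a countable subcover yields infinitely many nonempty pieces because $\bigcup_{n\le N} D_n = \bigcup_{n\le N} C_n$, so finitely many nonempty $D_n$ would produce a finite subcover. One minor point worth making explicit is why $\phi$ is surjective and open: since the $U_{s^\frown n}$ \emph{partition} $U_s$, every $x\in X$ lies in a unique branch, and the diameter condition forces the $U_s$ to form a basis for $X$, so $\phi$ carries basic cylinders to basic opens. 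With that addition the argument is complete.
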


\begin{corollary}
For any infinite-type surface $S$, $V_i(S)$ is homeomorphic to the Baire space.
\end{corollary}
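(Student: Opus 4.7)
The plan is to simply verify that $V_i(S)$ satisfies each of the five hypotheses of the Alexandrov-Urysohn Theorem (Theorem 5.7), all of which have essentially been proved already in sections 4 and 5. So this will really be a bookkeeping argument: I will pair each required property with the lemma that established it.

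First I would observe that $V_i(S)$ is non-empty, since every infinite-type surface admits a pants decomposition. Next, zero-dimensionality is exactly the content of Lemma 4.18, which gives a basis of clopen sets $B^i_n(X)$. For separability, I would note that Lemma 4.16 shows $V_i(S)$ is second-countable, and every second-countable space is separable (a representative from each basic open set gives a countable dense subset). Complete metrizability follows by combining Corollary 4.22 (metrizability via Urysohn) with Lemma 5.6 (completeness of the metric $d_i$ constructed in Theorem 5.5). Finally, the condition that every compact subset has empty interior is Lemma 4.24.

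Once these five facts are assembled, the Alexandrov-Urysohn Theorem applies and immediately yields a homeomorphism between $V_i(S)$ and the Baire space $\mathbb{N}^{\mathbb{N}}$. Since each hypothesis is already a proved statement in the paper, there is no real obstacle to overcome; the only mild subtlety is that separability is not stated directly as a lemma, but it is a one-line consequence of second-countability. The proof will therefore consist of a short paragraph citing each of Lemma 4.18, Lemma 4.16, Corollary 4.22, Lemma 5.6, and Lemma 4.24, and then invoking Theorem 5.7.
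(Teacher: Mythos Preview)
Your proposal is correct and matches the paper's own proof essentially line for line: both verify the five hypotheses of the Alexandrov--Urysohn Theorem by citing the lemmas from Sections~4 and~5, and then invoke Theorem~5.7. The only minor discrepancies are in your lemma numbers (zero-dimensionality is Lemma~4.17, metrizability is Corollary~4.21, and the empty-interior-of-compacta statement is Lemma~4.23), and your observation that separability must be deduced from second-countability is in fact a small clarification over the paper, which simply asserts separability was shown in Section~4.
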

\begin{proof}
Any infinite-type surface has a pants decomposition, hence $V_i(S)$ is nonempty.  In section 4, we have shown that $V_i(S)$ is seperable, zero-dimensional, and that all compact subsets of $V_i(S)$ have empty interior. The previous lemma shows that $V_i(S)$ is completely metrizable.
\end{proof}

\begin{corollary}
For a pants decomposition $X\in V_i(S)$ and an integer $n\geq 0$, the subspace $B^i_n(X)\subset V_i(S)$ is homeomorphic to the Baire space.
\end{corollary}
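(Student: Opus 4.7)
The plan is to verify each of the five hypotheses of the Alexandrov–Urysohn Theorem (Theorem 5.7) for $B_n^i(X)$, mirroring the argument used for $V_i(S)$ in Corollary 5.7, and exploiting the fact that by Lemma 4.15 the set $B_n^i(X)$ is clopen in $V_i(S)$.

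First I would observe that $B_n^i(X)$ is nonempty, since it contains $X$ itself. Zero-dimensionality is immediate: the basis of clopen sets for $V_i(S)$ produced in Lemma 4.16 restricts to a basis of clopen sets for the subspace $B_n^i(X)$. Separability follows from second-countability (Lemma 4.14), which is inherited by any subspace.

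For complete metrizability, I would invoke Lemma 5.6: $V_i(S)$ is a complete metric space under $d_i$, and $B_n^i(X)$ is closed in $V_i(S)$ by Lemma 4.15, so $B_n^i(X)$ endowed with the restriction of $d_i$ is itself a complete metric space.

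The one condition that deserves a brief argument is that every compact subset of $B_n^i(X)$ has empty interior. Here I would use that $B_n^i(X)$ is \emph{open} in $V_i(S)$, so for any subset $C \subseteq B_n^i(X)$ the interior of $C$ relative to $B_n^i(X)$ coincides with its interior relative to $V_i(S)$. Hence any compact $C \subseteq B_n^i(X)$ is also a compact subset of $V_i(S)$, and by Lemma 4.23 it has empty interior in $V_i(S)$, and therefore in $B_n^i(X)$ as well. Applying the Alexandrov–Urysohn Theorem then yields the corollary. There is no real obstacle: the entire proof is the observation that clopenness of $B_n^i(X)$ lets all the required topological properties pass to the subspace.
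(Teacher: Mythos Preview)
Your proof is correct and follows essentially the same approach as the paper: both verify the hypotheses of the Alexandrov--Urysohn theorem for $B^i_n(X)$, using that it is a closed (hence completely metrizable) subset of $V_i(S)$ and that the remaining properties pass to the subspace. Your treatment of the empty-interior condition via openness of $B^i_n(X)$ is slightly more explicit than the paper's ``same arguments as for $V_i(S)$,'' but the underlying idea is identical; note, however, that a few of your lemma references are off by one or two (e.g., clopenness is Lemma~4.17, second-countability is Lemma~4.16, and the $V_i(S)$ case is Corollary~5.8).
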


\begin{proof}
The subspace $B^i_n(X)$ is a closed subset of a completely metrizable space, and hence is completely metrizable.  The space $B^i_n(X)$ has a basis
\[\{B^i_m(Y):Y\in B^i_n(X), m\geq n\}
\]
We can show that $B^i_n(X)$ is zero-dimensional and seperable and such that all compact subsets have empty interior using the same arguments as for $V_i(X)$.  
\end{proof}

\begin{remark}\rm
By Corollary 9 of \cite{vlamisnotes}, $\Mod(S)$ with the compact-open topology is homeomorphic to a Baire space, and hence to $V_i(S)$.  This phenomenon also occurs in the finite-type setting: for a finite-type hypberbolic surface $S$, the zero skeleton of $\mathcal{P}(S)$ and $\Mod(S)$ with the compact-open topology are both countable discrete spaces.
\end{remark}

The goal of the next two lemmas is to sharpen part (2) of Theorem 5.5.

\begin{lemma}
Let $n\geq 3$, and let $X$ and $Y$ be pants decompositions such that $1<d_i(X,Y)< 1+\frac{1}{n}$.  Then there exist pants decompositions $X'$ and $Y'$ such that
\begin{itemize}
\item
$X'\in B^i_n(X)$.
\item
$Y'\in B^i_n(Y)$.
\item
$X'$ and $Y'$ disagree on $S_0$.
\item
$X'$ and $Y'$ differ by an elementary move.
\end{itemize}
\end{lemma}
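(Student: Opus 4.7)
The plan is to select a sequence in $\mathbb{S}_i(X,Y)$ (from the proof of Theorem 5.5) whose total $\hat{d}_i$-weight is close to $d_i(X,Y)$, and then read off a single ``long'' edge of this sequence that will serve as the required elementary move joining $X'$ to $Y'$.

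First I would extract the structural consequence of $d_i(X,Y) > 1$: by Theorem 5.5(1), $\hat{d}_i(X,Y)$ must be undefined, which by inspection of the three defining cases forces both $Y \notin B^i_0(X)$ and $d_0(X,Y) \neq 1$. In particular, $X$ and $Y$ already disagree on $S_0$. Using $d_i(X,Y) < 1 + 1/n$, I would pick a sequence $X = X_0, X_1, \ldots, X_\ell = Y$ in $\mathbb{S}_i(X,Y)$ with total weight less than $1 + 1/n$.

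Next I would show that exactly one index $k$ satisfies $\hat{d}_i(X_k, X_{k+1}) = 1$. At least one must exist: otherwise every consecutive pair lies in a common $B^i_1$-class, and iterating the Disjointness Lemma would place $Y \in B^i_1(X) \subseteq B^i_0(X)$, contradicting the $S_0$-disagreement. Two weight-$1$ edges would already exceed $1 + 1/n$, so exactly one such $k$ exists, and the remaining weights sum to less than $1/n$. Set $X' := X_k$ and $Y' := X_{k+1}$.

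To verify the four bullet points, I would use that $\hat{d}_i$ takes only the discrete values $\{1\} \cup \{1/m : m \geq 2\}$, so each of the remaining weights is not merely less than $1/n$ but in fact at most $1/(n+1)$. Each such edge therefore keeps both endpoints in a common $B^i_n$-equivalence class, and applying the Disjointness Lemma along the prefix $X_0, \ldots, X_k$ and the suffix $X_{k+1}, \ldots, X_\ell$ gives $X' \in B^i_n(X)$ and $Y' \in B^i_n(Y)$. Since $B^i_n$-agreement entails $B^i_0$-agreement (because $S_0 \subseteq S_n$), the same reasoning yields $X' \in B^i_0(X)$ and $Y' \in B^i_0(Y)$; combined with the $S_0$-disagreement of $(X,Y)$, this forces $X'$ and $Y'$ to disagree on $S_0$. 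Finally, since $Y' \notin B^i_0(X') \supseteq B^i_1(X')$, the second clause of the definition of $\hat{d}_i$ cannot account for the value $\hat{d}_i(X',Y') = 1$; only the third (elementary-move) clause can, forcing $d_0(X', Y') = 1$.

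The main obstacle is conceptual rather than computational: recognizing that the global $S_0$-disagreement between $X$ and $Y$ must concentrate at a single edge of any sufficiently short $\hat{d}_i$-path, and that this forced edge is precisely the one supplied by the elementary-move clause of $\hat{d}_i$.
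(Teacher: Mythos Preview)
Your proof is correct. It differs from the paper's in one structural respect: the paper first \emph{reduces} the chosen sequence (deleting intermediate terms whenever $\hat d_i(X_{j_1},X_{j_2})$ happens to be defined for $j_2-j_1\ge 2$), argues that a reduced sequence has length $\ell\le 3$, and then handles $\ell=2$ and $\ell=3$ by case analysis. You bypass the reduction entirely: once you know there is a \emph{unique} index $k$ with $\hat d_i(X_k,X_{k+1})=1$, the remaining weights sum to less than $1/n$, hence each is at most $1/(n+1)$, hence each non-special step stays inside a single $B^i_n$-class. The Disjointness Lemma then transports $B^i_n$-membership along the prefix and suffix in one stroke, with no need to shorten the sequence or split into cases. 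Your route is a genuine simplification: it replaces the reduction-plus-casework with a single transitivity argument, and it makes the role of the discreteness of the value set $\{0\}\cup\{1/m:m\ge 1\}$ more transparent. The paper's reduction does buy something in other contexts (it is a general device for shortening $\hat d_i$-paths), but for this particular lemma it is not needed.
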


\begin{proof}
By the definition of $d_i$, there exists a sequence of pants decompositions 
\[(X_0,\ldots , X_{\ell})\in \mathbb{S}_i(X,Y)\]
such that 
\[\sum_{j=1}^{\ell}(\hat{d}_i(X_{j-1},X_j))< 1+\frac{1}{n}\]

Our goal is to modify this sequence to obtain a sequence $(X,X',Y',Y)\in \mathbb{S}_i(X,Y)$ such that $X'$ and $Y'$ have the desired properties. 
\par 
Call an element of $\mathbb{S}_i(X,Y)$ \emph{reduced} if, for all $0\leq j_1<j_2\leq \ell$, $\hat{d}_i(X_{j_1},X_{j_2})$ is defined if and only if $j_2-j_1=1$.  
\par 

Suppose that $(X_0,\ldots,X_{\ell})$ is not reduced: i.e., there exist indices $0\leq j_1<j_1+2\leq j_2\leq \ell$ such that $\hat{d}_i(X_{j_1},X_{j_2})$ is defined.  Then we can remove all pants decompositions in between $X_{j_1}$ and $X_{j_2}$ to obtain a shorter sequence,
\[(X_0,\ldots, X_{j_1},X_{j_2},\ldots X_{\ell})\in \mathbb{S}_i(X,Y).\]  By nonnegativity of $\hat{d_i}$, the sum of the distances of this sequence is less than that of the original sequence.  Hence, we can repeat this process up to $\ell$ times until we get a reduced element of $\mathbb{S}_i(X,Y)$.  Thus, we may assume $(X_0,\ldots, X_{\ell})$ is reduced.
\par 
If $X_j$ and $X_{j+1}$ disagree on $S_0$, then $\hat{d}_i(X_j,X_{j+1})=1$. But $\sum_{j=1}^{\ell}(\hat{d}_i(X_{j-1},X_j))<2$. Hence there can be at most one index $j$ such that $X_j$ and $X_{j+1}$ disagree on $S_0$.
\par 
Now consider an index $0\leq j\leq \ell-2$.  Since our sequence is reduced, $X_j$ and $X_{j+2}$ disagree on $S_0$.  Since $i$-agreement on $S_0$ is an equivalence relation, the previous sentence implies that either $X_j$ and $X_{j+1}$ disagree on $S_0$ or $X_{j+1}$ and $X_{j+2}$ disagree on $S_0$.  
\par 
Combining the results of the previous two paragraphs, we deduce that $\ell\leq 3$.  We tackle each possible value of $\ell$.

\par 
If $\ell=3$, then $X_1$ and $X_2$ must disagree on $S_0$ in order for the sequence to be reduced (otherwise three consecutive $X_j$ would agree on $S_0$.) Thus $X_1$ and $X_2$ differ by an elementary move.  We now claim that $X'=X_1$ and $Y'=X_2$ satisfy the conclusion of the lemma.  We have already seen that $X'$ and $Y'$ disagree on $S_0$ and differ by an elementary move.  Also, by hypothesis, 
\[1+\frac{1}{n}> \hat{d}_i(X_0,X_1)+\hat{d}_i(X_1,X_2)+\hat{d}_i(X_2,X_3)=1+\hat{d}_i(X_0,X_1)+\hat{d}_i(X_2,X_3)\]
Hence, $\hat{d}_i(X_0,X_1)<\frac{1}{n}$ and $\hat{d}_i(X_2,X_3)<\frac{1}{n}$, so by definition $X_1\in B^i_n(X_0)=B^i_n(X)$ and $X_2\in B^i_n(X_3)=B^i_n(Y)$, as desired.
\par 
Now we turn our attention to the case $\ell=2$.  Either $X_0$ and $X_1$ disagree on $S_0$, or $X_1$ and $X_2$ disagree on $S_0$.  If $X_0$ and $X_1$ disagree on $S_0$, then we must have $X_1\in B^i_n(X_2)=B^i_n(Y)$ by the same argument as the $\ell=3$ case.  Thus, we can let $X_1=Y'$ and $X=X_0=X'$, and we are done.  Similarly, if $X_1$ and $X_2$ disagree on $S_0$, then $X_1\in B^i_n(X)$, so we can set $X_1=X'$ and $X_2=Y=Y'$, and we are done.
\par
Finally, we note that since $\hat{d}_i(X,Y)$ is undefined, we must have $\ell\geq 2$, so we don't need to consider $\ell=1$ or $\ell=0$.
\end{proof}

\begin{lemma}
If $X,Y\in V_i(S)$, then $\hat{d}_i(X,Y)$ is undefined if and only if $d_i(X,Y)>1$.
\end{lemma}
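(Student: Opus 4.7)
The plan is to separately handle the two implications, with one being immediate from Theorem 5.5 and the other requiring a careful application of the technique from Lemma 5.10. The backward direction is quick: if $d_i(X,Y) > 1$ but $\hat{d}_i(X,Y)$ were defined, then Theorem 5.5(1) would give $\hat{d}_i(X,Y) = d_i(X,Y) > 1$, contradicting the fact that $\hat{d}_i$ only takes values in $[0,1]$ by Definition 5.1.

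For the forward direction, I will assume $\hat{d}_i(X,Y)$ is undefined (so $X \neq Y$, $X$ and $Y$ disagree on $S_0$, and they do not differ by an elementary move), note that Theorem 5.5(2) yields $d_i(X,Y) \geq 1$, and derive a contradiction from $d_i(X,Y) = 1$. Under this supposition, for every $n \geq 3$ the infimum definition of $d_i$ produces a sequence in $\mathbb{S}_i(X,Y)$ of total length strictly less than $1 + 1/n$. The argument of Lemma 5.10 uses only the existence of such a sequence together with $\ell \geq 2$ (which in our setting follows from $\hat{d}_i(X,Y)$ being undefined rather than from strict inequality $d_i(X,Y) > 1$); it therefore produces pants decompositions $X'_n \in B^i_n(X)$ and $Y'_n \in B^i_n(Y)$ that disagree on $S_0$ and differ by an elementary move swapping $\alpha_n \in X'_n$ for $\beta_n \in Y'_n$ inside some complexity-$1$ subsurface $R_n$.

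The main obstacle will be to upgrade this family of ``nearby'' elementary moves into a single elementary move between $X$ and $Y$ itself. The key observation is that, for $n$ large enough, the subsurface $R_n$ is determined by the pants structure of $X$ and $Y$ near $S_0$: letting $S_0^+$ denote a finite-type subsurface containing every pair of pants of $X$ or $Y$ that meets $S_0$, for $n$ large we have $S_0^+ \subset S_n$, and the disagreement of $X'_n, Y'_n$ on $S_0$ combined with the single-curve swap forces $R_n \subset S_0^+ \subset S_n$. The $i$-agreement of $X'_n$ with $X$ on $S_n$, and of $Y'_n$ with $Y$ on $S_n$, then pins $\alpha_n$ down as a curve of $X$ and $\beta_n$ as a curve of $Y$.

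To finish, I will argue that $|X \setminus Y| \leq 1$. For any $\gamma \in X \setminus Y$, compactness of $\gamma$ and the exhaustion property give $\gamma \subset S_n$ for all sufficiently large $n$, and $i$-agreement then forces $\gamma \in X'_n \setminus Y'_n = \{\alpha_n\}$; in particular, two distinct elements of $X \setminus Y$ would both have to equal the single curve $\alpha_n$. Together with the symmetric bound $|Y \setminus X| \leq 1$, this shows $X$ and $Y$ differ by exactly one curve swap, and the minimal-intersection condition for an elementary move transfers from $(X'_n, Y'_n)$ to $(X, Y)$. Thus $d_0(X,Y) = 1$, and since $Y \notin B^i_1(X)$, case (3) of Definition 5.1 gives $\hat{d}_i(X,Y) = 1$, contradicting undefinedness.
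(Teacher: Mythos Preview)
Your proof is correct, and the core mechanism is the same as the paper's: both arguments feed the hypothesis that $\hat d_i(X,Y)$ is undefined into the previous lemma (the paper's Lemma~5.11) to obtain nearby pants decompositions $X'\in B^i_n(X)$, $Y'\in B^i_n(Y)$ related by a single elementary move, and then observe that $i$-agreement on $S_n$ forces the swapped curves to coincide with curves of $X$ and $Y$ themselves, producing an elementary move between $X$ and $Y$.

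The organization differs. The paper first splits into the cases $|X\setminus Y|=1$ and $|X\setminus Y|>1$, chooses a \emph{single} $n$ large enough that the relevant complexity-$1$ subsurfaces lie in $S_n$, and applies Lemma~5.11 once; this yields the sharper bound $d_i(X,Y)\geq 1+\tfrac{1}{n}$. You instead assume $d_i(X,Y)=1$, invoke the \emph{argument} of Lemma~5.11 for every $n\ge 3$ (correctly noting that only ``$\hat d_i$ undefined'' and a short sequence are needed, not the strict inequality $d_i>1$), and let $n\to\infty$ to conclude $|X\setminus Y|\le 1$ and then derive the elementary move. Both routes work; the paper's is slightly more direct and quantitative.

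One remark: your third paragraph asserting $R_n\subset S_0^+$ is not fully justified as written, since the boundary of $R_n$ consists of curves of $X'_n$ (or $Y'_n$) rather than of $X$ or $Y$, and the argument that these coincide requires some care depending on $i$. Fortunately this paragraph is unnecessary: your final paragraph's argument---that any $\gamma\in X\setminus Y$ eventually lies in $S_n$, hence by $i$-agreement lies in $X'_n\setminus Y'_n=\{\alpha_n\}$---is self-contained and already proves everything you need.
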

\begin{proof}
If $\hat{d}_i(X,Y)$ is defined, then Theorem 5.5 (1) immediately tells us that $d_i(X,Y)=\hat{d}_i(X,Y)\leq 1$.  
\par 
Now suppose $\hat{d}_i(X,Y)$ is undefined.  We will tackle this lemma in two cases, depending on the cardinality of $X\backslash Y$.
\par 
First, consider the case where $\vert X\backslash Y\vert =1$, i.e., there is a unique curve $\alpha$ in $X$ but not in $Y$.  Let $S_{\alpha}$ denote the complexity 1 subsurface contained in the complement of the union of $X\backslash\{\alpha\}$. 
\par 
Note that $\vert Y\backslash X\vert =1$, and let $\alpha'$ the unique curve in $Y\backslash X$.  Observe that $\alpha'\subset S_{\alpha}$, because $X\backslash \alpha=Y\backslash \alpha'$ has only one component of positive complexity.
\par 
Since $S_{\alpha}$ has finite type, there exists a natural number $n$ such that $\overline{S_{\alpha}}\subset S_n$.  We claim that $d_i(X,Y)\geq 1+\frac{1}{n}$.  Suppose, by way of contradiction, that $d_i(X,Y)< 1+\frac{1}{n}$.  Then by the previous lemma, there exist pants decompositions $X'$ and $Y'$ such that
\begin{itemize}
\item
$X'\in B^i_n(X)$.
\item
$Y'\in B^i_n(Y)$.
\item
$X'$ and $Y'$ differ by an elementary move.
\end{itemize}

Since $X$ and $X'$ agree on $S_n$, and $\alpha\subset S_{\alpha}\subset S_n$, we must have $\alpha\in X'$.  By the same argument, $\alpha'\in Y'$.  Thus, the elementary move that takes $X'$ to $Y'$ must replace $\alpha$ by $\alpha'$.  Hence, $\alpha$ and $\alpha'$ intersect minimally.  But this statement is impossible, because it implies that $X$ and $Y$ differ by an elementary move, and hence $\hat{d}_i(X,Y)$ is defined.  Thus, we can conclude $d_i(X,Y)\geq 1+\frac{1}{n}>1$.
\par 
Now, consider the case where $\vert X\backslash Y\vert >1$, and choose two distinct curves $\alpha,\beta\in X\backslash Y$.  As in the previous case, we let $S_{\alpha}$ denote the complexity 1 subsurface contained in the complement of $X\backslash \alpha$.  Similarly, we let $S_{\beta}$ denote the complexity 1 subsurface contained in the complement of $X\backslash \beta$.  Note that since $\alpha$ and $\beta$ are disjoint and nonisotopic, $S_{\alpha}\neq S_{\beta}$, although the subsurfaces are not necessarily disjoint.
\par 
We may pass to the geodesic representatives of $X$ and $Y$, and thus  we can assume that $S_{\alpha}$ and $S_{\beta}$ have geodesic boundaries.  Hence $S_{\alpha}\cup S_{\beta}$ is of finite type, so there exists a natural number $n$ such that $S_{\alpha}\cup S_{\beta}\subset S_n$.  As in the previous case, suppose that $d_i(X,Y)<1+\frac{1}{n}$.  Then by the previous lemma, there exist pants decompositions $X'$ and $Y'$ such that
\begin{itemize}
\item
$X'\in B^i_n(X)$.
\item
$Y'\in B^i_n(Y)$.
\item
$X'$ and $Y'$ differ by an elementary move.
\end{itemize}

Since $\alpha$ and $\beta$ are contained in $S_{\alpha}\cup S_{\beta}\subset S_n$, and since $X$ and $X'$ agree on $S_n$, we must have $\alpha, \beta\in X'$. Similarly, $Y$ and $Y'$ agree on $S_n$, so $\alpha,\beta\notin Y'$.  Thus $X'\backslash Y'$ contains at least two curves.  But this is impossible since $X'$ and $Y'$ differ by an elementary move.  Thus we can conclude that $d_i(X,Y)\geq 1+\frac{1}{n}>1$. 

\end{proof}

Recall that a metric space $(M,d)$ is said to be an \emph{ultrametric space} if for all $x,y,z\in M$, $d(x,z)\leq \max(\{d(x,y),d(y,z)\})$.  With the usual Euclidean metric, $\mathbb{R}^n$ is not an ultrametric space, because any three distinct colinear points violate the ultrametric inequality. 

\begin{corollary}
If $U\subset V_i(X)$ is a subspace of diameter $\leq 1$, then $U$ with the restriction of $d_i$ is an ultrametric space.
\end{corollary}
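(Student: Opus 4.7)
The plan is to reduce the ultrametric inequality on $U$ directly to Lemma 5.4, using Lemma 5.12 and Theorem 5.5(1) to convert between $d_i$ and the partial function $\hat{d}_i$.

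First I would fix arbitrary $X,Y,Z\in U$ and verify the inequality $d_i(X,Z)\leq\max\{d_i(X,Y),d_i(Y,Z)\}$. If any two of $X,Y,Z$ coincide the inequality is immediate, so assume all three are distinct. Since $U$ has diameter at most $1$, each of $d_i(X,Y)$, $d_i(Y,Z)$, and $d_i(X,Z)$ is at most $1$. By the contrapositive of Lemma 5.12, each of $\hat{d}_i(X,Y)$, $\hat{d}_i(Y,Z)$, and $\hat{d}_i(X,Z)$ is therefore defined, and by Theorem 5.5(1) these values coincide with the corresponding $d_i$ distances.

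Now I can invoke Lemma 5.4, which applies precisely when all three pairwise values of $\hat{d}_i$ are defined, to conclude
\[
d_i(X,Z)=\hat{d}_i(X,Z)\leq\max\{\hat{d}_i(X,Y),\hat{d}_i(Y,Z)\}=\max\{d_i(X,Y),d_i(Y,Z)\}.
\]
This is the ultrametric inequality on $U$, and together with the fact that $d_i$ is already a distance function (hence symmetric and positive-definite) it shows that $(U,d_i|_U)$ is an ultrametric space.

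There is essentially no obstacle here: all of the substantive work was done in establishing Lemma 5.4 (which gives the ultrametric inequality for $\hat{d}_i$) and in Lemma 5.12 (which guarantees that the diameter hypothesis forces $\hat{d}_i$ to be defined on every pair). The corollary is just the observation that these two facts fit together.
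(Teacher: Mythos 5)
Your proof is correct and takes the same route as the paper: use Lemma 5.12 to conclude $\hat{d}_i$ is defined on all of $U\times U$ (so it equals $d_i$ there by Theorem 5.5(1)), then apply Lemma 5.4. The only difference is that you explicitly dispose of the case where two of $X,Y,Z$ coincide, which the paper leaves implicit since Lemma 5.4 is stated for distinct vertices; that is a reasonable bit of extra care rather than a different argument.
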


\begin{proof}
By the previous lemma, $\hat{d}_i$ is defined on all of $U\times U$, and hence $d_i(x,y)=\hat{d}_i(x,y)$ for all $x,y\in U$.  The result now follows from Lemma 5.4.
\end{proof}

\begin{corollary}
The metric $d_i$ is not an ultrametric on $V_i(S)$.
\end{corollary}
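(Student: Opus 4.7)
My plan is to exhibit three pants decompositions $X, Y, Z \in V_i(S)$ that violate the ultrametric inequality, by exploiting Lemma 5.11: the distance $d_i$ exceeds $1$ precisely when $\hat{d}_i$ is undefined, which happens exactly when the two decompositions neither agree on $S_0$ nor differ by a single elementary move.

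Since the exhaustion is principal, $\kappa(S_0) \geq 6$, so I can choose a pants decomposition $X$ of $S$ together with two curves $\alpha, \beta \in X$ whose associated complexity-$1$ subsurfaces (i.e.\ the component of $S \setminus (X \setminus \{\alpha\})$ containing $\alpha$, and likewise for $\beta$) are disjoint and both contained in $S_0$. Let $Y$ be obtained from $X$ by an elementary move on $\alpha$ (replacing $\alpha$ by some $\alpha'$), and let $Z$ be obtained from $Y$ by an elementary move on $\beta$ (replacing $\beta$ by some $\beta'$). Both moves then take place strictly inside $S_0$.

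By construction $X$ and $Y$ differ by an elementary move but disagree on $S_0$ (the move alters a curve inside $S_0$), so $\hat{d}_i(X, Y) = 1$, and Theorem 5.5(1) gives $d_i(X, Y) = 1$; the same reasoning gives $d_i(Y, Z) = 1$. However, $X$ and $Z$ differ in exactly the two curves $\{\alpha, \beta\}$ versus $\{\alpha', \beta'\}$, so they cannot differ by a single elementary move, and since all four of these curves lie in $S_0$, the two decompositions also disagree on $S_0$. Hence $\hat{d}_i(X, Z)$ is undefined, and Lemma 5.11 yields $d_i(X, Z) > 1$. The ultrametric inequality therefore fails because
\[
d_i(X, Z) \;>\; 1 \;=\; \max(\{d_i(X, Y),\, d_i(Y, Z)\}).
\]

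The only point requiring genuine care will be localizing both elementary moves inside $S_0$; this is immediate from $\kappa(S_0) \geq 6$, which leaves ample room for two disjoint complexity-$1$ subsurfaces to sit inside $S_0$, so the construction of $X$, $\alpha$, and $\beta$ presents no real obstacle.
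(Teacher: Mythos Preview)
Your proof is correct and follows essentially the same approach as the paper: both arguments use Lemma~5.12 to produce a pair of decompositions at distance strictly greater than $1$, then reach one from the other via steps of $\hat{d}_i$-length at most $1$, contradicting the ultrametric inequality. The only difference is cosmetic---the paper appeals abstractly to the nonemptiness of $\mathbb{S}_i(X,Y)$ from Theorem~5.5 to obtain some chain of intermediate points, whereas you build an explicit length-two chain by performing two disjoint elementary moves inside $S_0$; your version is slightly more concrete but relies on the same key lemma.
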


\begin{proof}
Let $X,Y\in V_i(S)$ such that $X$ and $Y$ disagree on $S_0$ and such that $X$ and $Y$ do not differ by an elementary move.  Then $\hat{d}_i(X,Y)$ is undefined, and hence $d(X,Y)>1$ by the previous lemma.  But by Theorem 5.5, there exists a sequence of points $X=X_0,X_1,\ldots,X_{\ell}=Y$ such that $d_i(X_j,X_{j+1})=\hat{d}_i(X_j,X_{j+1})\leq 1$ for all $0\leq j<\ell$.  If $d_i$ were an ultrametric, then we would have $d_i(X,Y)\leq 1$, a contradiction.
\end{proof}
\section{The spaces $\mathcal{PS}_i(S)$}
The family of spaces $V_i(S)$ defined in the previous section are totally disconnected.  This fact is not very surprising, because $V_i(S)$ contains only the vertex set of $\mathcal{P}(S)$, without the edges.  In this section, using the definitions of $V_i(S)$ as a template, we will define a topology on the entire pants graph (including the edges), and call this space $\mathcal{PS}_i(S)$.  The topology on $\mathcal{PS}_i(S)$ will be weaker than the usual topology on $\mathcal{P}(S)$, but it will still encode all of the structure of the pants graph. In this section, we will construct a basis of open sets for $\mathcal{PS}_i(S)$, and we will prove it is second-countable.

\par 
$\mathcal{PS}_i(S)$ will be equal as a set to $\mathcal{P}(S)$.  We think of $\mathcal{P}(S)$ as the set of vertices with copies of the unit interval glued to each pair of adjacent vertices.  We can therefore represent a point in $\mathcal{PS}_i(S)$ by a three-tuple $(X,a,Y)$, where $X$ and $Y$ are adjacent vertices in $\mathcal{P}(S)$ and $a\in [0,1]$ is the distance between the point and $X$.  Each edge point has two such represenations, since $(X,a,Y)=(Y,1-a,X)$.  Also, each vertex is represented by infinitely many three-tuples:
\[X=(X,0,Y)=(Y,1,X)\]
for any vertex $X$ and any vertex $Y$ adjacent to $X$.  
\par 
Definition 6.1 states that a sequence converges in $\mathcal{PS}_i(S)$ if it satisfies any of three conditions.  Intuitively, the first two conditions are rough analogs of the fact that a sequence converges in a product space converges if and only if the projections of the sequence to each coordinate converge.  
\par 
If we think of edges as analogous to real intervals bounded by their incident vertices, then the third condition is an analog of the Squeeze Theorem.    
\par 

We are now ready to define $i$-converges for $\mathcal{PS}_i(S)$.

\begin{definition}\rm
Let $\{P_k\}_{k\geq 1}$ be a sequence of points (either vertex or edge points) in $\mathcal{P}(S)$.  For $0\leq i\leq 4$, the sequence is set to \emph{i-converge} to a point $P$ if there exist representatives $P_k=(X_k,a_k,Y_k)$ for $k\in \mathbb{N}$ and $P=(X,a,Y)$ such that \emph{at least one} of the following conditions hold:
\begin{enumerate}
\item
$\{X_k\}\to X$ and $\{Y_k\}\to Y$ in $V_i(S)$, and $\{a_k\}\to a$ in $[0,1]$.

\item
$P$ is a vertex, $\{X_k\}\to P$ in $V_i(S)$, and $\{a_k\}\to 0$ in $[0,1]$.

\item
$P$ is a vertex, and $\{X_k\}$ and $\{Y_k\}$ both converge to $P$ in $V_i(S)$
\end{enumerate}
\end{definition}
\begin{definition}\rm
For $0\leq i\leq 4$, let $\mathcal{PS}_i(S)$ be the set $\mathcal{P}(S)$ equipped with the finest topology such that any $i$-convergent sequence converges in $\mathcal{PS}_i(S)$.
\end{definition}

Let $\Gamma\subset \mathcal{P}(S)$ be a connected component.  Recall that $\Gamma$ can be made into a metric space by letting each edge have length 1.

\begin{lemma}
For any surface $S$, $\mathcal{PS}_0(S)\cong \mathcal{P}(S)$ with topology induced by the graph distance metric.
\end{lemma}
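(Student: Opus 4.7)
The plan is to show that the $0$-convergent sequences of Definition 6.1 are exactly the sequences converging in the graph-distance metric on $\mathcal{P}(S)$, and then to conclude the homeomorphism from the fact that the graph-distance topology is sequential. Since by the remark after Definition 4.9 convergence in $V_0(S)$ means eventual constancy, condition (1) of Definition 6.1 reduces to: the $P_k$ eventually lie on a single edge $[X,Y]$ with $a_k\to a$; condition (2) reduces to: eventually $P_k$ lies on some edge incident to a vertex $P$ with parameter $a_k\to 0$; and condition (3) is vacuous, since adjacent vertices $X_k,Y_k$ in $\mathcal{P}(S)$ cannot both equal a single vertex $P$.

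With this unpacking, I would verify in both directions that $0$-convergence coincides with metric convergence. For the forward direction, suppose $P_k\to P$ in the metric. If $P$ is interior to an edge $[X,Y]$, then eventually $P_k$ lies in that same edge, matching condition (1). If $P$ is a vertex, then eventually $d(P_k,P)<1/2$, so $P_k$ lies on an edge with $P$ as an endpoint, and choosing the representative $P_k=(P,a_k,Y_k)$ with $a_k=d(P_k,P)$ satisfies condition (2). The reverse direction is a direct computation: $d(P_k,P)=|a_k-a|\to 0$ in case (1) and $d(P_k,P)=a_k\to 0$ in case (2).

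To finish, I would assemble the topological conclusion. The graph-metric topology $\tau_{\mathrm{metric}}$ is metrizable, hence first-countable, hence sequential, so a set $U$ is $\tau_{\mathrm{metric}}$-open if and only if every metric-convergent sequence with limit in $U$ eventually lies in $U$. Since $\tau_{\mathrm{metric}}$ is one topology in which every $0$-convergent sequence converges, the definition of $\mathcal{PS}_0(S)$ as the \emph{finest} such topology gives $\mathcal{PS}_0(S)\supseteq\tau_{\mathrm{metric}}$. Conversely, if $U\in\mathcal{PS}_0(S)$ and $P_k\to P$ in the metric with $P\in U$, then $P_k$ is $0$-convergent, hence $P_k\to P$ in $\mathcal{PS}_0(S)$ by the defining property, whence eventually $P_k\in U$; sequentiality of the metric topology then gives $U\in\tau_{\mathrm{metric}}$. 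The only minor subtlety is the vertex case in the forward direction: a vertex of $\mathcal{P}(S)$ may have infinite valence, so the sequence can oscillate among many edges incident to $P$, but since each $P_k\ne P$ lies on a uniquely determined edge, the representatives $(P,a_k,Y_k)$ are always available and condition (2) applies uniformly.
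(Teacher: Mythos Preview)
Your proposal is correct and follows essentially the same route as the paper: unpack Definition~6.1 in the case $i=0$, use that $V_0(S)$ is discrete to reduce conditions (1) and (2) to eventual constancy of the vertex sequences, note that condition (3) is vacuous because $\mathcal{P}(S)$ has no loops, and conclude that $0$-convergence coincides with graph-metric convergence. The paper handles the implication ``metric-convergent $\Rightarrow$ $0$-convergent'' with a single phrase (``by construction''), whereas you verify it explicitly; and the paper leaves the passage from ``same convergent sequences'' to ``same topology'' implicit, whereas you spell it out via sequentiality of the metric topology together with the defining maximality of $\mathcal{PS}_0(S)$. These are expository improvements rather than a different argument.
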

\begin{proof}
We will show that a sequence converges in $\mathcal{PS}_0(S)$ if and only if it converges in $\mathcal{P}(S)$.  By construction, any convergent sequence in $\mathcal{P}(S)$ also converges in $\mathcal{PS}_0(S)$.  Now choose a 0-convergent sequence $\{(X_k,a_k,Y_k\}\to (X,a,Y)$ in $\mathcal{PS}_0(S)$.  Recall that $V_0(S)$ carries the discrete topology.  Thus, condition (1) in definition 6.1 can occur only if $\{X_k\}$ and $\{Y_k\}$ are eventually constant.  In this case it follows that $\{(X_k,a_k,Y_k)\}$ eventually agrees with $\{X,a_k,Y\}$, which obviously converges to $\{X,a,Y\}$ in $\mathcal{P}(S)$.
\par 
Now suppose that $\{(X_k,a_k,Y_k\}\to P$ in $\mathcal{PS}_0(S)$ by condition 2 in definition 6.1.  Then $\{X_k\}$ is eventually constant and converges to $P$.  Also, $\{a_k\}\to 0$, so $\{P,a_k,Y_k\}$ converges to $P$ in $\mathcal{P}(S)$.
\par 
Condition 3 in definition 6.1 cannot occur in the case $i=0$.  This is because $V_0(S)$ is discrete, so condition 3 says that $\{X_k\}$ and $\{Y_k\}$ are eventually equal.  But $C_P(S)$ is a simple graph, so $X_k=Y_k$ cannot be adjacent to itself, which is a contradiction.
\par 
Thus, in all three cases, a convergent sequence in $\mathcal{PS}_0(S)$ also converges to the same limit in $\mathcal{P}(S)$, so $\mathcal{PS}_0(S)\cong \mathcal{P}(S)$.
\end{proof}

\begin{lemma}
For $0\leq i\leq j\leq 4$, the topology on $\mathcal{PS}_i(S)$ is finer than the topology on $\mathcal{PS}_j(S)$.  
\end{lemma}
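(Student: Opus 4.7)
The plan is to reduce the comparison of the two topologies to a comparison of their defining collections of sequences. Both $\mathcal{PS}_i(S)$ and $\mathcal{PS}_j(S)$ are defined as the finest topology in which a specified family of sequences converges to a specified limit, so to show $\mathcal{PS}_i(S)$ is finer it suffices to show that the family of $i$-convergent sequences from Definition 6.1 is contained in the family of $j$-convergent sequences. Any topology in which every $j$-convergent sequence converges will then automatically have every $i$-convergent sequence converging, making it a candidate in the collection whose finest member defines $\mathcal{PS}_i(S)$, and hence coarser than $\mathcal{PS}_i(S)$. Applying this to the topology of $\mathcal{PS}_j(S)$ itself gives the result.

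To execute the plan, I would first show that convergence in $V_i(S)$ implies convergence (to the same limit) in $V_j(S)$. This is immediate because convergence in $V_i(S)$ coincides with $i$-convergence in the sense of Definition 4.9 (using that $V_i(S)$ is metrizable with open balls $\B_n^i(X)$), and Lemma 4.7 guarantees that $i$-agreement on a compact subsurface implies $j$-agreement on that subsurface, so every $i$-convergent sequence of vertices is $j$-convergent with the same limit. I would then take an arbitrary sequence $\{P_k\}$ in $\mathcal{P}(S)$ that $i$-converges to $P$ with witnessing representatives $P_k = (X_k, a_k, Y_k)$ and $P = (X, a, Y)$, and verify, case by case, that the same representatives witness $j$-convergence. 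Each of the three clauses in Definition 6.1 decomposes into convergence statements for the vertex sequences $\{X_k\}$, $\{Y_k\}$ in the vertex space together with convergence of $\{a_k\}$ in $[0,1]$. The convergence of $\{a_k\}$ in $[0,1]$ is independent of the index, and the vertex-space convergence passes from $V_i(S)$ to $V_j(S)$ by the preceding observation, so each clause transfers directly from $i$ to $j$.

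The main obstacle here is mostly bookkeeping rather than substance, since Lemma 4.7 does nearly all of the work. The one subtlety worth pointing out is that the phrase ``finest topology such that every $i$-convergent sequence converges'' requires a mild justification: any supremum of topologies in the refinement lattice is generated by finite intersections of opens from the members, and a finite intersection of neighborhoods of a limit point is again a neighborhood in which all but finitely many terms of a given convergent sequence lie. Thus the supremum of the collection of topologies making all $i$-convergent sequences converge is itself a topology with that property, so $\mathcal{PS}_i(S)$ is well-defined, and the containment of classes yields $\tau(\mathcal{PS}_j(S)) \subseteq \tau(\mathcal{PS}_i(S))$ as desired.
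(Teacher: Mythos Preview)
Your proposal is correct and follows essentially the same approach as the paper: the paper's one-line proof simply cites that $V_i(S)$ has a finer topology than $V_j(S)$, and your argument unpacks exactly why that fact, together with the sequential definition of $\mathcal{PS}_i(S)$, yields the conclusion. Your case-by-case verification and remark on the well-definedness of the finest topology are additional detail, but the core reasoning is identical.
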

\begin{proof}
This lemma follows immediately from the fact that $V_i(S)$ has a finer topology than $V_j(S)$ for $0\leq i\leq j\leq 4$.
\end{proof}

\begin{lemma}
For $0\leq i\leq 4$, the restriction of the topology on $\mathcal{PS}_i(S)$ to the vertex set agrees with that of $V_i(S)$.
\end{lemma}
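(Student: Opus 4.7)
My plan is to show the two topologies have the same open sets, using the fact that both $V_i(S)$ and $\mathcal{PS}_i(S)$ are defined as the \emph{finest} topologies in which specified sequences converge. As a consequence, a set $U$ is open in either topology if and only if every such convergent sequence with limit in $U$ is eventually contained in $U$. Writing $\mathcal{V}$ for the common underlying vertex set, I verify both inclusions between the topology $\tau_V$ of $V_i(S)$ and the restriction topology $\tau_R$ inherited from $\mathcal{PS}_i(S)$.

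For $\tau_R\subseteq \tau_V$, let $\tilde U$ be open in $\mathcal{PS}_i(S)$ and let $\{X_k\}$ be a sequence of vertices $i$-converging (in the sense of Definition~4.9) to $X\in \tilde U\cap \mathcal{V}$. Choosing any $Y_k$ adjacent to $X_k$ and any $Y$ adjacent to $X$, the representatives $(X_k,0,Y_k)$ and $(X,0,Y)$ with $a_k=0\to 0$ fulfill condition~(2) of Definition~6.1, so $\{X_k\}$ converges to $X$ in $\mathcal{PS}_i(S)$. Openness of $\tilde U$ then forces $X_k\in \tilde U\cap \mathcal{V}$ eventually.

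For $\tau_V\subseteq \tau_R$, given $U$ open in $V_i(S)$ I propose the candidate
\[\tilde U := U \cup \{P\in \mathcal{PS}_i(S)\setminus \mathcal{V} : P\text{ lies on an edge with at least one endpoint in }U\},\]
for which $\tilde U\cap \mathcal{V}=U$ tautologically, since edge-interior points are not vertices. It remains to check $\tilde U$ is open in $\mathcal{PS}_i(S)$, i.e.\ that every sequence $\{P_k\}$ that $i$-converges to $P\in \tilde U$ is eventually in $\tilde U$. If $P\in U$ is a vertex, I examine each of conditions~(1)--(3) of Definition~6.1 in turn; in each, some coordinate of the representative of $P_k$ converges in $V_i(S)$ to $P$ (namely $X_k$ if $a=0$ or under condition~(2), $Y_k$ if $a=1$, and both under condition~(3)), which by openness of $U$ puts that coordinate in $U$ eventually, while $a_k\to a$ pushes $a_k$ away from the opposite endpoint parameter. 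Consequently each $P_k$ is either a vertex in $U$ or an edge interior with an endpoint in $U$, hence in $\tilde U$. If instead $P$ is an edge interior with endpoint $X\in U$, only condition~(1) applies; arranging the representation so $X$ is the first coordinate, $X_k\to X$ and $a_k\to a\in(0,1)$ give $X_k\in U$ and $a_k\in(0,1)$ eventually, which forces $P_k\in \tilde U$.

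The main obstacle is the bookkeeping around the representatives in Definition~6.1: each edge point has two and each vertex has many, so one must track carefully which of $X_k$ or $Y_k$ actually converges to $P$ in $V_i(S)$, and use $a_k\to a$ to rule out the degenerate case of $P_k$ collapsing onto the endpoint not in $U$. Beyond this case analysis, I do not anticipate any essentially deeper difficulty.
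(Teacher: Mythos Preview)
Your proof is correct. The paper's own proof is the single line ``Immediate from the definition,'' so you have supplied the details the author leaves implicit. In spirit the approaches coincide: both rest on the observation that a sequence of vertices $i$-converges in the sense of Definition~4.9 if and only if it $i$-converges in the sense of Definition~6.1 (via condition~(2)). Your explicit construction of the extension $\tilde U$ is the honest way to handle the inclusion $\tau_V\subseteq\tau_R$, since the subspace of a sequentially-defined topology need not itself be sequential a priori; the paper is tacitly assuming this step causes no trouble, and you have verified that it does not.
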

\begin{proof}
Immediate from the definition.
\end{proof}

Our next goal is to construct a basis for $\mathcal{PS}_i(S)$ that is relatively easy to work with.  Intuitively, we can think of points in $\mathcal{PS}_i(S)$ as close if they are either close in the underlying graph, or if they agree on a large finite-type subsurface.  Thus far, we lack a definition of two points "agreeing on $S_n$ if one or both of the points is an edge point.  To this end, will define sets $A^i_n(P)$ for each $P\in \mathcal{PS}_i(S)$.  The $A^i_n(P)$ will not be open, but we will use them to construct open neighborhoods of each point in $\mathcal{PS}_i(S)$.  The definition of $A^i_n(P)$ will be different depending on whether $P$ is a vertex point or an edge point.

\begin{definition}\rm
If $X\in \mathcal{PS}_i(S)$ is a vertex, let $A^i_n(X)$ be the induced subgraph on $B^i_n(X)$.  
\par 
If $P=(X,a,Y)$ is an edge point, let 
\[A^i_n(P)=\{(X',a,Y'):X'\in B^i_n(X)\text{ and }Y'\in B^i_n(Y)\}\]
\end{definition}
\par 
If $P$ is a vertex, then $A^i_n(X)$ can be thought of as the "cloud" of vertices and edges near $X$.  If $P$ is an edge point, then $A^i_n(X)$ can be thought of as a "horizontal slice" of the edges near $P$.
\par 
Note that $A^i_n(P)$ need not be open in $\mathcal{PS}_i(S)$.  Also note that for the case $i=0$, $A^0_n(P)$ is just a single point $P$.

\begin{lemma}[Vertex Disjointness Lemma]
Let $X$ and $Y$ be two vertices in $\mathcal{PS}_i(S)$.  Then $A^i_n(X)$ and $A^i_n(Y)$ are either disjoint or equal.
\end{lemma}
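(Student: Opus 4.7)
The plan is to deduce this lemma directly from the Disjointness Lemma for the vertex spaces (Lemma 4.13 in the excerpt), together with the observation that $A^i_n(X)$ and $A^i_n(Y)$ are, by definition, the subgraphs of $\mathcal{P}(S)$ induced on the vertex sets $B^i_n(X)$ and $B^i_n(Y)$ respectively. Two induced subgraphs with the same vertex set coincide, and two induced subgraphs on disjoint vertex sets share no vertices and (since any edge of the pants graph is determined by its endpoints) share no edges either. So the entire content is to prove the following equivalence: $A^i_n(X) \cap A^i_n(Y) \neq \emptyset$ if and only if $B^i_n(X) \cap B^i_n(Y) \neq \emptyset$.

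First I would handle vertex points: a vertex $Z$ lying in $A^i_n(X) \cap A^i_n(Y)$ lies in $B^i_n(X) \cap B^i_n(Y)$ by the definition of an induced subgraph, so we are done in that case. Next I would handle edge points: if a point $P = (X', a, Y')$ with $a \in (0,1)$ lies in both $A^i_n(X)$ and $A^i_n(Y)$, then by the definition of the induced subgraph, the endpoints $X'$ and $Y'$ must both lie in $B^i_n(X)$ and both lie in $B^i_n(Y)$. In particular $X' \in B^i_n(X) \cap B^i_n(Y)$, so the vertex sets meet. The reverse direction is immediate since a common vertex lies in both induced subgraphs.

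Having established that $A^i_n(X) \cap A^i_n(Y) \neq \emptyset$ forces $B^i_n(X) \cap B^i_n(Y) \neq \emptyset$, I would apply Lemma 4.13 to conclude $B^i_n(X) = B^i_n(Y)$. Since $A^i_n(X)$ and $A^i_n(Y)$ are the induced subgraphs on these (now identical) vertex sets, they are equal as subsets of $\mathcal{PS}_i(S)$. The contrapositive gives the disjointness alternative, completing the proof.

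There is no serious obstacle here; the only minor care needed is to remember that points of $\mathcal{PS}_i(S)$ include interior edge points, which is why one must check that a shared edge point still forces the underlying vertex sets to intersect. Once that bookkeeping is done, the result is an immediate corollary of the vertex-level Disjointness Lemma.
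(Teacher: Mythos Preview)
Your proposal is correct and follows essentially the same approach as the paper, which simply states that the result ``follows immediately from the disjointness lemma for $V_i(S)$.'' You have merely spelled out the details of that immediate implication, in particular the bookkeeping that a shared interior edge point forces the underlying vertex sets $B^i_n(X)$ and $B^i_n(Y)$ to intersect.
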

\begin{proof}
Follows immediately from the disjointness lemma for $V_i(S)$.
\end{proof}

\begin{lemma}[Edge Disjointness Lemma]
Let $P$ and $Q$ be two edge points in $\mathcal{PS}_i(S)$.  Then $A^i_n(P)$ and $A^i_n(Q)$ are either disjoint or equal.
\end{lemma}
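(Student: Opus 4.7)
The plan is to reduce the statement to the Vertex Disjointness Lemma (Lemma 6.6) by examining the two possible representations of a shared edge point. The key preliminary observation is that $A^i_n(P)$ is well-defined independently of the representation of $P$: if $P=(X,a,Y)=(Y,1-a,X)$, then an edge-point tuple $(X',a,Y')$ equals $(Y',1-a,X')$ in $\mathcal{PS}_i(S)$, so applying the formula with either representation of $P$ produces literally the same set of edge points of $\mathcal{PS}_i(S)$.

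Suppose then that $R\in A^i_n(P)\cap A^i_n(Q)$, where $P=(X_1,a,Y_1)$ and $Q=(X_2,b,Y_2)$. From $R\in A^i_n(P)$ and $R\in A^i_n(Q)$ respectively we obtain representations $R=(X_1',a,Y_1')$ with $X_1'\in B^i_n(X_1)$, $Y_1'\in B^i_n(Y_1)$, and $R=(X_2',b,Y_2')$ with $X_2'\in B^i_n(X_2)$, $Y_2'\in B^i_n(Y_2)$. Since $R$ lies in the interior of an edge (as $a,b\in(0,1)$), it has exactly the two representations $(X_1',a,Y_1')$ and $(Y_1',1-a,X_1')$, so either $(X_2',b,Y_2')=(X_1',a,Y_1')$ or $(X_2',b,Y_2')=(Y_1',1-a,X_1')$.

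I would handle these two cases in turn. In the first, $a=b$ and the Vertex Disjointness Lemma applied separately to $X_1,X_2$ and to $Y_1,Y_2$ yields $B^i_n(X_1)=B^i_n(X_2)$ and $B^i_n(Y_1)=B^i_n(Y_2)$, so $A^i_n(P)$ and $A^i_n(Q)$ are defined by the same data and hence coincide. In the second, $b=1-a$, $B^i_n(X_2)=B^i_n(Y_1)$, and $B^i_n(Y_2)=B^i_n(X_1)$; invoking the well-definedness step to compute $A^i_n(P)$ using the representation $(Y_1,1-a,X_1)$ of $P$, one immediately matches it termwise with $A^i_n(Q)$.

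I expect no serious conceptual obstacle — the statement is essentially a corollary of the vertex case — but the bookkeeping of the two representations of an edge point, and the corresponding need to argue well-definedness of $A^i_n(P)$ before the case analysis, is the step most prone to error and is the only place where some care is genuinely required.
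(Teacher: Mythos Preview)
Your proof is correct and follows the same idea as the paper: both reduce the Edge Disjointness Lemma to the disjointness property of the $B^i_n$ sets in $V_i(S)$. The paper dispatches it in a single sentence (``Follows immediately from the disjointness lemma for $V_i(S)$''), whereas you spell out the well-definedness of $A^i_n(P)$ under the two representations of an edge point and the resulting two-case analysis; this extra care is appropriate but not a different strategy.
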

\begin{proof}
Follows immediately from the disjointness lemma for $V_i(S)$.
\end{proof}
Note that if $X$ is a vertex and $P$ is a point on an edge incident to $X$, then disjointness between $A^i_n(X)$ and $A^i_n(P)$ can fail.  If $P=(X,a,Y)$ is an edge point, and if $X$ $i$-agrees on $S_n$ with $Y$, then $A^i_n(P)$ is a proper subset of $A^i_n(X)=A^i_n(Y)$.
\par 
We need one more family of auxillary sets, $D^i_{\epsilon,n}(P)$.  Starting from $P$, we move a distance of up to $\epsilon$ in the graph metric on $\mathcal{P}(S)$ until we reach another point $P'$.  From there, we jump to another point which agrees with $P'$ on $S_n$: i.e., a point in $A^i_n(P')$.  The set of all possible ending points of these two steps is $D^i_{\epsilon,n}(P)$.
\par 

If $P_1$ and $P_2$ are points in a connected-component of $\mathcal{P}(S)$, we denote by $d_0(P_1,P_2)$ their distance in the graph metric.  If $P_1$ and $P_2$ are in different components, then we let $d_0(P_1,P_2)=\infty$. We can now restate the previous two paragraphs as a formal definition.
\begin{definition}\rm
Let $P\in \mathcal{PS}_i(S)$, $n\in \mathbb{N}$, and $\epsilon>0$.  Then let
\[D^i_{\epsilon,n}(P)=\bigcup_{P':d_0(P,P')<\epsilon} A^i_n(P')\]
\end{definition}

Now we are ready to define the basis of open sets for $\mathcal{PS}_i(S)$.

\begin{definition}\rm
Let $X\in \mathcal{PS}_i(S)$ be a vertex.  Let $0\leq i\leq 4$ and $n\geq 0$ be integers and let $\epsilon>0$ be a real number.  Let
\[\mathbb{D}^i_{\epsilon,n}(X)=\bigcup_{X':X'\mbox{ and }X\mbox{ }i\mbox{-agree on } S_n} D^i_{\epsilon,n}(X)\].
\end{definition}

\begin{definition}\rm
If $P=(X,a,Y)$ and $Q=(X',a',Y')$ are edges points, we say that $P$ and $Q$ \emph{$i$-agree on $S_n$} if $X$ and $X'$ $i$-agree on $S_n$, $Y$ and $Y'$ $i$-agree on $S_n$, and $a=a'$.  As in the case of vertices, we will sometimes say that $P$ and $Q$ \emph{agree on $S_n$} if $i$ is understood.
\end{definition}

\begin{definition}\rm
Let $P$ be an edge point.  Let $0\leq i\leq 4$ and $n\geq 0$ be integers and let $\epsilon>0$ be a real number.  Let
\[\mathbb{D}^i_{\epsilon,n}(P)=\bigcup_{P \mbox{ and } P'\mbox{ agree on }S_n}D^i_{\epsilon,n}(P'))\].
\end{definition}

It will sometimes be convenient to have a basis indexed only by a single positive real number.  Thus, we define

\[\mathbb{D}^i_{\epsilon}(P)=\mathbb{D}^i_{\epsilon, \lfloor \frac{1}{\epsilon}\rfloor}(P)\]

Note that for $i=0$, $\mathbb{D}^0_{\epsilon,n}(P)$ is the open ball of radius $\epsilon$ around $P$ in the graph-distance metric on $\mathcal{P}(S)$.

\par 
The goal for the rest of this section is to prove the following theorem.

\begin{theorem}[Basis Theorem]
For $1\leq i\leq 4$, the set
\[\{\mathbb{D}^i_{\epsilon}(P):\epsilon>0,P\in \mathcal{PS}_i(S)\}\]
is a basis for $\mathcal{PS}_i(S)$. 
\end{theorem}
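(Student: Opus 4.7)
My plan is to verify the two basis axioms for the given topology on $\mathcal{PS}_i(S)$: (a) each $\mathbb{D}^i_\epsilon(P)$ is open, and (b) for every open $U$ and every $P \in U$ some $\mathbb{D}^i_\epsilon(P)$ lies inside $U$.  Because $\mathcal{PS}_i(S)$ carries the \emph{finest} topology in which every $i$-convergent sequence converges, a set is open if and only if its complement is sequentially closed under $i$-convergence.  This sequential characterization will drive both parts of the argument.

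For (a), I will fix $\mathbb{D}^i_{\epsilon,n}(P_0)$ and consider an $i$-convergent sequence $\{P_k\}\to P^{*}\in \mathbb{D}^i_{\epsilon,n}(P_0)$, and show that $P_k\in \mathbb{D}^i_{\epsilon,n}(P_0)$ for $k$ large.  Unpacking the definition, there exist $P_1\in A^i_n(P_0)$ and $Q^{*}$ with $d_0(P_1,Q^{*})<\epsilon$ and $P^{*}\in A^i_n(Q^{*})$.  I then split into the three cases of Definition 6.1 and, using the Vertex and Edge Disjointness Lemmas, build for each large $k$ a witness $Q_k$ satisfying $d_0(P_1,Q_k)<\epsilon$ and $P_k\in A^i_n(Q_k)$.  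In case (1), the representatives $(X_k,a_k,Y_k)$ track $(X^{*},a^{*},Y^{*})$ in $V_i(S)\times [0,1]\times V_i(S)$, so once $a_k$ is within $\epsilon$ of $a^{*}$ one may take $Q_k$ to be a height-translated copy of $Q^{*}$.  In case (2), the condition $a_k\to 0$ and $X_k\to P^{*}$ forces $P_k$ into $A^i_n(P_k)\subseteq D^i_{\epsilon,n}(X_k)$ once $a_k<\epsilon$ and $X_k\in B^i_n(P^{*})$.  In case (3), both endpoints eventually lie in $B^i_n(P^{*})$, so $P_k$ already sits in a horizontal slice over this common ball and the desired $Q_k$ is easily produced.

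For (b), I will argue by contradiction.  Suppose $U$ is open, $P\in U$, but no $\mathbb{D}^i_{1/k}(P)=\mathbb{D}^i_{1/k,\lfloor k\rfloor}(P)$ is contained in $U$.  Pick $P_k\in \mathbb{D}^i_{1/k,\lfloor k\rfloor}(P)\setminus U$.  By the definition of $\mathbb{D}^i$, there is some $P_1^{(k)}\in A^i_{\lfloor k\rfloor}(P)$ and some $Q_k$ with $d_0(P_1^{(k)},Q_k)<1/k$ and $P_k\in A^i_{\lfloor k\rfloor}(Q_k)$.  Choosing representatives of each $P_k$ whose endpoints land in $B^i_n(\cdot)$ for every fixed $n$ (Lemma 4.24 guarantees enough flexibility to do so), I will exhibit $\{P_k\}$ as an $i$-convergent sequence with limit $P$, via one of the three cases of Definition 6.1 depending on whether the chosen representatives of $P_k$ have heights bounded away from $0$ and $1$, heights tending to $0$, or both endpoints tending to $P$.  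This contradicts the openness of $U$.

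The main obstacle is the case analysis in (a): a single $i$-convergent sequence admits several representations $(X_k,a_k,Y_k)$ witnessing different cases of Definition 6.1, so one must select representatives compatible with the $A^i_n$-decomposition of $\mathbb{D}^i_{\epsilon,n}(P_0)$.  Matching the horizontal-slice structure of $A^i_n$ on edge points with the vertex structure of $B^i_n$ at the endpoints is the subtle step; once the right representatives are chosen, the Disjointness Lemmas and the characterization of $V_i$-convergence as eventual $S_n$-agreement complete each case directly.
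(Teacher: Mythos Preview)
Your approach is essentially the same as the paper's: both openness and the neighborhood-basis claim are handled via the sequential characterization of the topology on $\mathcal{PS}_i(S)$ together with the three-case analysis of Definition~6.1, exactly as in Lemmas~6.14--6.16, Corollary~6.17, and Lemmas~6.18--6.19, 6.22 (the paper merely separates the cases into distinct lemmas and disposes of large $\epsilon$ by a union-of-open-sets argument). Two cosmetic points: your invocation of ``Lemma~4.24'' is spurious---the representatives you need come directly from the definition of $A^i_n$---and in part~(b) you should make explicit that passing to a subsequence of $\{P_k\}$ fitting a single case of Definition~6.1 already suffices for the contradiction.
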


We will prove this theorem by breaking it up into several lemmas.

\begin{lemma}
Let $P=(X,a,Y)$ be an edge point, $n\in \mathbb{N}$, and $0<\epsilon<\max({a,1-a})$.  Then $\mathbb{D}^i_{\epsilon,n}(P)$ is an open set.
\end{lemma}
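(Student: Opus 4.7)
My plan is to verify sequential openness: since $\mathcal{PS}_i(S)$ carries the finest topology in which every $i$-convergent sequence converges, a set is open if and only if every $i$-convergent sequence with limit inside it is eventually inside it. I would fix an $i$-convergent sequence $\{Q_k\} \to Q \in \mathbb{D}^i_{\epsilon,n}(P)$ and show $Q_k \in \mathbb{D}^i_{\epsilon,n}(P)$ for all sufficiently large $k$. Unpacking the definition produces witnesses for $Q$: a point $P'$ that $i$-agrees with $P$ on $S_n$ and a point $P''$ with $d_0(P', P'') < \epsilon$ and $Q \in A^i_n(P'')$. The task is to construct analogous witnesses $(P'_k, P''_k)$ for each large $k$. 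The constraint $\epsilon < \max(a, 1-a) < 1$ will sharply restrict the combinatorial possibilities.

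The argument splits by the nature of $Q$ and the convergence mode of Definition 6.1. If $Q$ is an edge point, only Case (1) of Definition 6.1 can apply; the Vertex and Edge Disjointness Lemmas place $X_k$ and $Y_k$ in the correct $B^i_n$-classes for large $k$, and I take $P'_k = P'$ and obtain $P''_k$ from $P''$ by updating the edge parameter from $a''$ to $a_k$ (or keeping $P''_k = P''$ if $P''$ is a vertex). The triangle inequality in the graph metric, applied along a single edge, yields $d_0(P'_k, P''_k) < \epsilon$ for $k$ large. If $Q$ is a vertex and convergence is via Case (3) of Definition 6.1, then both $X_k$ and $Y_k$ lie in $B^i_n(Q)$ eventually, so $Q_k$ sits in the induced subgraph $A^i_n(Q) \ni Q = P''$, and the original witnesses continue to work.

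The main obstacle is when $Q$ is a vertex and convergence uses Case (1) or (2), so $Q_k = (X_k, a_k, Y_k)$ with $X_k \to Q$ in $V_i(S)$ and $a_k \to 0$, but $Y_k$ either unconstrained or converging to some $Y_Q$ which need not lie in $B^i_n(Y)$. The condition $\epsilon < 1$ together with $d_0(P',Q) < \epsilon$ forces $P'$ to share a vertex with $Q$ in the pants graph; without loss of generality $P' = (Q, a, Y')$ with $a < \epsilon$. The naive candidate $P'_k = (X_k, a, Y_k)$ may fail to agree with $P$ on the $Y$-side. To resolve this, I would replace $Y_k$ by $Y^*_k$, obtained by performing on $X_k$ the same elementary move that sends $X$ to $Y$: because $X_k$ and $X$ agree on $S_n$, the two curves involved in the move sit in $X_k$ in the same position relative to $S_n$ as in $X$, so the move produces a vertex $Y^*_k \in B^i_n(Y)$ adjacent to $X_k$ in the pants graph. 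Then $P'_k = (X_k, a, Y^*_k)$, $P''_k = Q_k$, and since these edge points share the vertex $X_k$, the graph distance is $a + a_k$, which is eventually less than $\epsilon$.

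I expect the hardest step to be the verification that the transported elementary move actually yields $Y^*_k \in B^i_n(Y)$: one must check that the complexity-1 subsurface supporting the move behaves compatibly with $i$-agreement on $S_n$ regardless of whether this subsurface lies inside $S_n$, outside $S_n$, or straddles $\partial S_n$. This should follow by using the principal-exhaustion hypothesis and examining each level ($i = 1, 2, 3, 4$) of the agreement relation separately.
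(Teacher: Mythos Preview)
Your sequential-openness strategy matches the paper's, but you miss the structural observation that reduces the proof to a few lines: under the hypothesis on $\epsilon$, every point of $\mathbb{D}^i_{\epsilon,n}(P)$ is an \emph{edge} point of the explicit form $(X'',a',Y'')$ with $X''\in B^i_n(X)$, $Y''\in B^i_n(Y)$, and $|a-a'|<\epsilon$. The paper obtains this by noting that any $P'$ which $i$-agrees with $P$ on $S_n$ has the form $(X',a,Y')$, and that the bound on $\epsilon$ forces any $P''$ with $d_0(P',P'')<\epsilon$ to remain an interior point of that same edge; hence $A^i_n(P'')$ contains only edge points. (This step actually needs $\epsilon<\min(a,1-a)$; the $\max$ in the statement is almost certainly a typo, and the lemma is only ever invoked for small $\epsilon$ via Corollary~6.17.) With this description of $\mathbb{D}^i_{\epsilon,n}(P)$ in hand, only Case~(1) of Definition~6.1 can apply to any limit $Q$, and the conclusion is immediate from convergence in $V_i(S)$ and in $[0,1]$.

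Your vertex case is therefore unnecessary under the intended hypothesis, and the ``transported elementary move'' you propose for Case~(2) is also incomplete as written. To perform on $X_k$ the elementary move that sends $X$ to $Y$ you need the replaced curve $\alpha$ to actually lie in $X_k$, and to conclude $Y^*_k\in B^i_n(Y)$ you need the move to interact with $S_n$ in exactly the same way for $X_k$ as for $X$. Neither is guaranteed by $i$-agreement on $S_n$ when the complexity-$1$ subsurface carrying the move straddles $\partial S_n$ --- a difficulty you yourself flag in your final paragraph without resolving it. The paper sidesteps all of this by never needing to manufacture new witness pairs $(P'_k,P''_k)$ at all.
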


\begin{proof}
Let $Q\in \mathbb{D}^i_{\epsilon,n}(P)$.  By definition, there are points $P',P''\in \mathcal{PS}_i(S)$ such that $P'$ and $P$ agree on $S_n$, $d_0(P',P'')<\epsilon$, and $Q\in A^i_n(P'')$.  Hence, $P'$ is an edge point with a represenation $(X',a,Y')$.  Since $d_0(P',P'')<\epsilon<\max({a,1-a})$, $P''$ is also an edge point on the same edge as $P'$: it has a represenation $P''=(X',a',Y')$ with $|a-a'|<\epsilon$.  Hence, $Q$ is also an edge point with a represenation $(X'',a',Y'')$ where $X''\in B^i_n(X')=B^i_n(X)$ and $Y''\in B^i_n(Y')=B^i_n(Y)$.
\par 
Now suppose $\{Q_j\}_{j\geq 1}\to Q$.  Then each $Q_j$ has a represenation $(X_j,a_j,Y_j)$ with $X_j\to X''$, $Y_j\to Y''$, and $a_j\to a'\in (a-\epsilon,a+\epsilon$.  Hence, for all sufficiently large $j$, we have $X_j\in B^i_n(X'')=B^i_n(X)$, $Y_j\in B^i_n$, and $a_j\in (a-\epsilon,a+\epsilon)$.  Hence $Q_j\in \mathbb{D}^i_{\epsilon,n}(P)$ for all sufficiently large $j$.
\end{proof}

\begin{lemma}
Let $X\in V_i(S)$, $n\in \mathbb{N}$, $0<\epsilon<1$, and let $Q\in \mathbb{D}^i_{\epsilon,n}(X)$ be an edge point.  Let $\{Q_j\}_{j\geq 1}\to Q$.  Then all but finitely many of the $Q_j$ are in $\mathbb{D}^i_{\epsilon,n}(X)$
\end{lemma}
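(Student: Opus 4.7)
The plan is to extract from $Q \in \mathbb{D}^i_{\epsilon,n}(X)$ a concrete ``certificate'' for the membership, and then use the convergence $Q_j \to Q$ to produce analogous certificates for all sufficiently large $j$.

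First I would unpack the hypothesis via the definition of $\mathbb{D}^i_{\epsilon,n}(X)$ for a vertex: there exist a vertex $X' \in B^i_n(X)$ and a point $P'$ with $d_0(X',P') < \epsilon$ such that $Q \in A^i_n(P')$. Because $\epsilon < 1$ and $X'$ is a vertex, the only possibilities are $P' = X'$ or $P'$ is an edge point of the form $(X',c,Y')$ with $c \in (0,\epsilon)$; this splits the analysis into two cases. In parallel, since $Q$ is an edge point, only condition (1) of Definition 6.1 can govern the convergence, so there are representations $Q = (W_1,a,W_2)$ and $Q_j = (X_j,a_j,Y_j)$ (with $X_j, Y_j$ adjacent vertices) such that $X_j \to W_1$ and $Y_j \to W_2$ in $V_i(S)$ and $a_j \to a$ in $[0,1]$. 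Consequently $X_j \in B^i_n(W_1)$ and $Y_j \in B^i_n(W_2)$ for all sufficiently large $j$.

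In the case $P' = X'$, the set $A^i_n(X')$ is the induced subgraph on $B^i_n(X')$, so both $W_1$ and $W_2$ lie in $B^i_n(X')$, and the Vertex Disjointness Lemma gives $B^i_n(W_1) = B^i_n(W_2) = B^i_n(X)$. Eventually $X_j, Y_j \in B^i_n(X)$; since they remain adjacent, the edge containing $Q_j$ lies in the induced subgraph $A^i_n(X) \subseteq D^i_{\epsilon,n}(X) \subseteq \mathbb{D}^i_{\epsilon,n}(X)$. In the case $P' = (X',c,Y')$, the representation of $Q$ forces $a = c < \epsilon$, so $a_j < \epsilon$ eventually; combined with $X_j \in B^i_n(W_1) = B^i_n(X') = B^i_n(X)$, the choice $X'' := X_j$ and $P'' := Q_j$ is a valid certificate because $d_0(X'', P'') = a_j < \epsilon$ and $Q_j \in A^i_n(Q_j)$ tautologically, giving $Q_j \in D^i_{\epsilon,n}(X_j) \subseteq \mathbb{D}^i_{\epsilon,n}(X)$.

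The main obstacle is not conceptual but notational: one must track the two possible representations of each edge point, notice that condition (1) of Definition 6.1 is the only admissible convergence mode once $Q$ is known to be an edge point, and repeatedly invoke the Vertex Disjointness Lemma to identify the various $B^i_n$-balls. Once this bookkeeping is done, each case reduces to a single appeal to convergence in $V_i(S)$.
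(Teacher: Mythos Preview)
Your proof is correct and follows essentially the same route as the paper: both unpack $Q\in\mathbb{D}^i_{\epsilon,n}(X)$ to find $X'\in B^i_n(X)$ and $P'$ with $d_0(X',P')<\epsilon<1$, split into the cases $P'=X'$ versus $P'=(X',c,Y')$, and then use that only condition~(1) of Definition~6.1 applies to an edge-point limit. The only cosmetic difference is in the final certification step of the edge-point case: the paper places $Q_j$ in $A^i_n((X',a_j,Y))$, while you place it in $D^i_{\epsilon,n}(X_j)$ via $d_0(X_j,Q_j)=a_j<\epsilon$; both are equally valid, and your acknowledgment of the two-representation bookkeeping (needed to conclude $a=c$ rather than $a=1-c$) is exactly the minor subtlety one has to check.
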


\begin{proof}
Since $Q\in \mathbb{D}^i_{\epsilon,n}(X)$, there exists a vertex $X'\in B^i_n(X)$ and a point $P\in \mathcal{PS}_i(S)$ such that $d_0(X',P)<\epsilon$ and $Q\in A^i_n(P)$.  Since $d_0(X',P)<\epsilon<1$, $P$ is either an edge point on an edge incident to $X'$, or else $P=X'$.  We tackle these two cases seperately.

\par 
First, suppose $P=(X',a,Y)$ with $0<a<\epsilon$. Then $Q$ is also an edge point with a represenation $(X'',a,Y')$ where $X''\in B^i_n(X')=B^i_n(X)$ and $Y'\in B^i_n(Y)$.  Hence, for all sufficiently large $j$, $Q_j=(X_j,a_j,Y_j)$ with $X_j\in B^i_n(X'')=B^i_n(X)$, $Y_j\in  B^i_n(Y')=B^i_n(Y)$, and $0<a_j<\epsilon$.  Thus 
\[Q_j\in A^i_n((X',a_j,Y))\subseteq \mathbb{D}^i_{\epsilon,n}(X).\]

\par 
Next suppose $P=X'$.  Then $Q=(X'',a,Y)$ where $X'',Y\in B^i_n(X')=B^i_n(X)$.  Thus, for all sufficiently large $j$, $Q_j$ has a represenation $(X_j,a_j,Y_j)$ with $X_j,Y_j\in B^i_n(X')=B^i_n(X)$, hence $Q_j\in \mathbb{D}^i_{\epsilon,n}(X)$.
\end{proof}

\begin{lemma}
Let $X\in V_i(S)$, $n\in \mathbb{N}$, $0<\epsilon<1$, and $Q\in \mathbb{D}^i_{\epsilon,n}(X)$ is a vertex.  Let $\{Q_j\}_{j\geq 1}\to Q$.  Then all but finitely many of the $Q_j$ are in $\mathbb{D}^i_{\epsilon,n}(X)$
\end{lemma}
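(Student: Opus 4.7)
The plan is to mirror the structure of the previous lemma's proof, first unpacking what it means for the vertex $Q$ to lie in $\mathbb{D}^i_{\epsilon,n}(X)$, and then dealing with each of the three convergence conditions from Definition 6.1 separately. Since $Q\in \mathbb{D}^i_{\epsilon,n}(X)$, there exist a vertex $X'\in B^i_n(X)$ and a point $P$ with $d_0(X',P)<\epsilon$ and $Q\in A^i_n(P)$. Because $\epsilon<1$, either $P=X'$ or $P$ is an interior point of an edge incident to $X'$. The latter is incompatible with $Q$ being a vertex: for an interior edge point $P=(X',a,Y)$, every element of $A^i_n(P)$ is an edge point with the same parameter $a\in(0,1)$. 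So $P=X'$, and hence $Q\in A^i_n(X')$. Since $A^i_n(X')$ is the induced subgraph on $B^i_n(X')$, the vertex $Q$ lies in $B^i_n(X')=B^i_n(X)$.

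Next, write $Q_j=(X_j,a_j,Y_j)$ with a representation realizing the convergence $\{Q_j\}\to Q$. In each of the three conditions of Definition 6.1, $\{X_j\}\to Q$ in $V_i(S)$ (after choosing the representation $Q=(Q,0,Z)$ in condition (1)), so for all sufficiently large $j$ we have $X_j\in B^i_n(Q)=B^i_n(X)$. Under conditions (1) and (2) we also have $a_j\to 0$, hence $a_j<\epsilon$ eventually; then $Q_j\in A^i_n(Q_j)$ together with $d_0(X_j,Q_j)=a_j<\epsilon$ yields $Q_j\in D^i_{\epsilon,n}(X_j)\subseteq \mathbb{D}^i_{\epsilon,n}(X)$. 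If $Q_j$ happens to be a vertex, this degenerates to $Q_j=X_j\in A^i_n(X_j)$.

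The subtler case, and where I expect the main obstacle, is condition (3), under which the parameters $a_j$ are not controlled at all — in principle they could stay away from $0$ and $1$. The way through is to use the extra hypothesis $\{Y_j\}\to Q$, which gives $Y_j\in B^i_n(X)$ for large $j$, combined with the crucial fact that for a vertex, $A^i_n$ is defined as an induced \emph{subgraph}. In particular $A^i_n(X_j)$ contains every edge of $\mathcal{P}(S)$ whose endpoints both lie in $B^i_n(X_j)=B^i_n(X)$. Applied to the edge $X_jY_j$, this places $Q_j$ — for any value of $a_j$ — inside $A^i_n(X_j)\subseteq D^i_{\epsilon,n}(X_j)\subseteq \mathbb{D}^i_{\epsilon,n}(X)$, which closes the argument uniformly across the three convergence modes.
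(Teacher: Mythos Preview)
Your proof is correct and follows essentially the same route as the paper's: you first deduce from $Q\in\mathbb{D}^i_{\epsilon,n}(X)$ and the fact that $Q$ is a vertex that $Q\in B^i_n(X)$, and then handle the convergence conditions from Definition~6.1 separately, using in condition~(3) that $A^i_n(X_j)$ is the \emph{induced} subgraph on $B^i_n(X_j)$ so that the whole edge $X_jY_j$ lies in it. The paper treats only conditions~(2) and~(3), since for a vertex limit condition~(1) reduces to~(2); your explicit remark about this and your slightly more detailed justification for why $P$ must equal $X'$ are the only differences.
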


\begin{proof}
By definition, there exists points $P,P'\in \mathcal{PS}_i(S)$ such that $P\in B^i_n(X)$, $d_0(P,P')<\epsilon$ and $Q\in A^i_n(P')$.  Since $Q$ is a vertex, $P'$ must also be a vertex and $Q\in B^i_n(P')$.  Since $d_0(P,P')<1$, and both $P$ and $P'$ are vertices, $P=P'$, hence the vertices $X$, $P$, and $Q$ all agree on $S_n$.  
\par 
We must consider two subcases, depending on which of the conditions from Definition 6.1 the $Q_j$ satisfy.
\par 
In the first subcase, corresponding to Definition 6.1 (2), $Q_j=(X_j,a_j,Y_j)$ where $X_j\to Q$ and $a_j\to 0$.  For all  sufficiently large $j$, $X_j\in B^i_n(Q)=B^i_n(X)$ and $0\leq a_j<\epsilon$.  Thus $d_0(X_j,Q_j)=a_j<\epsilon$ and $X_j\in B^i_n(X)$, so $Q_j\in \mathbb{D}^i_{\epsilon,n}(X)$.

\par 
In the second subcase, corresponding to Definition 6.1 (3), $Q_j=(X_j,a_j,Y_j)$ with $X_j\to Q$ and $Y_j\to Q$. Thus, for all sufficiently large $j$, $X_j,Y_j\in B^i_n(Q)=B^i_n(X)$, so $Q_j\in A^i_n(X)\subseteq \mathbb{D}^i_{\epsilon,n}(X)$.
\end{proof}

\begin{corollary}
For all $P\in \mathcal{PS}_i(S)$, $\epsilon>0$, and $n\in \mathbb{N}$, the set $\mathbb{D}^i_{\epsilon,n}(P)$ is open.
\end{corollary}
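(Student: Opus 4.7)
The plan is to verify sequential openness of $\mathbb{D}^i_{\epsilon,n}(P)$, which suffices because $\mathcal{PS}_i(S)$ carries the finest topology in which $i$-convergent sequences converge. I will express $\mathbb{D}^i_{\epsilon,n}(P)$ as a union of basic neighborhoods already shown to be open by Lemmas 6.12, 6.13 and 6.14, namely sets of the form $\mathbb{D}^i_{\delta,n}(Q)$ for $\delta$ small.

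Given $Q\in \mathbb{D}^i_{\epsilon,n}(P)$, I first unpack the definition to get $P',P''$ with $P'$ agreeing with $P$ on $S_n$, $d_0(P',P'')<\epsilon$, and $Q\in A^i_n(P'')$. Set $\delta := \epsilon - d_0(P',P'')>0$ and choose $\delta'>0$ with $\delta'<\delta$ small enough that one of the preceding lemmas applies to $\mathbb{D}^i_{\delta',n}(Q)$: if $Q$ is an edge point with representation $(X_Q,a_Q,Y_Q)$, take $\delta'<\max(a_Q,1-a_Q)$ and invoke Lemma 6.12; if $Q$ is a vertex, take $\delta'<1$ and combine Lemmas 6.13 and 6.14 (the first handling edge-point limits, the second handling vertex limits). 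In either case, $\mathbb{D}^i_{\delta',n}(Q)$ is an open neighborhood of $Q$. Once the containment $\mathbb{D}^i_{\delta',n}(Q)\subseteq \mathbb{D}^i_{\epsilon,n}(P)$ is established for each $Q$, the union over $Q\in \mathbb{D}^i_{\epsilon,n}(P)$ exhibits the set as a union of opens and completes the proof.

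The main obstacle is verifying this containment. Given $R\in \mathbb{D}^i_{\delta',n}(Q)$, unfolding the definition yields $R\in A^i_n(T)$ for some $T$ with $d_0(Q',T)<\delta'$ and some $Q'$ agreeing with $P''$ on $S_n$. The task is to exhibit $X'$ agreeing with $P$ on $S_n$ together with $T'\in A^i_n(T)$ such that $d_0(X',T')<\epsilon$. I plan to take $X'=P'$ and build $T'$ by transporting the short graph path from $Q'$ to $T$ to a corresponding path starting at $P''$. Elementary moves whose swapped curves meet $S_n$ transport directly, since $Q'$ and $P''$ share their pants curves intersecting $S_n$ and therefore carry the same complexity-1 subsurface on which the swap occurs; moves whose swapped curves are disjoint from $S_n$ preserve the $S_n$-agreement class and can be replaced by any outside-$S_n$ move available from the current transported vertex, which exist because $S$ is of infinite type and the principal exhaustion leaves abundantly many complexity-1 subsurfaces of the pants decomposition outside $S_n$. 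The transported path has length at most that of the original, giving $d_0(P',T')\leq d_0(P',P'')+\delta'<\epsilon$, and $T'$ lies in the same agreement class as $T$, so $R\in A^i_n(T)=A^i_n(T')\subseteq D^i_{\epsilon,n}(P')\subseteq \mathbb{D}^i_{\epsilon,n}(P)$, as required.
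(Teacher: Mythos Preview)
Your overall strategy coincides with the paper's: invoke Lemmas 6.14--6.16 (your 6.12--6.14) to obtain openness of $\mathbb{D}^i_{\delta',n}(Q)$ for small $\delta'$, and then cover $\mathbb{D}^i_{\epsilon,n}(P)$ by such sets. The paper's proof simply asserts the needed containment $\mathbb{D}^i_{\psi,n}(Q)\subset \mathbb{D}^i_{\epsilon,n}(P)$ without argument; you go further and attempt to justify it via a path-transport construction.

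That attempt, however, has a genuine gap. The claim that ``$Q'$ and $P''$ share their pants curves intersecting $S_n$ and therefore carry the same complexity-1 subsurface on which the swap occurs'' is precisely the definition of $1$-agreement, not of $i$-agreement for $i\in\{2,3,4\}$. For $i=4$, say, $Q'$ and $P''$ only share the curves \emph{contained} in $S_n$; a curve of $Q'$ that crosses $\partial S_n$ need not appear in $P''$ at all, so the complexity-1 subsurface carrying the move in $Q'$ need not be a complexity-1 piece of $P''$, and the move cannot be transported as you describe. Your dichotomy also omits the mixed case in which the removed curve $\alpha$ meets $S_n$ while the inserted curve $\alpha'$ does not (or vice versa). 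So while the plan matches the paper's, your justification of the key containment is only valid for $i=1$; handling $i\geq 2$ would require a more careful argument about how $i$-agreement classes behave under elementary moves.
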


\begin{proof}
Lemmas 6.14, 6.15, and 6.16 show that $\mathbb{D}^i_{\epsilon,n}(P)$ is open for all sufficiently small values of $\epsilon$.  If $Q\in \mathbb{D}^i_{\epsilon,n}(P)$, then there is a $0<\psi<\epsilon$ such that $\mathbb{D}^i_{\psi,n}(Q)\subset \mathbb{D}^i_{\epsilon,n}(P)$.  Hence $\mathbb{D}^i_{\epsilon,n}(P)$ is a union of open sets.
\end{proof}

\begin{lemma}
Let $U\subset \mathcal{PS}_i(S)$ be an open set, and let $P\in U$. Then there exists an $\epsilon>0$ such that $D^i_{\epsilon}(P)\subset U$.
\end{lemma}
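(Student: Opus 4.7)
My approach is to prove the lemma by contradiction, constructing a sequence in the complement of $U$ that converges to $P$, thereby contradicting the openness of $U$. Assuming the statement refers to the basis elements $\mathbb{D}^i_\epsilon(P)$ defined above (so that the lemma completes the proof of the Basis Theorem), suppose no such $\epsilon$ exists. Then for every integer $n \geq 1$ we may choose $Q_n \in \mathbb{D}^i_{1/n,\,n}(P)\setminus U$. The plan is to show $\{Q_n\}_{n\ge 1}\to P$ in $\mathcal{PS}_i(S)$, which is the contradiction we want.

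To extract convergence data, I will unpack the definitions. For each $n$, membership $Q_n\in \mathbb{D}^i_{1/n,\,n}(P)$ gives intermediate points $P'_n$ agreeing with $P$ on $S_n$, and $P''_n$ with $d_0(P'_n,P''_n)<1/n$, such that $Q_n\in A^i_n(P''_n)$. I will split into two cases depending on the type of $P$. If $P=(X,a,Y)$ is an edge point, then for all $n$ large enough that $1/n<\min(a,1-a)$, the point $P'_n$ must be an edge point on an edge with endpoints in $B^i_n(X)$ and $B^i_n(Y)$ with edge coordinate exactly $a$; then $P''_n$ lies on that same edge with coordinate $a''_n$ satisfying $|a-a''_n|<1/n$; and finally $Q_n$ lies in $A^i_n(P''_n)$, so has a representative $(X_n,a''_n,Y_n)$ with $X_n\in B^i_n(X)$ and $Y_n\in B^i_n(Y)$. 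Since any point of $B^i_n(X)$ lies in $B^i_m(X)$ for all $m\le n$, the sequences $X_n$ and $Y_n$ converge to $X$ and $Y$ respectively in $V_i(S)$, and $a''_n\to a$, so by Definition 6.1 (1), $Q_n\to P$.

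If $P=X$ is a vertex, then $P'_n$ is a vertex in $B^i_n(X)$, and for $n$ large enough that $1/n<1$ we have $P''_n$ is either the vertex $P'_n$ itself or an edge point on an edge incident to $P'_n$ with vertex coordinate less than $1/n$. In the first subcase $Q_n$ lies in the induced subgraph on $B^i_n(X)$, so it has a representative $(X_n,a_n,Y_n)$ with $X_n,Y_n\in B^i_n(X)$ (or $Q_n$ is itself a vertex in $B^i_n(X)$, which we treat as $(Q_n,0,\cdot)$); in either situation both endpoints converge to $X$ in $V_i(S)$, and Definition 6.1 (3) (or (2)) applies. In the second subcase $Q_n$ has a representative $(X_n,a''_n,Y_n)$ with $X_n\in B^i_n(X)$ and $0\le a''_n<1/n$, and Definition 6.1 (2) applies. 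In all cases $Q_n\to P$, contradicting $P\in U$ open and $Q_n\notin U$.

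The main obstacle is purely notational: one must carefully navigate the three nested definitions $\mathbb{D}^i_{\epsilon,n}\supset D^i_{\epsilon,n}\supset A^i_n$ and account separately for the vertex/edge character of $P$, $P'_n$, $P''_n$, and $Q_n$. Once these cases are set out, the convergence itself is straightforward because the conditions built into Definition 6.1 were designed precisely to encode ``close in the underlying graph'' together with ``$i$-agreement on larger and larger $S_n$,'' which is exactly what the data produced by $Q_n\in\mathbb{D}^i_{1/n,n}(P)$ provides.
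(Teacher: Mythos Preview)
Your argument is correct and follows the same strategy as the paper: assume no such $\epsilon$ exists, pick $Q_n$ in the $1/n$-neighborhood but outside $U$, unpack the definitions to exhibit representatives, and conclude $Q_n\to P$, contradicting openness of $U$.

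One clarification is worth making. The lemma as stated is about $D^i_\epsilon(P)$, not $\mathbb{D}^i_\epsilon(P)$; you explicitly reinterpreted it as the latter. Since $D^i_{\epsilon,n}(P)\subseteq \mathbb{D}^i_{\epsilon,n}(P)$, your version is strictly stronger, so the lemma follows. In the paper the two are handled separately: Lemma~6.18 treats $D^i_\epsilon$ (a slightly simpler unpacking, with no intermediate $P'_n$), and then Lemma~6.19 (for vertices) and Lemma~6.22 (for edge points) bootstrap to $\mathbb{D}^i_\epsilon$. Your argument effectively merges 6.18 and 6.19 into a single step, which is fine and arguably cleaner.

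A small technical remark: in the vertex case your $Q_n$ may fall into different subcases (both endpoints in $B^i_n(X)$, versus one endpoint in $B^i_n(X)$ with $a_n<1/n$) for different $n$, so no single clause of Definition~6.1 applies to the full sequence. This is harmless---pass to a subsequence lying entirely in one subcase to get the contradiction---but it is worth saying explicitly. (The paper's own proof has the same informality.)
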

\begin{proof}
First, assume $P$ is an edge point with a representation $P=(X,a,Y)$ (we will tackle the case where $P$ is a vertex seperately).  For sufficiently small $\epsilon>0$, note that $D^0_{\epsilon}(P)$ is contained entirely in the edge between $X$ and $Y$, and in particular it contains no vertices.  It follows from the definitions that $D^i_{\epsilon}(P)$ also contains no vertices.
\par
Consider the descending sequence of open sets $D^i_{\frac{1}{n}}(P)$ where $n\in \mathbb{N}$.  Suppose that for each $n$, there exists a point $P_n\in D^i_{\frac{1}{n}}(P)\backslash U$.  For all but finitely many $n$, $P_n$ is an edge point.  Hence, we can find representatives $P_n=(X_n,a_n,Y_n)$ where $X_n\in B_n^i(X)$, $Y_n\in B_n^i(Y)$, and $|a_n-a|<\frac{1}{n}$.  By construction, the sequence of points $\{P_n\}$ converges to $P$, and hence all but finitely many must be contained in $U$, which is a contradiction.  Thus, there is some $n$ such that $D^i_{\frac{1}{n}}(P)\subset U$.  
\par 
For $P$ a vertex, we can use a similar argument.   Consider the descending sequence of open sets $D^i_{\frac{1}{n}}(P)$ where $n\in \mathbb{N}$.  Suppose that for each $n$, there exists a point $P_n\in D^i_{\frac{1}{n}}(P)\backslash U$.  Each $P_n$ is either a vertex with $P_n\in B^i_n(P)$, or an edge point with a representation $P_n=(X_n,a_n,Y_n)$ such that $X_n\in B^i_n(P)$ and $0<a<\epsilon$.  But by definition, this is a sequence which converges to $P$, so all but finitely many of the $P_j$ are in $U$, a contradiction.
\end{proof}

\begin{lemma}
Let $U\subseteq \mathcal{PS}_i(S)$ be an open set and let $X\in U$ be a vertex.  Then there exists an $n\geq 0$ such that 
\[\mathbb{D}^i_{\frac{1}{n}}(X)\subseteq U\].
\end{lemma}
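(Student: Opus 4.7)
The plan is to argue by contradiction. Suppose the conclusion fails; then for every $n \geq 1$ we may pick a point $P_n \in \mathbb{D}^i_{1/n}(X) \setminus U$. I would then extract a subsequence of the $P_n$ that $i$-converges to $X$ and invoke openness of $U$ to reach a contradiction.

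Unrolling Definitions 6.6, 6.8, and 6.10, each $P_n$ lies in some $A^i_n(P_n')$ where $d_0(X_n', P_n') < 1/n$ for some vertex $X_n' \in B^i_n(X)$. For $n \geq 2$, this forces $P_n'$ to either coincide with $X_n'$ or to lie on an edge incident to $X_n'$ within graph-distance $1/n$ of it. Consequently each $P_n$ falls into one of three types: (i) $P_n$ is a vertex in $B^i_n(X)$; (ii) $P_n$ is an edge point $(U_n, b_n, V_n)$ with both $U_n, V_n \in B^i_n(X)$ and parameter $b_n \in [0,1]$ arbitrary (this is the case $P_n' = X_n'$); or (iii) $P_n$ is an edge point $(U_n, c_n, V_n)$ with $U_n \in B^i_n(X)$ and parameter $0 < c_n < 1/n$ (this is the case $P_n'$ an edge point). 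By pigeonhole we pass to a subsequence on which all terms have the same type.

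In type (i), representing $P_n = (P_n, 0, Y_n)$ for any neighbor $Y_n$, condition (2) of Definition 6.1 gives $i$-convergence to $X$, since $P_n \to X$ in $V_i(S)$ and $a_n = 0$. In type (ii), condition (3) applies, as both endpoint sequences lie in $B^i_n(X)$ and therefore converge to $X$ in $V_i(S)$. In type (iii), condition (2) applies again, since $U_n \to X$ and $c_n \to 0$. In every case the subsequence $i$-converges, hence converges in $\mathcal{PS}_i(S)$ by the definition of its topology, so all but finitely many of its terms must lie in the open neighborhood $U$ of $X$, contradicting $P_n \notin U$.

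The main obstacle I anticipate is the requirement that we separate the three types of behavior before appealing to Definition 6.1. Definition 6.1 fixes one choice of convergence clause and one choice of representatives for the \emph{whole} sequence, so the unrefined sequence $\{P_n\}$ need not fit any single one of conditions (1)--(3) directly; only after extracting a single-type subsequence can one pick a uniform representation and a uniform clause. Once that is arranged, each of the three verifications is immediate from the definitions of $B^i_n$, $A^i_n$, and $\mathbb{D}^i_{\epsilon,n}$.
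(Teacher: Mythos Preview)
Your proof is correct and follows the same contradiction strategy as the paper: pick $P_n\in\mathbb{D}^i_{1/n}(X)\setminus U$, show that (a subsequence of) the $P_n$ $i$-converges to $X$, and contradict the openness of $U$. The organizational difference is that the paper first invokes Lemma~6.18 to find $m$ with $D^i_{1/m}(X)\subseteq U$ and then argues $\mathbb{D}^i_{1/n}(X)\subseteq D^i_{1/m}(X)$ for large $n$, whereas you work directly against $U$. Your route is slightly cleaner in that it avoids the intermediate set $D^i_{1/m}(X)$ (whose openness is not established at this point in the paper), and your explicit three-type case split together with the pigeonhole extraction handles the ``both endpoints in $B^i_n(X)$'' situation that the paper glosses over. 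The paper's reduction, on the other hand, lets the vertex case be dispatched immediately without any convergence argument, since a vertex in $B^i_n(X)$ with $n\geq m$ automatically lies in $D^i_{1/m}(X)$.
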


\begin{proof}
By Lemma 6.18, there exists an $m\geq 2$ such that $D^i_{\frac{1}{m}}(X)\subseteq U$.  Thus, it suffices to find an $n\geq 2$ such that 
\[\mathbb{D}^i_{\frac{1}{n}}(X)\subseteq D^i_{\frac{1}{m}}(X)\]
Suppose no such $n$ exists. Then for all $n\geq 2$, there exists a point $P_n$ such that 
\[P_n\in \left(\mathbb{D}^i_{\frac{1}{n}}(X)\right)\backslash\left(D^i_{\frac{1}{m}}(X)\right)\]
By definition, that inclusion means that there exists a vertex $X_n$ such that $X_n$ and $X$ agree on $S_n$ and $P_n\in D^i_{\frac{1}{n}}(X_n)$ but $P_n\notin D^i_{\frac{1}{m}}(X)$.
\par 
Suppose that $P_n$ is a vertex.  Then $P_n$ must agree with $X_n$ on $S_n$ but disagree with $X$ on $S_m$.  When $n\geq m$, this is a contradiction, since $X$ and $X_n$ agree on $S_n$.  
\par 
Hence, $P_n=(X'_n,a_n,Y'_n)$ is an edge point for all $n\geq m$.  But this implies that $a_n<\frac{1}{n}$ and that $X'_n$ agrees with $X$ on $S_n$.  Hence, the sequence $\{P_n\}_{n\geq m}$ converges to $X$, so all but finitely many of the $P_n$ are in $D^i_{\frac{1}{m}}(X)$, which is a contradiction.
\end{proof}

\begin{lemma}
Let $P$ and $Q$ be points in $\mathcal{PS}_i(S)$ that are either both vertices or both edge points.  Let $0\leq i\leq 4$ and $n\geq 0$ be integers and let $\epsilon>0$ be a real number. If $P$ and $Q$ $i$-agree on $S_n$, then 
\[\mathbb{D}^i_{\epsilon,n}(P)=\mathbb{D}^i_{\epsilon,n}(Q)\].
\end{lemma}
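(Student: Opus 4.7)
The plan is to deduce this directly from the definition of $\mathbb{D}^i_{\epsilon,n}$ together with the fact that $i$-agreement on $S_n$ is an equivalence relation. First I would verify that $i$-agreement on $S_n$ is an equivalence relation on both the set of vertices and the set of edge points of $\mathcal{PS}_i(S)$. For vertices, this is Lemma 4.7. For edge points, given representatives $(X,a,Y)$, $(X',a',Y')$, $(X'',a'',Y'')$, $i$-agreement is defined as the conjunction of $i$-agreement of the $X$-coordinates, $i$-agreement of the $Y$-coordinates, and equality of the scalar coordinate. Since each of these three conditions is an equivalence relation, so is their conjunction.

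Next I would unpack the definition. For $P$ a vertex, the definition reads
\[
\mathbb{D}^i_{\epsilon,n}(P) = \bigcup_{X' \text{ }i\text{-agrees with }P\text{ on }S_n} D^i_{\epsilon,n}(X'),
\]
and for $P$ an edge point, analogously,
\[
\mathbb{D}^i_{\epsilon,n}(P) = \bigcup_{P'\text{ }i\text{-agrees with }P\text{ on }S_n} D^i_{\epsilon,n}(P').
\]
In each case, the indexing set is exactly the $i$-equivalence class of $P$ under $i$-agreement on $S_n$ (restricted to vertices or to edge points, as appropriate, which is consistent since the hypothesis assumes $P$ and $Q$ are of the same type).

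Now I apply the equivalence relation. Since $P$ and $Q$ $i$-agree on $S_n$ and are of the same type (both vertices or both edge points), they lie in the same $i$-equivalence class on $S_n$ within the corresponding set. By transitivity, the set of $P'$ that $i$-agree with $P$ on $S_n$ coincides with the set of $P'$ that $i$-agree with $Q$ on $S_n$. Consequently, the unions defining $\mathbb{D}^i_{\epsilon,n}(P)$ and $\mathbb{D}^i_{\epsilon,n}(Q)$ are indexed by the same set of points, so the two sets are literally equal. There is no real obstacle here: the lemma is essentially the observation that the construction $\mathbb{D}^i_{\epsilon,n}(\cdot)$ is well-defined on $i$-equivalence classes on $S_n$, and the only subtlety is the trivial verification that $i$-agreement on $S_n$ passes from vertices to edge points as an equivalence relation.
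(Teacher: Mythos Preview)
Your proof is correct and matches the paper's own proof, which simply reads ``Immediate from the definitions.'' You have written out exactly the unpacking that justifies that one-line proof: the index set in the union defining $\mathbb{D}^i_{\epsilon,n}(P)$ is the $i$-agreement class of $P$ on $S_n$, and since $i$-agreement is an equivalence relation (Lemma~4.8 for vertices, and the coordinatewise argument you give for edge points), equal classes give equal unions.
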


\begin{proof}
Immediate from the definitions.
\end{proof}

\begin{definition}\rm
An edge point $P\in \mathcal{PS}_i(S)$ is called \emph{rational} if it has a representation $P=(X,a,Y)$ where $a$ is rational.
\end{definition}

\begin{lemma}
Let $U\subset \mathcal{PS}_i(S)$ be an open set containing no vertices.  Then $U$ is the union of sets of the form $\mathbb{D}^i_{\epsilon,n}((X,a,Y))$ where $\epsilon$ is a rational number and $P$ is a rational edge point.
\end{lemma}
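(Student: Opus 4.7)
The plan is to show that for every $P\in U$ there exists a rational edge point $Q$ and a rational number $\epsilon>0$ together with an integer $n\geq 0$ such that $P\in \mathbb{D}^i_{\epsilon,n}(Q)\subset U$; the union of all such basic sets then equals $U$. Since $U$ contains only edge points, fix $P=(X,a,Y)\in U$ with $0<a<1$. By Corollary 6.17 each $\mathbb{D}^i_{1/m,m}(P)$ is open, and a small modification of the argument in Lemma 6.19 (formulated now for an edge point rather than a vertex) shows that these sets form a neighborhood base at $P$. Hence we may choose an integer $n_0$ and a real number $\epsilon_0$ with $0<\epsilon_0<\min(a,1-a)$ such that $\mathbb{D}^i_{\epsilon_0,n_0}(P)\subset U$.

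The key step is to approximate $a$ by a rational. Pick rationals $a'\in\mathbb{Q}$ and $\epsilon\in\mathbb{Q}_{>0}$ with $|a-a'|+\epsilon<\epsilon_0$, choose an integer $n\geq n_0$, and set $Q=(X,a',Y)$. Then $Q$ is a rational edge point. To see that $P\in \mathbb{D}^i_{\epsilon,n}(Q)$, note that $Q$ trivially $i$-agrees with itself on $S_n$, that $d_0(Q,P)=|a-a'|<\epsilon$, and that $P\in A^i_n(P)$; this witnesses $P\in \mathbb{D}^i_{\epsilon,n}(Q)$.

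It remains to show $\mathbb{D}^i_{\epsilon,n}(Q)\subset \mathbb{D}^i_{\epsilon_0,n_0}(P)$. Let $R\in\mathbb{D}^i_{\epsilon,n}(Q)$. Unpacking Definitions 6.7 and 6.11, there are $Q'=(X'',a',Y'')$ with $X''\in B^i_n(X)$ and $Y''\in B^i_n(Y)$, and a point $R'$ with $d_0(Q',R')<\epsilon$, such that $R\in A^i_n(R')$. Set $P'=(X'',a,Y'')$. Then $P'$ and $P$ $i$-agree on $S_n$, and since $n\geq n_0$ they also $i$-agree on $S_{n_0}$. Moreover, $P'$ and $Q'$ lie on a common edge with $d_0(P',Q')=|a-a'|$, so $d_0(P',R')\leq |a-a'|+\epsilon<\epsilon_0$. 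Finally $R\in A^i_n(R')\subseteq A^i_{n_0}(R')$ because $B^i_n\subseteq B^i_{n_0}$. Thus $R$ lies in $\mathbb{D}^i_{\epsilon_0,n_0}(P)\subset U$, as required.

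The one subtlety is that the definition of $i$-agreement between edge points requires \emph{identical} interval parameters, so one cannot simply replace the possibly irrational $a$ with a nearby rational $a'$ while keeping the same $n_0$ and $\epsilon_0$. The device that makes the argument work is to absorb the difference $|a-a'|$ into the graph-distance slack $\epsilon$ of the basic set, which is available precisely because the construction of $\mathbb{D}^i_{\epsilon,n}$ adds an $\epsilon$-ball in the graph metric $d_0$ on top of the $i$-agreement condition.
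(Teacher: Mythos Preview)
Your proposal follows essentially the same strategy as the paper: first show that the sets $\mathbb{D}^i_{\epsilon,n}(P)$ form a neighborhood base at the edge point $P$, then pass to a nearby rational edge point. The paper spells out the first step in full (via a contradiction argument producing a sequence $P_n\to P$ with $P_n\notin U$) and is terse about the second; you do the reverse, hand-waving the neighborhood-base step by pointing to Lemma~6.19 and then carrying out the rational approximation carefully. Since the adaptation of the Lemma~6.19 argument to edge points is genuinely routine, this is fine.

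One small slip to fix: in your choice of $a'$ and $\epsilon$ you only impose $|a-a'|+\epsilon<\epsilon_0$, but then in verifying $P\in\mathbb{D}^i_{\epsilon,n}(Q)$ you assert $d_0(Q,P)=|a-a'|<\epsilon$, which does not follow (e.g.\ $|a-a'|=0.06$, $\epsilon=0.03$, $\epsilon_0=0.1$). Simply add the constraint $|a-a'|<\epsilon$ when choosing $a'$ and $\epsilon$; this is compatible with $|a-a'|+\epsilon<\epsilon_0$ provided $|a-a'|<\epsilon_0/2$, and then the rest of your argument goes through unchanged.
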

\begin{proof}
Let $P=(X,a,Y)\in U$.  We will try to find values of $n$ such that $\mathbb{D}^i_{\frac{1}{n}}(P)\subset U$.  
\par 
Suppose that no such $n$ exists.  Then for all $n\geq 0$, there exists a point 
\[P_n\in \left(\mathbb{D}^i_{\frac{1}{n}}(P)\backslash U\right)
\].  For all sufficiently large $n$, $X$ and $Y$ disagree on $S_n$.  Also, for all sufficiently large $n$, $0<a-\frac{1}{n}<a+\frac{1}{n}<1$.  Tracing through the definition of $\mathbb{D}^i_{\frac{1}{n}}(P)$, the statement statement $P_n\in \mathbb{D}^i_{\frac{1}{n}}(P)$ says that there are points $Q_n=(X'_n,a,Y'_n)$ and $R_n=(X'_n,a'_n,Y'_n)$ such that
\begin{enumerate}
\item
\[|a-a'_n|<\frac{1}{n}\].
\item
$P_n$ has a representation $(X''_n,a'_n,Y''_n)$.
\item
$X$, $X'_n$, and $X''_n$ all agree on $S_n$.
\item
$Y$, $Y'_n$, and $Y''_n$ all agree on $S_n$.
\end{enumerate}

In particular, the sequence of edge points $\{P_n\}$ have representations $\{(X''_n,a''_n,Y''_n)\}$ such that $X''_n\to X$, $Y''_n\to Y$, and $a''_n\to a$.  Thus $P_n\to P$, and hence all but finitely many of the $P_n$ are in $U$, a contradiction.  Thus, there is some $n$ such that $\mathbb{D}^i_{\frac{1}{n}}(P)\subset U$.
\par 
If $a$ is rational, then we are done.  If $a$ is irrational, then we can find a rational edge point $P'$ very close to $P$ such that $\mathbb{D}^i_{\frac{1}{n}}(P')\subset U$.
\end{proof}

We can now easily prove the Basis Theorem (Theorem 6.13).
\begin{proof}
Corollary 6.17 tells us that the $\mathbb{D}^i_{\epsilon,n}$ sets are all open, while Lemmas 6.19 and 6.22 says they generate all open sets.
\end{proof}

\begin{corollary}
$\mathcal{PS}_i(S)$ is a second-countable space.
\end{corollary}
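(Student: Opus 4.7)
The plan is to extract a countable subfamily of the basis provided by Theorem 6.13. The three key ingredients are: (a) Lemma 4.14, which implies that for each fixed $n\in\mathbb{N}$ the equivalence relation of $i$-agreement on $S_n$ partitions the vertex set $V_i(S)$ into countably many classes; (b) Lemma 6.20, which says that $\mathbb{D}^i_{\epsilon,n}(P)$ depends on $P$ only through the $i$-agreement classes (on $S_n$) of its endpoint vertices (and, for edge points, the parameter $a$); and (c) Lemmas 6.19 and 6.22, which together say that for any open $U$ and any point $P\in U$, there is a basis element of the form $\mathbb{D}^i_{1/n}(X)$ (if $P$ is a vertex $X$) or of the form $\mathbb{D}^i_{\epsilon,n}((X',a,Y'))$ with $\epsilon,a\in\mathbb{Q}$ (if $P$ is an edge point) that lies between $P$ and $U$.

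The steps will be as follows. First, using Lemma 4.14, choose for each $n\in\mathbb{N}$ a countable set $\mathcal{V}_n\subseteq V_i(S)$ consisting of one representative from each $B^i_n$-equivalence class. Second, form the \emph{vertex-type} collection
\[\mathcal{B}_{\mathrm{vert}} = \{\mathbb{D}^i_{1/n}(X') : n\in\mathbb{N},\ X'\in\mathcal{V}_n\},\]
which is countable. Third, form the \emph{edge-type} collection
\[\mathcal{B}_{\mathrm{edge}} = \{\mathbb{D}^i_{1/n,n}((X',a,Y')) : n\in\mathbb{N},\ X',Y'\in\mathcal{V}_n,\ a\in\mathbb{Q}\cap(0,1)\},\]
restricted to triples where $X'$ and $Y'$ are adjacent in $\mathcal{P}(S)$. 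This is again countable as a subset of a countable product. Fourth, check that $\mathcal{B}:=\mathcal{B}_{\mathrm{vert}}\cup\mathcal{B}_{\mathrm{edge}}$ is a basis.

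Verifying that $\mathcal{B}$ is a basis is the main step. For a vertex $X$ in an open set $U$, Lemma 6.19 gives some $n$ with $\mathbb{D}^i_{1/n}(X)\subseteq U$; Lemma 6.20 lets us replace $X$ by its representative $X'\in\mathcal{V}_n$, producing an element of $\mathcal{B}_{\mathrm{vert}}$ between $X$ and $U$. For an edge point $P\in U$, apply Lemma 6.22 (after shrinking $U$ if necessary to a neighborhood missing the vertex set, which is possible by the very construction of $\mathbb{D}^i_{\epsilon,n}(P)$ for small $\epsilon$) to obtain a basis element $\mathbb{D}^i_{1/n,n}((X,a,Y))$ with rational $a$ containing $P$ and contained in $U$; then use Lemma 6.20 once more to swap $X,Y$ for their representatives in $\mathcal{V}_n$, yielding an element of $\mathcal{B}_{\mathrm{edge}}$.

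The only place a small technical hurdle appears is ensuring that the edge-point case really reduces to Lemma 6.22: one must observe that a sufficiently small $\mathbb{D}^i_{\epsilon,n}(P)$ centered at an edge point contains no vertices (which follows from the fact that for $\epsilon<\min(a,1-a)$ any representative in $\mathbb{D}^i_{\epsilon,n}(P)$ has first coordinate in $(a-\epsilon,a+\epsilon)\subset(0,1)$, as in the proof of Lemma 6.14). With that in hand the reduction to Lemma 6.22 is immediate, and second-countability follows.
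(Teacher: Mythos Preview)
Your approach is essentially the same as the paper's: select countably many representative basis sets using Lemma 6.20 to pass to representatives, and Lemmas 6.19 and 6.22 to reduce to $\epsilon=1/n$ and rational edge parameters. Two small issues are worth flagging.

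First, a citation slip: Lemma 4.14 is the Disjointness Lemma, which shows that $i$-agreement on $S_n$ partitions $V_i(S)$, but not that the partition has countably many pieces. The countability is Lemma 4.16 (each $B^i_n$-class meets the countable set $V_i(S,X)$).

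Second, and more substantively, there is a gap in your edge-point verification. You want to swap the endpoints $X,Y$ of an edge for their representatives $X',Y'\in\mathcal{V}_n$ and invoke Lemma 6.20 for the edge point $(X',a,Y')$. But Lemma 6.20 applies only to genuine points of $\mathcal{PS}_i(S)$, and $(X',a,Y')$ is a point only when $X'$ and $Y'$ are adjacent in $\mathcal{P}(S)$; there is no reason the particular vertex representatives you chose in $\mathcal{V}_n$ should be adjacent, even when their classes contain an adjacent pair. Since your $\mathcal{B}_{\mathrm{edge}}$ is explicitly restricted to adjacent pairs from $\mathcal{V}_n$, the swap may land outside your collection and the argument stalls. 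The paper's proof sidesteps this by choosing, for each $n$, one representative \emph{edge} from each $i$-agreement class of edges (there are countably many such classes for the same reason there are countably many vertex classes), and then taking rational points on those edges. With that adjustment your argument goes through and matches the paper's.
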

\begin{proof}
Our goal is to find a countable subset of 
\[\{\mathbb{D}^i_{\epsilon}(P): \epsilon>0,P\in \mathcal{PS}_i(S)\}\]
which is still a basis.  For $m\in \mathbb{N}$, let $V_m$ be a set containing one vertex in each equivalence class for the equivalence relation "$i$-agrees on $S_m$."  By Lemma 4.15, $V_m$ is countable.  Likewise, let $E_m$ be the union of one edge in each equivalence class.  $E_m$ contains a countable number of edges (but an uncountable number of points.)  Let 
\[\tau=\bigcup_{m=0}^{\infty}(V_m\cup E_m)\subset \mathcal{PS}_i(S).\]
By Lemma 6.20, 
\[\{\mathbb{D}^i_{\epsilon}(P): \epsilon>0,P\in \mathcal{PS}_i(S)\}=\{\mathbb{D}^i_{\epsilon}(P): \epsilon>0,P\in \tau\}.\]

By Lemma 6.22, it suffices to consider basic open sets with rational $\epsilon$ values based around vertices and rational edge points.  Hence, if we let $\tau'$ be the subset of $\tau$ containing only vertices and rational edge points, then
\[\{\mathbb{D}^i_{\epsilon}(P): \epsilon\in \mathbb{Q}^+,P\in \tau'\}\]
is also a basis.  $\tau'$ is countable, hence the above basis is also countable.
\end{proof}

\section{Properties of $\mathcal{PS}_i(S)$}

In this section we study the basic properties of $\mathcal{PS}_i(S)$, as well as its homeomorphism group.  In particular, we show that it is metrizable, path-connected, not locally path-connected, and that every automorphism of $\mathcal{PS}_i(S)$ is isotopic to a unique graph automorphism of $\mathcal{P}(S)$.  We then show that the mapping class group of $\mathcal{PS}_i(S)$ is naturally isomorphic to $\Modpm(S)$, generalizing Margalit's result for finite-type pants graphs and confirming this case of the metaconjecture.

\begin{lemma}
The space $\mathcal{PS}_i(S)$ is first-countable.
\end{lemma}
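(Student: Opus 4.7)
The plan is to produce, for each point $P \in \mathcal{PS}_i(S)$, an explicit countable neighborhood basis. The natural candidate is $\mathcal{B}_P = \{\mathbb{D}^i_{1/n}(P) : n \geq 1\}$. This collection is countable and, by Corollary 6.17, consists of open sets each containing $P$, so what must be verified is that every open neighborhood $U$ of $P$ contains some $\mathbb{D}^i_{1/n}(P)$.

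When $P$ is a vertex, Lemma 6.19 supplies exactly this, so it only remains to treat an edge point $P = (X,a,Y)$ with $0 < a < 1$. For this case I would mimic the contradiction argument used in the proof of Lemma 6.22, rather than invoke that lemma as a black box, since its statement presupposes that $U$ contains no vertex. Supposing no $\mathbb{D}^i_{1/n}(P)$ is contained in $U$, one chooses $P_n \in \mathbb{D}^i_{1/n}(P)\setminus U$ for each $n$. For all $n$ large enough that $1/n < \min(a, 1-a)$, unwinding the definition of $\mathbb{D}^i_{1/n,\lfloor n\rfloor}(P)$ produces a representation $P_n = (X_n''', a_n'', Y_n''')$ with $X_n''' \in B^i_n(X)$, $Y_n''' \in B^i_n(Y)$, and $|a_n'' - a| < 1/n$. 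These three conditions are precisely clause (1) of Definition 6.1, so $P_n \to P$; hence cofinitely many of the $P_n$ lie in the open set $U$, contradicting the choice of $P_n$. Therefore some $\mathbb{D}^i_{1/n}(P) \subseteq U$, as required.

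The main obstacle is the bookkeeping in the unwinding step: $\mathbb{D}^i_{\epsilon, n}(P)$ is a union over edge points $P'$ agreeing with $P$ on $S_n$ of $D^i_{\epsilon, n}(P')$, and each $D^i_{\epsilon, n}(P')$ is itself a union over points $P''$ within graph-distance $\epsilon$ of $P'$ of the horizontal slices $A^i_n(P'')$. The inequality $1/n < \min(a, 1-a)$ is used precisely to force every such $P''$ to lie on the edge of $P'$, so that $A^i_n(P'')$ is the edge-point fiber from Definition 6.6, which is what yields the desired representation of $P_n$. Once this is set up, first-countability at edge points falls out of Definition 6.1, and combining with the vertex case completes the proof.
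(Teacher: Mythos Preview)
Your proposal is correct and takes essentially the same approach as the paper: both use $\{\mathbb{D}^i_{1/n}(P):n\in\mathbb{N}\}$ as a countable neighborhood basis at $P$. The paper's proof is a one-line citation of the Basis Theorem, while you unpack the edge-point case by hand (correctly noting that Lemma~6.22 as stated assumes $U$ contains no vertices); your contradiction argument is exactly the one appearing in the proof of Lemma~6.22, so your version is a more carefully spelled-out variant of the same proof rather than a different route.
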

\begin{proof}
By the Basis Theorem, each point $P\in \mathcal{PS}_i(S)$ has a countable neighborhood basis $\{\mathbb{D}^i_{\frac{1}{n}}(P):n\in \mathbb{N}\}$.  Thus $\mathcal{PS}_i(S)$ is first-countable.
\end{proof}

\begin{lemma}
The vertex set $V_i(S)$ is closed in $\mathcal{PS}_i(S)$.  
\end{lemma}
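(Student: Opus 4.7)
The plan is to show that the complement $\mathcal{PS}_i(S) \setminus V_i(S)$, that is, the set of interior edge points, is open. Equivalently, around every edge point we want to exhibit a basic open neighborhood from the Basis Theorem (Theorem 6.13) that contains no vertices.

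First I would fix an edge point $P$ and pick the representation $P=(X,a,Y)$ with $0<a<1$, which exists by definition. I would then set $\epsilon = \tfrac{1}{2}\min(a,\,1-a) > 0$ and choose any $n \geq 0$; the candidate neighborhood is $\mathbb{D}^i_{\epsilon,n}(P)$, which is open by Corollary~6.17 and contains $P$ (take $P'=P''=P$ in the union defining $\mathbb{D}^i_{\epsilon,n}(P)$).

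The key step is to trace carefully through the three-layer definition of $\mathbb{D}^i_{\epsilon,n}(P)$ to verify that no vertex lies inside it. A point $Q \in \mathbb{D}^i_{\epsilon,n}(P)$ belongs to $A^i_n(P'')$ for some $P''$ with $d_0(P',P'') < \epsilon$, where $P'$ $i$-agrees with $P$ on $S_n$. Since $P'$ is an edge point with representation $(X',a,Y')$ where $X'\in B^i_n(X)$ and $Y'\in B^i_n(Y)$ and the same coordinate $a$, the graph-distance condition $d_0(P',P'') < \epsilon < \min(a,1-a)$ forces $P''$ to be an interior edge point on the same edge, with representation $(X',a'',Y')$ for some $a'' \in (a-\epsilon, a+\epsilon) \subset (0,1)$. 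Then $A^i_n(P'')$ consists of points of the form $(X'',a'',Y'')$ with $X''\in B^i_n(X')$ and $Y''\in B^i_n(Y')$ and the same $a'' \in (0,1)$, so every such point is an interior edge point.

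I expect the only subtle point, and the main place to be careful, is the bookkeeping around representations: edge points carry two representations $(X,a,Y)=(Y,1-a,X)$, and the definition of $\mathbb{D}^i_{\epsilon,n}$ unions over all $P'$ that $i$-agree with $P$ on $S_n$, so I need to confirm that no alternative representation smuggles a vertex into the neighborhood. Because the choice $\epsilon < \min(a,1-a)$ is symmetric in $a$ and $1-a$, the graph-distance ball of radius $\epsilon$ around any $P'$ cannot reach either incident vertex, regardless of the chosen orientation of the edge. With this observation in place, the argument reduces to the straightforward unwinding above, and we conclude that the set of edge points is open, hence $V_i(S)$ is closed in $\mathcal{PS}_i(S)$.
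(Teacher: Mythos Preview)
Your proof is correct and follows essentially the same approach as the paper: both show that the complement of $V_i(S)$ is open by exhibiting, around each edge point, a basic open set $\mathbb{D}^i_{\epsilon,n}$ that contains no vertices, and both verify this by unwinding the three-layer definition to see that every point in such a set is an interior edge point. The only cosmetic difference is that the paper centers one neighborhood at the midpoint $(X,\tfrac{1}{2},Y)$ with $\epsilon=\tfrac{1}{2}$ (so a single open set swallows the entire open edge), whereas you work pointwise with $\epsilon=\tfrac{1}{2}\min(a,1-a)$; the verification is identical in both cases.
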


\begin{proof}
For an edge $(X,Y)$ in the pants graph, consider the $\frac{1}{2}$-ball around the midpoint:
\[\mathbb{D}^i_{\frac{1}{2}}((X,\frac{1}{2},Y))\]
\[=\bigcup_{(X',\frac{1}{2},Y')\mbox{ agrees with } (X,\frac{1}{2},Y)\mbox{ on }S_2} \quad \bigcup_{P':d_0(P',(X,\frac{1}{2},Y))<\frac{1}{2}} A^i_2(P')\]
\[=\bigcup_{(X',\frac{1}{2},Y')\mbox{ agrees with } (X,\frac{1}{2},Y)\mbox{ on }S_2} \quad \bigcup_{(X',a',Y'):0<a'<1} A^i_2((X',a',Y')\]

We note that this open set is a disjoint union of edges, and that it contains no vertices.  Moreover, every edge of $\mathcal{PS}_i(S)$ is contained in a similar neighborhood, so the union of all edges is open in $\mathcal{PS}_i(S)$, and hence $V_i(S)$ is closed.  
\end{proof}

\begin{lemma}
$\mathcal{PS}_i(S)$ is Hausdorff.  
\end{lemma}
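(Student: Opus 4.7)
The plan is to show directly that any two distinct points $P, Q \in \mathcal{PS}_i(S)$ admit disjoint basic open neighborhoods $\mathbb{D}^i_{\epsilon, n}(P)$ and $\mathbb{D}^i_{\epsilon, n}(Q)$. First, I would establish two clean descriptions of these basis sets by unwinding Definitions 6.5, 6.9, and 6.11, and invoking the Vertex and Edge Disjointness Lemmas (6.7 and 6.8). For an edge point $R = (U, a, V)$ with $\epsilon < \min(a, 1-a)$, the set $\mathbb{D}^i_{\epsilon, n}(R)$ is the ``tube'' of edge points $(U'', c, V'')$ satisfying $U'' \in B^i_n(U)$, $V'' \in B^i_n(V)$, and $|c-a| < \epsilon$. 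For a vertex $R = U$ with $\epsilon < 1$, the set $\mathbb{D}^i_{\epsilon, n}(U)$ consists of $A^i_n(U)$ together with the half-edge stubs of length $\epsilon$ emanating from $B^i_n(U)$: namely, edge points $(U'', c, W)$ with $U'' \in B^i_n(U)$, $W \notin B^i_n(U)$, and $c < \epsilon$, plus the symmetric set with the roles of the endpoints reversed.

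With these descriptions, the proof is a case analysis. If $P$ and $Q$ are both vertices, Hausdorffness of $V_i(S)$ (Lemma 4.16) yields $n$ with $B^i_n(P) \cap B^i_n(Q) = \emptyset$, and $\epsilon = 1/3$ suffices: a common edge point would need to simultaneously satisfy $c < \epsilon$ (from one endpoint being the ``$P$-stub side'') and $1-c < \epsilon$ (from the other being the ``$Q$-stub side''), impossible for $\epsilon < 1/2$. If $P$ and $Q$ lie on the same edge, align representations as $P = (X, a, Y)$, $Q = (X, b, Y)$ with $a \ne b$ and take $\epsilon < |a-b|/2$; the parameter windows are then disjoint. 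If they lie on different edges, pick $n$ via Lemma 4.16 so that the $B^i_n$-classes separate whichever endpoints are actually distinct, and small $\epsilon$ finishes. Finally, for $P = X$ a vertex and $Q = (X', a, Y')$ an edge point: when $X \notin \{X', Y'\}$, large $n$ separates all three vertices on $S_n$; when $X = X'$, choose $\epsilon < a/2$ so that every point in $\mathbb{D}^i_{\epsilon, n}(Q)$ has parameter strictly greater than $a/2 > \epsilon$, placing it outside the short stubs that constitute the edge-point portion of $\mathbb{D}^i_{\epsilon, n}(X)$.

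The main obstacle is this last subcase, where the vertex $X$ actually sits on $Q$'s edge: here the $B^i_n$-classes of $X$ and $X'$ coincide, so $V_i$-Hausdorffness alone cannot separate the two points. The separation must come from combining the distinct class $B^i_n(Y')$ with the parameter-direction bound $\epsilon < a/2$, using the explicit description of $\mathbb{D}^i_{\epsilon, n}(X)$ to see that the only edge points it contains with an endpoint in $B^i_n(X)$ have parameter below $\epsilon$. The remaining cases reduce essentially mechanically to Hausdorffness of $V_i(S)$ together with the Vertex and Edge Disjointness Lemmas from Section 6.
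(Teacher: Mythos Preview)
Your proposal is correct and follows essentially the same route as the paper: separating two points by basic open sets $\mathbb{D}^i_{\epsilon,n}$ via a case analysis on whether the points are vertices or edge points, using the Disjointness Lemmas and Hausdorffness of $V_i(S)$. You are actually more careful than the paper, which handles only the two-edge-point case explicitly and dismisses the remaining cases (including the delicate vertex--incident-edge subcase you flag) with ``a similar argument works.''
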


\begin{proof}
First suppose $P_1=(X_1,a_1,Y_1)$ and $P_2=(X_2,a_2,Y_2)$ are distinct edge points, and that $a_1\neq a_2$.  Choose an $n\in \mathbb{N}$ such that $B_n^i(X_1)\cap B_n^i(Y_1)=B_n^i(X_2)\cap B_n^i(Y_2)=\emptyset$, and choose an $\epsilon>0$ such that $|a_1-a_2|<\frac{\epsilon}{2}$.  Then $\mathbb{D}^i_{\epsilon,n}(P_1)$ and $\mathbb{D}^i_{\epsilon,n}(P_2)$ are disjoint open neighborhoods.
\par 
On the other hand, if $a_1=a_2$, then $P_1$ and $P_2$ necessarily lie on distinct edges, since $P_1\neq P_2$.  Without loss of generality, we may assume that $X_1\neq X_2$.  Choose an $n\geq 1$ such that $B_n^i(X_1)\cap B_n^i(Y_1)=B_n^i(X_2)\cap B_n^i(Y_2)=B_n^i(X_1)\cap B_n^i(X_2)=\emptyset$.  Also choose an $\epsilon>0$ such that $\epsilon<\min(\{a,1-a\})$.  Then $\mathbb{D}^i_{\epsilon,n}(P_1)$ and $\mathbb{D}^i_{\epsilon,n}(P_2)$ are disjoint open neighborhoods.
\par 
A similar argument works when one or both points are vertices.
\end{proof}

The next two lemmas tell us the closure of the basis sets.  Intuitively, they say that we can get the closure of a basis set by replacing "$d_0(P',P'')<\epsilon$" in the definition of $D^i_{\epsilon,n}$ with  $d_0(P',P'')\leq \epsilon$.  They will be used to show that $\mathcal{PS}_i(S)$ is regular.

\begin{lemma}
Let $X\in \mathcal{PS}_i(S)$ be a vertex, and let $0<\epsilon\leq \frac{1}{2}$.  Then the closure of $\mathbb{D}^i_{\epsilon,n}(X)$ is 
\[
C^i_{\epsilon,n}=\bigcup_{X'\in B^i_n(X)}\quad \left(\bigcup_{P:d_0(P,X')\leq \epsilon} A^i_n(P)\right)
\]

\end{lemma}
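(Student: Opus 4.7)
My plan is to establish the equality of sets in three stages: the trivial containment $\mathbb{D}^i_{\epsilon,n}(X) \subseteq C^i_{\epsilon,n}$, the containment $C^i_{\epsilon,n} \subseteq \overline{\mathbb{D}^i_{\epsilon,n}(X)}$ via explicit approximating sequences, and the closedness of $C^i_{\epsilon,n}$ via sequential arguments (using the first-countability established in Lemma 7.1). The first containment is immediate, since the definition of $\mathbb{D}^i_{\epsilon,n}(X)$ differs from that of $C^i_{\epsilon,n}$ only by strengthening $d_0(P,X')\leq \epsilon$ to $d_0(P,X')<\epsilon$.

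For $C^i_{\epsilon,n} \subseteq \overline{\mathbb{D}^i_{\epsilon,n}(X)}$, I take $Q\in C^i_{\epsilon,n}$ witnessed by $X'\in B^i_n(X)$ and $P$ with $d_0(P,X')\leq \epsilon$ and $Q\in A^i_n(P)$. If the inequality is strict, $Q$ already lies in $\mathbb{D}^i_{\epsilon,n}(X)$. Otherwise $d_0(P,X')=\epsilon$, so (using $\epsilon\leq 1/2<1$) $P$ is an edge point admitting a representation $(X',\epsilon,Y')$ for some $Y'$ adjacent to $X'$ in $\mathcal{P}(S)$; hence $Q$ has the form $(X'',\epsilon,Y'')$ with $X''\in B^i_n(X)$ and $Y''\in B^i_n(Y')$. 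Setting $Q_k=(X'',\epsilon-\tfrac{1}{k},Y'')$, each $Q_k$ lies in $A^i_n((X',\epsilon-\tfrac{1}{k},Y'))$ and therefore in $D^i_{\epsilon,n}(X')\subseteq \mathbb{D}^i_{\epsilon,n}(X)$, while Definition 6.1(1) gives $Q_k\to Q$.

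The main work is to show $C^i_{\epsilon,n}$ is closed. I first derive an explicit description: a vertex $Q$ lies in $C^i_{\epsilon,n}$ iff $Q\in B^i_n(X)$, and an edge point $Q=(Z,a,W)$ lies in $C^i_{\epsilon,n}$ iff at least one of the following holds: (i) both $Z,W\in B^i_n(X)$; (ii) $Z\in B^i_n(X)$ and $a\leq \epsilon$; (iii) $W\in B^i_n(X)$ and $1-a\leq \epsilon$. This follows by splitting on whether the witness $P$ is a vertex (forcing $P=X'$, hence condition (i)) or an edge point on an edge incident to $X'$ (yielding (ii) or (iii) depending on the endpoint). Given a sequence $Q_j\to Q$ with $Q_j\in C^i_{\epsilon,n}$, I case-split on which clause of Definition 6.1 realizes the convergence, use pigeonhole to extract a subsequence along which a single clause of the characterization holds uniformly, and then transfer this clause to $Q$ using the Disjointness Lemma, the clopenness of $B^i_n(\cdot)$ in $V_i(S)$, and the fact that $\leq$ passes to limits in $[0,1]$.

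The main obstacle will be the bookkeeping in the closedness step: the convergence in Definition 6.1 has three clauses, each $Q_j$ may satisfy multiple of (i)--(iii), and $Q$ may be a vertex or an edge point. The hypothesis $\epsilon \leq 1/2$ is what makes the case analysis consistent; for instance, when $Q$ is a vertex and clause (2) of Definition 6.1 applies so that $a_j\to 0$, the inequality $1-a_j\leq \epsilon$ eventually fails, ruling out clause (iii) for large $j$ and forcing $B^i_n(X)\cap B^i_n(Q)\neq \emptyset$, which by the Disjointness Lemma places $Q$ in $B^i_n(X)$.
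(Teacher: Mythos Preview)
Your proposal is correct and follows essentially the same route as the paper's proof: both establish $C^i_{\epsilon,n}\subseteq\overline{\mathbb{D}^i_{\epsilon,n}(X)}$ quickly and then spend the bulk of the effort on a sequential case analysis (splitting on whether the limit $Q$ is a vertex or an edge point, and on which clause of Definition~6.1 governs the convergence), using clopenness of $B^i_n(X)$ and the hypothesis $\epsilon\le \tfrac12$ to rule out the bad subcases. The one organizational difference is that you first extract an explicit membership criterion for $C^i_{\epsilon,n}$ (the conditions (i)--(iii)) and then run the closedness argument against that criterion, whereas the paper carries out the same analysis inline; your version is slightly cleaner, and taking the sequence in $C^i_{\epsilon,n}$ itself (rather than in $\mathbb{D}^i_{\epsilon,n}(X)$, as the paper does) is the more direct way to prove closedness.
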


\begin{proof}
It is clear from the definition of $C^i_{\epsilon,n}$ that each point in $C^i_{\epsilon,n}$ is a limit point of a sequence in $\mathbb{D}^i_{\epsilon,n}(X)$, and hence $C^i_{\epsilon,n}\subseteq \overline{\mathbb{D}^i_{\epsilon,n}(X)}$.  Our goal now is to show that $C^i_{\epsilon,n}$ is closed.  
\par 
Let $P_1, \ldots$ be a sequence in $\mathbb{D}^i_{\epsilon,n}$ which converges to some point $Q\in \mathcal{PS}_i(S)$.  We will show that $Q\in C^i_{\epsilon,n}$.
\par 
First suppose that $Q$ is an edge point.  Then $Q$ has a representation $Q=(Y,a,Z)$, $0<a<1$, and each $P_i$ has a represenations $P_j=(Y_j,a_j,Z_j)$ such that $\{Y_j\}\to Y$, $\{Z_j\}\to Z$, and $\{a_j\}\to a$.  Note that if $Y$ and $Z$ are both in $B^i_{n}(X)$, then $Q\in D^i_{\epsilon,n}(X)\subset\mathbb{D}^i_{\epsilon,n}(X)$ by definition and we are done.  On the other hand, At least one of $Y_j$ or $Z_j$ must be in $B^i_{n}(X)$, since $P_j\in \mathbb{D}^i_{\epsilon,n}(X)$.  Thus, we may assume that exactly one of $X_j$ or $Y_j$ is in $B^i_{n}(X)$.  
\par 
Due this assumption, the definition of $\mathbb{D}^i_{\epsilon}(X)$ implies that either $a_j<\epsilon\leq \frac{1}{2}$ or $a_j>1-\epsilon\geq \frac{1}{2}$.  Since the $a_j$ converge as a sequence of real numbers, then one of those two cases must occur only a finite number of times.  Assume, without loss of generality, that $a_j<\epsilon\leq \frac{1}{2}$ for all but finitely many $j\in \mathbb{N}$.  But that means $Y_j\in B^i_{n}(X)$ for all but finitely many $j\in \mathbb{N}$, and hence $Y\in B^i_{n}(X)$.  We also have $a\leq \epsilon$.  Thus $Q\in C^i_{\epsilon,n}$, as desired.

\par
We now turn our attention to the seperate case where $Q$ is a vertex.  As usual, there are two seperate subcases, depending on whether the $P_j$ satisfy conditions (2) or (3) of definition 6.1.  In the first subcase, we have represenations $P_j=(Y_j,a_j,Z_j)$ such that $Y_j\to Q$, $a_j\to 0$, and there are no restrictions on $Z_i$.  Since $(Y_j,a_j,Z_j)\in\mathbb{D}^i_{\epsilon,n}(X)$, either $Y_j\in B^i_n(X)$ and $a_j<\epsilon$, or $Z_j\in B^i_n(X)$ and $a_j>1-\epsilon$.  But the $a_j$ approach 0, so we must have $Y_j\in B^i_{n}(X)$ for all but finitely many $j\geq 1$.  Hence 
\[Q\in B^i_{n}(X)\subset \mathbb{D}^i_{\epsilon,n}(X)\subset C^i_{\epsilon,n}.\] 

\par 
In the second subcase, we have representations $P_j=(Y_j,a_j,Z_j)$ such that $Y_j\to Q$ and $Z_j\to Q$.  Hence, for large $j$, the vertices $Y_j$, $Z_j$, and $Q$ all agree on $S_n$.  Since $P_j\in \mathbb{D}^i_{\epsilon,n}(X)$ for all $j$, at least one of $Y_j$ or $Z_j$ agrees with $X$ on $S_n$.  Hence, by transitivity, for large $j$ the vertices $X$, $Y_j$, $Z_j$, and $Q$ all agree on $S_n$, thus
\[Q\in B^i_{n}(X)\subset \mathbb{D}^i_{\epsilon,n}(X)\subset C^i_{\epsilon,n}.\] 
\end{proof}

\begin{lemma}
Let $P=(X,a,Y)\in \mathcal{PS}_i(S)$ be an edge point, let $0<\epsilon<\max{a,1-a}$, and let $n\geq 1$ be large enough so that $X$ and $Y$ disagree on $S_n$. Then the closure of $\mathbb{D}^i_{\epsilon,n}(P)$ is 
\[
C^i_{\epsilon,n}=\bigcup_{P'=(X',a,Y'): X'\in B^i_n(X),Y'\in B^i_n(Y)}\quad \left( \bigcup_{P'':d_0(P',P'')\leq \epsilon} A^i_n(P'')\right)
\]
\[=\{(X',a',Y'):X'\in B^i_n(X),Y'\in B^i_n(Y),a-\epsilon\leq a'\leq a+\epsilon\}.\]

\end{lemma}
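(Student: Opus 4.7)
The plan is to prove both inclusions $C^i_{\epsilon,n}\subseteq \overline{\mathbb{D}^i_{\epsilon,n}(P)}$ and $\overline{\mathbb{D}^i_{\epsilon,n}(P)}\subseteq C^i_{\epsilon,n}$ separately, adapting the strategy of Lemma 7.4 to the edge-point setting. The key additional ingredient, absent in the vertex case, is that $B^i_n(X)$ and $B^i_n(Y)$ are disjoint, since $X$ and $Y$ disagree on $S_n$; this disjointness will be used repeatedly to pin down representations and to exclude vertex limits.

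For $C^i_{\epsilon,n}\subseteq \overline{\mathbb{D}^i_{\epsilon,n}(P)}$, given $Q=(X',a',Y')\in C^i_{\epsilon,n}$, I would split on the value of $a'$. If $a-\epsilon<a'<a+\epsilon$, then $Q$ lies directly in $\mathbb{D}^i_{\epsilon,n}(P)$: take the anchor $P^*=(X',a,Y')$ (which agrees with $P$ on $S_n$) and the auxiliary point $P^{**}=(X',a',Y')$, noting $d_0(P^*,P^{**})=|a-a'|<\epsilon$ so that $Q\in A^i_n(P^{**})\subseteq D^i_{\epsilon,n}(P^*)$. For the boundary values $a'=a\pm\epsilon$, approximate $Q$ by the sequence $\{(X',a'\mp 1/k,Y')\}_{k\geq 1}$, each term of which lies in $\mathbb{D}^i_{\epsilon,n}(P)$ by the preceding observation and which $i$-converges to $Q$ via Definition 6.1(1).

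For the reverse inclusion, I would take a convergent sequence $\{P_j\}\to Q$ with each $P_j\in \mathbb{D}^i_{\epsilon,n}(P)$ and show $Q\in C^i_{\epsilon,n}$. First, unpacking the definition of $\mathbb{D}^i_{\epsilon,n}(P)$ and using $\epsilon<\max(a,1-a)$ together with $B^i_n(X)\cap B^i_n(Y)=\emptyset$, I would verify that each $P_j$ admits a representation $(X_j,a_j,Y_j)$ with $X_j\in B^i_n(X)$, $Y_j\in B^i_n(Y)$, and $a_j\in(a-\epsilon,a+\epsilon)$. I would then rule out the possibility that $Q$ is a vertex: under Definition 6.1(2) the coordinates $a_j$ would have to tend to $0$, contradicting $a_j>a-\epsilon\geq 0$; under Definition 6.1(3) both sequences $X_j$ and $Y_j$ would converge to $Q$, forcing $Q$ to agree with both $X$ and $Y$ on $S_n$ and hence contradicting disjointness. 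Consequently $Q=(Z_1,b,Z_2)$ is an edge point and Definition 6.1(1) applies; passing to limits in the closed subsets $B^i_n(X),B^i_n(Y)$ of $V_i(S)$ gives $Z_1\in B^i_n(X),Z_2\in B^i_n(Y)$, while $b=\lim a_j\in[a-\epsilon,a+\epsilon]$, so $Q\in C^i_{\epsilon,n}$.

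The main obstacle is the initial ``pinning-down'' step in the reverse inclusion: the definition of $\mathbb{D}^i_{\epsilon,n}(P)$ allows an internal $d_0$-detour of length up to $\epsilon$, and a priori this detour could wander onto an edge adjacent to $(X_j^*,Y_j^*)$, which would admit representations with the wrong classes for $X_j$ or $Y_j$. The hypothesis $\epsilon<\max(a,1-a)$, combined with the disjointness of $B^i_n(X)$ and $B^i_n(Y)$, is what actually forbids those detours from producing points that realize ``across-vertex'' representations incompatible with the stated form of $C^i_{\epsilon,n}$. A secondary, more bookkeeping-level difficulty is ensuring the boundary values $a\pm\epsilon$ lie in $[0,1]$ when constructing the approximating sequences in the first inclusion, which follows from the bound on $\epsilon$ but should be spelled out explicitly.
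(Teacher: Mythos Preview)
Your proposal is correct and follows essentially the same strategy as the paper: prove both inclusions, and for the reverse one use disjointness of $B^i_n(X)$ and $B^i_n(Y)$ together with the bound on $\epsilon$ to pin down representations of the $P_j$ and exclude vertex limits. The only minor point to patch is that your exclusion of Definition~6.1(2) should also handle the flipped representation $(Y_j,1-a_j,X_j)$, i.e., rule out $a_j\to 1$ as well as $a_j\to 0$; the paper covers this by observing that the $a_j$ converge to neither $0$ nor $1$.
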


\begin{proof}
The equality in the lemma follows immediately from the definitions.  It is clear from the definition of $C^i_{\epsilon,n}$ that each point in $C^i_{\epsilon,n}$ is a limit point of a sequence in $D^i_{\epsilon,n}(P)$, and hence $C^i_{\epsilon,n}\subseteq \overline{D^i_{\epsilon,n}(P)}$.  Our goal now is to show that $C^i_{\epsilon,n}$ is closed.
\par 
Suppose $P_1, \ldots$ is a sequence of points in $D^i_{\epsilon,n}(P)$ which converge to some point $Q$. Note that $D^i_{\epsilon,n}(P)$ contains only edge points.  Since the $P_j$ converge, we can choose represenations $P_j=(W_j,a_j,Z_j)$ such that the $W_j$ converge as a sequence of points in $V_i(S)$ to some vertex $W$.  Each $W_j$ lies in either $B^i_n(X)$ or $B^i_n(Y)$.  Since we chose $n$ large enough such that $B^i_n(X)\cap B^i_n(Y)=\emptyset$, the only way for the $W_j$ to converge is if either $W_j\in B^i_n(X)$ for all but finitely many $j$, or $W_j\in B^i_n(Y)$ for all but finitely many $j$.  Without loss of generality, we assume that $W_j\in B^i_n(X)$ for \emph{all} $j$.  Hence, $W\in B^i_n(X)$, and $Z_j\in B^i_n(Y)$ for all $j$.  
\par 
Since $Z_j$ and $W$ disagree on $S_n$ for all $j$, the sequence $Z_1,\ldots$ does not converge to $W$.  Also, since $a-\epsilon\leq a_j\leq \epsilon$, the $a_j$ do not converge to either 0 or 1.  Hence, the only way for the $P_j$ to converge at all is if the $Z_j$ converge to a vertex $Z\in B^i_n(Y)$ and the $a_j$ converge to a point $b$ with $a-\epsilon\leq b\leq a+\epsilon$.  Thus $Q=(W,b,Z)\in C^i_{\epsilon,n}$.
\end{proof}

We recall that a space $\mathbb{X}$ is said to be $T_3$ if, for any closed set $C\subset \mathbb{X}$ and any point $x\in \mathbb{X}\backslash C$, there are disjoint open sets $U$ and $V$ with $x\in U$ and $C\subseteq V$.  A space is said to be \emph{regular} if it is both $T_3$ and Hausdorff.  
\par 
We also recall the well known theorem \cite[Chapter 4 Lemma 31.1  a]{munkres2000topology} that a Hausdorff space is regular if for every point $x$ and for every open set $x\in U$, there is an open set $V$ such that $x\in V$ and $\overline{V}\subset U$.

\begin{lemma}
$\mathcal{PS}_i(S)$ is a regular space.
\end{lemma}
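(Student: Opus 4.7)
The plan is to invoke the criterion recalled just before the statement: a Hausdorff space is regular provided that for every point $P$ and every open neighborhood $U$ of $P$, there exists an open $V$ with $P\in V$ and $\overline{V}\subseteq U$. Hausdorffness is Lemma 7.3, so I only need to verify the closure-shrinking condition. By the Basis Theorem (Theorem 6.13) it suffices to prove this for $U$ of the form $\mathbb{D}^i_{\epsilon,n}(P)$.

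Given such a basic $U$, my proposal is to take $V=\mathbb{D}^i_{\delta,m}(P)$ for any $0<\delta<\epsilon$ and any $m\geq n$ chosen large enough that the hypotheses of the relevant closure lemma are met. Concretely, if $P$ is a vertex $X$, I will pick any $\delta\leq \tfrac{1}{2}$ so that Lemma 7.4 describes $\overline{V}$ explicitly as $C^i_{\delta,m}$; if $P=(X,a,Y)$ is an edge point, I will first shrink $\epsilon$ so that $\epsilon<\max\{a,1-a\}$ (which we may do since the $\mathbb{D}^i_{\epsilon,n}(P)$ for small $\epsilon$ form a neighborhood basis at $P$), then choose $m\geq n$ large enough that $X$ and $Y$ disagree on $S_m$, so that Lemma 7.5 applies to describe $\overline{V}$.

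The inclusion $\overline{V}\subseteq U$ then follows from three monotonicity facts, each immediate from the definitions: (i) $B^i_m(X)\subseteq B^i_n(X)$ whenever $m\geq n$; (ii) $A^i_m(P')\subseteq A^i_n(P')$ whenever $m\geq n$, since $A^i_m$ is the induced subgraph on $B^i_m$ in the vertex case and the horizontal slice indexed by $B^i_m$ in the edge case; and (iii) the strict inequality $\delta<\epsilon$ converts the non-strict bound $d_0(P',P'')\leq \delta$ arising in the closure description into the strict bound $d_0(P',P'')<\epsilon$ appearing in the definition of $\mathbb{D}^i_{\epsilon,n}(P)$. Chasing a point of $\overline{V}$ through the union-of-unions in $C^i_{\delta,m}$ and applying (i)--(iii) term-by-term places it inside $\mathbb{D}^i_{\epsilon,n}(P)$.

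I do not expect any real obstacle: the closure formulas in Lemmas 7.4 and 7.5 were written in a form that makes this shrinking argument essentially mechanical, and the only bookkeeping is to handle the vertex and edge-point cases separately and, in the edge case, to pick $m$ large enough that $B^i_m(X)$ and $B^i_m(Y)$ are disjoint (a standing hypothesis of Lemma 7.5) before applying the closure description.
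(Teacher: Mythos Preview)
Your proposal is correct and follows essentially the same approach as the paper: invoke Hausdorffness (Lemma 7.3), use the closure computations of Lemmas 7.4 and 7.5, and observe the nesting $\overline{\mathbb{D}^i_{\delta,m}(P)}\subset \mathbb{D}^i_{\epsilon,n}(P)$ for $\delta<\epsilon$ and $m\geq n$ sufficiently large. The paper phrases this with the single-parameter family $\mathbb{D}^i_{1/n}(P)$ and the inclusion $\overline{\mathbb{D}^i_{1/n}(P)}\subset \mathbb{D}^i_{1/(n-1)}(P)$, but your two-parameter version and the explicit monotonicity facts (i)--(iii) are just a more detailed unpacking of the same argument.
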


\begin{proof}
Recall $\mathcal{PS}_i(S)$ is Hausdorff by Lemma 7.3. the previous two lemmas, each point $P\in \mathcal{PS}_i(S)$ has a neighborhood basis of open sets $\mathbb{D}^i_{\frac{1}{n}}(P)$ where $n\in \mathbb{N}$.  Moreover, for all sufficiently large $n$, we have $\overline{\mathbb{D}^i_{\frac{1}{n}}(P)}\subset \mathbb{D}^i_{\frac{1}{n-1}}(P)$, so $\mathcal{PS}_i(S)$ is regular.
\end{proof}

\begin{corollary}
$\mathcal{PS}_i(S)$ is metrizable.
\end{corollary}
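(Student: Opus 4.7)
The plan is to invoke the Urysohn Metrization Theorem, which states that a Hausdorff, regular, second-countable space is metrizable. All three hypotheses have already been verified in the excerpt, so the proof is essentially a one-line citation.

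First I would recall that second-countability of $\mathcal{PS}_i(S)$ is established in Corollary 6.23, where an explicit countable basis was constructed by picking one representative vertex in each $i$-agreement class on each $S_m$, together with one representative edge and rational edge-parameters. Next, Lemma 7.3 gives the Hausdorff property by separating any two distinct points with basic open sets of the form $\mathbb{D}^i_{\epsilon,n}$, choosing $n$ large enough that the relevant $B^i_n$-balls are disjoint. Finally, Lemma 7.8 gives regularity by combining Hausdorffness with the explicit description of the closures $\overline{\mathbb{D}^i_{1/n}(P)}$ in Lemmas 7.4 and 7.5, which shows $\overline{\mathbb{D}^i_{1/n}(P)} \subset \mathbb{D}^i_{1/(n-1)}(P)$.

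With those three properties in hand, one simply applies the Urysohn Metrization Theorem (see, e.g., \cite{willard2004general}) to conclude that $\mathcal{PS}_i(S)$ is metrizable. I do not anticipate any obstacle here, since every ingredient is already in place; the only minor bookkeeping is to assemble the citations to Lemmas 7.3, 7.8, and Corollary 6.23 into the single invocation of Urysohn. This exactly parallels Corollary 4.21, where the same metrization theorem was applied to the vertex space $V_i(S)$.
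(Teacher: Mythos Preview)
Your proposal is correct and matches the paper's own proof essentially verbatim: the paper cites regularity (the lemma immediately preceding the corollary) together with second-countability (Corollary~6.23) and then invokes the Urysohn Metrization Theorem. The only discrepancy is in your lemma numbering, which is understandable given the blind write-up.
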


\begin{proof}
The pants space is regular by the previous lemma, and second-countable by Corollary 6.23.  Thus, by the Urysohn Metrization Theorem\cite{willard2004general}, $\mathcal{PS}_i(S)$ is metrizable.
\end{proof}

\begin{lemma}
Let $\Gamma\subset \mathcal{P}(S)$ be a connected-component.  Then for $1\leq i\leq 4$, $\Gamma$ is dense in $\mathcal{PS}_i(S)$.
\end{lemma}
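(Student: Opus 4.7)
The plan is to first reduce to the finest topology ($i=1$): since the topology on $\mathcal{PS}_i(S)$ is coarser than that of $\mathcal{PS}_1(S)$ by Lemma 6.5, a subset dense in $\mathcal{PS}_1(S)$ is automatically dense in each $\mathcal{PS}_i(S)$.  By the Basis Theorem (Theorem 6.13), it then suffices to show that every basic open set $\mathbb{D}^1_{\epsilon,n}(P)$ meets $\Gamma$.  I will handle the cases $P$ a vertex and $P$ an edge point separately.

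For a vertex $P=X$, I would pick any $W\in \Gamma$ and invoke Lemma 4.24 to produce a sequence of elementary moves $W=X_1,X_2,\ldots$ converging to $X$ in $V_1(S)$.  Every $X_j$ lies in $\Gamma$ because it is reached from $W$ by finitely many elementary moves, and for all sufficiently large $j$ we have $X_j\in B^1_n(X)\subseteq \mathbb{D}^1_{\epsilon,n}(X)$, producing the required point of $\Gamma$.

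The more delicate case is $P=(X,a,Y)$ an edge point.  The idea is to construct an edge of $\Gamma$ whose endpoints $1$-agree with $X$ and $Y$ respectively on $S_n$, and then to take the point on this edge at parameter $a$.  Let $\alpha\in X$ and $\beta\in Y$ be the curves swapped by the elementary move $X\leftrightarrow Y$, lying in a common complexity $1$ subsurface $\Sigma$.  Choose $m\geq n$ large enough that $\overline{\Sigma}\subset S_m$.  Starting from any $Z\in \Gamma$ and applying Lemma 4.24 again, I obtain a sequence $Z=Z_1,Z_2,\ldots$ of elementary moves converging to $X$.  For all large $j$, Lemma 4.3 yields the equality $Z_j\cap S_m=X\cap S_m$, which in particular places $\alpha$ and the curves of $\partial \Sigma$ inside $Z_j$.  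I can then perform the same elementary move $\alpha\leftrightarrow \beta$ inside $Z_j$, producing $Z_j':=(Z_j\setminus\{\alpha\})\cup\{\beta\}\in \Gamma$.  The edge point $(Z_j,a,Z_j')$ will $1$-agree with $P$ on $S_n$ by Definition 6.10 and therefore lie in $A^1_n(P)\cap \Gamma\subseteq \mathbb{D}^1_{\epsilon,n}(P)\cap \Gamma$.

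The main obstacle I expect to confront is verifying that the transported elementary move in $Z_j$ is legitimate and that the resulting edge point truly $1$-agrees with $P$.  Both rely on the strict set-theoretic equality $Z_j\cap S_m=X\cap S_m$, which is exactly what $1$-agreement (via Lemma 4.3) delivers.  This is the principal reason for reducing to $i=1$ at the outset: for $i\geq 2$ one only has proper isotopy of intersections on $S_m$ rather than equality, and transporting the elementary move would require an additional isotopy argument to align $\alpha$ across the isotopy.
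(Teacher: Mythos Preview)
Your argument is correct and follows essentially the same route as the paper: approximate the vertex $X$ by vertices of $\Gamma$ via Lemma~4.24, and for an edge point transport the elementary move $\alpha\leftrightarrow\beta$ to the approximating vertex once the complexity-one subsurface $\Sigma$ sits inside the subsurface of agreement. The paper phrases this as exhibiting a sequence in $\Gamma$ converging to each point (sequential density), whereas you intersect $\Gamma$ with each basic open set, but the core construction is identical.

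One remark: your reduction to $i=1$ is harmless but unnecessary. Since $i$-agreement implies $4$-agreement (Lemma~4.7), once $Z_j$ and $X$ $i$-agree on $S_m\supset\overline{\Sigma}$ the curves $\alpha$ and $\partial\Sigma$ (being contained in $S_m$) already lie in $Z_j$, so the elementary move transports and $Z_j'$ $i$-agrees with $Y$ on $S_m$ for every $1\le i\le 4$. The paper runs the argument directly for all $i$ in this way. (Also, the coarser-topology fact you cite is Lemma~6.4 rather than 6.5.)
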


\begin{proof}
First, we note that by Lemma 4.23, every vertex in $\mathcal{PS}_i(S)$ is a limit point of $\Gamma$.  Now choose an edge point $P=(X,a,Y)$.  We want to show that $P\in \overline{\Gamma}$.   
\par 
Since $X$ and $Y$ differ by an elementary move, there exists curves $\alpha$ and $\beta$ such that $Y=(X\backslash\{\alpha\})\cup \{\beta\}$.  Let $S'$ be the complexity one subsurface in the complement of $X\backslash\{\alpha\}=Y\backslash\{\beta\}$.  Let $M$ be the least integer such that $S'\subset S_M$.  
\par 
For each $n\geq 1$, choose a vertex $X_n\in \Gamma$ such that $X_n$ and $X$ agree on $S_n$ (such a vertex must exist by Lemma 4.23).  For $n\geq M$, there is an elementary move on $X_n$ which replaces $\alpha$ by $\beta$.  Call the result of this elementary move $Y_n$: i.e. $Y_n=(X_n\backslash\{\alpha\})\cup \{\beta\}$.
\par 
Then there is an edge point $(X_n,a,Y_n)\in \Gamma$.  By construction, $\{X_n\}_{n\geq M}\to X$ and $\{Y_n\}_{n\geq M}\to Y$, thus $\{(X_n,a,Y_n)\}_{n\geq M}$ is a sequence of points in $\Gamma$ which converges to $P$.
\end{proof}

\begin{lemma}
For $1\leq i\leq 4$, $\mathcal{PS}_i(S)$ is a seperable space.
\end{lemma}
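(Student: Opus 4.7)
The plan is to derive separability as a quick consequence of results already established earlier in the paper, specifically second-countability (Corollary 6.23) and density of any connected component (Lemma 7.8). Either of these on its own is enough, but combining them gives the cleanest argument.

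First, I would recall the standard topological fact that any second-countable space is separable: given a countable basis $\{U_n\}_{n\in\mathbb{N}}$ of nonempty sets, a set formed by choosing one point from each $U_n$ is countable and dense. Since Corollary 6.23 has already provided a countable basis for $\mathcal{PS}_i(S)$, separability follows immediately. The key verification is that the countable basis produced in Corollary 6.23 consists of nonempty sets, which is clear since each $\mathbb{D}^i_{\epsilon}(P)$ contains $P$ itself.

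As an alternative (and arguably more concrete) route, I would use Lemma 7.8: pick a single connected component $\Gamma \subset \mathcal{P}(S)$, which is dense in $\mathcal{PS}_i(S)$. The vertex set of $\Gamma$ is countable (starting from any base vertex, one reaches only countably many pants decompositions by finite sequences of elementary moves, since each elementary move has countably many outcomes). Adjoining the countably many rational edge points on the countably many edges of $\Gamma$ gives a countable set $D \subset \Gamma$. A short argument using the basis theorem shows that $D$ is dense in $\Gamma$: any basic open set $\mathbb{D}^i_{\epsilon,n}(P)$ that meets $\Gamma$ meets $D$, since the rational edge points are dense along each edge. Since $\Gamma$ is itself dense in $\mathcal{PS}_i(S)$, transitivity of denseness yields that $D$ is dense in $\mathcal{PS}_i(S)$.

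I do not anticipate any real obstacle here; the argument is essentially a bookkeeping application of earlier results. The only point that requires a moment of care is ensuring that the countable dense subset one writes down really is dense in the ambient space $\mathcal{PS}_i(S)$, not merely in $\Gamma$, but this is handled by the density of $\Gamma$ itself. Given the brevity of the argument, I would present it in two or three sentences, simply citing Corollary 6.23 and applying the standard implication second-countable $\Rightarrow$ separable.
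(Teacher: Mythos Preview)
Your proposal is correct. Your ``alternative'' route---taking a connected component $\Gamma$, noting it has countably many vertices and edges, and using vertices together with rational edge points as the countable dense set---is exactly what the paper does (in two sentences, leaning on Lemma~7.8). Your preferred route via Corollary~6.23 and the general implication second-countable $\Rightarrow$ separable is a legitimate and arguably cleaner shortcut; it avoids having to argue that the rational edge points of $\Gamma$ are dense in $\Gamma$, at the cost of relying on a result (Corollary~6.23) whose proof already did essentially that work. Either way the content is the same, so there is nothing to fix.
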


\begin{proof}
Let $\Gamma\subset \mathcal{P}(S)$ be a connected-component.  Then $\Gamma$ has countably many vertices and edges.  The subset of $\Gamma$ consisting of vertices and rational edge points is a countable dense subset of $\mathcal{PS}_i(S)$.
\end{proof}

\begin{lemma}
For $1\leq i\leq 4$, $\mathcal{PS}_i(S)$ is path-connected.
\end{lemma}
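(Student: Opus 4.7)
The plan is to prove path-connectedness by constructing, for any two points $P, Q \in \mathcal{PS}_i(S)$, an explicit continuous path from $P$ to $Q$. First, by traversing a sub-segment of the edge containing $P$ (respectively $Q$), we may reduce to the case where $P$ and $Q$ are both vertices, say $X$ and $Y$, since within a single edge the subspace topology inherited from $\mathcal{PS}_i(S)$ coincides with the Euclidean topology on $[0,1]$ (as is readily checked using the Basis Theorem with a large enough $n$). The case of two vertices is where all the work lies, and here we invoke Lemma 4.23 to obtain a sequence of pants decompositions $X = X_1, X_2, \ldots$ such that consecutive terms differ by an elementary move and such that $X_j \to Y$ in $V_i(S)$.

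I will then define $\gamma : [0,1] \to \mathcal{PS}_i(S)$ as follows. Partition $[0,1)$ into the intervals $I_k = [1 - 2^{-(k-1)}, 1 - 2^{-k}]$ for $k \geq 1$, which accumulate at $1$. On $I_k$, let $\gamma$ traverse the edge from $X_k$ to $X_{k+1}$ linearly, with $\gamma(1 - 2^{-(k-1)}) = X_k$ and $\gamma(1 - 2^{-k}) = X_{k+1}$. Set $\gamma(1) = Y$.

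Since $\mathcal{PS}_i(S)$ is metrizable by Corollary 7.8 (or at least first-countable by Lemma 7.1), continuity of $\gamma$ is equivalent to sequential continuity. On the interior of each $I_k$, $\gamma$ traces a single edge, so continuity is immediate from the observation above. Continuity at the junction points $t = 1 - 2^{-k}$ follows from condition (2) of Definition 6.1, since as $t_j \to 1 - 2^{-k}$ from either incident edge the representation $\gamma(t_j) = (X_{k+1}, a_j, W_j)$ (after swapping representations if necessary) satisfies $a_j \to 0$ with the constant first coordinate $X_{k+1}$ converging to $X_{k+1}$.

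The main obstacle is continuity at $t = 1$. Let $t_j \to 1$; we must show $\gamma(t_j) \to Y$ in $\mathcal{PS}_i(S)$. Write $\gamma(t_j) = (X_{k_j}, a_j, X_{k_j + 1})$ with $k_j \to \infty$. Since $X_j \to Y$ in $V_i(S)$, both $X_{k_j}$ and $X_{k_j + 1}$ converge to $Y$, so condition (3) of Definition 6.1 yields $\gamma(t_j) \to Y$. Equivalently, one can check this directly from the Basis Theorem: a basic neighborhood $\mathbb{D}^i_{\epsilon,n}(Y)$ contains the induced subgraph $A^i_n(Y)$, and for sufficiently large $k$ both endpoints $X_k, X_{k+1}$ lie in $B^i_n(Y)$, so the entire edge between them lies in $A^i_n(Y) \subseteq \mathbb{D}^i_{\epsilon,n}(Y)$. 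The crucial and slightly subtle point is that $A^i_n(Y)$ is defined to include \emph{all} edges between vertices in $B^i_n(Y)$, which is precisely what prevents the ``middle'' of each edge from escaping small neighborhoods of $Y$ and makes the construction work.
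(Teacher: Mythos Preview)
Your proof is correct and follows essentially the same strategy as the paper: reduce to the case of two vertices, invoke Lemma 4.23 to obtain an infinite edge-path $X=X_1,X_2,\ldots$ converging to $Y$, parametrize this edge-path over $[0,1)$ with the intervals accumulating at $1$, and verify continuity at $t=1$ via condition (3) of Definition 6.1. The only cosmetic differences are your choice of partition points $1-2^{-k}$ versus the paper's $1-1/n$, and that the paper dispatches continuity on $[0,1)$ in one line by noting that $\mathcal{P}(S)=\mathcal{PS}_0(S)$ carries a finer topology than $\mathcal{PS}_i(S)$, whereas you check the junction points by hand.
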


\begin{proof}
First, note that because $\mathcal{P}(S)=\mathcal{PS}_0(S)$ carries a finer topology than $\mathcal{PS}_i(S)$, every path-component of $\mathcal{P}(S)$ is also path-connected in $\mathcal{PS}_i(S)$.  Thus, it is sufficient to find a path between an arbitrary pair of vertices $X$ and $Y$ in different $\mathcal{P}(S)$-components.  
\par 
By Lemma 4.23, there exists a sequence of pants decompositions $X_1,X_2,\ldots$ such that $X=X_1$, $X_j$ and $X_{j+1}$ differ by an elementary move, and such that the sequence converges to $Y$ in $\mathcal{PS}_i(S)$.  Consider the function $f: [0,1]\to \mathcal{PS}_i(S)$ with 
\[
f(t)=
\begin{cases}
X_n &  t=1-1/n \\
Y & t=1 \\
(X_n,t-(1-1/n),X_n+1) & 1-1/n<t<1-1/(n+1)
\end{cases}
\]
We first claim that $f$ is continuous everywhere except possibly at $t=1$.  To see this, note that the restriction of $f$ to $[0,(1)$ is a continuous path in $\mathcal{P}(S)$, and $\mathcal{P}(S)$ has a coarser topology than $\mathcal{PS}_i(S)$.
\par 
Let $t_1,t_2,\ldots$ be a sequence of real numbers in $[0,1]$ which converges to 1.  Then for any $n\geq 1$, and for all but finitely many of the $t_j$, $f(t_j)$ lies on the edge between $X_M$ and $X_{M+1}$ for some $M\geq n$.  Thus, by the definition of pants convergence, $f(t_j)\to Y=f(1)$.  Thus $f$ is continuous at 1, and hence $X$ and $Y$ are in the same path-component of $\mathcal{PS}_i(S)$.
\end{proof}

\begin{lemma}
$\mathcal{PS}_i(S)$ is locally path-connected at a point $P$ if and only if $P$ is a vertex point.
\end{lemma}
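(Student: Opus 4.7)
The plan is to treat the two implications separately, since they require different mechanisms. For the vertex direction I would show that each basic neighborhood $\mathbb{D}^i_{\epsilon,n}(X)$ is itself path-connected. Using Lemma 4.9, I first replace the ambient principal exhaustion by one satisfying $\partial S_j\setminus\partial S\subseteq X$ for every $j$; the $i$-agreement condition on $S_n$ then forces $\partial S_n\setminus\partial S\subseteq Y$ for every $Y\in B^i_n(X)$ as well. Given such a $Y$, I would adapt the inductive construction of Lemma 4.23 using a shifted principal exhaustion $S'_0\subset S'_1\subset\cdots$ with $S'_0=S_n$, chosen compatibly with $Y$ so that $\partial S'_j\setminus\partial S\subseteq Y$. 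Because $X$ already $i$-agrees with $Y$ on $S'_0$ and contains $\partial S'_0\setminus\partial S$, no initial elementary moves are required, and each subsequent elementary move lies in $S'_{k+1}\setminus S'_k\subseteq S\setminus S_n$. Consequently every $X_j$ in the resulting sequence $X=X_1,X_2,\ldots\to Y$ lies in $B^i_n(X)$, and the continuous path built from these vertices exactly as in Lemma 7.11 stays inside $A^i_n(X)\subseteq\mathbb{D}^i_{\epsilon,n}(X)$. An edge point in the neighborhood is first moved to a vertex endpoint in $B^i_n(X)$ by walking along its edge within the permitted parameter window, after which the previous argument finishes the job.

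For the edge direction, fix $P=(X,a,Y)$ and set $U=\mathbb{D}^i_{\epsilon,n}(P)$ with $\epsilon<\min(a,1-a)$ and $n$ large enough that $X$ and $Y$ disagree on $S_n$. By Lemma 6.27 the set $U$ contains only edge points, so clause (1) of Definition 6.1 is the only mechanism governing convergence within $U$. The vertex-coordinate projections $\pi_1,\pi_2\colon U\to V_i(S)$ sending $(X',a',Y')$ to $X'$ and $Y'$ are therefore sequentially continuous, hence continuous by the first-countability of $\mathcal{PS}_i(S)$ (Lemma 7.1). Since $V_i(S)$ is totally disconnected (Lemma 4.18), both $\pi_1\circ\gamma$ and $\pi_2\circ\gamma$ must be constant along any path $\gamma\colon[0,1]\to U$, so the path-component of $P$ in $U$ is precisely the interval $I=\{(X,a',Y):|a-a'|<\epsilon\}$. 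To conclude the failure of local path-connectedness at $P$, I would observe that by modifying the common multicurve $X\setminus\{\alpha\}=Y\setminus\{\beta\}$ (where $\alpha\leftrightarrow\beta$ is the elementary move between $X$ and $Y$) outside $S_m$ for any $m\geq n$, one produces infinitely many distinct pairs $(X'',Y'')$ with $X''\in B^i_m(X)$, $Y''\in B^i_m(Y)$, still differing by $\alpha\leftrightarrow\beta$. Thus every basic neighborhood $\mathbb{D}^i_{\epsilon',m}(P)$ contains edge points off $I$, so no open path-connected $V\subseteq U$ can contain $P$.

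The main obstacle is the vertex direction, specifically arranging simultaneously that the shifted exhaustion starting at $S_n$ is principal, compatible with $Y$, and satisfies the boundary-inclusion hypothesis needed to keep the Lemma 4.23 path inside $B^i_n(X)$; once this setup is in place, the rest of that lemma's proof adapts verbatim, and the edge-point argument is then essentially a totally-disconnected-projection maneuver.
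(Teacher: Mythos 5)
Your proposal is correct, and for the edge-point half it takes a genuinely different path from the paper, so a comparison is warranted.

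For the vertex direction both you and the paper argue that a basic neighborhood of a vertex $X$ is path-connected by running the Lemma 4.23 elementary-move construction and threading it through edges as in Lemma 7.10. The paper disposes of this with the single line ``by the same argument used in the previous lemma''; you fill in the bookkeeping that the paper leaves implicit — swap to an exhaustion compatible with $X$, note that $i$-agreement on $S_n$ then forces $\partial S_n\setminus\partial S$ into every $Y\in B^i_n(X)$, and shift the exhaustion so that the Lemma 4.23 path never disturbs curves in $S_n$. That is a faithful, more explicit version of the paper's sketch.

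For the edge direction you and the paper diverge. The paper passes to the quotient $\hat E=E_i(S)/\!\sim$ in which each open edge is collapsed to a point, shows $\hat E$ is totally disconnected via the Disjointness Lemma, concludes each edge is a full connected component of $E_i(S)$, and then notes that any neighborhood of an edge point contained in $E_i(S)$ meets infinitely many edges and hence is already disconnected — proving the slightly stronger failure of local \emph{connectedness}. You instead restrict to a small neighborhood $U$ containing only edge points, observe that the two endpoint projections $U\to V_i(S)$ are continuous, use total disconnectedness of $V_i(S)$ to force both projections to be constant along paths, so the path-component of $P$ in $U$ is the single edge segment $I$, and then exhibit points off $I$ in every basic neighborhood of $P$. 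The two arguments are close relatives — your endpoint projections factor through the paper's quotient $\hat E$ — but yours addresses path-connectedness directly, matching the lemma as stated, while the paper proves the stronger statement. One small point worth making explicit in your version: the sequential continuity of the endpoint projections relies on choosing $n$ large enough that $B^i_n(X)\cap B^i_n(Y)=\emptyset$, which is what makes the labeling of the two coordinates of a triple unambiguous and hence the projections well defined; you arrange this, but the reader should be told it is doing that work.
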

\begin{proof}
First, suppose $P$ is a vertex.  Then $\mathbb{D}_{1/2}^i(X)$ is path-connected, by the same argument used in the previous lemma.
\par 
Now suppose $P=(X,a,Y)$ is an edge point.  Recall that $P$ has a neighborhood $E_i(S)=\mathcal{PS}_i(S)\backslash V_i(S)$ consisting of all edge points.  We will show that each edge is a connected component of $E_i(S)$.  
\par 
A single edge is path-connected and hence connected.  To show that $E_i(S)$ is disconnected, we consider the quotient space 
\[\hat{E}=E_i(S)/((X,a,Y)\sim (X,a',Y))\]
obtained by identifying all points on an edge.  We will show that $\hat{E}$ is totally disconnected.  Each point in $\hat{E}$ can be represented by an unordered pair $\{X,Y\}$ of adjacent vertices in $\mathcal{P}(S)$.  
\par 
For a point $\{X,Y\}\in \hat{E}$ and an integer $n\geq 1$, there is a neighborhood 
\[\hat{A}_n(\{X,Y\})=\{\{X',Y'\}:X'\in B^i_n(X)\mbox{ and } Y'\in B^i_n(Y)\}\]
This set is open because its preimage in $E_i(S)$ is open.  It follows from the disjointness lemma that for two points $\{X,Y\}$ and $\{X',Y'\}$, the neighborhoods $\hat{A}_n(\{X,Y\})$ and $\hat{A}_n(\{X',Y'\})$ are either disjoint or equal.  Thus, for sufficiently large $n$, the $\hat{A}_n$ partition $\hat{E}$ into disjoint clopen sets such that $\{X,Y\}$ and $\{X',Y'\}$ are in different parts of the partition.  Hence, $\hat{E}$ is totally disconnected.  Hence, each edge of $E_i(S)$ is a connected-component.
\par 
Note also that any smaller neighborhood $U$ such that $P\in U\subseteq E_i(S)$ must intersect infinitely many edges, and hence is also disconnected.  Thus, $\mathcal{PS}_i(S)$ is not locally connected at $P$.
\end{proof}

\begin{corollary}
$\mathcal{PS}_i(S)$ is not locally path-connected.  
\end{corollary}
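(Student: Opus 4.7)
The plan is to derive this corollary as an immediate consequence of Lemma 7.10, which has already characterized exactly the points at which $\mathcal{PS}_i(S)$ is locally path-connected. The only work required is to observe that edge points actually exist in $\mathcal{PS}_i(S)$, so that the set of bad points is nonempty.

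First I would recall that a topological space is said to be \emph{locally path-connected} precisely when it is locally path-connected at every one of its points. Thus, to show that $\mathcal{PS}_i(S)$ fails to be locally path-connected, it suffices to exhibit a single point at which this property fails.

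Next I would invoke Lemma 7.10, which asserts that $\mathcal{PS}_i(S)$ is locally path-connected at $P$ if and only if $P$ is a vertex. Consequently, any edge point is a witness to the failure of local path-connectedness. To see that such a point exists, pick any pants decomposition $X$ of $S$ and any elementary move producing a neighboring pants decomposition $Y$; the interior of the edge joining $X$ to $Y$ consists entirely of edge points of $\mathcal{PS}_i(S)$. (Such an elementary move exists because every pants decomposition of an infinite-type surface contains, for example, a curve lying on a complexity one subsurface on which a flip is possible.)

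No additional argument is needed, and I do not anticipate any obstacle: the content has all been packaged into Lemma 7.10, and the corollary is simply the quantifier-level statement ``not locally path-connected everywhere'' extracted from ``not locally path-connected at edge points.''
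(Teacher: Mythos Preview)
Your proposal is correct and matches the paper's own proof, which simply states ``Immediate from the previous lemma.'' You have merely added the (trivial but reasonable) observation that edge points exist, so the failure set is nonempty; the numbering you cite as Lemma~7.10 corresponds to what the paper labels Lemma~7.11.
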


\begin{proof}
Immediate from the previous lemma.
\end{proof}

\begin{corollary}
$\mathcal{PS}_i(S)$ is not homeomorphic to a CW complex.
\end{corollary}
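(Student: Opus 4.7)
The plan is to invoke the standard fact that every CW complex is locally path-connected, and then apply the preceding corollary. Recall that a CW complex $K$ is locally path-connected at every point: given any point $x \in K$ and any open neighborhood $U$ of $x$, one can inductively build a path-connected open neighborhood $V \subseteq U$ by working up through the skeleta, using that the characteristic maps of the cells are continuous and that open cells are path-connected. This is a textbook result (see, e.g., Hatcher, Proposition A.4).

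Hence any space homeomorphic to a CW complex must itself be locally path-connected. Since Corollary 7.12 (the previous lemma) establishes that $\mathcal{PS}_i(S)$ fails to be locally path-connected at any edge point, no such homeomorphism can exist.

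There is essentially no obstacle: the entire content of the statement was carried by Lemma 7.11, which showed that edge points have no connected (hence no path-connected) neighborhood contained in $E_i(S)$. The only thing to cite for this corollary is the local path-connectedness of CW complexes, and then to apply it in contrapositive form. The proof therefore reduces to one sentence invoking the preceding corollary together with this standard property of CW complexes.
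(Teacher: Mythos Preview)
Your proposal is correct and follows exactly the same approach as the paper: both argue that CW complexes are locally path-connected, whereas $\mathcal{PS}_i(S)$ is not (by the preceding corollary), so no homeomorphism can exist. Your version is simply a bit more explicit, supplying the Hatcher reference for the local path-connectedness of CW complexes.
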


\begin{proof}
This follows from the fact that $\mathcal{PS}_i(S)$ not locally path-connected.
\end{proof}
In particular, the preceeding corollary implies that $\mathcal{PS}_i(S)$ is different from the generalization of the pants graph in \cite{FossasParlier}.

\begin{corollary}
Let $f: \mathcal{PS}_i(S)\to \mathcal{PS}_i(S)$ be a homeomorphism.  Then $f$ takes vertices to vertices and edge points to edge points.  
\end{corollary}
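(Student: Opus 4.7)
The plan is to invoke Lemma 7.13 and use the fact that local path-connectedness at a point is a topological invariant. Specifically, if $h:\mathbb{X}\to \mathbb{Y}$ is a homeomorphism and $\mathbb{X}$ is locally path-connected at $x$, then $\mathbb{Y}$ is locally path-connected at $h(x)$; this follows immediately because $h$ induces a bijection between neighborhood bases at $x$ and at $h(x)$, preserving path-connectedness of each neighborhood.

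Concretely, suppose $f:\mathcal{PS}_i(S)\to \mathcal{PS}_i(S)$ is a homeomorphism, and let $P\in \mathcal{PS}_i(S)$. If $P$ is a vertex, then by Lemma 7.13, $\mathcal{PS}_i(S)$ is locally path-connected at $P$. Since $f$ is a homeomorphism, $\mathcal{PS}_i(S)$ is locally path-connected at $f(P)$ as well. Applying Lemma 7.13 in the reverse direction forces $f(P)$ to be a vertex. Conversely, if $P$ is an edge point, then $\mathcal{PS}_i(S)$ fails to be locally path-connected at $P$, hence also fails to be locally path-connected at $f(P)$, which by Lemma 7.13 means $f(P)$ is an edge point.

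There is really no substantive obstacle here; the whole content is packaged into Lemma 7.13. The one small point to mention explicitly (to make the argument airtight) is the standard observation that homeomorphisms preserve the property ``locally path-connected at a point,'' which is immediate from the definition.
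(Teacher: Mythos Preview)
Your proof is correct and follows exactly the same approach as the paper: the paper's own proof is the single sentence ``Homeomorphisms must preserve local-path-connectedness, so the result follows from the previous lemma,'' which is precisely your argument (with the relevant lemma being the one characterizing vertices as the locally path-connected points). Your write-up simply spells out the standard fact that local path-connectedness at a point is preserved under homeomorphism, which the paper leaves implicit.
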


\begin{proof}
Homeomorphisms must preserve local-path-connectedness, so the result follows from the previous lemma.
\end{proof}

\begin{lemma}
Let $f: \mathcal{PS}_i(S)\to \mathcal{PS}_i(S)$ be a homeomorphism and let $P,Q\in \mathcal{PS}_i(S)$ be edge points.  Then $P$ and $Q$ lie in the same edge if and only if $f(P)$ and $f(Q)$ lie in the same edge.
\end{lemma}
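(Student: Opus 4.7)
The plan is to deduce this from the previous lemma (Lemma 7.13), which establishes that each edge of $\mathcal{PS}_i(S)$ is a connected component of the subspace $E_i(S) = \mathcal{PS}_i(S)\setminus V_i(S)$ of edge points. The key point is that a homeomorphism must preserve connected components of any subspace it preserves setwise, so it suffices to show $f$ preserves $E_i(S)$.

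First, I would note that by Lemma 7.2 the vertex set $V_i(S)$ is closed in $\mathcal{PS}_i(S)$, so $E_i(S)$ is open. By Corollary 7.14, the homeomorphism $f$ sends vertices to vertices and edge points to edge points, so $f(V_i(S)) = V_i(S)$ and $f(E_i(S)) = E_i(S)$. Therefore the restriction $f|_{E_i(S)}\colon E_i(S) \to E_i(S)$ is a homeomorphism of $E_i(S)$ onto itself.

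Next, recall from the proof of Lemma 7.13 that the connected components of $E_i(S)$ are exactly the individual edges of $\mathcal{P}(S)$: each single edge is path-connected (hence connected) in $\mathcal{PS}_i(S)$, and the quotient $\hat{E}$ that identifies each edge to a point is totally disconnected, so no larger connected subset of $E_i(S)$ can meet two distinct edges. Since homeomorphisms carry connected components to connected components bijectively, $f|_{E_i(S)}$ permutes the edges of $\mathcal{P}(S)$.

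The conclusion follows immediately: two edge points $P, Q$ lie in the same edge of $\mathcal{P}(S)$ iff they lie in the same connected component of $E_i(S)$, iff $f(P)$ and $f(Q)$ lie in the same connected component of $E_i(S)$, iff $f(P)$ and $f(Q)$ lie in the same edge. There is no real obstacle here; the lemma is essentially a corollary of Lemma 7.13 together with the fact (Lemma 7.2 and Corollary 7.14) that $f$ respects the decomposition of $\mathcal{PS}_i(S)$ into its vertex set and its open edge complement.
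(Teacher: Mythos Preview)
Your proof is correct and follows essentially the same approach as the paper: use Corollary 7.14 to get that $f$ restricts to a homeomorphism of $E_i(S)$, then invoke the fact (established in the proof of the local path-connectedness lemma) that the connected components of $E_i(S)$ are exactly the individual edges. The only minor issue is your reference numbering---the result about edges being connected components of $E_i(S)$ is proved in Lemma~7.11, not Lemma~7.13---but the mathematical content is identical to the paper's argument.
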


\begin{proof}
By the previous corollary, $f$ restricts to a homeomorphism of $E_i(S)$.  Recall from the proof of Lemma 7.5 that $P$ and $Q$ lie on the same edge if and only if they are in the same connected-component of $E_i(S)$. 
\end{proof}

\begin{lemma}
Let $f: \mathcal{PS}_i(S)\to \mathcal{PS}_i(S)$ be a homeomorphism.  Let $X$ be a vertex and $P$ be an edge point.  Then $P$ lies on an edge incident to $X$ if and only if $f(P)$ lies on an edge incident to $X$.
\end{lemma}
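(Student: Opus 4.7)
The plan is to identify vertex-edge incidence as a topologically intrinsic notion and transport it via the homeomorphism $f$.

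By Corollary 7.14, $f$ maps vertices to vertices and edge points to edge points, and in particular restricts to a self-homeomorphism of the subspace $E_i(S) := \mathcal{PS}_i(S) \setminus V_i(S)$ of edge points. The analysis in the proof of Lemma 7.11 (via the totally disconnected quotient $\hat{E}$ obtained by collapsing each edge to a point) shows that the connected components of $E_i(S)$ are exactly the individual open edges of $\mathcal{P}(S)$. Write $C_P$ for the connected component of $E_i(S)$ containing $P$; then $f(C_P)$ is the connected component of $E_i(S)$ containing $f(P)$, which recovers and refines Lemma 7.15.

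Next I would show that the closure of $C_P$ in $\mathcal{PS}_i(S)$ is exactly $C_P$ together with the two endpoint vertices of the underlying edge. This is immediate from Definition 6.1 applied to sequences of edge points on a fixed edge $(X,Y)$: the only candidate limit points outside $C_P$ itself are the two endpoints $X$ and $Y$ (via condition (2) of the definition, with $\{a_k\}\to 0$ or $\{a_k\}\to 1$). Consequently, a vertex is an endpoint of the edge containing $P$ if and only if it lies in $\overline{C_P}\cap V_i(S)$, giving a purely topological characterization of incidence.

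Because $f$ is a homeomorphism preserving the vertex set, it restricts to a bijection $\overline{C_P}\cap V_i(S) \longrightarrow \overline{C_{f(P)}}\cap V_i(S)$. Reading off: the vertex $X$ is incident to the edge containing $P$ precisely when the vertex to which $X$ corresponds under $f$ is incident to the edge containing $f(P)$, which is the content of the lemma. The only delicate ingredient is the identification of the open edges with the connected components of $E_i(S)$, which was already carried out in Lemma 7.11; no new ideas are required beyond this.
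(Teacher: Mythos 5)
Your proposal is correct and follows essentially the same route as the paper: the paper pushes forward a sequence $\{(X,\tfrac{a}{n},Y)\}$ converging to $X$, uses Lemma 7.15 to see the images lie on a single edge, and concludes that the limit $f(X)$ is an endpoint of that edge, which is exactly your observation that the endpoints of an edge are the vertices in the closure of the corresponding component of $E_i(S)$. Your packaging via $\overline{C_P}\cap V_i(S)$ makes explicit the closure computation that the paper uses implicitly, but no genuinely new idea is involved (and both arguments establish the conclusion with $f(X)$ in place of the $X$ appearing in the lemma's statement, which is evidently a typo there).
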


\begin{proof}
Suppose $P$ is on an edge incident to $X$.  Then $P$ has a representative triple $P=(X,a,Y)$.  Consider the sequence of points $\{(X,\frac{a}{n},Y)\}_{n\geq 1}$.  This sequence converges to $X$ in $\mathcal{PS}_i(S)$.  Hence, the sequence $\{f((X,\frac{a}{n},Y))\}_{n\geq 1}$ converges to $f(X)$.  By the previous lemma, all the points in $\{f((X,\frac{a}{n},Y))\}_{n\geq 1}$ lie in a single edge, and hence their limit $f(X)$ must be one of the end points of that edge.  Thus $f(P)$ is on an edge incident to $f(X)$.  The converse follows from replacing $f$ by $f^{-1}$.
\end{proof}

\begin{theorem}
Let $S$ and $S'$ be infinite-type surfaces and let $f,f':\mathcal{PS}_i(S)\to \mathcal{PS}_i(S')$ be two homeomorphisms.  Then the following are equivalent:

\begin{enumerate}
\item
The homeomorphisms $f$ and $f'$ are isotopic.
\item
The restrictions of $f$ and $f'$ to $V_i(S)$ are equal.
\item 
For all edge points $P\in \mathcal{PS}_i(S)$, $f(P)$ and $f'(P)$ lie on the same edge in $\mathcal{PS}_i(S')$.
\end{enumerate}
\end{theorem}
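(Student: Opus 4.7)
The plan is to prove the cyclic implications $(1) \Rightarrow (2) \Rightarrow (3) \Rightarrow (1)$.

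For $(1) \Rightarrow (2)$, I will suppose $H_t$ is an isotopy with $H_0 = f$ and $H_1 = f'$. For any vertex $X$, the map $t \mapsto H_t(X)$ is a continuous path in $\mathcal{PS}_i(S')$. Every homeomorphism of $\mathcal{PS}_i$ sends vertices to vertices by the earlier corollary, so this path lies in $V_i(S')$. Since $V_i(S')$ is totally disconnected by Lemma 4.18, the path is constant, giving $f(X) = f'(X)$.

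For $(2) \Rightarrow (3)$, let $P$ lie on the edge between adjacent vertices $X$ and $Y$. Applying Lemma 7.16 twice, $f(P)$ lies on an edge incident to both $f(X)$ and $f(Y)$. The pants graph is simple---an elementary move is determined by the unique pants curve it changes---so there is at most one edge of $\mathcal{PS}_i(S')$ with endpoints $\{f(X), f(Y)\}$, and $f(P)$ lies on it. Condition (2) gives $\{f'(X), f'(Y)\} = \{f(X), f(Y)\}$, so $f'(P)$ lies on the same edge by the same argument.

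For $(3) \Rightarrow (1)$, I first show (3) implies (2). For any edge $(X, Y)$ and edge point $P$ on it, the common edge containing $f(P)$ and $f'(P)$ has endpoints equal to both $\{f(X), f(Y)\}$ and $\{f'(X), f'(Y)\}$. The only potential failure of $f(X) = f'(X)$ is a ``swap'' $f(X) = f'(Y)$ and $f(Y) = f'(X)$; but every vertex $X$ of $\mathcal{P}(S)$ has infinitely many neighbors (each pants curve admits infinitely many elementary moves in its complexity-$1$ subsurface), so applying the argument to another neighbor $Z$ of $X$ forces $f'(Y) = f'(Z)$, contradicting injectivity of $f'$. With (2) and (3) in hand, I construct $H$ edgewise: on each edge $e$ with endpoints $X, Y$, both $f$ and $f'$ send $e$ homeomorphically onto the same edge $e' = f(e) = f'(e)$ of $\mathcal{PS}_i(S')$ while fixing endpoints. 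Parameterizing $e$ and $e'$ as $[0,1]$ with $X, f(X)$ at $0$ makes $f|_e$ and $f'|_e$ increasing self-homeomorphisms of $[0,1]$ fixing $\{0,1\}$, and I define
\[
H_t|_e(s) = (1-t)\, f|_e(s) + t\, f'|_e(s),
\]
together with $H_t = f$ on vertices. The convex combination of two increasing endpoint-fixing homeomorphisms of $[0,1]$ is again such a homeomorphism, so each $H_t$ is a bijection.

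The main obstacle is verifying joint continuity of $H\colon [0,1] \times \mathcal{PS}_i(S) \to \mathcal{PS}_i(S')$; continuity of each $H_t$ is then immediate, and the analogous interpolation of $f^{-1}$ and $(f')^{-1}$ yields continuity of $H_t^{-1}$. The routine case is $(t_n, P_n) \to (t_0, P_0)$ with $P_0$ an edge point, where Definition 6.1(1) reduces the claim to convergence of endpoints (from continuity of $f, f'$ as maps of $V_i$) and of the interpolated parameters. The delicate case is $P_0 = X$ a vertex and $P_n = (X_n, a_n, Y_n)$ edge points converging to $X$: here I plan to use continuity of $f$ and $f'$ to conclude $f(P_n), f'(P_n) \to f(X)$, which by Definition 6.1(2)--(3) forces the parameters of $f(P_n)$ and $f'(P_n)$ on $f(e_n)$ (oriented with $f(X_n)$ at $0$) to tend to $0$, so their convex combination does too, giving $H_{t_n}(P_n) \to f(X)$.
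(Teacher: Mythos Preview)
Your proof is correct and follows the same approach as the paper: total disconnectedness of $V_i$ for $(1)\Rightarrow(2)$, Lemma~7.16 for the link between $(2)$ and $(3)$, and the edgewise straight-line isotopy to return to $(1)$. You are in fact more careful than the paper, which declares $(2)\Leftrightarrow(3)$ ``immediate from Lemma~7.16'' without explicitly ruling out the endpoint swap you handle via the multiple-neighbors argument, and which simply asserts that the simultaneous straight-line isotopy on all edges is a global isotopy without checking continuity.
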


\begin{proof}
\emph{(1) $\Rightarrow$ (2)}: Let $f$ and $f'$ be isotopic homeomorphisms.  By Corollary 7.14, $f$ and $f'$ restrict to homeomorphisms from $V_i(S)$ to $V_i(S')$, and any isotopy between $f$ and $f'$ restricts to an isotopy between $f_{|_{V_i(S)}}$ and $f'_{|_{V_i(S)}}$.  By Lemma 4.19, $V_i(S)$ is totally disconnected, so isotopic homeomorphisms on $V_i(S)$ must be equal.  Thus $f_{|_{V_i(S)}}=f'_{|_{V_i(S)}}$.
\par 
\emph{(2) $\Leftrightarrow$ (3)}: This implication is immediate from Lemma 7.16.
\par 
\emph{(2) and (3)$\Rightarrow$ (1)}:
Consider the edge between two vertices $X,Y\in V_i(S)$.  By hypothesis (2), $f(X)=f'(X)$ and $f(Y)=f'(Y)$.  By hypothesis (3), $f$ and $f'$ restrict to homeomorphisms from the edge between $X$ and $Y$ to the edge between $f(X)$ and $f(Y)$.  An edge in the pants space is homeomorphic to the real unit interval, which has trivial (orientation-preserving) mapping class group, hence the restrictions of $f$ and $f'$ to this edge are isotopic via the straight line isotopy.  We can apply the straight line isotopy to every edge simultaneously to get an isotopy between $f$ and $f'$.  
\end{proof}

\begin{definition}\rm
Let $S$ and $S'$ be infinite-type surfaces. A (not necessarily continuous) map $f:\mathcal{PS}_i(S)$ to $\mathcal{PS}_i(S')$ is a \emph{graph isomorphism} if 
\begin{enumerate}
\item 
The restriction of $f$ to $V_i(S)$ is a bijection between $V_i(S)$ and $V_i(S')$.
\item 
For all edge points $P=(X,a,Y)\in \mathcal{PS}_i(S)$, $f(P)=(f(X),a,f(Y))$.
\end{enumerate}
\end{definition}

\emph{Note}: In an earlier version of this paper, Theorem 7.18 was stated only for the special case $S=S'$.  The author is grateful to J. Aramayona for suggesting that theorem holds in the cases when $S \neq S'$.

\begin{theorem}
Let $S$ and $S'$ be infinite-type surfaces.  Then every homeomorphism from $\mathcal{PS}_i(S)$ to $\mathcal{PS}_i(S')$ is isotopic to a unique graph isomorphism from $\mathcal{P}(S)$ to $\mathcal{P}(S')$.
\end{theorem}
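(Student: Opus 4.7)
The plan is to produce the desired graph isomorphism by ``straightening'' the homeomorphism $f:\mathcal{PS}_i(S)\to\mathcal{PS}_i(S')$.  I will define a candidate $g:\mathcal{P}(S)\to\mathcal{P}(S')$ by $g(X):=f(X)$ on each vertex $X\in V_i(S)$ and $g((X,a,Y)):=(f(X),a,f(Y))$ on each edge point; this prescription is consistent with the identification $(X,a,Y)=(Y,1-a,X)$, so $g$ is well-defined as a map of sets.

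The first task is to verify that $g$ is actually a graph isomorphism in the sense of Definition 7.18.  Applying Corollary 7.14 to both $f$ and $f^{-1}$ shows that $f$ restricts to a bijection $V_i(S)\to V_i(S')$, giving part (1) of the definition.  For adjacency, given adjacent $X,Y\in V_i(S)$ I would take the midpoint $P=(X,\tfrac{1}{2},Y)$ of their edge; by Corollary 7.14 $f(P)$ is again an edge point, and by Lemma 7.17 $f(P)$ lies on an edge incident to $f(X)$ and simultaneously on an edge incident to $f(Y)$.  Since each edge point lies on a unique edge and $f(X)\neq f(Y)$, those two edges must coincide, so $f(X)$ and $f(Y)$ are the two endpoints of a single edge in $\mathcal{P}(S')$.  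Repeating the argument for $f^{-1}$ gives the converse, so $g$ is indeed a graph isomorphism.

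To produce the isotopy $f\simeq g$, I would simply invoke Theorem 7.18: by construction $f$ and $g$ agree on $V_i(S)$, which is condition (2) of that theorem, so $f$ and $g$ are isotopic.  For uniqueness, suppose $g$ and $g'$ are graph isomorphisms both isotopic to $f$, hence isotopic to each other.  The implication (1)$\Rightarrow$(2) of Theorem 7.18 forces $g|_{V_i(S)}=g'|_{V_i(S)}$; but the values of a graph isomorphism on edge points are determined by its values on vertices via $(X,a,Y)\mapsto(g(X),a,g(Y))$, so $g=g'$.

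The main piece of bookkeeping is the adjacency-preservation step, which combines Corollary 7.14 with Lemma 7.17 to control where $f$ sends the interior of an edge; everything else is a direct application of results already established in this section, so I do not expect genuine difficulty.
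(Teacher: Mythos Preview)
Your proof is correct and follows essentially the same route as the paper: build the candidate graph isomorphism from the vertex bijection, use Corollary~7.14 together with the ``edge incident to $X$'' lemma (Lemma~7.16, which you cite as 7.17) to check adjacency is preserved, and then appeal to the three-condition equivalence (Theorem~7.17, which you cite as 7.18) for both the isotopy and uniqueness. The one small technical point is that Theorem~7.17 is stated for pairs of \emph{homeomorphisms}, whereas you have not yet shown $g$ is continuous; the paper sidesteps this by describing the straight-line isotopy on each edge directly, and you could do the same (or simply note that the proof of $(2)\Rightarrow(1)$ in Theorem~7.17 never uses continuity of the second map).
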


\begin{proof}
Let $f: \mathcal{PS}_i(S)\to \mathcal{PS}_i(S')$ be a homeomorphism.  By combining Corollary 7.14, Lemma 7.15, and Lemma 7.16, $f$ takes vertices to vertices, and for any edge point $P=(X,a_P,Y)$, $f(P)$ has a representative of the form $(f(X),a'_P,f(Y))$ for some $0<a'<1$.  By isotoping $f$ along the edges, we can make it so that $a_P=a'_P$ for all edge points.  Hence, $f$ is isotopic to a simplicial isomorphism.  
\par 
To show that this isomorphism is unique, suppose that $g,h:\mathcal{P}_i(S)\to \mathcal{P}_i(S')$ are two distict graph isomorphisms.  Then there exists some $X\in V_i(S)$ such that $g(X)\neq h(X)$.  Hence, by Theorem 7.17, $g$ and $h$ are not isotopic.
\end{proof}

Theorem 7.18 tells us that each isotopy class of homeomorphisms between pants spaces has a "canonical" representative: namely, the one which is a graph isomorphism of the underlying pants graphs.   In the particular case of $S=S'$, this means that the mapping class group of $\mathcal{PS}_i(S)$ is naturally isomorphic to the subgroup of $\Aut(\mathcal{P}(S))$ consisting of the automorphisms which are continuous in the topology of $\mathcal{PS}_i(S)$.  Our goal for the rest of this section is to determine which graph automorphisms of $\mathcal{P}(S)$ are homeomorphisms of $\mathcal{PS}_i(S)$.
\par 

\begin{lemma}
Let $\Gamma$ be a connected-component of $\mathcal{P}(S)$.  Let $f$ and $g$ be homeomorphisms of $\mathcal{PS}_i(S)$ which are also graph automorphisms.  Then $f=g$ if and only if $f\vert_{\Gamma}=g\vert_{\Gamma}$.
\end{lemma}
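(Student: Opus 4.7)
The forward implication is immediate, so the substance of the lemma is the reverse direction: if $f$ and $g$ agree on one component $\Gamma$, they must agree on all of $\mathcal{PS}_i(S)$. The plan is a standard density-plus-Hausdorff argument, leveraging the two structural results already established earlier in Section 7.

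First, I would invoke Lemma 7.8, which says that the connected-component $\Gamma$ is dense in $\mathcal{PS}_i(S)$. Second, I would invoke Lemma 7.3, which says $\mathcal{PS}_i(S)$ is Hausdorff (and Lemma 7.1 for first-countability, so that sequential arguments suffice). The proof then proceeds as follows: given any point $P\in \mathcal{PS}_i(S)$, by density choose a sequence $\{P_n\}\subset \Gamma$ with $P_n\to P$. By hypothesis, $f(P_n)=g(P_n)$ for every $n$. By continuity of $f$ and $g$, the sequences $\{f(P_n)\}$ and $\{g(P_n)\}$ converge to $f(P)$ and $g(P)$ respectively. Because these two sequences are identical and $\mathcal{PS}_i(S)$ is Hausdorff, their limits agree, so $f(P)=g(P)$. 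Since $P$ was arbitrary, $f=g$.

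There is no real obstacle; the only thing to check is that the relevant tools from earlier in the section have been established in the form needed. Density of $\Gamma$ comes from Lemma 7.8, uniqueness of limits follows from the Hausdorff property (Lemma 7.3), and sequential continuity suffices because $\mathcal{PS}_i(S)$ is first-countable (Lemma 7.1). The assumption that $f$ and $g$ are graph automorphisms is not needed for this direction; only continuity is used, so the result holds more generally for any pair of continuous self-maps of $\mathcal{PS}_i(S)$ that agree on a single connected component of $\mathcal{P}(S)$.
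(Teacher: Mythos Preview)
Your proposal is correct and takes the same approach as the paper: the paper's proof is simply ``The result follows from the fact that $\Gamma$ is dense in $\mathcal{PS}_i(S)$,'' and you have unpacked exactly that, citing the density lemma (7.8) and making explicit the Hausdorff and first-countability ingredients that justify the one-line claim. Your observation that the graph-automorphism hypothesis is unnecessary for this direction is also correct.
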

\begin{proof}
The result follows from the fact that $\Gamma$ is dense in $\mathcal{PS}_i(S)$.
\end{proof}

\begin{theorem}
Let $S$ and $S'$ be infinite-type surfaces.  A graph automorphism $\phi:\mathcal{PS}_i(S) \to \mathcal{PS}_i(S')$ is a homeomorphism of pants spaces if and only if it is induced by a homeomorphism from $S$ to $S'$.
\end{theorem}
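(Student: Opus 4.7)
The plan is to prove the two implications separately: the reverse direction is a continuity check for surface-induced maps, and the forward direction combines Theorem 3.11 with a density argument built on Lemma 7.9.

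For the reverse direction, suppose $\phi$ is induced by a homeomorphism $\psi:S\to S'$. I would observe that $\psi$ carries any principal exhaustion $\{S_n\}$ of $S$ to a principal exhaustion $\{\psi(S_n)\}$ of $S'$, and that two pants decompositions $X,Y$ of $S$ $i$-agree on $S_n$ if and only if $\psi(X),\psi(Y)$ $i$-agree on $\psi(S_n)$. By Lemma 4.10 the topology on $V_i(S')$ does not depend on the choice of exhaustion, so $\phi$ maps each basic vertex set $B^i_n(X)$ to a basic open set in $V_i(S')$. Extending this observation edge-wise shows that $\phi$ maps the basis sets $\mathbb{D}^i_{\epsilon,n}(P)$ of the Basis Theorem bijectively onto basis sets in $\mathcal{PS}_i(S')$; applying the same reasoning to $\psi^{-1}$ makes $\phi$ a homeomorphism.

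For the forward direction, assume $\phi$ is a graph automorphism that is also a homeomorphism of pants spaces. First, fix any connected component $\Gamma\subset\mathcal{P}(S)$. Because $\phi$ is a graph automorphism, its restriction to $\Gamma$ is a simplicial isomorphism onto some connected component $\Gamma'\subset\mathcal{P}(S')$. Applying Theorem 3.11 yields a homeomorphism $\psi:S\to S'$, unique up to isotopy, inducing $\phi|_\Gamma$; since $\psi$ is itself a homeomorphism between the two surfaces, the multicurve $Q$ appearing in the definition of "induces" must be empty, so $\phi(X)=\psi(X)$ for every vertex $X\in\Gamma$. Let $\psi_*$ be the global graph automorphism induced by $\psi$. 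By the reverse direction already handled, $\psi_*$ is a homeomorphism of pants spaces, and by construction $\psi_*$ and $\phi$ agree on all of $\Gamma$ (they agree on vertices and on edges by the same linear formula). Lemma 7.9 says $\Gamma$ is dense in $\mathcal{PS}_i(S)$ and Lemma 7.3 says $\mathcal{PS}_i(S')$ is Hausdorff; two continuous maps into a Hausdorff space that agree on a dense subset must agree everywhere, so $\phi=\psi_*$ on all of $\mathcal{PS}_i(S)$, and $\phi$ is induced by $\psi$.

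The hard part is conceptually isolating where continuity actually does work. Theorem 3.1 exhibits graph automorphisms of $\mathcal{P}(S)$ built by acting as a finite-type-supported mapping class on one component while fixing every other component pointwise, and these automorphisms are \emph{not} induced by any surface homeomorphism. Such maps are perfectly fine graph automorphisms but they must fail to be continuous in $\mathcal{PS}_i(S)$; the density of a single component $\Gamma$ in the pants space is the precise mechanism that forces any continuous graph automorphism to transport the surface action coherently across all components. The rest of the argument is essentially packaging Theorem 3.11 and the Hausdorff/density extension principle.
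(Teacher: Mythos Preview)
Your argument is correct and follows essentially the same route as the paper: for the forward direction you restrict $\phi$ to a component $\Gamma$, invoke Theorem~3.11 to get a surface homeomorphism $\psi$, and then use density of $\Gamma$ in $\mathcal{PS}_i(S)$ together with Hausdorffness to conclude $\phi=\psi_*$ globally---this is exactly the paper's proof, with Lemma~7.20 unpacked inline (note the density statement is Lemma~7.8, not~7.9). Your treatment of the reverse direction is more explicit than the paper's one-line ``follows immediately from the definition,'' and your remark that $Q=\emptyset$ fills in a small gap the paper leaves implicit.
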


\begin{proof}
The "if" direction follows immediately from the definition of the pants space topology.  Now suppose $\phi$ is a homeomorphism of pants spaces.  Choose a connected-component $\Gamma\in \Pi_0(\mathcal{P}(S))$.  Since $\phi$ is a graph isomorphism, the restriction of $\phi$ to $\Gamma$ is an isomorphism between $\Gamma$ and some $\Gamma'\in \Pi_0(\mathcal{P}(S'))$.  By Theorem 3.11, there is a homeomorphism $f:S\to S'$ which induces an isomorphism from $\Gamma$ to $\Gamma'$.  Then $f$ also induces a homeomorphism from $\mathcal{PS}_i(S)$ to $\mathcal{PS}_i(S')$, and by Lemma 7.20, this homeomorphism must be $\phi$. 
\end{proof}

This theorem immediately provides us with the following corollary.

\begin{corollary}
The mapping class groups $\Modpm(\mathcal{PS}_i(S))$ and $\Modpm(S)$ are naturally isomorphic.
\end{corollary}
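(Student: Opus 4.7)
The plan is to construct the natural homomorphism $\Psi: \Modpm(S) \to \Modpm(\mathcal{PS}_i(S))$ and show it is bijective, invoking essentially all the work done in the previous theorems. Given $[h] \in \Modpm(S)$, the homeomorphism $h: S \to S$ acts on isotopy classes of pants decompositions and hence induces a graph automorphism of $\mathcal{P}(S)$; by definition of the pants space topology (as encoded in agreement on finite-type subsurfaces), this graph automorphism is continuous, so it is a homeomorphism of $\mathcal{PS}_i(S)$. We then take its isotopy class. The assignment is well-defined because homeomorphisms of $S$ in the same isotopy class act identically on isotopy classes of curves, hence on vertices of $\mathcal{PS}_i(S)$, and thus the two induced homeomorphisms of $\mathcal{PS}_i(S)$ agree on $V_i(S)$; by Theorem 7.18 (the equivalence of (2) and (1)) they are isotopic in $\mathcal{PS}_i(S)$. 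That $\Psi$ is a group homomorphism is then immediate from the fact that the induced actions compose correctly on vertices.

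Surjectivity is where the heavy machinery from earlier in the paper is cashed in. Let $[F] \in \Modpm(\mathcal{PS}_i(S))$ with representative $F$. By Theorem 7.19 (applied to $S' = S$), $F$ is isotopic to a unique graph automorphism $\phi$ of $\mathcal{P}(S)$, and $\phi$ is itself a homeomorphism of $\mathcal{PS}_i(S)$. Theorem 7.21 then furnishes a homeomorphism $h: S \to S$ inducing $\phi$. Consequently $\Psi([h]) = [\phi] = [F]$, establishing surjectivity.

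For injectivity, suppose $h_1, h_2 \in \Modpm(S)$ satisfy $\Psi([h_1]) = \Psi([h_2])$. Let $\tilde{h_1}, \tilde{h_2}$ be the induced pants-space homeomorphisms; by hypothesis they are isotopic in $\mathcal{PS}_i(S)$. By Theorem 7.18, their restrictions to $V_i(S)$ coincide, which means $h_1$ and $h_2$ send every isotopy class of pants decomposition of $S$ to the same isotopy class. In particular, for every simple closed curve $\alpha$, $h_1(\alpha)$ and $h_2(\alpha)$ are isotopic (since $\alpha$ can be completed to a pants decomposition in many ways, as in the proof of Lemma 3.4). Thus $h_2^{-1} \circ h_1$ fixes every isotopy class of simple closed curve, and the Alexander Method for infinite-type surfaces (Theorem 2.2) forces $h_2^{-1} \circ h_1$ to be isotopic to the identity, so $[h_1] = [h_2]$.

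Naturality of the isomorphism is automatic: the construction uses only the induced action on curves and nothing auxiliary. The main potential obstacle is the injectivity step, but it is resolved cleanly because the pants space remembers the entire action on the curve complex through its $V_i(S)$ skeleton, and the Alexander Method then recovers the mapping class of the underlying surface.
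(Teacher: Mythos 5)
Your proof is correct and takes essentially the same route as the paper: it assembles the natural map $\Modpm(S)\to\Modpm(\mathcal{PS}_i(S))$ and verifies well-definedness, surjectivity, and injectivity by invoking the earlier structural theorems (the equivalence-of-isotopy theorem, the unique-graph-isomorphism-representative theorem, and the characterization of which graph automorphisms are pants-space homeomorphisms), which is exactly the content the paper compresses into the phrase ``this theorem immediately provides us with the following corollary.'' The only slip is cosmetic: what you cite as Theorem 7.18 (the three-way equivalence) is numbered 7.17 in the paper; the rest of the numbering is fine.
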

We thus have a generalization to infinite-type surfaces of Margalit's result from \cite{margalit2002}

\section{A continuous group action on $\mathcal{PS}_i(S)$}
The goal of this section is to prove the following theorem.

\begin{theorem}
Let $S$ be an infinite-type surface with no planar ends.  Then the natural action of $\Modpm(S)$ on $\mathcal{PS}_i(S)$ is continuous.
\end{theorem}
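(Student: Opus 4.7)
The plan is to check continuity at each point $(f, P) \in \Modpm(S) \times \mathcal{PS}_i(S)$: for every basic open neighborhood $U = \mathbb{D}^i_{\epsilon,n}(f \cdot P)$ of $f \cdot P$ (Theorem 6.13 ensures these suffice), I will produce open sets $N_f \ni f$ and $N_P \ni P$ such that the action maps $N_f \times N_P$ into $U$.

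To construct these neighborhoods, fix a representative homeomorphism $\tilde{f}$ of $f$ and, using compactness, choose $m \geq n$ so that $\tilde{f}^{-1}(S_n) \subset S_m^{\mathrm{o}}$; in the edge-point case $P = (X, a, Y)$, further enlarge $m$ so that $X$ and $Y$ disagree on $S_m$. Let $N_f$ be the set of $g \in \Modpm(S)$ admitting a representative $\tilde{g}$ that agrees with $\tilde{f}$ on $S_m$: such sets are open and, under the no-planar-ends hypothesis, form a neighborhood basis of $f$ in the compact-open topology on $\Modpm(S)$ (the hypothesis is what rules out homeomorphisms supported near a planar end which are small in compact-open but not pointwise trivial on any compact subsurface). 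Let $N_P = \mathbb{D}^i_{\epsilon,m}(P)$. The verification that $g \cdot Q \in U$ whenever $g \in N_f$ and $Q \in N_P$ proceeds as follows: choose representatives with $\tilde{g}|_{S_m} = \tilde{f}|_{S_m}$; the hypothesis $Q \in N_P$ says that $Q$ $i$-agrees with $P$ on $S_m$, so the multicurves $P \cap S_m$ and $Q \cap S_m$ determine the same simplex of the arc-and-curve complex of $S_m$; in particular they agree on $\tilde{f}^{-1}(S_n) \subset S_m$, so applying $\tilde{g} = \tilde{f}$ shows that $\tilde{g}(Q)$ and $\tilde{f}(P)$ agree on $S_n$. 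The edge-point case reduces to this vertex argument applied separately to each endpoint of the edge, combined with the fact that the edge parameter $a$ is preserved by the $\Modpm(S)$-action.

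The main obstacle I anticipate is the tension between the fact that the $\Modpm(S)$-action is only defined on isotopy classes while $i$-agreement is defined in terms of geodesic representatives in a fixed hyperbolic metric. The image $\tilde{g}(Q)$ is not generally a geodesic pants decomposition, so one must argue that after passing to the geodesic representative, agreement with $\tilde{f}(P)$ on $S_n$ persists --- this comes down to the fact that $i$-agreement on a compact subsurface is an isotopy invariant (it records only a simplex of the arc-and-curve complex), so the compatibility between action and agreement is automatic once the correct neighborhoods are in hand. The other delicate point is justifying the neighborhood-basis description of the compact-open topology on $\Modpm(S)$, which is exactly where the no-planar-ends hypothesis enters.
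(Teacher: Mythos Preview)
Your approach differs from the paper's: you argue via basic neighborhoods, while the paper uses sequential continuity. The paper first isolates a vertex-level statement (Lemma~8.2: if $(f_j,X_j)\to(f,X)$ in $\Modpm(S)\times V_i(S)$ then $f_j(X_j)\to f(X)$), and then runs through the three convergence clauses of Definition~6.1 to treat edge points. Since $\mathcal{PS}_i(S)$ is first-countable, both strategies are in principle available, and your neighborhood formulation is a reasonable alternative; it has the advantage of making the role of the compact-open topology on $\Modpm(S)$ explicit, whereas the paper's sequential argument sweeps the choice of representatives into a uniform-convergence statement.

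There are, however, two genuine gaps in your execution. First, you assert that ``$Q\in N_P$ says that $Q$ $i$-agrees with $P$ on $S_m$'', but $\mathbb{D}^i_{\epsilon,m}(P)$ is strictly larger than the $i$-agreement class of $P$: by Definitions~6.9, 6.10 and 6.12 it is a union, over $P'$ agreeing with $P$ on $S_m$, of the sets $D^i_{\epsilon,m}(P')$, each of which allows a further graph-distance step of size $<\epsilon$ followed by another $A^i_m$-jump. Your verification never accounts for this extra step, and in particular when $P$ is a vertex your $N_P$ contains edge points which you do not address. Second, your characterization of $i$-agreement as ``determining the same simplex of the arc-and-curve complex of $S_m$'' is exactly Definition~4.5, i.e.\ the case $i=3$ only; for $i=1,2,4$ the notions are different (Definitions~4.2, 4.4, 4.6), and your argument that agreement persists after applying $\tilde g$ and straightening to geodesic representatives does not cover those cases. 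Both gaps look repairable --- the first by tracking the $\epsilon$-step through the action (which preserves $d_0$), the second by treating each $i$ separately --- but as written the verification is incomplete.
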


First, we will need the following lemma.

\begin{lemma}
Let $S$ be an infinite-type surface.  Let $(f_1,X_1), (f_2,X_2),\ldots$ be a sequence in $\Modpm(S)\times V_i(S)$ which converges to $(f,X)$.  Then the sequence $f_1(X_1),f_2(X_2),\ldots$ converges to $f(X)$.
\end{lemma}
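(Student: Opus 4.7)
The plan is to fix an arbitrary index $m \geq 0$ and show that for $k$ sufficiently large, $f_k(X_k)$ and $f(X)$ $i$-agree on $S_m$; since $m$ is arbitrary, this is exactly what it means for $f_k(X_k) \to f(X)$ in $V_i(S)$.

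First I would unpack the two hypotheses. Convergence $X_k \to X$ in $V_i(S)$ gives, for each $\ell \geq 0$, an integer $N_\ell$ such that $X_k$ and $X$ $i$-agree on $S_\ell$ for all $k \geq N_\ell$. Convergence $f_k \to f$ in $\Modpm(S)$ lets us choose homeomorphism representatives $\phi_k$ and $\phi$ such that $\phi_k \to \phi$ in the compact-open topology on $\mathrm{Homeo}^{\pm}(S)$; since $\mathrm{Homeo}^{\pm}(S)$ is a topological group, we also have $\phi_k^{-1} \to \phi^{-1}$ uniformly on compacta.

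Next I would locate where the action matters. The set $\phi^{-1}(\overline{S_m})$ is compact, hence contained in some $S_{m'}$. Using uniform convergence $\phi_k^{-1} \to \phi^{-1}$ on $\overline{S_m}$ and the fact that $\overline{S_{m'}} \subset S_{m'+1}^{\mathrm{o}}$ (principal exhaustion), I would find $m'' \geq m'$ and an integer $K_0$ so that $\phi_k^{-1}(\overline{S_m}) \subset S_{m''}$ for all $k \geq K_0$, and likewise $\phi^{-1}(\overline{S_m}) \subset S_{m''}$. Then any curve of $X_k$ (resp.\ $X$) whose image under $\phi_k$ (resp.\ $\phi$) meets $S_m$ must itself meet $S_{m''}$.

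Now for $k \geq \max(K_0, N_{m''})$, the curves of $X_k$ meeting $S_{m''}$ coincide (as isotopy classes, with the appropriate notion of $i$-agreement) with the curves of $X$ meeting $S_{m''}$. Call this finite collection $\gamma_1, \ldots, \gamma_r$. The key step is to argue that for each $\gamma_j$ there is some $k_j$ such that $\phi_k(\gamma_j)$ is isotopic to $\phi(\gamma_j)$ for all $k \geq k_j$. This follows because $C^0$-uniform convergence $\phi_k \to \phi$ on a compact neighborhood of $\gamma_j$ forces $\phi_k(\gamma_j)$ to be arbitrarily close to $\phi(\gamma_j)$, and two simple closed curves that are sufficiently $C^0$-close in a surface are ambient isotopic (this is standard; the analogous statement is what underlies the discreteness of the mapping class group action on curves). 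Taking $k \geq \max(k_1, \ldots, k_r)$ and combining with the previous paragraph, the curves of $f_k(X_k)$ meeting $S_m$ are exactly, up to isotopy, the curves of $f(X)$ meeting $S_m$; the same argument applies to arcs of the restrictions, yielding $i$-agreement on $S_m$ for all four values of $i$.

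The main obstacle is the quantitative step in the previous paragraph: translating the compact-open convergence of homeomorphisms into isotopic equality of finitely many curve images. This rests on a uniform ``thickening'' of each geodesic representative of $\gamma_j$ together with the observation that once $\phi_k$ stays within a small tubular neighborhood of $\phi(\gamma_j)$, the image $\phi_k(\gamma_j)$ is forced to be isotopic to $\phi(\gamma_j)$. Once this local statement is established, everything else is bookkeeping over finitely many curves and one choice of $m''$.
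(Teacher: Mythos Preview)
Your argument follows the same skeleton as the paper's: fix a target level $S_m$, locate a domain level by pulling back through $f^{-1}$, and combine $i$-agreement of $X_k$ with $X$ there with convergence of the $f_k$. The technical implementation differs. The paper fixes a hyperbolic metric, passes to geodesic representatives, and uses that convergence in $\Modpm(S)$ amounts to eventual \emph{exact} agreement on any prescribed compact subsurface (this is the permutation-topology description of $\Modpm(S)$); once $f_k$ and $f$ have representatives coinciding on the relevant compact piece, the finitely many curve images are literally equal and no $C^0$-closeness argument is needed. Your route through $\mathrm{Homeo}^{\pm}(S)$ is more explicit but costs two additional steps. First, the claim that a convergent sequence in $\Modpm(S)$ lifts to a convergent sequence of homeomorphism representatives is true but not automatic for quotient maps; it deserves a one-line diagonal argument over the exhaustion. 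Second, the ``$C^0$-close curves are isotopic'' step, while standard for simple closed curves, should be phrased with some care for the proper arcs that appear in $2$- and $3$-agreement. Both routes are valid; the paper's is marginally shorter because it bypasses these two points by working directly with the mapping-class-group topology.
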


\begin{proof}
Fix some hyperbolic metric on $S$.  For each finite-type surface $S_n\subset S$, there is a compact subsurface $S'_n\subseteq S_n$ such that any geodesic simple closed curve is disjoint from $S_n\backslash S'_n$.  Since the $f_j$ converge to $f$ in the compact-open topoloy on $\Modpm(S)$, the restrictions of the $f_j$ to $S'_k$ uniformly converge to $f_{|S'_k}$ for all $k\geq 0$.  
\par 
Fix an $n\geq 1$.  Let $\hat{n}$ be the least natural number such that every curve in $f(X)$ which essentially intersects $S_n$ is contained in $S'_{\hat{n}}\subseteq S_{\hat{n}}$.  By uniform convergence, for all sufficiently large $m$, $f_{m_{|S'_{\hat{n}}}}=f_{|S'_{\hat{n}}}$, and $X_m$ agrees with $X$ on $S_{\hat{n}}$.  Thus, $f_m(X_m)$ agrees with $f(X)$ on $S_n$.  The result follows now from the definition of convergence in $V_i(S)$.
\end{proof}

Now we are ready to prove Theorem 9.1.
\begin{proof}
Suppose that $(f_1,P_1),\ldots$ is a sequence in $\Modpm(S)\times \mathcal{PS}_i(S)$ which converges to $(f,P)$.  We will show that $\lim_{n\to \infty} f_n(P_n)=f(P)$, and therefore conclude that the action of $\Modpm(S)$ is continuous.  As usual, we must consider three cases, depending on which of the conditions of definition 6.1 the $P_j$ satisfy.
\par 
First, we consider the case where $P=(X,a,Y)$ is an edge point.  Then there are represenations $P_j=(X_j,a_j,Y_j)$ such that $\{X_j\}\to X$, $\{Y_j\}\to Y$, and $a_j\to a$.  By the definition of the action of $\Modpm(S)$ on $\mathcal{PS}_i(S)$, $f_j(P_j)=(f(X_j),a_j,f(Y_j))$.  By Lemma 8.2, $f_j(X_j)\to f(X)$ and $f_j(Y_j)\to f(Y)$, fo $f_j(P_j)\to f(P)$, as desired.
\par 
Next, suppose that $P$ is a vertex, and that for $j\geq 1$, we have represenations $P_j=(X_j,a_j,Y_j)$ such that $\{X_j\}\to P$ and $\{a_j\}\to 0$.  Once again, we have $f_j(P_j)=(f_j(X_j),a_j,f(Y_j))$.  Again by Lemma 8.2, $f_j(X_j)\to f(X)$, and hence $f_j(P_j)\to f(P)$.
\par 
Finally, suppose $P$ is a vertex, and that there are represenations $P_j=(X_j,a_j,Y_j)$ where the sequences $\{X_j\}$ and $\{Y_j\}$ both converge to $P$.  Then $f_j(P_j)=(f_j(X_j),a_j,f(Y_j))$, so invoking Lemma 8.2 again shows that $\{f_j(X_j)\}$ and $\{f_j(Y_j)\}$ both converge to $f(P)$, and hence $f_j(P_j)\to f(P)$.
\end{proof}

\section{Open Questions}

There are two main open questions concerning $\mathcal{PS}_i(S)$ and $V_i(S)$ which we intend to tackle in a sequel paper.  We briefly discuss them in this section.

\subsection{Coarse Geometry of $V_i(S)$}

\begin{figure}
\def\svgwidth{\columnwidth}

\caption{Part of three pants decompositions of $S$.  The black circle is $\partial S_n$, the red dots are punctures, and the blue curves are pants curves.\label{fig:3pants}}
\subcaptionbox{X \label{X}}
{\includegraphics[scale=0.175]{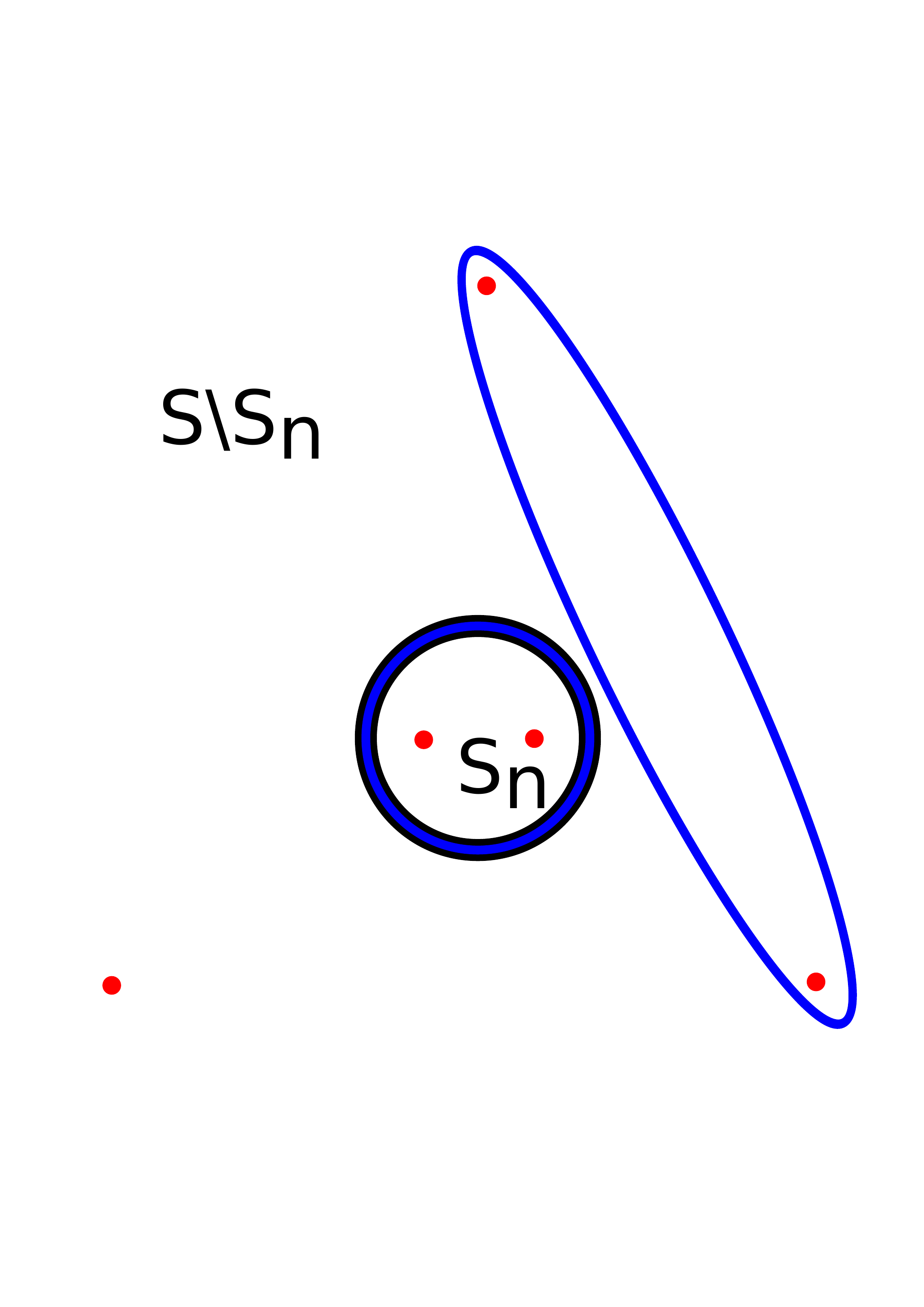}}
\subcaptionbox{Y\label{Y}}
{\includegraphics[scale=0.175]{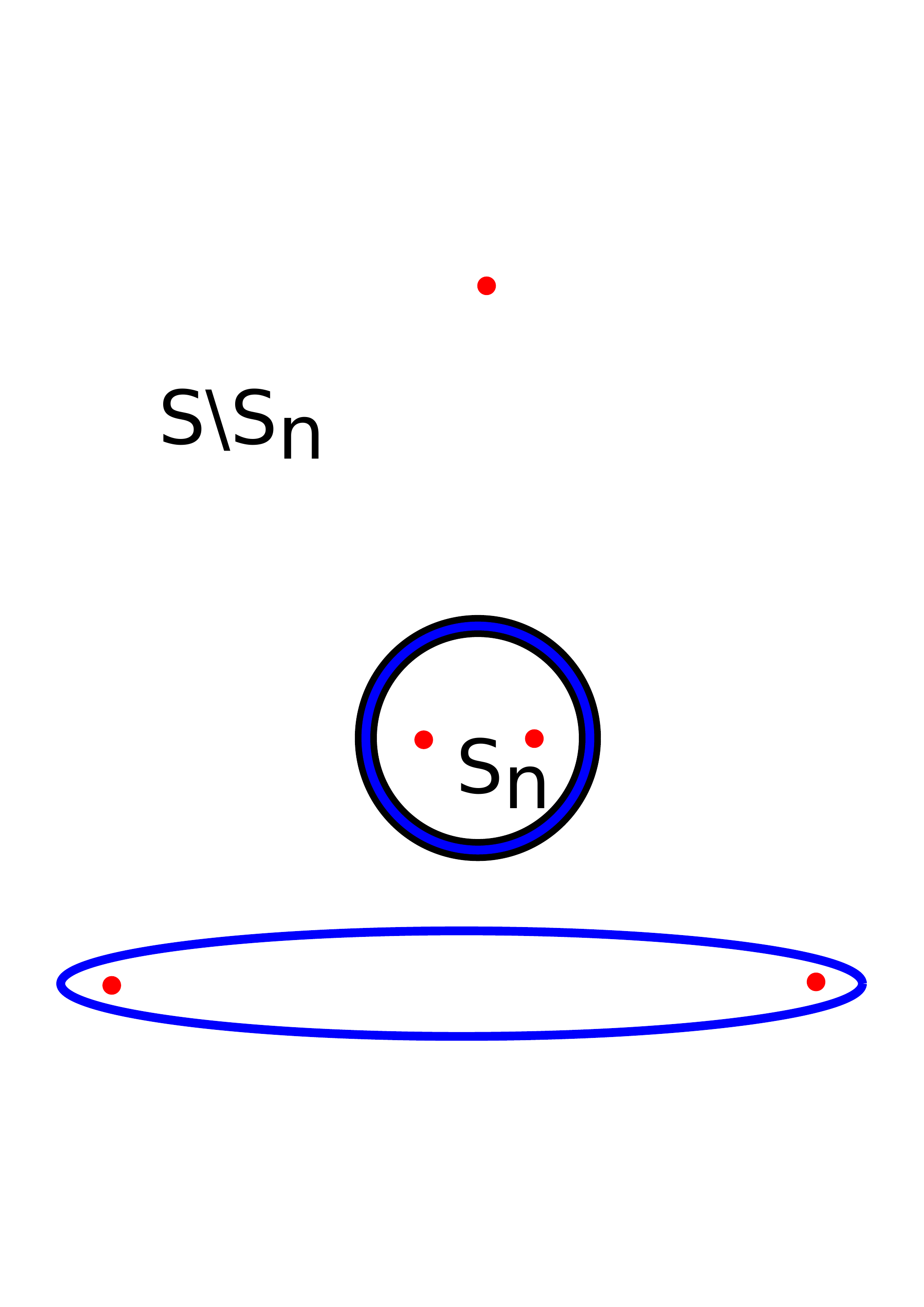}}
\subcaptionbox{Z\label{Z}}
{\includegraphics[scale=0.175]{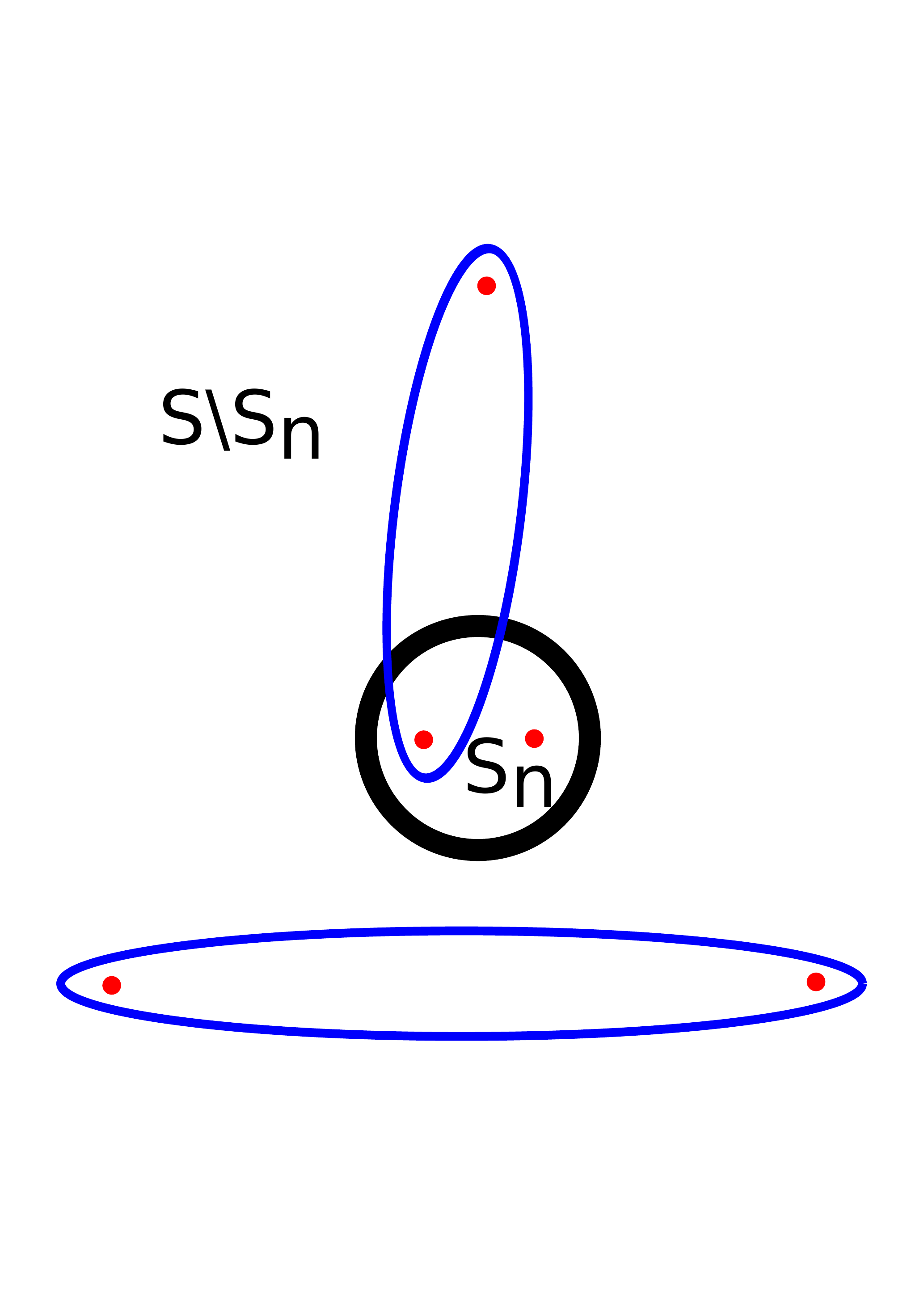}}

\end{figure}

In section 5, we constructed a metric on $V_i(S)$ and showed that it has diameter strictly greater than one. The definition of the metric allows us to "walk around" the vertex space using two kinds of steps: jumping to another pants decomposition which agrees on a finite-type subsurface, and making elementary moves.  Figure \ref{fig:3pants} establishes that these two types of moves do not commute.  Hence, it is nontrivial to compute distances greater than one in the vertex space.
\par 
Ultimately, we would like to know the diameter of $V_i(S)$.  If it is infinite diameter, we would like to understand its coarse geometry.
\subsection{Naturality of $V_i(S)$}
The topologies on $V_i(S)$ and $\mathcal{PS}_i(S)$ does not depend on the choice of the exhaustion of $S$.  However, the metric on $V_i(S)$ does depend on the exhaustion.  We do not know if there is some equivalence relation (such as quasi-isometry) under which the metric on $V_i(S)$ is independent of the exhaustion.

\subsection{Natural Metrics on $\mathcal{PS}_i(S)$}
In section seven, we showed that the pants space is metrizable using the Urysohn Metrization Theorem.  The proof of the Urysohn Metrization Theorem involves embedding any second-countable regular space in the Hilbert cube.  While this embedding gives a metric on $\mathcal{PS}_i(S)$, there is no guarentee that this metric has anything to do with the graph structure underlying the pants space, or with the family of metrics $d_i$ on the vertex space we constructed in section five.  
\par 
We would like to know the answer to the following questions.
\begin{question}\rm
Is there a metric on $\mathcal{PS}_i(S)$ such that the edges are geodesics and the length of an edge is the distance between its endpoints?
\end{question}

\begin{question}\rm
Is there a metric on the pants space such that the inclusion of $(V_i(S),d_i)\hookrightarrow \mathcal{PS}_i(S)$ is an isometric embedding?
\end{question}
\printbibliography
\end{document}